\documentclass{article}

\usepackage{PRIMEarxiv}

\usepackage[utf8]{inputenc} 
\usepackage[T1]{fontenc}    
\usepackage{hyperref}       
\usepackage{url}            
\usepackage{booktabs}       
\usepackage{amsfonts}       
\usepackage{nicefrac}       
\usepackage{microtype}      
\usepackage{xcolor}         
\usepackage{makecell}

\usepackage[english]{babel}
\usepackage{amsmath,amsfonts,amssymb}
\usepackage[export]{adjustbox}
\usepackage{xcolor}
\usepackage{soul}

\newcommand{\N}{\mathbb{N}}
\newcommand{\R}{\mathbb{R}}
\newcommand{\E}{\mathbb{E}}

\usepackage{bm}
\usepackage{multirow}
\usepackage{amsthm}
\usepackage{hyperref}  
\usepackage{thm-restate}

\usepackage{lipsum}		
\usepackage{graphicx}
\usepackage[numbers]{natbib}
\usepackage{algorithm} 
\usepackage{algorithmic}
\hypersetup{colorlinks, linkcolor=blue, citecolor=blue, urlcolor=magenta, linktocpage, plainpages=false}
\usepackage{enumitem}

\usepackage{subfigure}

\usepackage{caption}

\usepackage[final]{changes}

\newcommand{\mA}{\mathbf{A}}
\newcommand{\mB}{\mathbf{B}}

\newcommand{\mI}{\mathbf{I}}

\newcommand{\mV}{\mathbf{V}}
\newcommand{\mW}{\mathbf{W}}
\newcommand{\mX}{\mathbf{X}}
\newcommand{\mY}{\mathbf{Y}}
\newcommand{\mZ}{\mathbf{Z}}
\newcommand{\mU}{\mathbf{U}}

\newcommand{\mDelta}{\mathbf{\Delta}}
\newcommand{\mH}{\mathbf{H}}
\newcommand{\mTheta}{\mathbf{\Theta}}

\newcommand{\cD}{\mathcal{D}}
\newcommand{\cL}{\mathcal{L}}

\newcommand{\cF}{\mathcal{F}}
\newcommand{\cG}{\mathcal{G}}
\newcommand{\cO}{\mathcal{O}}
\newcommand{\T}{^\top}
\newcommand{\avein}{\frac{1}{n}\sum_{i=1}^n}

\newcommand{\bfone}{\mathbf{1}}
\newcommand{\bfonet}{\mathbf{1}^{\top}}
\newcommand{\norm}[1]{\left\|#1\right\|}

\def\<#1,#2>{\left\langle #1,#2\right\rangle}
\DeclareMathOperator*{\argmax}{arg\,max\,}
\DeclareMathOperator*{\argmin}{arg\,min\,}


\theoremstyle{plain}
\newtheorem{theorem}{Theorem}[section]

\newtheorem{lemma}[theorem]{Lemma}
\newtheorem{corollary}[theorem]{Corollary}
\theoremstyle{definition}
\newtheorem{definition}{Definition}
\newtheorem{assumption}{Assumption}
\newtheorem{remark}{Remark}
\allowdisplaybreaks

\usepackage{thm-restate}

\title{Fully First-Order Methods for Decentralized Bilevel Optimization}

%

\author{Xiaoyu Wang$^{*}$$^1$, \, Xuxing Chen\thanks{denotes equal contributions.}\, $^2$, \, Shiqian Ma$^3$, \, and Tong Zhang$^4$ \\
$^1$The Hong Kong University of Science and Technology \\ $^2$University of California Davis \\$^3$Rice University, $^4$University of Illinois Urbana-Champaign \\  \texttt{maxywang@ust.hk}, \texttt{xuxchen@ucdavis.edu}, \texttt{sqma@rice.edu}, \\\texttt{tongzhang@tongzhang-ml.org}}

\begin{document}

\maketitle

\begin{abstract}
    This paper focuses on decentralized stochastic bilevel optimization (DSBO) where agents only communicate with their neighbors. We propose Decentralized Stochastic Gradient Descent and Ascent with Gradient Tracking (DSGDA-GT), a novel algorithm that only requires first-order oracles that are much cheaper than second-order oracles widely adopted in existing works. We further provide a finite-time convergence analysis showing that for $n$ agents collaboratively solving the DSBO problem, the sample complexity of finding an $\epsilon$-stationary point in our algorithm is $\cO(n^{-1}\epsilon^{-7})$, which matches the currently best-known results of the single-agent counterpart with linear speedup. The numerical experiments demonstrate both the communication and training efficiency of our algorithm.
\end{abstract}

\section{Introduction}
\label{intro}
Bilevel optimization (BO) has recently gained growing attention in the machine learning community due to its effectiveness in various applications such as hyperparameter optimization~\citep{pmlr-v22-domke12,maclaurin2015gradient, iterative_der,lorraine2020optimizing}, meta-learning~\citep{andrychowicz2016learning,franceschi2018bilevel,rajeswaran2019meta}, reinforcement learning~\citep{NEURIPS2019_9713faa2, hong2023two}, and many others~\citep{sinha2017review}. Mathematically, the bilevel optimization problem can be formulated as follows
\begin{align}
 \min_{x \in  \R^p} & \quad \Phi(x) = f(x, y^{\ast}(x)), \quad 
 \text{s.t.}\, \, y^{\ast}(x) = \arg\min_{y\in \R^q} g(x,y)  \label{eq: bo_opt}
\end{align}
where $g$ is the lower-level (LL) function and is usually assumed to be strongly convex with respect
to $y$ for all $x$, and $f$ is the upper-level (UL) function which is possibly non-convex. A natural strategy to solve problem \eqref{eq: bo_opt} is to estimate $\nabla \Phi(x)$ (which we call hypergradient), and then perform hypergradient descent on $x$. Under certain smoothness and {strongly convexity assumptions}, the hypergradient exists and has the following closed-form expression by implicit function theorem~\citep{ghadimi2018approximation}:
\begin{align}
    \nabla \Phi(x) = \nabla_x f(x, y^*(x)) + \nabla y^*(x)^\top \nabla_y g(x, y^*(x)) \label{eq: hypergrad}
\end{align}
where we have
\begin{align}
    \nabla y^*(x)^\top = - \nabla_{xy}^2g(x,y^*(x))(\nabla_y^2g(x, y^*(x)))^{-1}. \label{eq: nabla_y_star}
\end{align}
Two major challenges are obvious from the hypergradient expression in \eqref{eq: hypergrad} -- one may not have direct access to $y^*(x)$ and it is usually expensive to directly invert a Hessian matrix $\nabla_y^2g(x, y^*(x))$, which may further require some approximation of the Hessian inverse. This suggests that one should carefully handle these two sources of large bias in estimating \eqref{eq: hypergrad}. State-of-the-art techniques to estimate \eqref{eq: hypergrad} include AID-based methods~\citep{domke2012generic, pedregosa2016hyperparameter, gould2016differentiating, ghadimi2018approximation, grazzi2020iteration, ji2021bilevel}, ITD-based methods~\citep{domke2012generic, maclaurin2015gradient, franceschi2018bilevel, grazzi2020iteration, ji2021bilevel}, Neumann-series-based methods~\citep{ghadimi2018approximation, chen2021closing,hong2023two,ji2021bilevel}, and SGD-based methods~\citep{arbel2022amortized, dagreou2022framework, chen2023optimal, hao2024bilevel}. Although the sample complexity of BO has been proven to match the lower bound under mild assumptions~\citep{chen2023optimal, hao2024bilevel}, it is worth noting all these works require Jacobian-vector product oracles, which largely restrict the applicability of such algorithms. {To mitigate this issue, a new paradigm of fully first-order penalty-based methods has been introduced, which reformulate the lower-level problem into the optimality constraint~\cite{liu2022bome,kwon2023fully, chen2023near, shen2023penalty}.}

To accelerate the optimization process of BO algorithms, there is a flurry of work extending the single-agent training setting to the multi-agent ones such as decentralized training~\citep{lu2022decentralized, chen2022decentralized, yang2022decentralized, gao2022stochastic, dong2023single, kong2024decentralized} and federated learning~\citep{tarzanagh2022fednest, huang2023achieving, yang2024simfbo}. {Decentralized training has the potential for efficient training of scalable foundation models and received a lot of attention in recent years~\cite{NEURIPS2022_a37d615b,gan2021bagua}.} Designing provably convergent and efficient algorithms for these types of problems is even harder, as we need to handle the heterogeneity from various sources of data and achieve consensus among different agents. Existing decentralized stochastic bilevel optimization (DSBO) algorithms mainly utilize second-order information to approximate the hypergradient, and then apply updates in a decentralized manner on top of it. This paper aims to propose and evaluate the fully first-order methods for DSBO problems. Our contributions can be summarized as follows.

\begin{table*}[t]
\label{table: compare}
\vskip 0.15in
\begin{small}
\caption{We compare our Algorithm \ref{alg:stochastic_minmax:onestage} with existing DSBO algorithms including DSBO-JHIP~\citep{chen2022decentralized}, GBDSBO~\citep{yang2022decentralized}, MA-DSBO~\citep{chen2022decentralizeddsbo}, and D-SOBA~\cite{kong2024decentralized}. ``Cost / Iter'' represents the per-iteration computational and communication cost. ``Complexity'' represents the oracle complexity as well as the communication rounds required to find an $\epsilon$-stationary point. ``Oracles'' represents the oracles needed in the algorithms. We use ``Jacobian'', ``JVP'', and ``Grad'' to denote oracles of Jacobian matrices, Jacobian-vector products, and gradients respectively. ``Heterogeneity'' corresponds to data heterogeneity, and ``Bounded'' indicates the requirement of an additional assumption that the data heterogeneity is bounded across agents, i.e., $\norm{\nabla f_i - \frac{1}{n}\sum_{i=1}^{n}\nabla f_i}$ is bounded uniformly for all $i$. In deep learning architectures, the computation of a Jacobian-vector product can take four times the time taken by computing a gradient and may require three times more memory than computing a gradient~\citep{dagréou2024howtocompute}.}
\end{small}
\centering
\begin{tabular}{ccccccc}
\toprule
\textbf{Algorithm}& \textbf{Cost / Iter}  & \textbf{Complexity} & \textbf{Oracles} & \textbf{Heterogeneity} \\ 
\midrule
\textbf{DSBO-JHIP}  &$\cO(d^2)$ & $\tilde\cO(\epsilon^{-6})$ & JVP, Grad & Bounded \\
\textbf{GBDSBO}   & $\cO(d^2)$ & $\tilde{\cO}(n^{-1}\epsilon^{-4})$ & Jacobian, Grad & Bounded \\ 
\textbf{MA-DSBO}   & $\cO(d)$ & $\tilde{\cO}(\epsilon^{-4}) $ & JVP, Grad & Bounded \\
\textbf{D-SOBA}  & $\cO(d)$ & ${\cO}(n^{-1}\epsilon^{-4}) $ & JVP, Grad & Bounded\\
\midrule
\textbf{DSGDA-GT}  & $\cO(d)$ & $\cO(n^{-1}\epsilon^{-7}) $ & Grad & Unbounded \\
\bottomrule
\end{tabular}
\end{table*}


%
\subsection{Our contributions}

\begin{itemize}[leftmargin=*]

    \item We propose Decentralized Stochastic Gradient Descent and Ascent with Gradient Tracking (DSGDA-GT), a fully first-order algorithm for solving the DSBO problem with a constant batch size and unbounded data heterogeneity {(i.e., $\norm{\nabla f_i - \frac{1}{n}\sum_{i=1}^{n}\nabla f_i}$ is unbounded across agents)}. Our algorithm greatly improves the per-iteration time and space complexity compared to existing works, which heavily depend on second-order information of the objectives.

    \item We provide a finite-time analysis, which indicates that our algorithm is capable of finding an $\epsilon$-stationary point within $\cO(n^{-1}\epsilon^{-7})$ first-order oracle complexity, which matches the current best-known result in the single-agent counterpart and achieves a linear speedup effect in the decentralized setting. In addition, our analysis of the double-loop and two-timescale decentralized optimization is of independent interest.
    
    \item We conduct experiments on both synthetic and real-world datasets, comparing the performance of our algorithm against existing state-of-the-art baselines. The empirical results demonstrate that our methods exhibit superior generalization performance and greater efficiency compared to the others.
\end{itemize}

\subsection{Related work}

{\bf Bilevel optimization.}\quad The study of bilevel optimization can be traced back to \cite{stackelberg1952theory}. Recently, there is a flurry of work proposing novel BO algorithms with provable convergence rates~\citep{ghadimi2018approximation, grazzi2020iteration, hong2023two, chen2021closing, dagreou2022framework} and implementing BO in large-scale problems in the machine learning community~\citep{pedregosa2016hyperparameter, lorraine2020optimizing}. It is gaining popularity due to its capability to handle different types of problems with a hierarchical structure. One line of theoretical work aims at settling the sample complexity of finding a stationary point in BO~\citep{ghadimi2018approximation, hong2023two, ji2020convergence, chen2021closing, arbel2022amortized, dagreou2022framework, chen2023optimal, hao2024bilevel} when second-order oracles like Jacobian-vector products are accessible. Despite the fact that the complexity of computing a matrix-vector product oracle is roughly the same as that of a gradient~\citep{pearlmutter1994fast}, such oracles are still time-consuming and difficult to implement, especially when it comes to neural network models, which require additional efforts in developing machine learning libraries to efficiently compute the hypergradient~\citep{grefenstette2019generalized, deleu2019torchmeta, arnold2020learn2learn}. {Motivated by this, some recent works propose novel algorithms to avoid accessing second-order information of the problem, such as fully first-order method~\cite{liu2022bome,kwon2023fully,shen2023penalty, chen2023near}}, which reformulates the bilevel problem as a single-level one treating the lower-level problem as a penalty term, zeroth-order method~\citep{Sow_2022_zeroth,yang2024achieving, aghasi2024fully}, which estimates the hypergradient via finite-difference approximation, etc.

{\bf Decentralized optimization.}\quad Decentralized optimization has been studied extensively in control community~\citep{xu2015augmented, di2016next}. When it comes to large-scale machine learning problems, the decentralized training was revealed to have its own advantages in terms of privacy protection, robustness, scalability, and linear speedup effect~\citep{lian2017can, tang2018d}. Theoretical investigations include analyzing the sample complexity~\citep{lian2017can, tang2018d}, effects of network topology~\citep{neglia2020decentralized}, compression techniques~\citep{tang2018communication, koloskova2019decentralized}, etc.

Decentralized stochastic bilevel optimization (DSBO) arises naturally when the data of a bilevel problem is distributed among different agents connected by a communication network. Extending BO from single-agent training to distributed training is non-trivial, as the hypergradient estimation involves Hessian inverse estimation, which requires the information of each local function pair $(f_i, g_i)$. Some efforts are trying to overcome this obstacle in the distributed setting, for example, decentralized setting~\citep{lu2022decentralized, chen2022decentralized, yang2022decentralized, gao2022stochastic, dong2023single, huang2024distributed,kong2024decentralized} and federated learning setting~\citep{tarzanagh2022fednest, huang2023achieving, yang2024simfbo}. However, all these works require access to matrix-vector products, i.e., second-order information, that are sometimes unavailable. {The recent parallel work~\cite{niu2024distributed} proposes a penalty-based distributed bilevel method in the deterministic setting and focuses on the bounded heterogeneity assumption. }

\section{Preliminaries}

{\bf Problem setup.}\quad In decentralized stochastic bilevel optimization (DSBO), we aim to solve the BO problem via multiple agents or devices in a distributed manner. Specifically, there are $n$ different agents communicating over a decentralized network, which can be represented by a graph whose vertices denote local agents and each edge indicates the neighboring relationship between end points of it. The formal description of the DSBO problem is 
\begin{align}
 \min_{x \in \R^p} & \quad \Phi(x) = \avein f_i(x, y^{\ast}(x)) \quad
 \text{s.t.}\, \, y^{\ast}(x) = \arg\min_{y} \avein g_i(x,y)   \label{opt: dsbo}
\end{align}
where the lower and upper functions $f_i(x,y) = \E_{\xi \sim \Xi_i}[F(x, y;\xi)]$ and $g_i(x,y)  = \E_{\psi \sim \Psi_i}[G(x, y;\psi)]$ are only accessible to the agent $i$. We assume that each agent only has access to stochastic gradient oracles of local functions $(f_i, g_i)$, and they can only communicate with their neighbors to exchange information so that they can collaboratively solve the problem. It is worth noting that according to the hypergradient expression in \eqref{eq: hypergrad} and \eqref{eq: nabla_y_star}, we can obtain
\begin{align*}
    \nabla \Phi(x) &= \left(\frac{1}{n}\sum_{i=1}^{n}\nabla_x f_i(x, y^*(x))\right) + \nabla y^*(x)^\top \left(\frac{1}{n}\sum_{i=1}^{n}\nabla_y f_i(x, y^*(x))\right) \\
    \nabla y^*(x)^\top &= -\left(\frac{1}{n}\sum_{i=1}^{n}\nabla_{xy}^2g_i(x,y^*(x))\right)\left(\frac{1}{n}\sum_{i=1}^{n}\nabla_y^2g_i(x, y^*(x))\right)^{-1}.
\end{align*}
We can clearly see that the main challenge of solving DSBO problems lies in estimating $\nabla y^*(x)^\top$, and there have been some efforts along this line~\citep{chen2022decentralized, chen2022decentralizeddsbo, yang2022decentralized, kong2024decentralized}. They all require access to Jacobian-vector products, which are not available in our setting. 

{\bf Notation.}\quad 
For convenience, we first introduce our notation conventions. $\bfone_n$ denotes the all-one vector in $\R^n$. $\norm{\cdot}$ represents $\ell^2$-norm for vectors and Frobenius norm for matrices. $\norm{\cdot}_2$ denotes the spectral norm for matrices. We use bar notation over a variable to represent the average of the variables of all agents. We use $\cO$ and $\Theta$ to denote big-O and big-Theta notation, i.e.,
\begin{align*}
    &f(x) = \cO(g(x)), \text{when } |f(x)| \leq C |g(x)| \text{ for some constant $C$ independent of $f, g$,} \\
    &f(x) = \Theta(g(x)), \text{when } C_1|g(x)| \leq |f(x)|\leq C_2 |g(x)| \text{ for some constants $C_1, C_2$ independent of $f, g$}.
\end{align*}
The notion of stationarity in this paper is defined as follows.
\begin{definition}
    Suppose we are given the output sequence $\{\bar x_1, \bar x_2, ..., \bar x_S\}$ of an algorithm for Problem \eqref{opt: dsbo}. We say it finds an $\epsilon$-stationary point, when 
    \[
        \min_{1\leq s\leq S}\E\left[\norm{\nabla \Phi(\bar x_s)}\right]\leq \epsilon.
    \]
\end{definition}

\subsection{Fully first-order hypergradient estimation}

To effectively approximate $(\nabla_y^2 g)^{-1}\nabla_y f$ in the expression of the hypergradient in \eqref{eq: hypergrad}, classical stochastic algorithms either require Neumann series methods~\citep{ghadimi2018approximation, hong2023two, chen2021closing}, or approximating the solution of a linear system via minimizing a quadratic function~\citep{arbel2022amortized, dagreou2022framework, chen2023optimal, hao2024bilevel}. All of them require Hessian-vector products. To avoid the computation of second-order information, we consider the following min-max formulation shown in~\citep{kwon2023fully, chen2023near} to design a fully first-order method for DBSO. 

{\bf Min-max reformulation.}\quad Note that in \eqref{opt: dsbo} the lower-level can be viewed as a constraint of the upper-level problem, and thus it is tempting to reformulate the DSBO problem as: 
\begin{align}\label{DSBO:bi-level_proxy}
    \min_{x \in \R^p, y\in \R^q} \,\,  &  \frac{1}{n}\sum_{i=1}^n f_i(x,y), \quad 
    \text{s.t.}  \,\,   \frac{1}{n}\sum_{i=1}^n  g_i(x, y) - \min_{z} \frac{1}{n}\sum_{i=1}^n g_i(x, z) = 0.   
\end{align}
In this formulation, we introduce an auxiliary variable $z$ to transform the lower problem $y^{\ast}(x) = \arg\min_{y} \frac{1}{n}\sum_{i=1}^ng_i(x, y)$ into the constraint $\frac{1}{n}\sum_{i=1}^ng_i(x, y) - \min_z \frac{1}{n}\sum_{i=1}^ng_i(x,z) = 0$, where $y$ serves as a proxy of $y^{\ast}(x)$. By adding the constraint in~\eqref{DSBO:bi-level_proxy} as a penalty term with a factor $\alpha$ to the upper-level function, the DSBO problem can be reformulated as follows:
\begin{align}\label{DBP:min:max}
    \min_{x \in \R^p, y\in \R^q}\max_{z}  \mathcal{L}^{\alpha}(x, y, z)
\end{align}
where
\begin{align}
\mathcal{L}^{\alpha}(x, y, z):= \frac{1}{n}\sum_{i=1}^n (f_i(x,y) + \alpha(g_i(x,y) - g_i(x,z)))
\end{align}
and $z \in \R^q$ is the lower variable whose optimum value is still $y^{\ast}(x)$, while $y \in \R^q$, $\alpha > 0$ is the multiplier. In this way, the approximation of both lower constraint and upper
optimum can be obtained during the same optimization process, and $\alpha$ controls the priority.  

{\bf Equivalence between Problems \eqref{opt: dsbo} and ~\eqref{DBP:min:max}.}\quad We overload the notation in~\eqref{DBP:min:max} and define
\begin{align}
     \Omega^{\alpha}(x,y) = \max_z \cL^{\alpha}(x,y,z), & \quad   z_*(x) := \argmax_z \cL^{\alpha}(x,y,z) = \argmin_z \frac{1}{n}\sum_{i=1}^ng_i(x,z), \notag \\
     \Gamma^{\alpha}(x) = \min_y \Omega^{\alpha}(x,y), &  \quad y^{\alpha}_*(x) := \argmin_y \Omega^{\alpha}(x,y). \notag 
\end{align}
Note that solving for $z$ does not require $\alpha$ to be present in the problem. The max part is essentially $\min_z g(x,z)$. The optimality metric of  Problem~\eqref{DBP:min:max} is defined as
\begin{align}\label{eq: minmax_metric}
    \norm{\nabla \Gamma^{\alpha}(x)} \leq \epsilon,
\end{align}
which is commonly used in non-convex strongly-concave (NCSC) min-max optimization \citep{lin2020gradient}. Moreover, we have the following Lemma~\ref{lem: gap_grad_bil_minmax} characterizing the relationship between the optimality of the min-max problem defined above and the first-order stationarity of problem \eqref{opt: dsbo}. We omit the proof and the details can be found in lemma 4.1 of \citep{chen2023near}).
\begin{lemma}\label{lem: gap_grad_bil_minmax}
Under Assumption~\ref{aspt: smoothness}, if $\alpha \geq 2\ell_{f,1}/\mu_g$, then
\begin{align}\label{eq:grad_bilevel_minimax}
 \text{(a.)}\, \left\| \nabla \Phi(x) - \nabla \Gamma^{\alpha}(x) \right\| \leq \mathcal{O}\left(\frac{\kappa^3}{\alpha}\right); \quad \text{(b.)}\, \left\|\nabla^2 \Gamma^{\alpha}(x) \right\| \leq \mathcal{O}(\kappa^3)
\end{align}
where $\kappa, \mu_g, \ell_{f,1}$ are defined in Section~\ref{sec:theoretical:result}.
\end{lemma}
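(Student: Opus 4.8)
The plan is to first reduce $\Gamma^{\alpha}$ to a transparent closed form and then perform two rounds of (implicit) differentiation. Write $f:=\avein f_i$ and $g:=\avein g_i$, so that $\cL^{\alpha}(x,y,z)=f(x,y)+\alpha g(x,y)-\alpha g(x,z)$. Since the $z$-dependence of $\cL^{\alpha}$ enters only through $-\alpha g(x,z)$, the inner maximizer is $z_*(x)=\argmin_z g(x,z)=y^{\ast}(x)$, independent of $y$ and of $\alpha$; hence $\Omega^{\alpha}(x,y)=f(x,y)+\alpha g(x,y)-\alpha g(x,y^{\ast}(x))$ and $\Gamma^{\alpha}(x)=\min_y\{f(x,y)+\alpha g(x,y)\}-\alpha g(x,y^{\ast}(x))$. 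The hypothesis $\alpha\ge 2\ell_{f,1}/\mu_g$ ensures $\nabla_y^2(f+\alpha g)\succeq\tfrac{\alpha\mu_g}{2}\mI$, so $y^{\alpha}_*(x):=\argmin_y\{f(x,y)+\alpha g(x,y)\}$ is well defined, unique, and $C^1$ in $x$ by the implicit function theorem. Applying the envelope theorem twice (using $\nabla_y\Omega^{\alpha}(x,y^{\alpha}_*(x))=0$ and $\nabla_z g(x,z_*(x))=0$) yields the working identity
\[
\nabla\Gamma^{\alpha}(x)=\nabla_x f(x,y^{\alpha}_*(x))+\alpha\bigl(\nabla_x g(x,y^{\alpha}_*(x))-\nabla_x g(x,y^{\ast}(x))\bigr).
\]

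For part (a) I would first show $y^{\alpha}_*(x)$ is $\cO(1/\alpha)$-close to $y^{\ast}(x)$: from the optimality condition $\nabla_y f(x,y^{\alpha}_*)+\alpha\nabla_y g(x,y^{\alpha}_*)=0$ and $\mu_g$-strong convexity of $g$ (with $\nabla_y g(x,y^{\ast})=0$) one gets $\norm{y^{\alpha}_*(x)-y^{\ast}(x)}\le\tfrac{1}{\mu_g\alpha}\norm{\nabla_y f(x,y^{\alpha}_*(x))}=\cO(1/\alpha)$. Setting $\delta:=y^{\alpha}_*(x)-y^{\ast}(x)$ and expanding $\nabla_y g(x,y^{\alpha}_*)=\nabla_y^2 g(x,y^{\ast})\delta+\cO(\norm{\delta}^2)$ (Lipschitz Hessian of $g$) in the optimality condition gives $\alpha\nabla_y^2 g(x,y^{\ast})\delta=-\nabla_y f(x,y^{\ast})+\cO(1/\alpha)$, hence $\alpha\delta=-(\nabla_y^2 g(x,y^{\ast}))^{-1}\nabla_y f(x,y^{\ast})+\cO(1/\alpha)$. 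Substituting the first-order expansions $\nabla_x f(x,y^{\alpha}_*)=\nabla_x f(x,y^{\ast})+\cO(\norm{\delta})$ and $\alpha\bigl(\nabla_x g(x,y^{\alpha}_*)-\nabla_x g(x,y^{\ast})\bigr)=\nabla_{xy}^2 g(x,y^{\ast})(\alpha\delta)+\cO(1/\alpha)$ into the working identity and comparing with the implicit-function formula \eqref{eq: hypergrad}--\eqref{eq: nabla_y_star} gives $\nabla\Gamma^{\alpha}(x)=\nabla\Phi(x)+\cO(1/\alpha)$. The stated factor $\kappa^3$ then comes from bookkeeping the $1/\mu_g$ of the inverse Hessian together with the Lipschitz constants of Assumption~\ref{aspt: smoothness}.

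For part (b) I would differentiate the working identity once more (equivalently, differentiate $\Gamma^{\alpha}(x)=\min_y\{f+\alpha g\}-\alpha g(x,y^{\ast}(x))$ directly). Implicit differentiation of the two optimality conditions gives $\nabla y^{\alpha}_*(x)=-(\nabla_y^2 f+\alpha\nabla_y^2 g)^{-1}(\nabla_{yx}^2 f+\alpha\nabla_{yx}^2 g)$ at $(x,y^{\alpha}_*(x))$ and $\nabla y^{\ast}(x)=-(\nabla_y^2 g)^{-1}\nabla_{yx}^2 g$ at $(x,y^{\ast}(x))$; both are $\cO(\kappa)$ in norm, and a perturbation estimate for the difference of matrix inverses at the $\cO(1/\alpha)$-separated points $y^{\alpha}_*(x),y^{\ast}(x)$ yields $\norm{\nabla y^{\alpha}_*(x)-\nabla y^{\ast}(x)}=\cO(1/\alpha)$. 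Differentiating $\nabla\Gamma^{\alpha}(x)$, the only contributions carrying an explicit $\alpha$ are $\alpha\bigl(\nabla_{xx}^2 g(x,y^{\alpha}_*)-\nabla_{xx}^2 g(x,y^{\ast})\bigr)$ and $\alpha\bigl(\nabla_{xy}^2 g(x,y^{\alpha}_*)\nabla y^{\alpha}_*(x)-\nabla_{xy}^2 g(x,y^{\ast})\nabla y^{\ast}(x)\bigr)$; the $\cO(1/\alpha)$-closeness of $y^{\alpha}_*$ and of $\nabla y^{\alpha}_*$ to their limits, together with the Lipschitz Hessian of $g$, makes each of these $\cO(\kappa^3)$ uniformly in $\alpha$, and collecting the remaining ($\alpha$-free) terms gives $\norm{\nabla^2\Gamma^{\alpha}(x)}=\cO(\kappa^3)$.

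The main obstacle is the cancellation of the explicit $\alpha$: taken term by term, each penalty-derived quantity diverges with $\alpha$, and one must exploit the $\cO(1/\alpha)$-contraction of $y^{\alpha}_*(x)$ (and, for part (b), of $\nabla y^{\alpha}_*(x)$) toward the true lower-level quantities so that ``$\alpha\times\cO(1/\alpha)$'' telescopes to an $\alpha$-free limit which is exactly the implicit-function hypergradient (resp.\ Hessian). The most delicate ingredient is the second-order perturbation bound $\norm{\nabla y^{\alpha}_*(x)-\nabla y^{\ast}(x)}=\cO(1/\alpha)$, requiring one to control the difference of two inverse Hessians evaluated at nearby points while tracking the $\kappa$-dependence; once that is established, the remaining $\kappa$-power bookkeeping is routine.
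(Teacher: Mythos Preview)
Your proposal is correct. The paper does not supply its own proof of this lemma; it simply cites Lemma~4.1 of \cite{chen2023near}, and your envelope-theorem identity for $\nabla\Gamma^{\alpha}$ followed by the Taylor expansion around $y^{\ast}(x)$ (extracting $\alpha\delta=-(\nabla_y^2 g)^{-1}\nabla_y f+\cO(1/\alpha)$ from the optimality condition, and for part~(b) the perturbation bound $\norm{\nabla y^{\alpha}_*(x)-\nabla y^{\ast}(x)}=\cO(1/\alpha)$) is precisely the standard route for such penalty-approximation results.
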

 Lemma~\ref{lem: gap_grad_bil_minmax} (a.) implies that when $\alpha \sim 1/\epsilon$, the stationary point of Problem~\eqref{DBP:min:max} is also a stationary point of Problem~\eqref{opt: dsbo}. Note that Lemma~\ref{lem: gap_grad_bil_minmax} (b.) clarifies that the gradient Lipschitz constant of $\Gamma^{\alpha}(x)$ does not depend on the multiplier $\alpha$ when $\alpha$ is larger than a certain threshold.



\section{Algorithm}
In this section, we introduce the main ingredients of our algorithmic framework.

\subsection{Decentralized optimization with gradient tracking}
In decentralized optimization, the gradient tracking (GT) technique was proposed to improve the convergence rates of decentralized optimization algorithms~\citep{xu2015augmented, di2016next, nedic2017achieving, qu2017harnessing}. It was later shown, under mild assumptions, to have unique advantages in handling unbounded gradient similarity caused by data heterogeneity~\citep{zhang2019decentralized, lu2019gnsd, pu2021distributed, koloskova2021improved}. Thus, we will incorporate this technique into our algorithms to mitigate the data heterogeneity effect. It is worth noting that the implementation of Algorithm~\ref{alg:inner_loop} has one communication round in each iteration, and one can also adopt multi-consensus techniques such as FastMix~\citep{ye2023multi} and Chebyshev-type communication~\citep{song2023optimal} to enhance consensus among agents.

\subsection{Proposed algorithm}
To solve the equivalent decentralized min-max problem~\eqref{DBP:min:max}, we are ready to present our main Algorithm \ref{alg:stochastic_minmax:onestage} named decentralized stochastic gradient descent ascent with gradient tracking (DSGDA-GT). It adopts a double-loop structure widely used in bilevel optimization literature~\citep{ghadimi2018approximation, ji2020convergence, chen2021closing}.

We first perform the $T$-step inner-loop decentralized training with gradient tracking (in Algorithm~\ref{alg:inner_loop}) to update lower variables $y, z$. As shown in line 6 of Algorithm~\ref{alg:inner_loop}, we use $u_{t+1}^{(i)}$ to track the stochastic gradients of the local agent $i$, which provably achieves linear speedup without assuming data similarity assumption~\citep{pu2021distributed, koloskova2021improved}. Since the inner variables $y, z$ are independent of each other, the two $T$-step inner-loop updates can be performed synchronously.  In the inner-loop subroutines: when setting $T=1$, Algorithm \ref{alg:stochastic_minmax:onestage} immediately becomes a single-loop algorithm, while choosing large $T$ could potentially bring better convergence rates~\citep{ji2022will, chen2023near,kwon2024complexity}. Thus, this seemingly complex framework offers more flexibility than the single-loop counterpart.

In each outer iteration (indexed by $s$), we run stochastic gradient descent with gradient tracking specifically for the upper variable $x$. The gradient track update for agent $i$ is obtained in line 8 of Algorithm~\ref{alg:stochastic_minmax:onestage} utilizing additional variable set $v_{s+1}^{(i)}$. Note that Algorithm \ref{alg:stochastic_minmax:onestage} may involve unequal
stepsizes for $x$, $y$, and $z$ to accommodate their distinct objectives, as dictated by their theoretical properties. 

\begin{algorithm}
\caption{Decentralized stochastic gradient descent ascent with gradient tracking (DSGDA-GT)}
\label{alg:stochastic_minmax:onestage}
\begin{algorithmic}[1]
\STATE {\bfseries Input:}  $x_0, y_0, z_0, \alpha, \eta_{x}, \eta_{y}, \eta_{z}, S, T$.
\STATE {\bfseries Initialization:} $x_0^{(i)} = x_0, y_0^{(i)}=y_0, z_0^{(i)}=z_0, v_0^{(i)} = \delta_0^{(i)} = 0$ on node $i$.
\FOR{$s = 0:S-1$}
\FOR{$i=1:n$}
\STATE{$y_{s+1}^{(i)}, u_{s+1, y}^{(i)}, h_{s+1, y}^{(i)} = $ \texttt{Inner Loop}$(y_s^{(i)}, \eta_{y}, f_i(x_s^{(i)}, \cdot) + \alpha g_i(x_s^{(i)}, \cdot), u_{s, y}^{(i)}, h_{s, y}^{(i)}, T)$}
\STATE{$z_{s+1}^{(i)}, u_{s+1, z}^{(i)}, h_{s+1, z}^{(i)} = $ \texttt{Inner Loop}$(z_s^{(i)}, \eta_{z}, g_i(x_s^{(i)}, \cdot), u_{s, z}^{(i)}, h_{s, z}^{(i)}, T)$}
\STATE{$\delta_{s+1}^{(i)} =\nabla_{x} f_i(x_{s}^{(i)}, y_{s}^{(i)}; \xi_s^{(i)}) + \alpha \left(\nabla_{x} g_i(x_{s}^{(i)}, y_{s}^{(i)};\psi_s^{(i)}) - \nabla_{x} g_i(x_{s}^{(i)}, z_{s}^{(i)};\psi_s^{(i)})\right)$}
\STATE{$v_{s+1}^{(i)} = \sum_{j=1}^{n}w_{ij} v_{s}^{(j)} + \delta_{s+1}^{(i)} - \delta_{s}^{(i)}$}
\STATE{$x_{s+1}^{(i)} = \sum_{j=1}^{n}w_{ij}x_s^{(i)} - \eta_{x} v_{s+1}^{(i)}$}
\ENDFOR
\ENDFOR
\STATE{{\bf Output:} $x_{S}^{(i)}, y_{S}^{(i)}, z_{S}^{(i)}$ on each node.}
\end{algorithmic}
\end{algorithm}

\begin{algorithm}
\caption{\texttt{Inner Loop}\,($\theta_0, \gamma, \phi_i(x, \theta), u_0, h_0, T$) }
\label{alg:inner_loop}
\begin{algorithmic}[1]
\STATE {\bfseries Input: $\theta_0, \gamma, \phi_i(x, \theta), u_0, h_0, T$.}
\STATE {\bfseries Initialization: $u_0^{(i)}, h_0^{(i)}$ on node $i$ satisfying $\bar u_0 = \bar h_0$.}
\FOR{$t = 0:T-1$}
\FOR{$i=1:n$}
\STATE{$h_{t+1}^{(i)} = \nabla \phi_i(x^{(i)}, \theta_t^{(i)};\zeta_t^{(i)})$}
\STATE{$u_{t+1}^{(i)} = \sum_{j=1}^{n}w_{ij}u_t^{(i)} + h_{t+1}^{(i)} - h_t^{(i)}$}
\STATE{$\theta_{t+1}^{(i)} = \sum_{j=1}^{n}w_{ij}\theta_t^{(i)} - \gamma u_{t+1}^{(i)}$}
\ENDFOR
\ENDFOR
\STATE{{\bf Output:} $\theta_T^{(i)}, u_T^{(i)}, h_{T+1}^{(i)}$ on each node.}
\end{algorithmic}
\end{algorithm}

\section{Theoretical results}
\label{sec:theoretical:result}
In this section, we provide a convergence analysis of our algorithms. We first introduce the following assumptions, which are standard in both bilevel and distributed optimization literature, as follows.

\begin{assumption}(Smoothness)\label{aspt: smoothness}
The objectives $f_i$ and $g_i$ for each agent $i$ satisfy:
\begin{itemize}
    \item[(1)] The UL objective $f_i(x, y)$ is 
    $\ell_{f,0}$-Lipschitz continuous in $y$; $\ell_{f,1}$-gradient Lipschitz, and $\ell_{f,2}$-Hessian Lipschitz.
    \item[(2)] The LL objective $g_i(x, y)$ is $\ell_{g,1}$-gradient Lipschitz, $\ell_{g,2}$-Hessian Lipschitz, and $\mu_g$-strongly convex in $y$.  
\end{itemize}
\end{assumption}
 In this paper, we consider the well-conditioned bilevel problem which is sufficient under Assumption~\ref{aspt: smoothness}(2)~\citep{ghadimi2018approximation}. Here we define the condition number $\kappa = \max \left\lbrace \ell_{f,0}, \ell_{f,1},  \ell_{g,1}, \ell_{g,2}\right\rbrace / \mu_g$ which aligns with Definition 3.1 in~\citep{chen2023near}. 

Under Assumption~\ref{aspt: smoothness}, $f_i + \alpha g_i$ is $\mu_g\alpha/2$-strongly convex in $y$ if $\alpha \geq 2\ell_{f,1}/\mu_{g}$. The technical lemmas for functions $\cL^{\alpha}(x,y,z)$ and $\Gamma^{\alpha}(x)$ and their optimal functions $z_*(x)$ and $y_*^{\alpha}(x)$ in the nonconvex-(strongly-convex)-(strongly-concave) min-max setting can be found in Appendix~\ref{sec:appendix:functions}.

\begin{assumption}(Bounded variance)\label{aspt: so}
    Denote by $\cF_s$ the $\sigma$-algebra generated by all iterates with subscripts up to $s$. All stochastic oracles are unbiased with bounded variance. The stochastic oracles of iterates with subscript $s$ are independent under $\cF_s$.
\end{assumption}

\begin{remark}
The assumptions for objectives $f_i, g_i$ are similar to those of Theorem 4.1 in \citep{kwon2023fully}, except for the boundedness requirement on $\nabla g_i$ as stated in \citep{kwon2023fully}. In comparison to the assumptions made in \citep{chen2023near}, the Hessian Lipschitz condition of $f_i$ is required to ensure the smoothness of $y_*^{\alpha}(x)$ (see Lemma~\ref{lem:appendix:zy} in Appendix), which is necessary for the consensus analysis of $Y$ when the inner-loop step $T=1$. It is worth noting that this higher-order smoothness assumption in $f_i$ can be further relaxed by incorporating the moving-average technique used in~\citep{chen2023optimal, kong2024decentralized}.
\end{remark}

\begin{assumption}(Network topology)\label{aspt: sgap}
    $\mW = (w_{ij})\in\R^{n\times n}$ is symmetric and doubly stochastic, and its eigenvalues $\lambda_n\leq ...\leq \lambda_1 = 1$ satisfy $\rho:= \max\{|\lambda_2|, |\lambda_n|\} < 1$.
\end{assumption}
{Assumption~\ref{aspt: sgap} is a standard condition in the distributed optimization~\cite{lian2017can,chen2022dsbo}, as it characterizes the network topology. It is readily satisfied in certain fully connected networks, such as ring networks and perfect graphs.}
\begin{assumption}\label{aspt: bdd_grad}
    There exists a constant $c_{\delta}$ such that in Algorithm \ref{alg:stochastic_minmax:onestage} we have 
    \begin{align*}
        \E\left[\norm{\bar \delta_{s+1}}^2|\cF_s\right]\leq c_{\delta}\alpha^2.
    \end{align*}
     {where $\bar \delta_{s+1}$ is the average of $\delta_{s+1}^{(i)}$ (see Algorithm \ref{alg:stochastic_minmax:onestage}) across the agents.}
\end{assumption}
{Assumption~\ref{aspt: bdd_grad} is independent of the gradient heterogeneity assumption used in~\cite{lian2017can,chen2022dsbo}. } A similar assumption is also used in bilevel optimization literature (see Assumption 3.7 in~\citep{dagreou2022framework}). Note that Assumption \ref{aspt: bdd_grad} holds provided that Assumptions \ref{aspt: smoothness} and \ref{aspt: so} hold and $\norm{\nabla_x g_i(x,y)}$ is bounded since
\begin{align*}
    \E\left[\norm{\bar \delta_{s+1}}^2|\cF_s\right]= \norm{\E\left[\bar \delta_{s+1}|\cF_s\right]}^2 + \E\left[\norm{\bar \delta_{s+1} - \E\left[\bar \delta_{s+1}|\cF_s\right]}^2|\cF_s\right]
\end{align*}
which is of order $\cO(\alpha^2)$. 

Now we are ready to present the convergence results of our algorithms.
\begin{restatable}[]{theorem}{thmconverge} 
\label{thm: warm_start_rate}
    Suppose Assumptions \ref{aspt: smoothness}, \ref{aspt: so}, \ref{aspt: sgap}, and \ref{aspt: bdd_grad} hold, and parameters $\alpha$ and step sizes are chosen such that 
    \begin{align*}
\alpha = \Theta\left((1-\rho^2)^{1/2}(nS)^{1/7} \right), \eta_x = \Theta \left( \frac{(1-\rho^2)n^{2/7}}{S^{5/7}}\right), \eta_y = \Theta \left( \frac{(1-\rho^2)n^{2/7}}{S^{5/7}}\right), \eta_z = \Theta \left(\frac{(1-\rho^2)^{3/2}n^{3/7}}{S^{4/7}} \right)
    \end{align*}
    and further assume a warm-start for variables $y, z$ such that 
    \begin{align}\label{eq: warmup_yz}
\max\left(\norm{\bar y_0 - y_{*, 0}^{\alpha}}^2, \norm{\bar z_0 - z_{*, 0}}^2\right) = \cO\left(1/\alpha\right)    
    \end{align}
    Consider Algorithm~\ref{alg:stochastic_minmax:onestage} with $T=1$ and $S \geq n^{4/3}$, we have 
    \begin{align*}
     \min_{0\leq s \leq S-1} \E\left[\norm{\nabla \Phi(\bar x_s)}\right] &  \leq \cO\left(\frac{(1-\rho^2)^{-1/2}}{(nS)^{1/7}}\right),\ \min_{0 \leq s \leq S-1} \frac{\E\left[ \norm{\mX_s - \bar{x}_s \bfone_n}\right]}{n}\leq \cO\left(\frac{n^{-1/14}}{S^{4/7}} + \frac{n^{1/7}(1-\rho^2)^{-3/2}}{S^{6/7}}\right).
    \end{align*}
\end{restatable}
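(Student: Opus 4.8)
The plan is to follow the standard ``descent on a potential function'' strategy for two-timescale decentralized optimization, but carefully bookkeeping the coupling between the outer variable $x$, the inner variables $y,z$, the gradient-tracking error, and the consensus error. Because Lemma~\ref{lem: gap_grad_bil_minmax}(a) converts stationarity of $\Phi$ into stationarity of $\Gamma^{\alpha}$ up to an $\cO(\kappa^3/\alpha)$ bias, it suffices to track $\E[\norm{\nabla\Gamma^{\alpha}(\bar x_s)}]$ and show it is $\cO((1-\rho^2)^{-1/2}(nS)^{-1/7})$; with $\alpha = \Theta((1-\rho^2)^{1/2}(nS)^{1/7})$ the bias term is of the same order, which is exactly what dictates the choice of $\alpha$. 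First I would set up the descent inequality for the ``outer'' objective $\Gamma^{\alpha}(\bar x_s)$ using that its gradient is $\cO(\kappa^3)$-Lipschitz (Lemma~\ref{lem: gap_grad_bil_minmax}(b), crucially $\alpha$-independent), producing a term $-\tfrac{\eta_x}{2}\norm{\nabla\Gamma^{\alpha}(\bar x_s)}^2$, an error term from replacing $\nabla\Gamma^{\alpha}(\bar x_s)$ by the averaged tracked direction $\bar v_{s+1}$, and an $\eta_x^2$ noise term controlled by Assumption~\ref{aspt: bdd_grad} (hence the $\alpha^2$ factors). The error term is governed by the inexactness of the inner solves: $\norm{\bar y_s - y_*^{\alpha}(\bar x_s)}^2$, $\norm{\bar z_s - z_{*}(\bar x_s)}^2$, and the consensus errors $\norm{\mX_s-\bar x_s\bfone_n}^2$, $\norm{\mY_s-\bar y_s\bfone_n}^2$, $\norm{\mZ_s-\bar z_s\bfone_n}^2$.

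Next I would derive recursions (one-step contraction-plus-drift inequalities) for each of these five quantities plus the two gradient-tracking errors $\norm{\mV_s - \bar v_s\bfone_n}^2$ and the inner-loop tracking errors. For the inner variables with $T=1$: one step of gradient tracking on the $\mu_g\alpha/2$-strongly-convex function $f_i+\alpha g_i$ contracts $\norm{\bar y_{s+1}-y_*^{\alpha}(\bar x_{s+1})}^2$ by roughly $(1-\Theta(\eta_y\mu_g\alpha))$, but picks up a drift $\cO(\norm{y_*^{\alpha}(\bar x_{s+1})-y_*^{\alpha}(\bar x_s)}^2) = \cO(\kappa^2\eta_x^2\norm{\bar v_{s+1}}^2)$ from the movement of $x$ (using Lipschitzness of $y_*^{\alpha}$ from Lemma~\ref{lem:appendix:zy}, which is exactly why the Hessian-Lipschitz assumption on $f_i$ is needed), plus consensus and variance terms. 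Similarly for $z$ with rate $(1-\Theta(\eta_z\mu_g))$. The consensus errors contract by $\rho^2$ each iteration (one communication round) with drift $\cO(\eta_\bullet^2)$ times the tracked directions, and the tracking errors contract by $\tfrac{1+\rho^2}{2}$ with drift controlled by successive-iterate differences of the local gradients, which in turn are bounded by the step sizes times smoothness constants. The scaling $\eta_z = \Theta((1-\rho^2)^{3/2}n^{3/7}S^{-4/7})$ versus $\eta_y = \eta_x = \Theta((1-\rho^2)n^{2/7}S^{-5/7})$ is forced so that $z$ is solved on a faster timescale relative to $\alpha$ (recall $y$'s effective strong convexity is $\Theta(\alpha)$ whereas $z$'s is $\Theta(1)$), making $\eta_y\alpha$ and $\eta_z$ comparably small.

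Then I would combine everything into a single Lyapunov function $V_s = \Gamma^{\alpha}(\bar x_s) + c_1\norm{\bar y_s - y_*^{\alpha}(\bar x_s)}^2 + c_2\norm{\bar z_s - z_*(\bar x_s)}^2 + c_3\norm{\mX_s-\bar x_s\bfone_n}^2/(\eta_x n) + (\text{similar terms for }\mY,\mZ,\mV,\text{ inner trackers})$, choosing the constants $c_i$ (depending on $\kappa$, $1-\rho^2$) so that all cross terms are absorbed and one gets $V_{s+1} \le V_s - \Theta(\eta_x)\norm{\nabla\Gamma^{\alpha}(\bar x_s)}^2 + \cO(\eta_x^2\alpha^2(\text{variance}) + \eta_x \eta_z^2(\dots) + \dots)$. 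The warm-start condition \eqref{eq: warmup_yz} ensures $V_0 - V^* = \cO(1)$ rather than $\cO(\alpha)$, which is essential for the rate; without it the initial inner-error terms, scaled by $c_1 = \Theta(\alpha)$-type constants, would blow up. Summing over $s=0,\dots,S-1$, dividing by $\eta_x S$, and plugging in the stated values of $\alpha,\eta_x,\eta_y,\eta_z$ (and using $S\ge n^{4/3}$ to guarantee the ``lower-order'' residual terms — in particular the consensus contribution — are dominated) yields $\min_s \E[\norm{\nabla\Gamma^{\alpha}(\bar x_s)}^2] = \cO((1-\rho^2)^{-1}(nS)^{-2/7})$; Jensen/Cauchy-Schwarz then gives the $\ell^1$-in-expectation bound, and adding the $\cO(\kappa^3/\alpha)$ bias recovers the claimed bound on $\E[\norm{\nabla\Phi(\bar x_s)}]$. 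The consensus bound follows by separately unrolling the $\norm{\mX_s-\bar x_s\bfone_n}^2$ recursion against the already-established sums.

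\textbf{Main obstacle.} The hard part will be the entangled constant-chasing in the Lyapunov construction: because $y_*^{\alpha}$ has $\alpha$-dependent curvature but $\alpha$-independent Lipschitz/smoothness constants, and because there are two nested gradient-tracking loops with different timescales, one must verify that a single consistent choice of $(c_i)$ and step sizes makes every drift term provably smaller than the contraction it is charged against — in particular that the $y$-drift $\cO(\kappa^2\eta_x^2\norm{\bar v_{s+1}}^2)$ is beaten by the $\Theta(\eta_y\alpha)$ contraction given $\eta_x=\eta_y$ and $\alpha=\Theta((nS)^{1/7})$, and that the interplay with the $S\ge n^{4/3}$ regime cleanly kills the $n^{1/7}(1-\rho^2)^{-3/2}S^{-6/7}$-type leftovers in the gradient bound (so only the consensus bound, not the stationarity bound, retains that term). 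Establishing the Lipschitz continuity and (where needed) smoothness of $y_*^{\alpha}(x)$ and $z_*(x)$ with $\alpha$-uniform constants (the content of the appendix lemmas) is a prerequisite I would assume rather than reprove.
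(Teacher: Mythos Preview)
Your overall strategy---descent on $\Gamma^{\alpha}$, coupled recursions for the inner errors and consensus/tracking errors, then summing and balancing constants---is exactly what the paper does (it substitutes summed bounds into one another rather than writing a single Lyapunov, but that is cosmetic). However, there is a genuine gap in your handling of the $y$- and $z$-drift that would make the argument fail at the stated rate.

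You write the drift from the moving target as $\cO(\norm{y_*^{\alpha}(\bar x_{s+1})-y_*^{\alpha}(\bar x_s)}^2)=\cO(\kappa^2\eta_x^2\norm{\bar v_{s+1}}^2)$, invoking only Lipschitz continuity of $y_*^{\alpha}$. But to simultaneously retain the $(1-\Theta(\eta_y\alpha))$ contraction you must split via Young with parameter $c=\Theta(\eta_y\alpha)$, which inflates the drift to $\Theta(\eta_x^2/(\eta_y\alpha))\norm{\bar v_{s+1}}^2$. After multiplying by the Lyapunov weight $c_1=\Theta(\alpha)$ (needed to absorb the $\eta_x\alpha^2$ inner-error term from the descent lemma), summing, dividing by $\eta_x S$, and using $\E[\norm{\bar v}^2]=\E[\norm{\E[\bar v\mid\cF]}^2]+\cO(\sigma_x^2/n)$ with $\sigma_x^2=\Theta(\alpha^2)$, the residual variance term is $\Theta(\alpha^2/n)=\Theta((1-\rho^2)\,n^{-5/7}S^{2/7})$, which \emph{diverges}. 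The identical obstruction appears for $z$ with contraction rate $\Theta(\eta_z)$.

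The missing idea (which the paper carries out explicitly, following \cite{chen2021closing}) is a refined cross-term analysis: write $\norm{\bar z_s - z_{*,s+1}}^2 = \norm{\bar z_s - z_{*,s}}^2 + \norm{z_{*,s}-z_{*,s+1}}^2 + 2\langle z_{*,s}-\bar z_s,\, z_{*,s+1}-z_{*,s}\rangle$, Taylor-expand $z_{*,s+1}-z_{*,s}$ about $\bar x_s$, and take conditional expectation \emph{before} Cauchy--Schwarz. The first-order piece then involves $\E[\bar v_{s+1}\mid\cF_s]$ rather than $\bar v_{s+1}$, so it is charged against the negative $\norm{\E[\bar v_{s+1}\mid\cF_s]}^2$ term on the LHS with no variance penalty; the second-order remainder is bounded using the \emph{smoothness} of $z_*$ and $y_*^{\alpha}$ (Lemma~\ref{lem_aspt: smoothness:yz}---this, not Lipschitzness as you wrote, is where Hessian-Lipschitz of $f_i$ enters) together with Assumption~\ref{aspt: bdd_grad}, and is genuinely $\cO(\eta_x^2)$ without the $1/(\eta_y\alpha)$ blow-up. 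This decoupling is what makes the two-timescale choice $\alpha\eta_y\asymp\eta_z$ close and delivers the $(nS)^{-1/7}$ rate; a naive Lipschitz bound does not.
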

{Theorem \ref{thm: warm_start_rate} explicitly characterizes the relationship between the convergence and the network connectivity parameter $\rho$. As $\rho$ increases, the convergence rate gets worse.} As a byproduct of Theorem \ref{thm: warm_start_rate}, we have the following Corollary that gives the sample complexity of finding an $\epsilon$-stationary point.
\begin{corollary}
    Under the same conditions of Theorem \ref{thm: warm_start_rate}, the stochastic first-order oracles needed in Algorithm \ref{alg:stochastic_minmax:onestage} for finding an $\epsilon$-stationary point is $\cO(n^{-1}\epsilon^{-7})$.
\end{corollary}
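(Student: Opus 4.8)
The plan is to read off the required number of outer iterations $S$ from the rate in Theorem~\ref{thm: warm_start_rate} and then multiply by the per‑iteration oracle cost, which is constant when $T=1$.

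First I would invoke the first bound of Theorem~\ref{thm: warm_start_rate}, namely $\min_{0\le s\le S-1}\E[\norm{\nabla\Phi(\bar x_s)}]\le \cO\big((1-\rho^2)^{-1/2}(nS)^{-1/7}\big)$, and determine for which $S$ the right‑hand side falls below $\epsilon$. Treating the connectivity factor $(1-\rho^2)^{-1/2}$ as an absolute constant (as is standard when reporting sample complexity), this amounts to $(nS)^{1/7}\gtrsim\epsilon^{-1}$, i.e.\ $S=\Theta(n^{-1}\epsilon^{-7})$. Since Definition~1 measures stationarity through $\min_s\E[\norm{\nabla\Phi(\bar x_s)}]$, outputting the averaged iterate $\bar x_s$ at a best (or randomly chosen) index $s$ then yields an $\epsilon$‑stationary point. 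I would also check that $S=\Theta(n^{-1}\epsilon^{-7})$ is compatible with the standing requirement $S\ge n^{4/3}$ of the theorem: it is, precisely in the regime $\epsilon=\cO(n^{-1/3})$, which is exactly the regime in which the claimed linear speedup is meaningful; outside this regime the choice $S=n^{4/3}$ already makes the bound $\le\epsilon$ and the complexity is only smaller, so the stated estimate still holds. One should also note that the prescribed $\alpha,\eta_x,\eta_y,\eta_z$ are functions of $n$ and $S$ alone, so fixing $S$ fixes all algorithm parameters consistently.

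Next I would account for the oracle cost of each outer iteration with $T=1$. In Algorithm~\ref{alg:stochastic_minmax:onestage} each outer step $s$ and each agent $i$ performs: one call to \texttt{Inner Loop} for $y$ (a single stochastic gradient of $f_i+\alpha g_i$ in the lower variable, by line~5 of Algorithm~\ref{alg:inner_loop} with $T=1$), one call to \texttt{Inner Loop} for $z$ (a single stochastic gradient of $g_i$), and the evaluation of $\delta_{s+1}^{(i)}$ in line~7 (the partial gradients $\nabla_x f_i$ and $\nabla_x g_i$ at one sample). The remaining operations in lines~8--9 are gossip mixing and vector arithmetic and invoke no oracle. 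Hence the number of stochastic first‑order oracle calls per agent per outer iteration is $\Theta(1)$, so over all $S$ outer iterations each agent makes $\Theta(S)=\Theta(n^{-1}\epsilon^{-7})$ such calls, and the number of communication rounds is likewise $\Theta(S)$. Combining this with the first step gives the per‑agent stochastic first‑order oracle complexity $\cO(n^{-1}\epsilon^{-7})$, which is the single‑agent $\cO(\epsilon^{-7})$ with an $n$‑fold speedup.

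I do not expect a substantive obstacle here, since the statement is a direct consequence of Theorem~\ref{thm: warm_start_rate}; the only points requiring care are bookkeeping ones: (i) being explicit about whether the network factor $(1-\rho^2)^{-1/2}$ is kept or absorbed into the $\cO(\cdot)$ constant, (ii) confirming that the schedule $S=\Theta(n^{-1}\epsilon^{-7})$ respects the hypothesis $S\ge n^{4/3}$, which restricts the result to $\epsilon=\cO(n^{-1/3})$, and (iii) counting the constant number of gradient evaluations per outer step correctly (including the endpoint gradient returned by \texttt{Inner Loop}), none of which affects the $\Theta(1)$ per‑iteration order.
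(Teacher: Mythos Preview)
Your proposal is correct and is essentially the same direct derivation the paper has in mind: the Corollary is stated as an immediate byproduct of Theorem~\ref{thm: warm_start_rate} without a separate proof, and your argument---solve $(nS)^{-1/7}\lesssim\epsilon$ for $S$ and multiply by the $\Theta(1)$ per-agent, per-iteration oracle cost when $T=1$---is exactly the intended reading. The paper additionally remarks (in the paragraph following the Corollary) that the warm-start requirement~\eqref{eq: warmup_yz} can itself be met with $\cO(n^{-1}\epsilon^{-1})$ extra oracles, which is dominated by the main $\cO(n^{-1}\epsilon^{-7})$ term; you may wish to mention this, but it is not needed for the Corollary as stated since the warm start is already part of the hypotheses.
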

We highlight that the warm-start condition \eqref{eq: warmup_yz} can be satisfied via running Algorithm \ref{alg:inner_loop} as another subroutine. Note that the sample complexity (per node) of achieving \eqref{eq: warmup_yz} is $\cO(n^{-1}\alpha) = \cO(n^{-6/7}S^{1/7} )$ according to Lemma \ref{lem: inner_error}, and for $S = \cO(n^{-1}\epsilon^{-7})$ we know this requires $\cO(n^{-1}\epsilon^{-1})$ additional stochastic oracles, which do not affect the final sample complexity. Note that we also obtain the linear speedup effect in the sample complexity bound, i.e., the samples required on each node is $\cO(n^{-1}\epsilon^{-7})$.

\begin{remark}
When considering $\cO(1)$ batch size setting, if we set $n=1$, which represents the single-agent training scenario, then the sample complexity of finding an $\epsilon$-stationary point of Algorithm \ref{alg:stochastic_minmax:onestage} matches that of \cite{kwon2023fully}. It is worth noting that the large-batch and inner-loop $T \gg 1$ settings can also be covered by our analysis, however, it does not yield the desired improvement by a simple extension of \citep{chen2023near} and \citep{kwon2024complexity} due to the consensus error in the upper variable $x$. With stronger assumptions such as mean-squared smoothness~\citep{kwon2024complexity, yang2024achieving} and large batch sizes~\citep{chen2023near} imposed, we anticipate the sample complexity can be further improved, and we leave this as an interesting future work.
\end{remark}

\subsection{Proof sketch}
In this section, we highlight the main steps of analyzing the proposed algorithms and the novelty of our analysis as compared to the existing ones. 

By the smoothness of $\Gamma^{\alpha}(x)$ in Lemma~\ref{lem: gap_grad_bil_minmax}, we first get the descent inequality over the variable $x$:
\begin{align*}
& \E\left[\Gamma^{\alpha}(\bar x_{s+1})\middle|\cF_s\right]-\Gamma^{\alpha}(\bar x_s) \notag \\
        \leq &  -\frac{\eta_{x}}{2}\norm{\nabla \Gamma^{\alpha}(\bar x_s)}^2 - \left(\frac{\eta_{x}}{2} - \frac{\eta_{x}^2\ell_{\Gamma}}{2}\right)\norm{\E\left[\bar v_{s+1}\middle|\cF_s\right]}^2 + \frac{\ell_{\Gamma}\eta_{x}^2\sigma_x^2}{2n} \notag \\
       + &\underbrace{\frac{3\eta_x\ell_{x,1}^2}{2n}\norm{\mX_s - \bar x_s\bfonet_n}^2}_{\text{outer-loop error}} + \underbrace{\frac{3\eta_x\ell_{y,1}^2}{2n}\norm{\mY_s - {y_{\ast}^{\alpha}}(\bar x_s)\bfonet_n}^2 +  \frac{3\eta_x\alpha^2\ell_{z,1}^2}{2n}\norm{\mZ_s - z_{\ast}(\bar x_s)\bfonet_n}^2}_{\text{inner-loop error}}.
    \end{align*}
This, together with \eqref{lem: gap_grad_bil_minmax}, indicates that to theoretically bound $\norm{\nabla \Phi(\bar x_s)}$, we need to carefully estimate the error induced by the inner-loop variables $y, z$ and the outer-loop variable $x$.

\textbf{Inner-loop error.}\quad Take $y$ for example, motivated by the decomposition
\begin{align*}
\norm{\mY_s - {y_{\ast}^{\alpha}}(\bar x_s)\bfonet_n}^2 \leq \underbrace{\left\| \mY_s - \bar{y}_s \bfonet_n\right\|^2}_{\text{Consensus error}} + \underbrace{n\left\|\bar{y}_s -y_{\ast}^{\alpha}(\bar x_s)\right\|^2}_{\text{Convergence error}},
\end{align*}
we separately analyze the consensus and convergence of inner variables $y, z$ in Section \ref{sec:convergence_app}. { The corresponding inner-loop errors are also provided in Lemma \ref{lem: yz_convergence}.}

\textbf{Outer-loop error.}\quad Note that due to the double-loop and two-timescale nature of our algorithm, the analysis of the inner-loop error, which gives a recursive relation between $\norm{\mY_{s+1} - {y_{\ast}^{\alpha}}(\bar x_{s+1})\bfonet_n}$ and $\norm{\mY_s - {y_{\ast}^{\alpha}}(\bar x_s)\bfonet_n}$ (see Lemma \ref{lem: inner_error}, same for $z$), cannot be directly incorporated into the outer-loop analysis. We provide a novel analysis to balance these two sources of error in Section \ref{sec:consensus_app}.

We highlight that different from classical analysis of decentralized stochastic gradient tracking techniques for optimizing strongly convex functions~\citep{pu2021distributed} which only requires all stepsizes to have the same order of magnitude in terms of $S$ (i.e., single-timescale), our convergence analysis requires careful design of stepsize choices for $\eta_x, \eta_y, \eta_z$ to handle the consensus error and convergence error induced by both the inner and outer loops. Different from the existing analysis of double-loop DSBO algorithm~\cite{chen2022decentralizeddsbo}, we provide a fine-grained analysis in Section \ref{sec:consensus_app} that is of independent interest.

\section{Experiments}\label{sec:exp}

In this section, we investigate the empirical performance of Algorithm \ref{alg:stochastic_minmax:onestage}. Following the basic experimental setup in existing works~\citep{pedregosa2016hyperparameter, grazzi2020iteration, ji2020convergence, chen2022decentralizeddsbo, kong2024decentralized}, we consider the following hyperparameter optimization problem under the decentralized setting.
\begin{align}\label{opt: ho_exp}
    \min_{\lambda\in\R^p}\  \frac{1}{n}\sum_{i=1}^{n}\cL_{\text{val}}^{(i)}(\lambda,\omega^*(\lambda)),\quad \text{s.t.}\ \  \omega^*(\lambda) =\argmin_{w\in\R^q} \frac{1}{n}\sum_{i=1}^{n}\cL_{\text{train}}^{(i)}(\lambda,\omega).
\end{align}
Here, agent $i$ has access to validation dataset $\cD_{\text{val}}^{(i)}$ and the training dataset $\cD_{\text{train}}^{(i)}$, that are used to evaluate $\cL_{\text{val}}$ and $\cL_{\text{train}}^{(i)}$ respectively. We aim at learning the best hyperparameters $\lambda$, under the constraint that the model parameters $\omega$ are optimal. All experiments are conducted on a computer with Intel Core i7-11370H Processor. We use 8 cores to simulate 8 agents ($n=8$), and the communication steps are conducted with mpi4py~\citep{dalcin2021mpi4py} module. We compare our Algorithm \ref{alg:stochastic_minmax:onestage} with MA-DSBO~\citep{chen2022decentralizeddsbo} and D-SOBA~\citep{kong2024decentralized}, two DSBO algorithms that only require first-order oracles and matrix-vector product oracles. We note that both DSBO-JHIP~\citep{chen2022decentralized} and Gossip-DSBO~\citep{yang2022decentralized} require computing and communicating Jacobian matrices, and are inefficient \citep{chen2022decentralizeddsbo} as reported by \cite{chen2022decentralizeddsbo}. Hence we do not include them as baseline algorithms.

We would like to highlight that for hyperparameter optimization problems, the validation datasets that produce the upper-level functions $f_i$ are relatively much smaller than the training datasets for the lower-level functions $g_i$. It is thus more reasonable to update the hyperparameters less frequently than the model parameters, which indicates that our double-loop DSBO Algorithm \ref{alg:stochastic_minmax:onestage} offers more flexibility in this type of problem than single-loop ones.

\subsection{Synthetic data}
To validate the efficiency of Algorithm \ref{alg:stochastic_minmax:onestage}, we first consider a simple binary 
classification problem with synthetic data. Specifically, we consider problem~\eqref{opt: ho_exp}, with functions $(\cL_{\text{val}}^{(i)}, \cL_{\text{train}}^{(i)})$ as follows.
\begin{align*}
    \cL_{\text{val}}^{(i)}(\lambda, \omega) &= \frac{1}{|\cD_{\text{val}}^{(i)}|}\sum_{(x_e,y_e)\in\cD_{\text{val}}^{(i)}}\psi(y_ex_e^{\top}\omega),\\
    \cL_{\text{train}}^{(i)}(\lambda, \omega) &= \frac{1}{|\cD_{\text{train}}^{(i)}|}\sum_{(x_e,y_e)\in\cD_{\text{train}}^{(i)}}\psi(y_ex_e^{\top}\omega) + \frac{1}{2}\sum_{i=1}^{d}e^{\lambda_i}\omega_i^2,
\end{align*}
where $\psi(x) = \log(1 + e^{-x})$. We have $x_e\sim \mathcal N(0, i^2I_d)$ and $y_e = \text{sgn}(x_e^\top\omega + 0.1\cdot z)$, where $\text{sgn}(\cdot)$ is the sign function that outputs $1$ for a positive input and $0$ otherwise. $z$ is the noise vector generated from standard normal distribution. This gives a regularized logistic regression problem, which is widely used in bilevel optimization literature~\citep{pedregosa2016hyperparameter, grazzi2020iteration}. We plot the training loss and test accuracy over wall-clock time in Figures~\ref{figure:comp_loss} and~\ref{figure:comp_time}, from which we can observe that our methods achieve the lowest training loss and best accuracy in a relatively short amount of time. Interestingly, when all curves stabilize, the test accuracy of our Algorithm is better than the ones that require second-order information. This may indicate fully first-order methods have better generalization performance than second-order ones.
\begin{figure*}[ht]
	\centering
        \subfigure[]{\label{figure:comp_loss}\includegraphics[width=0.49\textwidth]{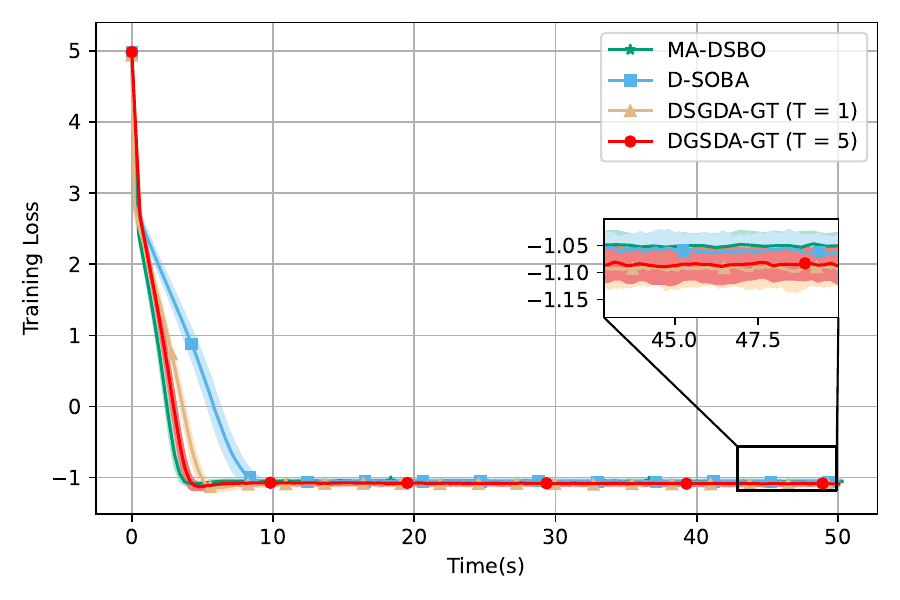}}
        \subfigure[]{\label{figure:comp_time}\includegraphics[width=0.49\textwidth]{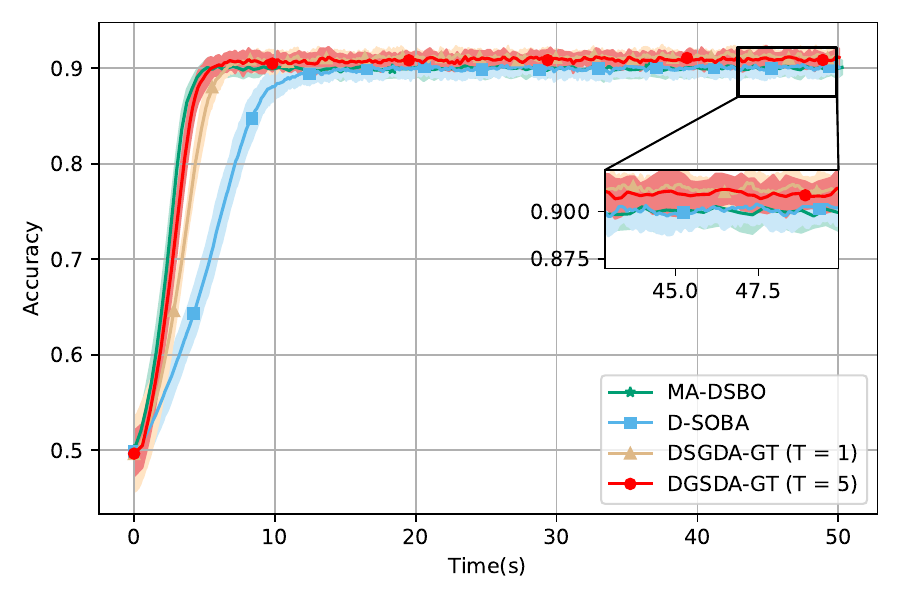}}
	\vspace{-0.2cm}
	\caption{Training loss and test accuracy of $\ell^2$-regularized logistic regression on synthetic data. The vertical axis of Figure \ref{figure:comp_loss} is in log scale.}\label{Figure: syn_acc}
	\vspace{-0.3cm}
\end{figure*}

\subsection{Real-world data}
We then test the performance of our algorithm on real-world data -- MNIST~\citep{lecun1998gradient}, with functions $(\cL_{\text{val}}^{(i)}, \cL_{\text{train}}^{(i)})$ defined as
\begin{align*}
    \cL_{\text{val}}^{(i)}(\lambda, \omega) &= \frac{1}{|\cD_{\text{val}}^{(i)}|}\sum_{(x_e,y_e)\in\cD_{\text{val}}^{(i)}}L(x_e^{\top}\omega, y_e),\\
    \cL_{\text{train}}^{(i)}(\lambda, \omega) &= \frac{1}{|\cD_{\text{train}}^{(i)}|}\sum_{(x_e,y_e)\in\cD_{\text{train}}^{(i)}}L(x_e^{\top}\omega, y_e) + \frac{1}{cd}\sum_{i=1}^{c}\sum_{j=1}^{d}e^{\lambda_j}\omega_{ij}^2,
\end{align*}
where we denote by $L$ the cross-entropy loss, and $(c,d)=(10, 784)$ represent the number of classes and number of features.  {More details of the parameters in the experiments are provided in Appendix~\ref{numerical:appendix}.} We plot the training loss and test accuracy with respect to training time in Figure~\ref{Figure: MNIST_loss}.  Our Algorithm \ref{alg:stochastic_minmax:onestage} with different settings is consistently better than existing ones in terms of training loss and accuracy. Moreover, we can observe better generalization performance of the fully first-order algorithm over the second-order algorithms under the same training time. Our Algorithm also provides more flexibility, in the sense that we can set the number of inner-loop iterations $T$ to be greater than $1$, which gives a double-loop algorithm, which has been proven beneficial over the fully single-loop ones both theoretically~\citep{chen2021closing, ji2022will} and also empirically in our Figures~\ref{figure:comp_time_mnist_loss} and~\ref{figure:comp_time_mnist_acc}.

\begin{figure*}[ht]
\centering  
    \subfigure[]{\label{figure:comp_time_mnist_loss}\includegraphics[width=0.49\textwidth]{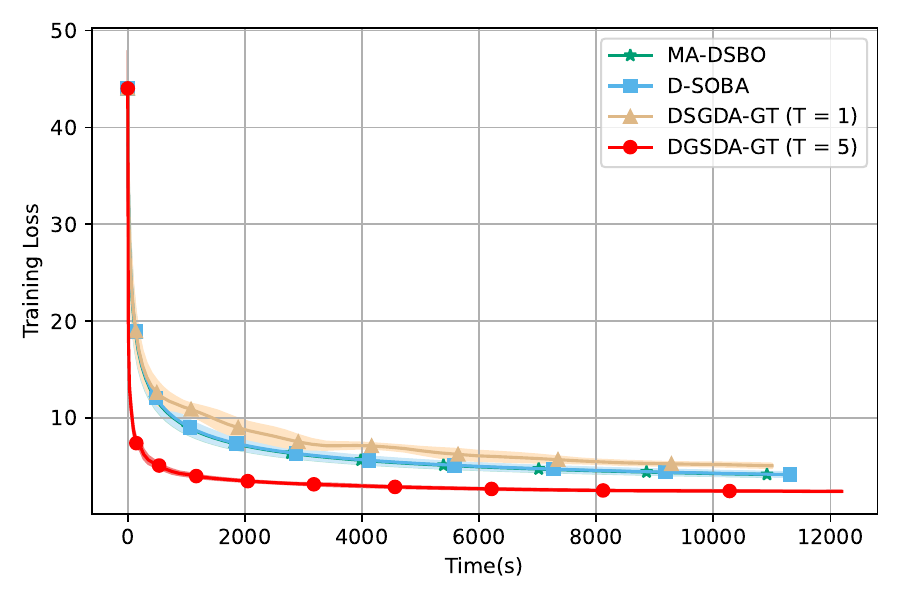}}
    \subfigure[]{\label{figure:comp_time_mnist_acc}\includegraphics[width=0.49\textwidth]{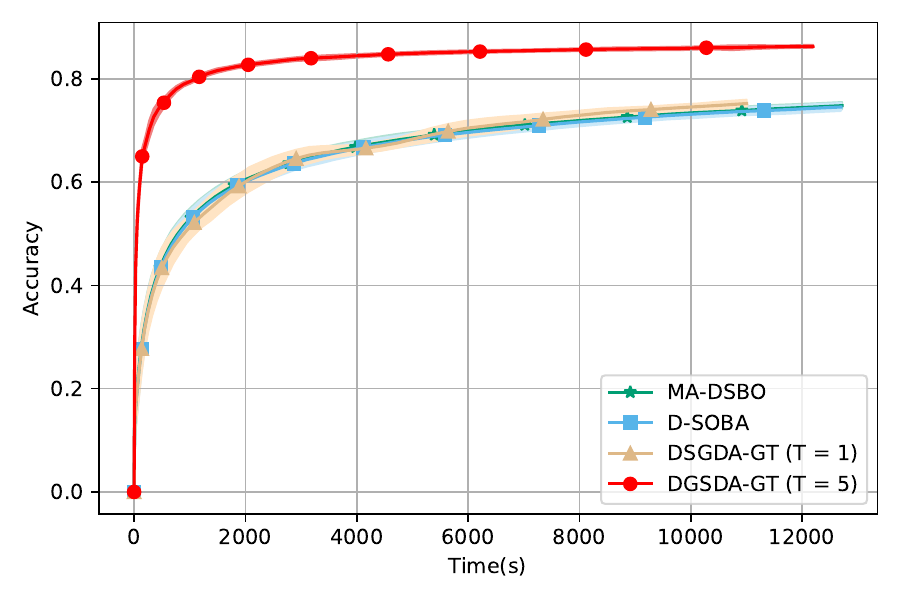}}
\vspace{-0.2cm}
\caption{$\ell^2$-regularized logistic regression on MNIST.}\label{Figure: MNIST_loss}
\vspace{-0.3cm}
\end{figure*}



\section{Conclusion}
In this paper, we propose a novel algorithm called Decentralized Stochastic Gradient Descent Ascent with Gradient Tracking (DSGDA-GT) for solving decentralized stochastic bilevel optimization problems. The proposed algorithm only requires the first-order gradient oracle, making it more efficient compared to the existing methods that involve second-order oracles. We provide the first-order oracle complexity $\cO(n^{-1}\epsilon^{-7})$ to find an $\epsilon$ stationary point, which matches the well-known result in the single agent method~\cite{kwon2023fully}. In the future, it will be interesting to improve the convergence rate of the fully first-order methods under stronger assumptions and large-batch settings. Moreover, investigating the fundamental limits and analyzing the lower bound of such problems is an area of independent interest. 


\bibliographystyle{unsrt}
\bibliography{ref}

\newpage


\appendix

\tableofcontents

\newpage
\section{Appendix / Auxiliary lemmas for theoretical results}\label{sec: aux_lemma_app}

In this section, we analyze the convergence of Algorithm \ref{alg:stochastic_minmax:onestage}. For convenience, we first introduce our notational conventions. $\bfone_n$ denotes the all-one vector in $\R^n$. $\norm{\cdot}$ represents $\ell^2$-norm for vectors and Frobenius norm for matrices. $\norm{\cdot}_2$ denotes the spectral norm for matrices.
\begin{align*}
    &\mX_s = \left(x_s^{(1)}, ..., x_s^{(n)}\right), \mY_s = \left(y_s^{(1)}, ..., y_s^{(n)}\right), \mZ_s = \left(z_s^{(1)}, ..., z_s^{(n)}\right), \notag \\
    & \mV_{s} = \left(v_{s}^{(1)}, ..., v_{s}^{(n)}\right), \mDelta_s = \left(\delta_s^{(1)}, ..., \delta_s^{(n)}\right). \\
    &\bar x_s = \frac{1}{n}\mX_s\bfone_n = \avein x_s^{(i)},\ \bar y_s = \frac{1}{n}\mY_s\bfone_n = \avein y_s^{(i)},\ \bar z_s = \frac{1}{n}\mZ_s\bfone_n = \avein z_s^{(i)},\\
    &\bar v_s = \frac{1}{n}\mV_{s}\bfone_n = \avein v_{s}^{(i)},\ \bar \delta_s = \frac{1}{n}\mDelta_s\bfone_n = \avein \delta_s^{(i)}.\\
    & y^{\alpha}_*(x) := \argmin_y \Omega^{\alpha}(x,y),\ z_*(x) := \argmin_z g(x,z), \notag \\
    & \ y_{*, s}^{\alpha} := \argmin_{y}\Omega^{\alpha}(\bar x_s, y),\ z_{*, s} := \argmin_z g(\bar x_s, z). \\
    & \cF_s = \sigma\left(\bigcup_{i=1}^{n}\left\{x_0^{(i)}, y_0^{(i)}, z_0^{(i)}, v_0^{(i)}, ..., x_s^{(i)}, y_s^{(i)}, z_s^{(i)}, v_s^{(i)}\right\}\right).
\end{align*}
In the following analysis, the symbol $\lesssim$ indicates that there exists an absolute constant $C$ such that LHS $\leq$ $C$ RHS, and for simplicity, omitting $C$ does not affect the order of RHS. 

Note that suppose that $f_i$ and $g_i$ for each agent $i$ satisfy the variance bounded condition in Assumption~\ref{aspt: so}, we might let 
\begin{align}
\E\left[\norm{\nabla F_i(x,y;\xi) - \nabla f_i(x, y)}^2 \right] \leq \sigma_f^2; \quad  \E\left[\norm{\nabla G_i(x,y;\psi) - \nabla g_i(x, y)}^2 \right] \leq \sigma_g^2.
\end{align}

The following technical lemmas are standard.
\begin{lemma}\label{lem: cs}
    For any $m, n\in \N_+$ and matrices $\mA, \mB\in \R^{m\times n}$ and $c>0$, we have:
    \[
        \norm{\mA + \mB}^2\leq (1+c)\norm{\mA}^2 + (1 + c^{-1})\norm{\mB}^2.
    \]
\end{lemma}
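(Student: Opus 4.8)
\textbf{Proof proposal for Lemma~\ref{lem: cs}.}

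The plan is to derive this from the standard Young-type inequality together with the bilinearity of the Frobenius inner product. First I would recall that for matrices $\mA, \mB \in \R^{m\times n}$ the Frobenius norm is induced by the inner product $\langle \mA, \mB\rangle = \mathrm{tr}(\mA^\top \mB)$, so that $\norm{\mA + \mB}^2 = \norm{\mA}^2 + 2\langle \mA, \mB\rangle + \norm{\mB}^2$. The only content in the statement is therefore a clean bound on the cross term $2\langle \mA, \mB\rangle$.

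The key step is the scaled arithmetic–geometric mean inequality: for any $c > 0$,
\begin{align*}
    2\langle \mA, \mB\rangle \;=\; 2\left\langle \sqrt{c}\,\mA,\; \tfrac{1}{\sqrt{c}}\,\mB\right\rangle \;\leq\; 2\,\norm{\sqrt{c}\,\mA}\,\norm{\tfrac{1}{\sqrt{c}}\,\mB} \;\leq\; c\,\norm{\mA}^2 + c^{-1}\norm{\mB}^2,
\end{align*}
where the first inequality is Cauchy–Schwarz for the Frobenius inner product and the second is $2ab \leq a^2 + b^2$ applied to $a = \sqrt{c}\,\norm{\mA}$ and $b = \tfrac{1}{\sqrt{c}}\,\norm{\mB}$. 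Substituting this into the expansion of $\norm{\mA+\mB}^2$ yields
\begin{align*}
    \norm{\mA + \mB}^2 \;\leq\; \norm{\mA}^2 + c\,\norm{\mA}^2 + c^{-1}\norm{\mB}^2 + \norm{\mB}^2 \;=\; (1+c)\norm{\mA}^2 + (1 + c^{-1})\norm{\mB}^2,
\end{align*}
which is exactly the claim.

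There is essentially no obstacle here: the statement is a routine algebraic inequality, and the only mild care needed is to note that the argument is valid simultaneously for the vector $\ell^2$-norm (the case $n=1$, or by viewing vectors as $m\times 1$ matrices) and for the Frobenius norm, since both arise from an inner product and the same Cauchy–Schwarz/AM–GM chain applies verbatim. I would present the two-line computation above and stop.
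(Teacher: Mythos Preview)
Your proof is correct and is the standard argument for this Young-type inequality. The paper itself does not give a proof of this lemma: it simply lists it among the ``standard'' technical lemmas in the appendix and moves on, so there is nothing to compare against beyond noting that your derivation is exactly the routine Cauchy--Schwarz plus AM--GM computation one would expect.
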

\begin{lemma}\label{lem: F_norm_ineq}
    For any $p, q, r\in \N_+$ and matrices $\mA\in \R^{p\times q}, \mB\in \R^{q\times r}$, we have:
    \[
        \|\mA\mB\| \leq \min\left(\|\mA\|_2\cdot \|\mB\|, \|\mA\|\cdot \|\mB\T\|_2\right).
    \]
\end{lemma}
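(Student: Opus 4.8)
The plan is to prove the two bounds inside the minimum separately; each follows from the definition of the spectral norm as the operator norm induced by the Euclidean norm, combined with the column (resp. row) decomposition of the Frobenius norm. The whole argument is elementary linear algebra and can be written without any matrix calculus.

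First I would establish $\norm{\mA\mB} \leq \norm{\mA}_2 \norm{\mB}$. Writing $\mB = [\mathbf{b}_1, \dots, \mathbf{b}_r]$ in terms of its columns $\mathbf{b}_k \in \R^q$, the $k$-th column of $\mA\mB$ is $\mA\mathbf{b}_k$, so that $\norm{\mA\mB}^2 = \sum_{k=1}^r \norm{\mA\mathbf{b}_k}^2$. Bounding each term via the defining operator-norm inequality $\norm{\mA\mathbf{b}_k} \leq \norm{\mA}_2\norm{\mathbf{b}_k}$ and summing gives $\norm{\mA\mB}^2 \leq \norm{\mA}_2^2 \sum_k \norm{\mathbf{b}_k}^2 = \norm{\mA}_2^2\norm{\mB}^2$. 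Taking square roots yields the first bound.

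For the second bound, $\norm{\mA\mB} \leq \norm{\mA}\norm{\mB\T}_2$, I would exploit the transpose invariance of the Frobenius norm. Since $\norm{\mA\mB} = \norm{(\mA\mB)\T} = \norm{\mB\T\mA\T}$, applying the first bound to the product $\mB\T\mA\T$ (with $\mB\T$ playing the role of the left factor) gives $\norm{\mB\T\mA\T} \leq \norm{\mB\T}_2\norm{\mA\T} = \norm{\mB\T}_2\norm{\mA}$, where the final equality again uses $\norm{\mA\T} = \norm{\mA}$. Combining the two estimates and taking the minimum completes the proof.

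There is no genuine obstacle here: the result is a routine consequence of submultiplicativity of the operator norm and the transpose symmetry of the Frobenius norm. The only points requiring care are (i) invoking the correct operator-norm inequality $\norm{\mA\mathbf{v}} \leq \norm{\mA}_2\norm{\mathbf{v}}$ for vectors rather than a spurious Frobenius analogue, and (ii) tracking which factor the spectral norm is applied to, so that $\mB\T$ (and not $\mB$) appears in the second term exactly as stated.
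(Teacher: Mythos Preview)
Your proof is correct and is the standard argument for this well-known inequality. The paper states this lemma without proof as a standard technical fact, so there is nothing to compare against; your columnwise bound plus the transpose trick is exactly the canonical justification.
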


\begin{lemma}\label{lem: basic_ineq}
    For three sequences $\{a_n\}_{n=0}^{\infty}, \{b_n\}_{n=0}^{\infty},\ \{\tau_n\}_{n=-1}^{\infty},$ and a constant $r$ satisfying
    \begin{equation}\label{ineq: conditions_cons_decay}
        a_{k+1}\leq r a_k + b_{k},\ a_k\geq 0,\ b_k\geq 0,\  0 = \tau_{-1}\leq \tau_{k+1}\leq \tau_k\leq 1,\ 0< r< 1,
    \end{equation}
    for all $k\geq 0$. Then for any $K > 0$, we have
    \begin{align}
        a_k &\leq r^ka_0 + \sum_{i=0}^{k-1}r^{k-1-i}b_{i}, \label{ineq: ak}\\
        \sum_{k=0}^{K}\tau_ka_k &\leq \frac{1}{1-r}\left(\tau_0a_0 + \sum_{k=0}^{K}\tau_kb_k\right).\label{ineq: aksum}
    \end{align}
\end{lemma}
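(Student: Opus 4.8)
The plan is to prove the two inequalities separately: \eqref{ineq: ak} by a direct induction, and \eqref{ineq: aksum} by summing the recursion against the weights $\tau_k$ and exploiting their monotonicity. No fixed-point or generating-function machinery is needed; both are elementary consequences of the hypotheses in \eqref{ineq: conditions_cons_decay}.

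For \eqref{ineq: ak} I would induct on $k$. The base case $k=0$ is immediate, since the empty sum vanishes and the bound reads $a_0 \leq a_0$. For the inductive step, assuming the bound at index $k$, I substitute it into $a_{k+1} \leq r a_k + b_k$ and collect terms:
\begin{align*}
a_{k+1} \leq r\left(r^k a_0 + \sum_{i=0}^{k-1} r^{k-1-i} b_i\right) + b_k = r^{k+1} a_0 + \sum_{i=0}^{k} r^{k-i} b_i,
\end{align*}
where the trailing $b_k = r^{k-k} b_k$ merges into the sum as its $i=k$ term. This is exactly the claimed bound at index $k+1$, closing the induction. This part is entirely routine.

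The substance is \eqref{ineq: aksum}. Here I would first multiply the recursion $a_{k+1} \leq r a_k + b_k$ by the nonnegative weight $\tau_k$ and sum over $k = 0, \dots, K$, obtaining
\begin{align*}
\sum_{k=0}^{K} \tau_k a_{k+1} \leq r \sum_{k=0}^{K} \tau_k a_k + \sum_{k=0}^{K} \tau_k b_k.
\end{align*}
The key maneuver is to re-index the left-hand side: shifting the summation index and using the convention $\tau_{-1} = 0$ to absorb the boundary term gives $\sum_{k=0}^K \tau_k a_{k+1} = \sum_{k=1}^{K+1} \tau_{k-1} a_k$. Now the monotonicity $\tau_{k-1} \geq \tau_k$ together with $a_k \geq 0$ lets me lower-bound each summand by $\tau_k a_k$, and dropping the nonnegative tail term at $k = K+1$ yields $\sum_{k=1}^{K+1} \tau_{k-1} a_k \geq \sum_{k=1}^{K} \tau_k a_k$. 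Chaining these inequalities, adding $\tau_0 a_0$ to both sides to complete the sum on the left, and writing $S := \sum_{k=0}^K \tau_k a_k$, I arrive at $S \leq \tau_0 a_0 + r S + \sum_{k=0}^K \tau_k b_k$. Since $0 < r < 1$, the factor $1-r$ is positive, so I can move $rS$ across and divide to obtain the claimed bound.

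The main obstacle—really the only nonroutine point—is recognizing that the monotonicity of $\tau$ is precisely the property that converts the index-shifted sum $\sum \tau_{k-1} a_k$ back into something comparable to $\sum \tau_k a_k$; without $\tau_{k-1} \geq \tau_k$ the absorption step would fail. I note that the conditions $\tau_k \leq 1$ and $\tau_{-1} = 0$ are used only for convenience (the latter merely to tidy the boundary term), whereas nonnegativity of $a_k, b_k$ and the strict contraction $0 < r < 1$ are what guarantee both the dropped-term inequality and the final division by $1-r$.
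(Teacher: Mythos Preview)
Your proof is correct and follows essentially the same approach as the paper: the paper proves \eqref{ineq: ak} by dividing through by $r^i$ and telescoping (equivalent to your induction), and proves \eqref{ineq: aksum} by writing $(1-r)\sum\tau_k a_k \le \sum\tau_k(a_k-a_{k+1}+b_k)$ and applying summation by parts, which is algebraically the same as your re-index-and-compare maneuver using $\tau_{k-1}\ge\tau_k$.
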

\begin{proof}(of Lemma~\ref{lem: basic_ineq})
    To prove \eqref{ineq: ak}, notice that we have $\frac{a_i}{r^i}\leq \frac{a_{i-1}}{r^{i-1}} + \frac{b_{i-1}}{r^i}$, and thus taking summation for $1\leq i\leq k$ on both sides completes the proof. To prove \eqref{ineq: aksum}, note that we have
    \begin{align*}
        (1-r)\sum_{k=0}^{K}\tau_ka_k 
        \leq &\sum_{k=0}^{K}\tau_k(a_k - a_{k+1} + b_k) = \sum_{k=0}^{K}(\tau_k - \tau_{k-1})a_k - \tau_Ka_{K+1} + \sum_{k=0}^{K}\tau_kb_k\leq \tau_0a_0 + \sum_{k=0}^{K}\tau_kb_k,
    \end{align*}
    where the inequalities use \eqref{ineq: conditions_cons_decay}, and the equality uses summation by parts.
\end{proof}
\begin{lemma}\label{lem:double:sum}
For the sequence $\left\lbrace x_n \right\rbrace_{n=1}^{N}$ and constant $r \in (0,1)$, then
\begin{align*}
\sum_{s=0}^{N}\sum_{n=0}^{s}r^{s-n}x_n = \sum_{n=0}^{N}\sum_{s=n}^{N} r^{s-n}x_n \leq \frac{1}{1-r} \sum_{n=0}^{N} x_n.
\end{align*}
\end{lemma}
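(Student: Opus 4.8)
The plan is to establish the two assertions separately: the displayed equality is merely an interchange of the order of summation in a finite double sum, and the displayed inequality then follows by bounding a truncated geometric series by its infinite counterpart.

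For the equality, I would observe that the two iterated sums enumerate precisely the same set of index pairs, namely $\{(s,n): 0 \le n \le s \le N\}$, and that the summand $r^{s-n}x_n$ is a function of the pair $(s,n)$ alone. Hence, by commutativity of finite addition (finite Fubini), $\sum_{s=0}^{N}\sum_{n=0}^{s} r^{s-n}x_n = \sum_{n=0}^{N}\sum_{s=n}^{N} r^{s-n}x_n$; no hypothesis on the signs of the $x_n$ is needed for this step.

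For the inequality, I would fix $n$, pull the constant $x_n$ out of the inner sum, and substitute $k = s-n$ to rewrite $\sum_{s=n}^{N} r^{s-n} = \sum_{k=0}^{N-n} r^{k}$. Since $r \in (0,1)$ and each $r^k \ge 0$, this partial sum is at most $\sum_{k=0}^{\infty} r^k = \tfrac{1}{1-r}$. Multiplying by $x_n$ — which is nonnegative in all intended uses, since in the consensus/convergence recursions the $x_n$ are squared norms, just as in Lemma~\ref{lem: basic_ineq} — and summing over $0 \le n \le N$ yields $\sum_{n=0}^{N} x_n \sum_{k=0}^{N-n} r^{k} \le \tfrac{1}{1-r}\sum_{n=0}^{N} x_n$, which is the claimed bound.

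There is no genuine obstacle here; the only point requiring attention is the nonnegativity of $\{x_n\}$, which is what licenses replacing the finite geometric partial sum by its limit term-by-term. An equivalent compact packaging is to note that the weights $a_{s,n} := r^{s-n}[\,n \le s\,]$ satisfy $\sum_{s=n}^{N} a_{s,n} \le \tfrac{1}{1-r}$ uniformly in $n$, and then apply this uniform column bound directly after the interchange of summation order.
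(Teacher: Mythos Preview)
Your proposal is correct. The paper states this lemma without proof (it is listed among the ``standard'' technical lemmas in Appendix~\ref{sec: aux_lemma_app}), so there is no approach to compare against; your argument---interchange of the order of summation followed by bounding the truncated geometric series by $\tfrac{1}{1-r}$, relying on the implicit nonnegativity of the $x_n$---is exactly the intended one-line justification.
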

\begin{lemma}\label{lem: W_m}
    Suppose Assumption \ref{aspt: sgap} holds. For any $m\in \N+$, we have
    \[
        \norm{\mW^m - \frac{\bfone_n\bfonet_n}{n}}_2\leq \rho^m.
    \]
\end{lemma}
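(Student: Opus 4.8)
\textbf{Proof proposal for Lemma~\ref{lem: W_m}.}

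The plan is to diagonalize $\mW$ and separate out the component along the all-ones direction, which is exactly the part that the subtraction of $\bfone_n\bfonet_n/n$ removes. First I would invoke Assumption~\ref{aspt: sgap}: since $\mW$ is symmetric and doubly stochastic, the spectral theorem gives an orthonormal eigenbasis $q_1,\dots,q_n$ with eigenvalues $\lambda_1=1\geq\lambda_2\geq\cdots\geq\lambda_n$, and double stochasticity forces the top eigenvector to be $q_1=\bfone_n/\sqrt{n}$ with $\lambda_1=1$. Consequently the orthogonal projector onto $\mathrm{span}(\bfone_n)$ is precisely $q_1q_1\T=\bfone_n\bfonet_n/n$, and in the eigenbasis $\mW = \sum_{k=1}^n \lambda_k q_k q_k\T$, so $\bfone_n\bfonet_n/n = q_1q_1\T$ commutes with $\mW$.

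Next I would compute the $m$-th power: since the $q_kq_k\T$ are mutually orthogonal idempotents summing to $\mI$, we get $\mW^m = \sum_{k=1}^n \lambda_k^m q_k q_k\T$, and because $\lambda_1 = 1$,
\begin{align*}
    \mW^m - \frac{\bfone_n\bfonet_n}{n} = \sum_{k=1}^n \lambda_k^m q_k q_k\T - q_1 q_1\T = \sum_{k=2}^n \lambda_k^m q_k q_k\T.
\end{align*}
This is again a symmetric matrix whose eigenvalues are exactly $\{\lambda_2^m,\dots,\lambda_n^m\}$ (together with a zero eigenvalue along $q_1$), so its spectral norm equals $\max_{2\leq k\leq n}|\lambda_k|^m = \bigl(\max\{|\lambda_2|,|\lambda_n|\}\bigr)^m = \rho^m$, using the definition $\rho := \max\{|\lambda_2|,|\lambda_n|\}$ from Assumption~\ref{aspt: sgap}. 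That yields $\norm{\mW^m - \bfone_n\bfonet_n/n}_2 \leq \rho^m$ (in fact with equality).

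There is essentially no hard step here; the only point requiring a little care is justifying that $\bfone_n\bfonet_n/n$ really is the spectral projector onto the eigenspace for eigenvalue $1$ — that is, that $1$ is a \emph{simple} eigenvalue or at least that $\bfone_n$ spans the relevant piece we subtract. If the eigenvalue $1$ had multiplicity greater than one, then $\rho=1$ and the bound $\rho^m=1$ holds trivially since $\norm{\mW^m - \bfone_n\bfonet_n/n}_2\leq\norm{\mW^m}_2 + 1$ is too weak, so one instead notes directly that $\mW^m - \bfone_n\bfonet_n/n$ still has all eigenvalues in $\{\lambda_k^m : \lambda_k\neq 1\}\cup\{0\}$ when $\bfone_n\bfonet_n/n$ is replaced by the full projector onto the $1$-eigenspace; but under Assumption~\ref{aspt: sgap} we have $\rho<1$, which forces $\lambda_2<1$, hence $1$ is simple, and the clean argument above applies verbatim. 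An alternative, matrix-power-free route is to show $(\mW^m - \bfone_n\bfonet_n/n) = (\mW - \bfone_n\bfonet_n/n)^m$ directly from $\mW\cdot\bfone_n\bfonet_n/n = \bfone_n\bfonet_n/n = (\bfone_n\bfonet_n/n)^2$ and then apply submultiplicativity of the spectral norm together with $\norm{\mW - \bfone_n\bfonet_n/n}_2 = \rho$; I would mention this as the one-line version.
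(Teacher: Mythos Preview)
Your proof is correct. The paper does not actually supply a proof of Lemma~\ref{lem: W_m}; it is listed among the ``standard'' technical lemmas in Appendix~\ref{sec: aux_lemma_app} and stated without argument, so there is nothing to compare against. Both routes you outline --- the spectral decomposition yielding $\mW^m - \bfone_n\bfonet_n/n = \sum_{k\ge 2}\lambda_k^m q_kq_k\T$, and the one-line identity $(\mW - \bfone_n\bfonet_n/n)^m = \mW^m - \bfone_n\bfonet_n/n$ followed by submultiplicativity --- are the standard proofs, and your handling of the simplicity of the eigenvalue $1$ (forced by $\rho<1$) is appropriately careful.
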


\begin{lemma}\label{lem: gd_decrease}
    Suppose $f(x)$ is $\mu$-strongly convex and $\ell$-smooth. For any $x$ and $\gamma<\frac{2}{\mu + \ell}$, define $x^+ = x - \gamma\nabla f(x),\ x^*=\argmin_x f(x)$. Then we have
    \[
        \|x^+ - x^*\| \leq (1-\gamma\mu)\|x-x^*\|.
    \]
\end{lemma}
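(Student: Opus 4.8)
The plan is to use the standard one-step contraction argument: expand the squared distance to $x^*$ after one gradient step, and then control the inner product term via the strengthened monotonicity (``co-coercivity'') inequality available for functions that are simultaneously strongly convex and smooth. First I would write, using $x^+ = x - \gamma\nabla f(x)$ and the optimality condition $\nabla f(x^*) = 0$,
\begin{align*}
\|x^+ - x^*\|^2 = \|x - x^*\|^2 - 2\gamma\langle \nabla f(x), x - x^*\rangle + \gamma^2\|\nabla f(x)\|^2.
\end{align*}
Then I would invoke the classical fact (e.g.\ Nesterov, \emph{Lectures on Convex Optimization}, Thm.\ 2.1.12) that a $\mu$-strongly convex, $\ell$-smooth $f$ satisfies, for all $x$ (again using $\nabla f(x^*)=0$),
\begin{align*}
\langle \nabla f(x), x - x^*\rangle \ \geq\ \frac{\mu\ell}{\mu+\ell}\|x-x^*\|^2 + \frac{1}{\mu+\ell}\|\nabla f(x)\|^2,
\end{align*}
which upon substitution gives
\begin{align*}
\|x^+ - x^*\|^2 \ \leq\ \Big(1 - \frac{2\gamma\mu\ell}{\mu+\ell}\Big)\|x-x^*\|^2 + \gamma\Big(\gamma - \frac{2}{\mu+\ell}\Big)\|\nabla f(x)\|^2.
\end{align*}

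The key observation is that the step-size restriction $\gamma < 2/(\mu+\ell)$ makes the coefficient $\gamma(\gamma - 2/(\mu+\ell))$ strictly negative, so the last term can only be increased by replacing $\|\nabla f(x)\|^2$ with a lower bound; strong convexity together with Cauchy--Schwarz yields $\|\nabla f(x)\| \geq \mu\|x - x^*\|$. Inserting $\|\nabla f(x)\|^2 \geq \mu^2\|x-x^*\|^2$ and combining the two $1/(\mu+\ell)$ terms (namely $-\tfrac{2\gamma\mu\ell}{\mu+\ell}$ and $-\tfrac{2\gamma\mu^2}{\mu+\ell}$ collapse to $-2\gamma\mu$) gives
\begin{align*}
\|x^+ - x^*\|^2 \ \leq\ \big(1 - 2\gamma\mu + \gamma^2\mu^2\big)\|x-x^*\|^2 = (1-\gamma\mu)^2\|x-x^*\|^2.
\end{align*}
Taking square roots and noting $1 - \gamma\mu \geq 0$, which holds since $\gamma < 2/(\mu+\ell) \leq 1/\mu$, completes the argument.

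I do not expect a genuine obstacle here, as this is a textbook contraction estimate; the only points that require care are getting the sign of the coefficient on $\|\nabla f(x)\|^2$ right and citing the strengthened monotonicity inequality correctly. If one prefers to avoid quoting that inequality, an equivalent route — valid whenever $f$ is twice differentiable — is to write $\nabla f(x) = \big(\int_0^1 \nabla^2 f(x^* + t(x-x^*))\,dt\big)(x-x^*) =: H(x-x^*)$, where $H$ is symmetric with $\mu I \preceq H \preceq \ell I$, so that $x^+ - x^* = (I - \gamma H)(x-x^*)$ and $\|I - \gamma H\|_2 = \max_{\lambda\in[\mu,\ell]}|1-\gamma\lambda| \leq 1-\gamma\mu$ under the assumed step-size bound; this is in fact the cleanest way to see why $2/(\mu+\ell)$ is exactly the threshold that keeps $1-\gamma\mu$ the dominant eigenvalue.
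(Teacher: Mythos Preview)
Your argument is correct: the co-coercivity inequality followed by the lower bound $\|\nabla f(x)\|\geq \mu\|x-x^*\|$ cleanly produces the factor $(1-\gamma\mu)^2$, and the alternative Hessian-integral route you sketch is also valid. The paper does not actually prove this lemma; it simply cites Lemma~10 of Qu and Li (2017), whose proof is precisely the standard contraction computation you have written out, so your proposal is in line with (indeed more explicit than) what the paper relies on.
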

\begin{proof}
    See, e.g., Lemma 10 in \cite{qu2017harnessing}.
\end{proof}

\begin{lemma}\label{lem: gt}
    Suppose Assumption \ref{aspt: sgap} holds. We have for all $0\leq s\leq S-1$ that 
    \begin{align*}
        \bar v_s = \bar \delta_s.
    \end{align*}
\end{lemma}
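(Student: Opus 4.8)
\textbf{Proof plan for Lemma~\ref{lem: gt}.}

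The plan is to prove the identity $\bar v_s = \bar\delta_s$ by induction on $s$, using the doubly stochastic property of $\mW$ (Assumption~\ref{aspt: sgap}) to collapse the mixing step when averaging. For the base case $s=0$, recall from the initialization in Algorithm~\ref{alg:stochastic_minmax:onestage} that $v_0^{(i)} = \delta_0^{(i)} = 0$ on every node $i$, so $\bar v_0 = \bar\delta_0 = 0$ trivially.

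For the inductive step, suppose $\bar v_s = \bar\delta_s$ for some $0 \le s \le S-2$. From line~8 of Algorithm~\ref{alg:stochastic_minmax:onestage} we have $v_{s+1}^{(i)} = \sum_{j=1}^n w_{ij} v_s^{(j)} + \delta_{s+1}^{(i)} - \delta_s^{(i)}$. Averaging over $i$ and using that $\mW$ is doubly stochastic, i.e. $\sum_{i=1}^n w_{ij} = 1$ for every $j$, the first term becomes
\begin{align*}
\frac{1}{n}\sum_{i=1}^n \sum_{j=1}^n w_{ij} v_s^{(j)} = \frac{1}{n}\sum_{j=1}^n \left(\sum_{i=1}^n w_{ij}\right) v_s^{(j)} = \frac{1}{n}\sum_{j=1}^n v_s^{(j)} = \bar v_s.
\end{align*}
Hence $\bar v_{s+1} = \bar v_s + \bar\delta_{s+1} - \bar\delta_s$, and applying the inductive hypothesis $\bar v_s = \bar\delta_s$ gives $\bar v_{s+1} = \bar\delta_{s+1}$, closing the induction.

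There is no real obstacle here: the only ingredient beyond bookkeeping is the column-sum (equivalently row-sum, by symmetry) property of $\mW$, which is exactly Assumption~\ref{aspt: sgap}, and the zero initialization of $v_0^{(i)}$ and $\delta_0^{(i)}$. One could equivalently phrase the argument in matrix form: writing $\mV_{s+1} = \mV_s \mW\T + \mDelta_{s+1} - \mDelta_s$ and right-multiplying by $\bfone_n/n$, the relation $\mW\T \bfone_n = \bfone_n$ immediately yields $\bar v_{s+1} = \bar v_s + \bar\delta_{s+1} - \bar\delta_s$, and telescoping from the zero initialization gives $\bar v_s = \bar\delta_s$ for all $s$. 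This ``gradient tracking preserves the average'' property is standard, and the same computation underlies the analogous identity $\bar u_t = \bar h_t$ maintained inside \texttt{Inner Loop} (Algorithm~\ref{alg:inner_loop}).
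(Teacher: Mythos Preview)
Your proposal is correct and follows essentially the same approach as the paper: average the gradient-tracking update using the doubly stochastic property of $\mW$ to obtain $\bar v_{s+1} = \bar v_s + \bar\delta_{s+1} - \bar\delta_s$, then use the zero initialization $\bar v_0 = \bar\delta_0$ to conclude. The only cosmetic difference is that you phrase it as an explicit induction whereas the paper simply states the recursion and invokes the initialization.
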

\begin{proof}(of Lemma~\ref{lem: gt})
    We first note that each $v_{s+1}^{(i)}$ is introduced in the gradient tracking step of Algorithm \ref{alg:stochastic_minmax:onestage}, i.e.,
    \begin{align*}
        v_{s+1}^{(i)} = \sum_{j=1}^{n}w_{ij} v_{s}^{(j)} + \delta_{s+1}^{(i)} - \delta_{s}^{(i)}
    \end{align*}
    Computing the average on both sides and using the fact that $\mW$ is doubly stochastic, we have
    \begin{align*}
        \bar v_{s+1} = \bar v_s + \bar \delta_{s+1} - \bar \delta_s.
    \end{align*}
    Hence, $\bar v_s = \bar \delta_s$ given the initialization $\bar v_0 = \bar \delta_0$. 
\end{proof}

\subsection{Properties of min-max functions and its optimal functions}\label{sec:appendix:functions}
Suppose Assumption~\ref{aspt: smoothness} hold, the functions $\cL^{\alpha}(x,y,z)$ and  $\Gamma^{\alpha}(x)$ satisfy the following properties. 
\begin{restatable}[]{lemma}{lemsmoothminmaxprob} 
\label{lem: smoothness_minmax_prob}
Under Assumption~\ref{aspt: smoothness}, the followings hold:
\begin{itemize}
\item[(i)] $\cL^{\alpha}(x,y,z)$ is $\mu_g\alpha$-strongly concave w.r.t. $z$; 
\item[(ii)]  $\cL^{\alpha}(x,y,z)$ is $\mu_g\alpha/2$-strongly convex w.r.t. $y$ if $\alpha > 2\ell_{f,1}/\mu_g$.
\end{itemize}
\end{restatable}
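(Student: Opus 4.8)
The plan is to prove Lemma~\ref{lem: smoothness_minmax_prob} directly from the definition $\mathcal{L}^{\alpha}(x,y,z) = \frac{1}{n}\sum_{i=1}^n (f_i(x,y) + \alpha(g_i(x,y) - g_i(x,z)))$ by examining its dependence on each of the variables $z$ and $y$ separately, using only Assumption~\ref{aspt: smoothness} and the standard fact that strong convexity/concavity and smoothness are preserved under averaging and under adding/subtracting an affine (here, variable-independent) term.

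For part (i), I would observe that as a function of $z$ alone (with $x,y$ fixed), the only $z$-dependent piece of $\mathcal{L}^{\alpha}$ is $-\frac{\alpha}{n}\sum_{i=1}^n g_i(x,z)$. By Assumption~\ref{aspt: smoothness}(2), each $g_i(x,\cdot)$ is $\mu_g$-strongly convex, hence $\frac{1}{n}\sum_i g_i(x,\cdot)$ is $\mu_g$-strongly convex, hence $-\frac{\alpha}{n}\sum_i g_i(x,\cdot)$ is $\mu_g\alpha$-strongly concave. Adding the $z$-independent terms $\frac{1}{n}\sum_i(f_i(x,y) + \alpha g_i(x,y))$ does not change strong concavity in $z$, so $\mathcal{L}^{\alpha}(x,y,z)$ is $\mu_g\alpha$-strongly concave in $z$.

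For part (ii), the $y$-dependent part of $\mathcal{L}^{\alpha}$ is $\frac{1}{n}\sum_{i=1}^n(f_i(x,y) + \alpha g_i(x,y))$. By Assumption~\ref{aspt: smoothness}(1), each $f_i(x,\cdot)$ is $\ell_{f,1}$-gradient Lipschitz, so $\nabla^2_{yy} f_i \succeq -\ell_{f,1} I$; by Assumption~\ref{aspt: smoothness}(2), each $g_i(x,\cdot)$ is $\mu_g$-strongly convex, so $\nabla^2_{yy} g_i \succeq \mu_g I$. Therefore $\nabla^2_{yy}(f_i + \alpha g_i) \succeq (\alpha\mu_g - \ell_{f,1}) I$, and when $\alpha > 2\ell_{f,1}/\mu_g$ we get $\alpha\mu_g - \ell_{f,1} > \alpha\mu_g - \alpha\mu_g/2 = \alpha\mu_g/2$, so each $f_i + \alpha g_i$ is $\mu_g\alpha/2$-strongly convex in $y$; averaging over $i$ and adding the $y$-independent term $-\frac{\alpha}{n}\sum_i g_i(x,z)$ preserves this, giving the claim. (If one prefers to avoid assuming twice-differentiability of $f_i$, the same argument runs through the equivalent first-order characterization: $\ell_{f,1}$-gradient Lipschitzness of $f_i(x,\cdot)$ gives $\langle \nabla_y f_i(x,y)-\nabla_y f_i(x,y'), y-y'\rangle \geq -\ell_{f,1}\|y-y'\|^2$, while $\mu_g$-strong convexity of $g_i(x,\cdot)$ gives the matching lower bound $\mu_g\|y-y'\|^2$, and combining with weight $\alpha$ yields strong monotonicity of $\nabla_y(f_i+\alpha g_i)$ with modulus $\alpha\mu_g - \ell_{f,1}$.)

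There is no real obstacle here — the statement is essentially a bookkeeping consequence of Assumption~\ref{aspt: smoothness} plus the observation that the penalty structure isolates the $y$- and $z$-dependences cleanly. The only point requiring a moment's care is confirming that a lower curvature bound of $-\ell_{f,1} I$ on the (possibly nonconvex) Hessian of $f_i$ in $y$ is exactly what $\ell_{f,1}$-gradient Lipschitzness supplies, and that the threshold $\alpha > 2\ell_{f,1}/\mu_g$ buys precisely the factor $1/2$; both are immediate.
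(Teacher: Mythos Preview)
Your proposal is correct and is exactly the standard direct argument; the paper itself does not spell out a proof but simply cites \cite{kwon2023fully} and Lemma B.1 of \cite{chen2023near}, whose arguments are essentially what you have written. There is nothing to add.
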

The results of Lemma~\ref{lem: smoothness_minmax_prob} can be found in~\citep{kwon2023fully} and Lemma B.1 of \citep{chen2023near}. From Lemma B.7 in \citep{chen2023near}, the following result holds for $\Gamma^{\alpha}(x)$:
\begin{lemma}
Under Assumption~\ref{aspt: smoothness}, if $\alpha > 2\ell_{f,1}/\mu_g$, then $\Gamma^{\alpha}(x)$ is $\ell_{\Gamma}$-smooth, where $\ell_{\Gamma} = \mathcal{O}( \kappa^3)$ is a constant that is independent on $\alpha$.
\end{lemma}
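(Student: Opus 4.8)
The plan is to show that $\Gamma^\alpha(x)$ is $\ell_\Gamma$-smooth with $\ell_\Gamma = \mathcal{O}(\kappa^3)$ independent of $\alpha$ by unwinding the definition $\Gamma^\alpha(x) = \min_y \Omega^\alpha(x,y)$ through two applications of Danskin's theorem and then bounding each resulting term. First I would recall from Lemma~\ref{lem: smoothness_minmax_prob} that $\cL^\alpha(x,y,z)$ is $\mu_g\alpha$-strongly concave in $z$ and $\mu_g\alpha/2$-strongly convex in $y$ once $\alpha > 2\ell_{f,1}/\mu_g$, so that the inner maximizer $z_*(x)$ and the outer minimizer $y_*^\alpha(x)$ are both uniquely defined and Lipschitz in $x$; the key quantitative fact to establish here is that these Lipschitz constants are $\mathcal{O}(\kappa)$ and $\mathcal{O}(\kappa)$ respectively — the factors of $\alpha$ in the strong convexity/concavity moduli cancel against matching factors of $\alpha$ in the gradient Lipschitz constants of $\cL^\alpha$ (which scale like $\alpha\ell_{g,1}$), so the $\alpha$-dependence drops out. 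This cancellation, essentially the content of Lemma~\ref{lem:appendix:zy} in the appendix, is really the crux of why $\ell_\Gamma$ is $\alpha$-free.

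Next I would use Danskin's theorem twice. Writing $\Omega^\alpha(x,y) = \cL^\alpha(x,y,z_*(x))$, we get $\nabla_x \Omega^\alpha(x,y) = \nabla_x \cL^\alpha(x,y,z_*(x))$ and $\nabla_y \Omega^\alpha(x,y) = \nabla_y \cL^\alpha(x,y,z_*(x))$ (the $z_*(x)$ term contributes nothing by first-order optimality in $z$). Then $\nabla \Gamma^\alpha(x) = \nabla_x \Omega^\alpha(x, y_*^\alpha(x)) = \nabla_x \cL^\alpha(x, y_*^\alpha(x), z_*(x))$. To bound $\norm{\nabla\Gamma^\alpha(x_1) - \nabla\Gamma^\alpha(x_2)}$ I would add and subtract, controlling the variation through each of the three arguments: the direct $x$-variation contributes $\ell_{x,1}$-type constants; the variation through $y_*^\alpha(x)$ contributes (gradient-Lipschitz constant of $\cL^\alpha$ in $y$) $\times$ (Lipschitz constant of $y_*^\alpha$), and similarly for $z_*(x)$. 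Here the gradient-Lipschitz constant of $\cL^\alpha$ in the mixed $(x,z)$ block is $\mathcal{O}(\alpha\ell_{g,1})$, which when multiplied by the $\mathcal{O}(1/\alpha \cdot \ell_{g,1}) = \mathcal{O}(\kappa/\alpha)$-type Lipschitz bound for $z_*$ again yields an $\alpha$-free $\mathcal{O}(\kappa^2)$ contribution; careful bookkeeping of all the condition-number factors gives the claimed $\mathcal{O}(\kappa^3)$.

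The main obstacle will be the bound on the Lipschitz constant of the outer minimizer $y_*^\alpha(x)$. Unlike $z_*(x)$, which is just the minimizer of $g(x,\cdot)$ and is handled by the standard implicit-function-theorem estimate, $y_*^\alpha(x)$ is the minimizer of $\Omega^\alpha(x,\cdot)$, and establishing that $\Omega^\alpha$ is smooth in $(x,y)$ with the right constants itself requires the Hessian-Lipschitz assumption on $f_i$ (this is exactly the point flagged in the Remark after Assumption~\ref{aspt: so}) — one needs $\nabla^2\Omega^\alpha$ to exist and be bounded, which propagates second-order information of $\cL^\alpha$ through $z_*(x)$. So the real work is a chain of implicit-function-theorem / perturbation estimates: bound $\norm{\nabla z_*}$, then $\norm{\nabla^2_{xy}\Omega^\alpha}$ and the strong convexity of $\Omega^\alpha$ in $y$ (which is $\mu_g\alpha/2$ minus a correction that must be shown negligible), then $\norm{\nabla y_*^\alpha}$, and finally assemble. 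I would present the $\alpha$-independence as the conceptual punchline and defer the detailed constant-chasing to the cited Lemma~\ref{lem:appendix:zy}, since the statement here is quoted from Lemma B.7 of \citep{chen2023near} and the proof is essentially a transcription of that argument into the present notation.
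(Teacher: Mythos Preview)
The paper does not actually prove this lemma; it simply cites Lemma~B.7 of \cite{chen2023near} (and the statement is essentially a restatement of part~(b) of Lemma~\ref{lem: gap_grad_bil_minmax}). Your sketch goes further than the paper and has the right architecture --- two Danskin applications plus implicit-function-theorem bounds on $y_*^{\alpha}$ and $z_*$ --- and you correctly locate the source.

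However, there is a genuine gap in your accounting of the $\alpha$-cancellation in the final add-and-subtract step. You assert that $z_*$ has an ``$\mathcal{O}(\kappa/\alpha)$-type Lipschitz bound''; this is wrong. The map $z_*(x)=\argmin_z g(x,z)$ does not involve $\alpha$ at all, and Lemma~\ref{lem:appendix:zy}(i) gives it Lipschitz constant $\kappa$, not $\kappa/\alpha$. Likewise $y_*^{\alpha}$ is $3\kappa$-Lipschitz. With these correct constants, the naive decomposition of $\nabla_x\cL^{\alpha}(x,y_*^{\alpha}(x),z_*(x))$ through each argument gives a term of size $(\alpha\ell_{g,1})\cdot\kappa$ from the variation through $y_*^{\alpha}$ (and similarly through $z_*$), which blows up in $\alpha$ rather than cancelling.

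The missing ingredient is the structure of the $\alpha$-dependent piece of $\nabla\Gamma^{\alpha}$, namely $\alpha\bigl[\nabla_x g(x,y_*^{\alpha}(x))-\nabla_x g(x,z_*(x))\bigr]$. It is not enough that $\norm{y_*^{\alpha}(x)-z_*(x)}=\mathcal{O}(1/\alpha)$ (Lemma~\ref{lem:y:ast}); one needs the sharper fact that the \emph{derivative} of this difference is $\mathcal{O}(1/\alpha)$, i.e.\ $\norm{\nabla y_*^{\alpha}(x)-\nabla z_*(x)}=\mathcal{O}(1/\alpha)$. This comes from comparing the implicit-function formulas $\nabla y_*^{\alpha}=-(\nabla_{yy}^2 f+\alpha\nabla_{yy}^2 g)^{-1}(\nabla_{yx}^2 f+\alpha\nabla_{yx}^2 g)$ at $(x,y_*^{\alpha}(x))$ and $\nabla z_*=-(\nabla_{yy}^2 g)^{-1}\nabla_{yx}^2 g$ at $(x,z_*(x))$: a matrix-perturbation argument shows their difference is $\mathcal{O}(1/\alpha)$, and it is precisely here that the Hessian-Lipschitz hypotheses on both $f$ and $g$ are genuinely used. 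Equivalently, one may bound $\nabla^2\Gamma^{\alpha}$ directly via the Hessian formula for nested $\min$/$\max$, where the $\alpha$'s in $(\nabla_{yy}^2\cL^{\alpha})^{-1}$ and $\nabla_{xy}^2\cL^{\alpha}$ cancel term-by-term. Either way, your current decomposition does not close without this additional step.
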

Moreover, the functions $y_*^{\alpha}(x)$ and $z_*(x)$ satisfy the following properties.
\begin{lemma}\label{lem:y:ast}
Under Assumption~\ref{aspt: smoothness}, we have
\begin{align*}
 \left\| y_{\ast}^{\alpha}(x) - y^{\ast}(x)\right\| \leq \frac{C_0}{\alpha}
\end{align*}
where $C_0 = \ell_{f,0}/\mu_{g}$.
\end{lemma}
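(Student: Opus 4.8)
\textbf{Proof plan for Lemma~\ref{lem:y:ast}.}

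The statement to prove is $\left\| y_{\ast}^{\alpha}(x) - y^{\ast}(x)\right\| \leq C_0/\alpha$ with $C_0 = \ell_{f,0}/\mu_g$, where $y_*^{\alpha}(x) = \argmin_y \Omega^{\alpha}(x,y)$ and $y^*(x) = \argmin_y \frac{1}{n}\sum_i g_i(x,y)$. The plan is to treat $y_*^\alpha(x)$ as the minimizer of a strongly convex function and use the standard perturbation bound: if $h$ is $m$-strongly convex with minimizer $y_*$, then for any point $\tilde y$ one has $\|\tilde y - y_*\| \le \frac{1}{m}\|\nabla h(\tilde y)\|$. I would take $h(y) = \Omega^\alpha(x,y)$ (for the fixed $x$), which by Lemma~\ref{lem: smoothness_minmax_prob}(ii) is $\mu_g\alpha/2$-strongly convex once $\alpha > 2\ell_{f,1}/\mu_g$, and I would evaluate its gradient at the candidate point $\tilde y = y^*(x)$.

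The key computation is to identify $\nabla_y \Omega^\alpha(x,y)$. Recall $\Omega^\alpha(x,y) = \max_z \cL^\alpha(x,y,z) = \cL^\alpha(x,y,z_*(x))$, and crucially $z_*(x) = \argmin_z \frac1n\sum_i g_i(x,z)$ does \emph{not} depend on $y$. Hence by Danskin's theorem (or simply because the inner maximizer is $y$-independent here) $\nabla_y \Omega^\alpha(x,y) = \nabla_y \cL^\alpha(x,y,z_*(x)) = \frac1n\sum_i \nabla_y f_i(x,y) + \alpha \cdot \frac1n\sum_i \nabla_y g_i(x,y)$. Now evaluate at $y = y^*(x)$: since $y^*(x)$ minimizes $\frac1n\sum_i g_i(x,\cdot)$, the term $\frac1n\sum_i \nabla_y g_i(x,y^*(x)) = 0$, so $\nabla_y \Omega^\alpha(x,y^*(x)) = \frac1n\sum_i \nabla_y f_i(x,y^*(x))$. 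By Assumption~\ref{aspt: smoothness}(1), each $f_i$ is $\ell_{f,0}$-Lipschitz in $y$, so $\|\nabla_y f_i(x,y^*(x))\| \le \ell_{f,0}$, and therefore $\|\nabla_y \Omega^\alpha(x,y^*(x))\| \le \ell_{f,0}$.

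Combining, the strong convexity bound gives $\|y^*(x) - y_*^\alpha(x)\| \le \frac{2}{\mu_g \alpha}\|\nabla_y \Omega^\alpha(x, y^*(x))\| \le \frac{2\ell_{f,0}}{\mu_g \alpha}$. This already gives the claimed rate $\cO(1/\alpha)$; to recover the stated constant $C_0 = \ell_{f,0}/\mu_g$ exactly (without the factor $2$), I would instead apply the tighter inequality for $m$-strongly convex functions, $\langle \nabla h(\tilde y) - \nabla h(y_*), \tilde y - y_* \rangle \ge m\|\tilde y - y_*\|^2$, with $\nabla h(y_*) = 0$, Cauchy–Schwarz on the left side, and use that $\cL^\alpha(x,y,z)$ is actually $\mu_g\alpha$-strongly convex in $y$ when one does not split off the $f$ part (or, more carefully, track that the relevant modulus near the optimum is governed by $\mu_g\alpha$ rather than $\mu_g\alpha/2$, as in the referenced Lemma in \citep{kwon2023fully}); this yields $\|y^*(x) - y_*^\alpha(x)\| \le \frac{\ell_{f,0}}{\mu_g\alpha}$. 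The only mild subtlety — not really an obstacle — is making sure the $y$-independence of $z_*(x)$ is used correctly so that the gradient of the $\max$ has the clean form above; everything else is a one-line strong-convexity estimate.
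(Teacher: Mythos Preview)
Your overall strategy is correct and is exactly how this bound is obtained in the literature the paper cites (the paper itself does not give an independent proof, it simply defers to Lemma~B.2 of \cite{chen2023near}). The only issue is your attempt to recover the exact constant $C_0=\ell_{f,0}/\mu_g$ rather than $2C_0$. Your claim that ``$\cL^\alpha(x,y,z)$ is actually $\mu_g\alpha$-strongly convex in $y$ when one does not split off the $f$ part'' is not justified: $f$ is only assumed $\ell_{f,1}$-smooth, so it may be $\ell_{f,1}$-weakly convex, and the best guaranteed modulus for $f+\alpha g$ in $y$ is $\alpha\mu_g-\ell_{f,1}\ge \alpha\mu_g/2$, exactly as in Lemma~\ref{lem: smoothness_minmax_prob}(ii). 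The monotonicity inequality you invoke does not sharpen the constant either --- it gives the same $\|\tilde y - y_*\|\le m^{-1}\|\nabla h(\tilde y)\|$ bound.

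The clean way to get the constant without the factor $2$ is to swap which function's strong convexity you use. From first-order optimality of $y_*^\alpha(x)$ you have $\nabla_y f(x,y_*^\alpha(x)) + \alpha\,\nabla_y g(x,y_*^\alpha(x)) = 0$, hence
\[
\|\nabla_y g(x,y_*^\alpha(x))\| = \tfrac{1}{\alpha}\|\nabla_y f(x,y_*^\alpha(x))\| \le \tfrac{\ell_{f,0}}{\alpha}.
\]
Now apply $\mu_g$-strong convexity of $g(x,\cdot)$ \emph{alone} (whose minimizer is $y^*(x)$, so $\nabla_y g(x,y^*(x))=0$) to obtain
\[
\mu_g\,\|y_*^\alpha(x)-y^*(x)\| \le \|\nabla_y g(x,y_*^\alpha(x)) - \nabla_y g(x,y^*(x))\| = \|\nabla_y g(x,y_*^\alpha(x))\| \le \tfrac{\ell_{f,0}}{\alpha},
\]
which is exactly $\|y_*^\alpha(x)-y^*(x)\|\le C_0/\alpha$. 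This uses only Assumption~\ref{aspt: smoothness} and does not even require $\alpha>2\ell_{f,1}/\mu_g$ (beyond what is needed for $y_*^\alpha$ to be well-defined).
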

The result in Lemma~\ref{lem:y:ast} follows from Lemma B.2 of \cite{chen2023near}.
\begin{lemma}\label{lem:appendix:zy}
Under Assumption~\ref{aspt: smoothness}, if $\alpha > 2\ell_{f,1}/\mu_g$,  then we have
\begin{itemize}
\item[(i)] $z_{*}(x)$ is $\kappa$-Lipschitz continuous;
\item[(ii)] $y_{*}^{\alpha}(x)$ is $\ell_{y_{*},0}$-Lipschitz continuous where $\ell_{y_*,0}=3\kappa$. 
\end{itemize}
\end{lemma}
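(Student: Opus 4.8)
\textbf{Proof proposal for Lemma~\ref{lem:appendix:zy}.}

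The plan is to differentiate the first-order optimality conditions defining $z_*(x)$ and $y_*^\alpha(x)$ and bound the resulting Jacobians in spectral norm, exactly as in the classical implicit-function-theorem argument for hypergradients in~\citep{ghadimi2018approximation}. For part (i), recall that $z_*(x) = \argmin_z g(x,z)$ where $g = \frac{1}{n}\sum_i g_i$ is $\mu_g$-strongly convex and $\ell_{g,1}$-smooth in $z$ by Assumption~\ref{aspt: smoothness}(2). The optimality condition $\nabla_z g(x, z_*(x)) = 0$ can be differentiated in $x$ to give $\nabla_{zx}^2 g(x, z_*(x)) + \nabla_z^2 g(x, z_*(x)) \nabla z_*(x) = 0$, hence $\nabla z_*(x) = -(\nabla_z^2 g)^{-1}\nabla_{zx}^2 g$. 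Taking spectral norms, $\|\nabla z_*(x)\|_2 \leq \|(\nabla_z^2 g)^{-1}\|_2 \cdot \|\nabla_{zx}^2 g\|_2 \leq \ell_{g,1}/\mu_g \leq \kappa$, where the bound on the cross Hessian uses that $g$ is $\ell_{g,1}$-gradient Lipschitz jointly and the bound on the inverse Hessian uses $\mu_g$-strong convexity. This gives the $\kappa$-Lipschitz continuity of $z_*$.

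For part (ii), I would work with the function $\Omega^\alpha(x,y) = \max_z \cL^\alpha(x,y,z)$. Since $\cL^\alpha(x,y,z) = \frac{1}{n}\sum_i(f_i(x,y) + \alpha(g_i(x,y) - g_i(x,z)))$, the maximization over $z$ only affects the $-\alpha g_i(x,z)$ terms, so $\Omega^\alpha(x,y) = f(x,y) + \alpha g(x,y) - \alpha g(x, z_*(x))$ where the last term depends only on $x$. Therefore $\nabla_y \Omega^\alpha(x,y) = \nabla_y f(x,y) + \alpha \nabla_y g(x,y)$ and $\nabla_y^2 \Omega^\alpha = \nabla_y^2 f + \alpha \nabla_y^2 g$, which is $(\mu_g \alpha/2)$-strongly convex and $(\ell_{f,1} + \alpha\ell_{g,1})$-smooth in $y$ by Lemma~\ref{lem: smoothness_minmax_prob}(ii). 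Differentiating the optimality condition $\nabla_y \Omega^\alpha(x, y_*^\alpha(x)) = 0$ in $x$ yields $\nabla y_*^\alpha(x) = -(\nabla_y^2 \Omega^\alpha)^{-1}\nabla_{yx}^2 \Omega^\alpha$, and the cross term is $\nabla_{yx}^2 f + \alpha\nabla_{yx}^2 g$ with norm at most $\ell_{f,1} + \alpha\ell_{g,1}$. Hence $\|\nabla y_*^\alpha(x)\|_2 \leq \frac{\ell_{f,1} + \alpha \ell_{g,1}}{\mu_g\alpha/2} = \frac{2\ell_{f,1}}{\mu_g\alpha} + \frac{2\ell_{g,1}}{\mu_g}$. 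Using $\alpha > 2\ell_{f,1}/\mu_g$ we get $\frac{2\ell_{f,1}}{\mu_g\alpha} < 1 \leq \kappa$ and $\frac{2\ell_{g,1}}{\mu_g} \leq 2\kappa$, giving the bound $3\kappa = \ell_{y_*,0}$.

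The main subtlety, rather than obstacle, is justifying differentiability of $z_*$ and $y_*^\alpha$ in the first place (so that the implicit function theorem applies) and correctly identifying that the $z$-maximization in $\cL^\alpha$ decouples from $y$, which is what makes the $y_*^\alpha$ Hessian clean and the Hessian-Lipschitz assumption on $f_i$ enter only later (for smoothness of $y_*^\alpha$, not Lipschitz continuity). Differentiability follows from the implicit function theorem since the relevant Hessians $\nabla_z^2 g$ and $\nabla_y^2\Omega^\alpha$ are invertible under strong convexity, and the integrand smoothness in Assumption~\ref{aspt: smoothness} transfers to the averages $f, g$. Everything else is a routine spectral-norm estimate; the only place to be careful about constants is tracking that $\|\nabla_{yx}^2 f\|_2 \le \ell_{f,1}$ and $\|\nabla_{yx}^2 g\|_2 \le \ell_{g,1}$ follow from the joint gradient-Lipschitz hypotheses, and choosing the constant $3\kappa$ to absorb the two contributions above.
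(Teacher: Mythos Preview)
Your argument is correct and is essentially the implicit-function-theorem computation underlying the results the paper cites (Lemma~2.2 of~\citep{ghadimi2018approximation} for (i) and Lemma~3.2 of~\citep{kwon2023fully} for (ii)); the paper does not give a self-contained proof but simply defers to these references, and you have faithfully reproduced that standard argument, including the decoupling observation $\nabla_y\Omega^\alpha=\nabla_y f+\alpha\nabla_y g$ and the constant bookkeeping $\frac{2\ell_{f,1}}{\mu_g\alpha}<1\le\kappa$, $\frac{2\ell_{g,1}}{\mu_g}\le 2\kappa$.
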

Claim (i) in Lemma~\ref{lem:appendix:zy} can be found in Lemma 2.2 of \citep{ghadimi2018approximation} and Claim (ii) implies from Lemma 3.2 (setting $\lambda_1=\lambda_2$) of \citep{kwon2023fully}. 

\begin{restatable}[]{lemma}{lemasptsmoothyz} 
\label{lem_aspt: smoothness:yz}
Under Assumption~\ref{aspt: smoothness}, if $\alpha > 2\ell_{f,1}/\mu_g$,
\begin{itemize}
\item[(i)] $y_{*}^{\alpha}(x)$ is $\ell_{\nabla y_*}$-smooth where $\ell_{\nabla y_*} = \mathcal{O}\left(\frac{\kappa^2}{\mu_g}\left(\frac{\ell_{f,2}}{\alpha} + \ell_{g,2}\right)\right)$
\item[(ii)] $z_{*}(x)$ is $\ell_{\nabla z_*}$-smooth where $\ell_{\nabla z_*} = \mathcal{O}\left(\frac{\kappa^2}{\mu_g}\left(\ell_{g,1} + 1\right) \right)$
\end{itemize}
\end{restatable}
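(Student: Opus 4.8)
The plan is to write the Jacobians of $z_*$ and $y_*^\alpha$ in closed form via the implicit function theorem and then bound how fast these expressions change with $x$, combining the strong convexity that controls the inverse Hessians, the Hessian-Lipschitz bounds on $f_i,g_i$ from Assumption~\ref{aspt: smoothness}, and --- crucially --- the zeroth-order Lipschitz continuity of $z_*$ and $y_*^\alpha$ already proved in Lemma~\ref{lem:appendix:zy}. \textbf{Step 1 (Jacobian formulas).} Write $f=\frac1n\sum_i f_i$ and $g=\frac1n\sum_i g_i$, both $C^2$ under Assumption~\ref{aspt: smoothness}. Since $g$ is $\mu_g$-strongly convex in its second argument, $z_*(x)$ is the unique root of $\nabla_z g(x,z_*(x))=0$, and differentiating (cf.~\eqref{eq: nabla_y_star}) gives $\nabla z_*(x)^{\top}=-\nabla_{xz}^2 g(x,z_*(x))(\nabla_{zz}^2 g(x,z_*(x)))^{-1}$. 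Because $z_*(x)=\argmax_z\cL^\alpha(x,y,z)$ does not depend on $y$, we have $\Omega^\alpha(x,y)=f(x,y)+\alpha g(x,y)-\alpha g(x,z_*(x))$ whose last term is $y$-independent, so $y_*^\alpha(x)$ minimizes in $y$ the function $h_\alpha:=f+\alpha g$, which is $(\alpha\mu_g/2)$-strongly convex in $y$ when $\alpha\geq 2\ell_{f,1}/\mu_g$; the same reasoning gives $\nabla y_*^\alpha(x)^{\top}=-\nabla_{xy}^2 h_\alpha(x,y_*^\alpha(x))(\nabla_{yy}^2 h_\alpha(x,y_*^\alpha(x)))^{-1}$.

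\textbf{Step 2 (ingredients along the solution path).} I will use: $\|(\nabla_{zz}^2 g)^{-1}\|_2\leq\mu_g^{-1}$ and $\|(\nabla_{yy}^2 h_\alpha)^{-1}\|_2\leq 2/(\alpha\mu_g)$ from strong convexity; $\|\nabla_{xz}^2 g\|_2\leq\ell_{g,1}$ and $\|\nabla_{xy}^2 h_\alpha\|_2\leq\ell_{f,1}+\alpha\ell_{g,1}$ from smoothness; and the path-Hessian-Lipschitz bounds obtained by combining Assumption~\ref{aspt: smoothness} with Lemma~\ref{lem:appendix:zy}, namely
\[
\|\nabla^2 g(x_1,z_*(x_1))-\nabla^2 g(x_2,z_*(x_2))\|\leq\ell_{g,2}\big(\|x_1-x_2\|+\|z_*(x_1)-z_*(x_2)\|\big)\leq\ell_{g,2}(1+\kappa)\|x_1-x_2\|,
\]
and likewise $\|\nabla^2 h_\alpha(x_1,y_*^\alpha(x_1))-\nabla^2 h_\alpha(x_2,y_*^\alpha(x_2))\|\leq(\ell_{f,2}+\alpha\ell_{g,2})(1+3\kappa)\|x_1-x_2\|$, where we used that $z_*$ is $\kappa$-Lipschitz and $y_*^\alpha$ is $3\kappa$-Lipschitz.

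\textbf{Step 3 (difference of Jacobians and constant bookkeeping).} Writing each transposed Jacobian as $-BA^{-1}$ and using the resolvent identity $A_1^{-1}-A_2^{-1}=A_1^{-1}(A_2-A_1)A_2^{-1}$ together with Lemma~\ref{lem: F_norm_ineq}, one obtains $\|B_1A_1^{-1}-B_2A_2^{-1}\|\leq\|B_1-B_2\|\|A_1^{-1}\|_2+\|B_2\|_2\|A_1^{-1}\|_2\|A_1-A_2\|\|A_2^{-1}\|_2$. Plugging in Step 2 gives $\|\nabla z_*(x_1)-\nabla z_*(x_2)\|\leq(\mu_g^{-1}+\mu_g^{-2}\ell_{g,1})\ell_{g,2}(1+\kappa)\|x_1-x_2\|$ and
\[
\|\nabla y_*^\alpha(x_1)-\nabla y_*^\alpha(x_2)\|\leq\Big(\tfrac{2}{\alpha\mu_g}+\tfrac{4(\ell_{f,1}+\alpha\ell_{g,1})}{\alpha^2\mu_g^2}\Big)(\ell_{f,2}+\alpha\ell_{g,2})(1+3\kappa)\|x_1-x_2\|.
\]
Finally, using $\mu_g\leq\ell_{g,1}$, $\ell_{f,1}\leq\alpha\ell_{g,1}/2$, $1+3\kappa=\cO(\kappa)$, and $\ell_{f,2}+\alpha\ell_{g,2}=\alpha(\ell_{f,2}/\alpha+\ell_{g,2})$, the first bound collapses to $\cO(\kappa^2\mu_g^{-1}(\ell_{g,1}+1))$ and the second to $\cO(\kappa^2\mu_g^{-1}(\ell_{f,2}/\alpha+\ell_{g,2}))$, which are exactly $\ell_{\nabla z_*}$ and $\ell_{\nabla y_*}$.

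\textbf{The hard part.} The argument is mostly bookkeeping; the one conceptual point is the bootstrap in Step 2 --- controlling the Hessian-Lipschitz terms evaluated at the moving point $(x,z_*(x))$ (resp.\ $(x,y_*^\alpha(x))$) requires knowing in advance that $z_*$ and $y_*^\alpha$ are Lipschitz, which is precisely the zeroth-order content of Lemma~\ref{lem:appendix:zy}; this is what upgrades the estimate to first-order (Jacobian) Lipschitzness. The other delicate point is the $\alpha$-dependence: the inverse Hessian of $h_\alpha$ scales like $1/\alpha$ while its mixed block scales like $\alpha$, so their product is $\Theta(1)$ in $\alpha$ and the only decay that survives is the $\ell_{f,2}/\alpha$ remainder inside $\ell_{f,2}+\alpha\ell_{g,2}$; one must also check that $h_\alpha$ inherits a clean $C^2$ and Hessian-Lipschitz structure with moduli $\ell_{f,\cdot}+\alpha\ell_{g,\cdot}$ before applying the implicit function theorem.
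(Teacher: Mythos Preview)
Your proof is correct and follows exactly the approach the paper indicates: the paper's own argument simply cites an external lemma and explicitly invokes the Lipschitz continuity of $y_*^\alpha$ and $z_*$ from Lemma~\ref{lem:appendix:zy}, which is precisely your Step~2 bootstrap combined with the standard implicit-function-theorem Jacobian formula and resolvent identity. The one minor wrinkle is that your derivation for $z_*$ naturally yields $\cO(\kappa^2\mu_g^{-1}\ell_{g,2})$ rather than the paper's stated $\cO(\kappa^2\mu_g^{-1}(\ell_{g,1}+1))$, but both are $\cO(\kappa^3)$ under the paper's definition of $\kappa$, so the discrepancy is immaterial.
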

Following Lemma A.3 of \citep{kwon2023fully} and recalling the Lipschitz continuous property of $y_*^{\alpha}(x)$,  we have the first claim (i) is correct. Note that to ensure the smoothness of $y_*^{\alpha}(x)$, we need to assume the Hessian-Lipschitz of $f$. Similarly,  recalling the Lipschitz continuity of $z_*(x)$ from Lemma~\ref{lem:appendix:zy}, the function $z_{*}(x)$ is gradient Lipschitz, that is Claim (ii) holds.

\section{Appendix / Analysis of Algorithm~\ref{alg:inner_loop}}\label{sec:convergence_app}

In Algorithm~\ref{alg:stochastic_minmax:onestage}, the updates for $y_{s}^{(i)}$ and $z_s^{(i)}$ are essentially $T$-step decentralized stochastic gradient descent with gradient tracking (see Algorithm~\ref{alg:inner_loop}). Hence, their convergence and the consensus can be analyzed through the following technical lemma.
\begin{restatable}[]{lemma}{leminnerconverge} 
\label{lem: dsgt}
    Suppose $\phi_i(x, \theta)$ in Algorithm \ref{alg:inner_loop} is $\ell$-smooth and $\mu$-strongly convex. The stochastic oracle $h_{t+1}^{(i)} = \nabla_{\theta}\phi_i(x^{(i)}, \theta_t^{(i)}; \zeta_t^{(i)})$ is unbiased with variance bounded by $\sigma^2$, and is independent of $h_{t+1}^{(j)}$ conditioning on all iterates with subscripts up to $t$. Define 
    \begin{align*}
        &\mTheta_t = \left(\theta_t^{(1)}, ..., \theta_t^{(n)}\right),\ \mH_t = \left(h_t^{(1)}, ..., h_t^{(n)}\right),\ \bar x = \avein x^{(i)},\ \phi(x, \theta) = \avein \phi_i(x, \theta)\\
        &\theta_* = \argmin_{\theta} \avein \phi_i(\bar x, \theta),\ \cG_t = \sigma\left(\bigcup_{i=1}^{n}\{\theta_0^{(i)}, h_0^{(i)}, ..., \theta_t^{(i)}, h_t^{(i)}, x^{(i)}\}\right).
    \end{align*}
    If $\gamma < \frac{1}{\ell} \leq \frac{2}{\mu + \ell}$, we have
    \begin{subequations}
    \begin{align}
        &\E\left[\norm{\bar \theta_{t+1} - \theta_*}^2\mid \cG_t\right] \notag\\
        \leq &(1-\gamma\mu)\norm{\bar \theta_t - \theta_*}^2 + \frac{2\gamma\ell^2}{\mu n} \left(\norm{\mX - \bar x\bfonet_n}^2 + \norm{\mTheta_t - \bar \theta_t\bfonet_n}^2 \right) + \frac{\gamma^2\sigma^2}{n}, \label{ineq: dsgt:a}\\
        &\norm{\mTheta_{t+1} - \bar \theta_{t+1}\bfonet_n}^2\leq \frac{1+\rho^2}{2}\norm{\mTheta_t - \bar \theta_t\bfonet_n}^2 + \frac{(1+\rho^2)\gamma^2}{1-\rho^2}\norm{\mU_{t+1} - \bar u_{t+1}\bfonet_n}^2, \label{ineq: dsgt:b} \\
        &\E\left[\norm{\mU_{t+1} - \bar u_{t+1}\bfonet_n}^2\right] \notag \\
        \leq &\left(\frac{1+\rho^2}{2} + \frac{6\ell^2\gamma^2(1+\rho^2)}{1-\rho^2}\right)\E\left[\norm{\mU_t - \bar u_t\bfonet_n}^2\right] + \frac{36(1+\rho^2)\ell^2}{1-\rho^2} \E\left[\norm{\mTheta_{t-1} - \bar \theta_{t-1}\bfonet}^2\right]  \notag \\
    + &\frac{12(1+\rho^2)\ell^4\gamma^2}{1-\rho^2}\E\left[\norm{\mX - \bar x\bfonet_n}^2\right] + \frac{12n(1+\rho^2)\ell^4\gamma^2}{1-\rho^2}\E\left[\norm{\bar \theta_{t-1} - \theta_*}^2\right] + \frac{12n(1+\rho^2)\sigma^2}{1-\rho^2}.  \label{ineq: dsgt:c}
    \end{align}
    \end{subequations}
\end{restatable}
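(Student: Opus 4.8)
The plan is to establish the three recursive inequalities \eqref{ineq: dsgt:a}, \eqref{ineq: dsgt:b}, \eqref{ineq: dsgt:c} one at a time, exploiting the structure of the gradient-tracking updates in Algorithm~\ref{alg:inner_loop}. For \eqref{ineq: dsgt:a}, I would start from the averaged update $\bar\theta_{t+1} = \bar\theta_t - \gamma\bar u_{t+1}$, and use the tracking identity $\bar u_{t+1} = \bar h_{t+1}$ (the analogue of Lemma~\ref{lem: gt}, proved by the same doubly-stochastic-averaging argument using the initialization $\bar u_0 = \bar h_0$). Then $\E[\bar u_{t+1}\mid\cG_t] = \avein \nabla_\theta\phi_i(x^{(i)},\theta_t^{(i)})$, which differs from $\nabla\phi(\bar x,\bar\theta_t) = \avein\nabla_\theta\phi_i(\bar x,\bar\theta_t)$ by a term controlled, via $\ell$-smoothness of each $\phi_i$, by $\norm{\mX - \bar x\bfonet_n}$ and $\norm{\mTheta_t - \bar\theta_t\bfonet_n}$. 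A standard one-step descent lemma for $\mu$-strongly convex $\ell$-smooth functions (Lemma~\ref{lem: gd_decrease} applied to $\phi(\bar x,\cdot)$, whose minimizer is $\theta_*$) gives the contraction factor $(1-\gamma\mu)$, while the variance term $\gamma^2\sigma^2/n$ comes from $\E[\norm{\bar h_{t+1} - \E[\bar h_{t+1}\mid\cG_t]}^2\mid\cG_t]\le\sigma^2/n$ by conditional independence across agents. Expanding the square $\norm{\bar\theta_{t+1}-\theta_*}^2$, bounding cross terms with Young's inequality, and collecting constants yields \eqref{ineq: dsgt:a}.

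**Consensus recursions.** For \eqref{ineq: dsgt:b}, I would write the matrix form of the $\theta$-update, $\mTheta_{t+1} = \mTheta_t\mW\T - \gamma\mU_{t+1}$ (or the appropriate row/column convention), multiply by $(\mI - \tfrac{1}{n}\bfone_n\bfonet_n)$ to project onto the consensus-orthogonal subspace, and use Lemma~\ref{lem: W_m} with $m=1$ to get the $\rho$-contraction on the $\mTheta$-part; the $\mU$-part is untouched by $\mW$'s action on the already-centered quantity, so Lemma~\ref{lem: cs} with the choice $c = \tfrac{1-\rho^2}{2\rho^2}$ (the usual tuning that turns $\rho^2(1+c)$ into $\tfrac{1+\rho^2}{2}$) produces the stated coefficients $\tfrac{1+\rho^2}{2}$ and $\tfrac{(1+\rho^2)\gamma^2}{1-\rho^2}$. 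The third inequality \eqref{ineq: dsgt:c} is the most involved: here one writes the centered $\mU$-update $\mU_{t+1} - \bar u_{t+1}\bfonet_n = (\mW - \tfrac{\bfone_n\bfonet_n}{n})(\mU_t - \bar u_t\bfonet_n) + (\mI - \tfrac{\bfone_n\bfonet_n}{n})(\mH_{t+1} - \mH_t)$, applies Lemma~\ref{lem: W_m} and Lemma~\ref{lem: cs} again for the $\rho$-contraction, and then must bound $\E\norm{\mH_{t+1} - \mH_t}^2$. Since $h^{(i)}_{t+1} = \nabla_\theta\phi_i(x^{(i)},\theta^{(i)}_t;\zeta^{(i)}_t)$, the difference $h^{(i)}_{t+1} - h^{(i)}_t$ splits into a deterministic part controlled by $\ell\norm{\theta^{(i)}_t - \theta^{(i)}_{t-1}}$ and stochastic parts of variance $\le\sigma^2$ each; the increment $\theta^{(i)}_t - \theta^{(i)}_{t-1}$ is then expanded through the $\theta$-update into a consensus-error piece, a $\bar u_t = \bar h_t$ piece, and the gradient-tracking term, each re-expressed via $\ell$-smoothness in terms of $\norm{\mX - \bar x\bfonet_n}$, $\norm{\mTheta_{t-1} - \bar\theta_{t-1}\bfonet_n}$, $\norm{\bar\theta_{t-1}-\theta_*}$ (using $\norm{\nabla\phi(\bar x,\bar\theta_{t-1})}\le\ell\norm{\bar\theta_{t-1}-\theta_*}$), plus the $\sigma^2$ noise. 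Tracking the multiplicative constants $6$, $12$, $36$ through these Young's-inequality splits is bookkeeping but must be done carefully to match the claimed form.

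**Main obstacle.** The delicate point is \eqref{ineq: dsgt:c}: the variable $\mU_t$ at time $t+1$ depends on $\mH_{t+1} - \mH_t$, which reaches back to $\mTheta_{t-1}$ and $\bar\theta_{t-1}$ — so unlike \eqref{ineq: dsgt:b} this is not a clean one-step recursion but couples three consecutive iterates, and one must be scrupulous about which $\sigma$-algebra each expectation is taken over (hence the unconditional $\E$ on the left of \eqref{ineq: dsgt:c} versus the conditional $\E[\cdot\mid\cG_t]$ in \eqref{ineq: dsgt:a}), ensuring the stochastic increments $\zeta^{(i)}_t$ and $\zeta^{(i)}_{t-1}$ are handled as independent zero-mean terms and do not generate uncontrolled cross terms. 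The condition $\gamma < 1/\ell$ is what keeps the $\gamma^2\ell^2$ and $\gamma^2\ell^4$ coefficients from overwhelming the $\tfrac{1+\rho^2}{2}$ contraction when these inequalities are later chained together, so I would flag where that bound is invoked. Everything else reduces to repeated application of Lemmas~\ref{lem: cs}, \ref{lem: F_norm_ineq}, \ref{lem: W_m}, \ref{lem: gd_decrease} and the smoothness/variance assumptions.
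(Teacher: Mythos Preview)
Your proposal is correct and follows essentially the same route as the paper's proof: the tracking identity $\bar u_{t+1}=\bar h_{t+1}$ combined with Lemma~\ref{lem: gd_decrease} and a Young-inequality split (parameter $\gamma\mu$) for \eqref{ineq: dsgt:a}; the projection $(\mW-\tfrac{1}{n}\bfone_n\bfonet_n)$ plus Lemma~\ref{lem: cs} with $c=\tfrac{1-\rho^2}{2\rho^2}$ for \eqref{ineq: dsgt:b}; and for \eqref{ineq: dsgt:c} the decomposition of $\mH_{t+1}-\mH_t$ into two variance pieces and a deterministic piece bounded by $\ell^2\norm{\mTheta_t-\mTheta_{t-1}}^2$, the latter then unrolled through the $\theta$-update and the relation $\norm{\E[\bar h_t\mid\cG_{t-1}]}^2\le \tfrac{2\ell^2}{n}(\norm{\mX-\bar x\bfonet_n}^2+\norm{\mTheta_{t-1}-\bar\theta_{t-1}\bfonet_n}^2)+2\ell^2\norm{\bar\theta_{t-1}-\theta_*}^2$. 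One small clarification: the hypothesis $\gamma<1/\ell$ is already invoked \emph{inside} this lemma to simplify constants (e.g.\ $(1+\tfrac{1}{\gamma\mu})\gamma^2\le\tfrac{2\gamma}{\mu}$ in \eqref{ineq: dsgt:a}, and $3(8\ell^2+4\ell^4\gamma^2)\le 36\ell^2$, $6(n+\ell^2\gamma^2)\le 12n$ in \eqref{ineq: dsgt:c}), not only when the inequalities are chained downstream.
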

\begin{proof}(of Lemma~\ref{lem: dsgt})
   At each step, we have
    \begin{align}\label{eq: dsgt_update}
        \mU_{t+1} = \mU_t\mW + \mH_{t+1} - \mH_t,
        \mTheta_{t+1} = \mTheta_t\mW - \gamma \mU_{t+1},\ \bar \theta_{t+1}  = \bar \theta_t - \gamma\bar u_{t+1} =  \bar \theta_t - \gamma\bar h_{t+1}.
    \end{align}
    To prove the first inequality  \eqref{ineq: dsgt:a}, we have 
 \begin{align*}
 &\bar \theta_{t+1} -  \theta_*  = \bar \theta_t  - \gamma\bar h_{t+1} - \theta_* \notag \\
  &  = \, \bar \theta_t - \theta_* - \gamma\nabla_{\theta} \phi(\bar x, \bar \theta_t) - \gamma \left(\E[\bar h_{t+1} \mid \cG_t] - \nabla_{\theta} \phi(\bar x, \bar \theta_t) \right) - \gamma \left(\bar h_{t+1} - \E[\bar h_{t+1} \mid \cG_t]\right).
 \end{align*}
This implies
\begin{align}
&\E\left[\norm{\bar \theta_{t+1} - \theta_*}^2\mid \cG_t \right]  \notag \\
= & \norm{\bar \theta_t - \theta_* - \gamma\nabla_{\theta} \phi(\bar x, \bar \theta_t) - \gamma \left(\E[\bar h_{t+1} \mid \cG_t] - \nabla_{\theta} \phi(\bar x, \bar \theta_t) \right) }^2 + \gamma^2\E\left[ \norm{\bar h_{t+1} - \E[\bar h_{t+1} \mid \cG_t}^2\mid \cG_t\right] \notag \\
\leq& (1 + \gamma\mu)\norm{\bar \theta_t - \theta_* - \gamma\nabla_{\theta} \phi(\bar x, \bar \theta_t) }^2 + \left(1 + \frac{1}{\gamma\mu}\right)\gamma^2\norm{\E[\bar h_{t+1} \mid \cG_t] - \nabla_{\theta} \phi(\bar x, \bar \theta_t) }^2 \notag \\
& \quad + \gamma^2\E\left[ \norm{\bar h_{t+1} - \E[\bar h_{t+1} \mid \cG_t]}^2\mid \cG_t\right] \label{eq: theta_conv},
\end{align}
where the first equality holds by the unbiasedness of $h_{t+1}^{(i)}$. We use Lemma \ref{lem: gd_decrease} to estimate the first term of \eqref{eq: theta_conv}:
\begin{align*}
\norm{\bar \theta_t - \theta_* - \gamma\nabla_{\theta} \phi(\bar x, \bar \theta_t) }^2\leq (1-\gamma\mu)^2 \norm{\bar \theta_t - \theta_* }^2.
\end{align*}
Then we focus on the second term of \eqref{eq: theta_conv}:
\begin{align*}
\norm{\E[\bar h_{t+1} \mid \cG_t] - \nabla_{\theta} \phi(\bar x, \bar \theta_t) }^2  & = \norm{\frac{1}{n}\sum_{i=1}^n \nabla_{\theta} \phi_i(x^{(i)}, \theta_t^{(i)}) - \nabla_{\theta} \phi_i(\bar x, \bar \theta_t) }^2 \notag \\
& \leq \frac{1}{n}\sum_{i=1}^n \norm{\nabla_{\theta} \phi_i(x^{(i)}, \theta_t^{(i)}) - \nabla_{\theta} \phi_i(\bar x, \bar \theta_t)  }^2  \notag \\
& \leq  \frac{\ell^2}{n}\sum_{i=1}^n \left(\norm{x^{(i)} - \bar x}^2 + \norm{\theta_t^{(i)} - \bar \theta_t }^2 \right)
\end{align*}
where the last inequality follows from the Lipschitz smoothness of each $\phi_i$. Next, we estimate the third term of \eqref{eq: theta_conv}:
\begin{align}
 &\E\left[ \norm{\bar h_{t+1} - \E[\bar h_{t+1} \mid \cG_t]}^2\mid \cG_t\right]  = \E\left[ \norm{ \frac{1}{n}\sum_{i=1}^n \left(h_{t+1}^{(i)} - \E\left[h_{t+1}^{(i)}\mid \cG_t\right]\right)}^2\middle| \cG_t\right]  \notag \\
 =&  \frac{1}{n^2}\sum_{i=1}^n \E\left[ \norm{ h_{t+1}^{(i)} -  \E\left[h_{t+1}^{(i)}\mid \cG_t\right]}^2\middle|\cG_t\right] + \frac{1}{n^2} \sum_{j \neq i}\E\left[ \left\langle  h_{t+1}^{(i)} -  \E\left[h_{t+1}^{(i)}\mid \cG_t\right], h_{t+1}^{(j)} -  \E\left[h_{t+1}^{(j)} \mid \cG_t\right]\right\rangle \middle| \cG_t\right] \notag \\
 & 
 \leq   \frac{\sigma^2}{n}
\end{align}
where the inequality uses the bounded variance, unbiasedness, and the independence of different stochastic oracles. Substituting the above results into \eqref{eq: theta_conv}, we have
\begin{align*}
\E\left[\norm{\bar \theta_{t+1} - \theta_*}^2 \mid \cG_t\right] 
\leq &(1-\gamma\mu)\norm{\bar \theta_t - \theta_*}^2 + \left(1 + \frac{1}{\gamma\mu}\right) \frac{\gamma^2\ell^2}{n} \left(\norm{\mX - \bar x\bfonet_n}^2 + \norm{\mTheta_t - \bar \theta_t\bfonet_n}^2 \right) + \frac{\sigma^2\gamma^2}{n}.
\end{align*}
The first inequality \eqref{ineq: dsgt:a} holds due to the step-size $\gamma < \frac{1}{\ell} \leq \frac{1}{\mu}$. Now for the second inequality \eqref{ineq: dsgt:b}, by \eqref{eq: dsgt_update} we have
\begin{align}
    \mTheta_{t+1} - \bar \theta_{t+1}\bfonet_n & =  \mTheta_t\mW - \gamma\mU_{t+1} - (\bar \theta_t - \gamma \bar u_{t+1})\bfonet_n \notag \\
    & = \left(\mTheta_t - \bar \theta_t\bfonet_n\right)\left(\mW - \frac{\bfone_n\bfonet_n}{n}\right) - \gamma\left(\mU_{t+1} - \bar u_{t+1}\bfonet_n\right).
\end{align}
By Lemmas \ref{lem: cs} and \ref{lem: W_m} we know for any $c>0$,
\begin{align*}
    \norm{\mTheta_{t+1} - \bar \theta_{t+1}\bfonet_n}^2 & \leq (1 + c)\rho^2\norm{\mTheta_t - \bar \theta_t\bfonet_n}^2 + (1 + c^{-1})\gamma^2\norm{\mU_{t+1} - \bar u_{t+1}\bfonet_n}^2.
\end{align*}
We set $c = \frac{1-\rho^2}{2\rho^2}$ and obtain the second inequality \eqref{ineq: dsgt:b}. Finally, for the third inequality  \eqref{ineq: dsgt:c}, we have from \eqref{eq: dsgt_update} that
\begin{align}
    \mU_{t+1} - \bar u_{t+1}\bfonet_n 
    &= \mU_t\mW + \mH_{t+1} - \mH_t - (\bar u_t + \bar h_{t+1} - \bar h_t)\bfonet_n \notag\\
    &= \left(\mU_t - \bar u_t\bfonet_n\right)\left(\mW - \frac{\bfone_n\bfonet_n}{n}\right) + \left(\mH_{t+1} - \mH_t\right)\left(\mI_n - \frac{\bfone_n\bfonet_n}{n}\right). \label{eq: U_decompose}
\end{align}
which, together with Lemmas \ref{lem: cs}, \ref{lem: F_norm_ineq} and \ref{lem: W_m}, and $\norm{\mI_n - \frac{\bfone_n\bfonet_n}{n}}_2\leq 1$, implies
\begin{align*}
    \norm{\mU_{t+1} - \bar u_{t+1}\bfonet_n}^2\leq \frac{1+\rho^2}{2}\norm{\mU_t - \bar u_t\bfonet_n}^2 + \frac{1+\rho^2}{1-\rho^2}\norm{\mH_{t+1} - \mH_t}^2.
\end{align*}
To bound $\norm{\mH_{t+1} - \mH_t}$, we have
\begin{align}\label{eq: H_diff}
    \mH_{t+1} - \mH_t = \mH_{t+1} - \E\left[\mH_{t+1}\mid \cG_t\right] - (\mH_t - \E\left[\mH_t\mid \cG_{t-1}\right])+\E\left[\mH_{t+1}\mid \cG_t\right]- \E\left[\mH_t\mid \cG_{t-1}\right]
\end{align}
and thus
\begin{align}\label{ineq: H_diff_exp}
   \E\left[\norm{\mH_{t+1} - \mH_t}^2\right] 
   &\leq 3\E\left[\norm{\mH_{t+1} - \E\left[\mH_{t+1}\mid \cG_t\right]}^2 + \norm{\mH_t - \E\left[\mH_t\mid \cG_{t-1}\right]}^2 + \norm{\E\left[\mH_{t+1}\mid \cG_t\right]- \E\left[\mH_t\mid \cG_{t-1}\right]}^2\right] \notag \\
    &\leq 6n\sigma^2 + 3\E\left[\norm{\E\left[\mH_{t+1}\mid \cG_t\right]- \E\left[\mH_t\mid \cG_{t-1}\right]}^2\right]
\end{align}
in which we bound $\norm{\E\left[\mH_{t+1}\mid \cG_t\right]- \E\left[\mH_t\mid \cG_{t-1}\right]}$ via the following inequalities:
\begin{align*}
    \norm{\E\left[\mH_{t+1}\mid \cG_t\right]- \E\left[\mH_t\mid \cG_{t-1}\right]}^2 &= \sum_{i=1}^{n}\norm{\nabla_{\theta}\phi_i(x^{(i)}, \theta_t^{(i)}) - \nabla_{\theta}\phi_i(x^{(i)}, \theta_{t-1}^{(i)})}^2\leq \ell^2\norm{\mTheta_t - \mTheta_{t-1}}^2. 
    \end{align*}
    \begin{align*}
    \norm{\mTheta_{t+1} - \mTheta_t}^2 &= \norm{\left(\mTheta_t - \bar \theta_t\bfonet_n\right)\left(\mW - \mI\right) - \gamma \mU_{t+1}}^2\leq 2\norm{\left(\mTheta_t - \bar \theta_t\bfonet_n\right)\left(\mW - \mI\right)}^2 + 2\gamma^2\norm{\mU_{t+1}}^2 \\
    & \leq 8\norm{\mTheta_t - \bar \theta_t\bfonet}^2 + 2\gamma^2\norm{\mU_{t+1} - \bar u_{t+1}\bfonet_n}^2 + 2\gamma^2\norm{\bar u_{t+1}\bfonet_n}^2.
    \end{align*}
    \begin{align*}
    \E\left[\norm{\bar u_{t+1}}^2\mid \cG_t\right] & = \E\left[\norm{\bar h_{t+1} - \E\left[\bar h_{t+1}\mid \cG_t\right] }^2\mid \cG_t\right] + \norm{\E\left[\bar h_{t+1}\mid \cG_t\right]}^2\leq \frac{\sigma^2}{n} + \norm{\E\left[\bar h_{t+1}\mid \cG_t\right]}^2. 
     \end{align*}
    \begin{align*}
     \norm{\E\left[\bar h_{t+1}\mid \cG_t\right]}^2 & = \norm{\E\left[\bar h_{t+1}\mid \cG_t\right] - \nabla_{\theta}\phi(\bar x, \bar \theta_t) + \nabla_{\theta}\phi(\bar x, \bar \theta_t) - \nabla_{\theta}\phi(\bar x, \theta_*)}^2\\
    &\leq \frac{2\ell^2}{n}\left(\norm{\mX - \bar x\bfonet_n}^2 + \norm{\mTheta_t - \bar \theta_t\bfonet_n}^2\right) + 2\ell^2\norm{\bar \theta_t - \theta_*}^2.
\end{align*}
Combining all the inequalities above, we obtain
\begin{align*}
    &\E\left[\norm{\E\left[\mH_{t+1}\mid \cG_t\right]- \E\left[\mH_t\mid \cG_{t-1}\right]}^2\right]\\
    \leq &\ell^2 \E\left[8\norm{\mTheta_{t-1} - \bar \theta_{t-1}\bfonet}^2 + 2\gamma^2\norm{\mU_t - \bar u_t\bfonet_n}^2 + 2\gamma^2\norm{\bar u_t\bfonet_n}^2\right] \\
    \leq &8\ell^2\E\left[\norm{\mTheta_{t-1} - \bar \theta_{t-1}\bfonet}^2\right] + 2\ell^2\gamma^2\E\left[\norm{\mU_t - \bar u_t\bfonet_n}^2\right] + 2\ell^2\gamma^2\left(\sigma^2 + n\E\left[\norm{\E\left[\bar h_t\mid \cG_t\right]}^2\right]\right) \\
    \leq &(8\ell^2 + 4\ell^4\gamma^2)\E\left[\norm{\mTheta_{t-1} - \bar \theta_{t-1}\bfonet}^2\right] + 2\ell^2\gamma^2\E\left[\norm{\mU_t - \bar u_t\bfonet_n}^2\right] +  4\ell^4\gamma^2\E\left[\norm{\mX - \bar x\bfonet_n}^2\right]  \\
    &+ 4\ell^4\gamma^2n\E\left[\norm{\bar \theta_{t-1} - \theta_*}^2\right] + 2\ell^2\gamma^2\sigma^2.
\end{align*}
and thus
\begin{align*}
    &\E\left[\norm{\mU_{t+1} - \bar u_{t+1}\bfonet_n}^2\right]\\
    \leq &\frac{1+\rho^2}{2}\E\left[\norm{\mU_t - \bar u_t\bfonet_n}^2\right] + \frac{1+\rho^2}{1-\rho^2}\E\left[\norm{\mH_{t+1} - \mH_t}^2\right] \\
    \leq &\frac{1+\rho^2}{2}\E\left[\norm{\mU_t - \bar u_t\bfonet_n}^2\right] + \frac{1+\rho^2}{1-\rho^2}\bigg(6n\sigma^2 + 3\bigg\{(8\ell^2 + 4\ell^4\gamma^2)\E\left[\norm{\mTheta_{t-1} - \bar \theta_{t-1}\bfonet}^2\right] \\
    &+ 2\ell^2\gamma^2\E\left[\norm{\mU_t - \bar u_t\bfonet_n}^2\right] +  4\ell^4\gamma^2\E\left[\norm{\mX - \bar x\bfonet_n}^2\right]
    + 4\ell^4\gamma^2n\E\left[\norm{\bar \theta_{t-1} - \theta_*}^2\right] + 2\ell^2\gamma^2\sigma^2
    \bigg\} \bigg) \\
    =&\left(\frac{1+\rho^2}{2} + \frac{6\ell^2\gamma^2(1+\rho^2)}{1-\rho^2}\right)\E\left[\norm{\mU_t - \bar u_t\bfonet_n}^2\right] + \frac{3(1+\rho^2)(8\ell^2 + 4\ell^4\gamma^2)}{1-\rho^2} \E\left[\norm{\mTheta_{t-1} - \bar \theta_{t-1}\bfonet}^2\right] \\
    & +\frac{1+\rho^2}{1-\rho^2}\left(12\ell^4\gamma^2\E\left[\norm{\mX - \bar x\bfonet_n}^2\right] + 12n\ell^4\gamma^2\E\left[\norm{\bar \theta_{t-1} - \theta_*}^2\right] + 6(n+\ell^2\gamma^2)\sigma^2\right).
\end{align*}
The third inequality \eqref{ineq: dsgt:c} holds by noticing that $\gamma < \frac{1}{\ell}$. We have completed the proof.
\end{proof}

Based on Lemma~\ref{lem: dsgt}, after $T$-steps, Algorithm~\ref{alg:inner_loop} achieves the following result.
\begin{restatable}[]{lemma}{lemsinnererrorconverge} 
\label{lem: inner_error}
   Under the same conditions as~Lemma~\ref{lem: dsgt}. Suppose the stepsize $\gamma$ satisfies 
    \begin{align}
      \gamma \leq \mathcal{O}\left(\min  \left\lbrace\frac{1-\rho^2}{\ell}, \frac{(1-\rho^2)\sqrt{\mu/\ell}}{\ell}, \frac{(1-\rho^2)^2}{\ell} \right\rbrace\right).
    \end{align}
    Define the constants
    \begin{align*}
    & e_{\theta}  = 1-\gamma\mu, \, e_{\rho,1} = \frac{\rho^2+1}{2},\, e_{\rho,2} = \frac{\rho^2+3}{4}\notag \\ 
   &  C_{x,1} = \left(\frac{\ell^2}{\mu} + \frac{\gamma^4\ell^6}{\mu(1-\rho^2)^4}\right),  C_{x,2}  = \frac{\ell^4}{(1-\rho^2)^4}\left(\frac{\ell^2}{\mu^2} + 1\right), C_{x,3}  = \frac{\left(\frac{\ell^2}{\mu^2} + 1 \right)\ell^4}{(1-\rho^2)^2},\\
   & C_{\sigma, 1}  =\left( \frac{\gamma\ell^2 n}{\mu \left(1-\rho^2 \right)^4} + 1\right), C_{\sigma,2} = \frac{\gamma^3\ell^4}{\mu(1-\rho^2)^4} + \frac{n}{(1-\rho^2)^4}, C_{\sigma,3} = \frac{1}{(1-\rho^2)^2}\left(\frac{\gamma^3\ell^4}{n \mu} +1 \right)
    \end{align*}
    then consider Algorithm \ref{alg:inner_loop}, for any $T \geq 1$,  we have
    \begin{align*}
\E\left[\norm{\bar \theta_{T} - \theta_*}^2\right]  
&\leq e_{\theta}^T \left( 1+ \frac{\gamma^4\ell^6}{\mu^2(1-\rho^2)^4} \right)\E\left[\norm{\bar \theta_0 - \theta_*}^2\right] + \frac{e_{\theta}^{T-1}\gamma\ell^2}{\mu(1-\rho^2)}\frac{1}{n}\E\left[\norm{\mTheta_0 - \bar \theta_0\bfonet_n}^2\right]  \notag \\  &  + \frac{e_{\theta}^{T-1}\gamma^3\ell^2}{\mu(1-\rho^2)^3n}\E\left[\norm{\mU_1 - \bar u_1\bfonet_n}^2\right]
+   \min \left(T, \frac{1}{\mu\gamma} \right) \left( \frac{C_{x,1}\gamma}{n}\E\left[\norm{\mX - \bar{x}\bfonet_n}^2\right] + \frac{C_{\sigma,1}}{n} \gamma^2 \sigma^2   \right);
    \end{align*}
    \begin{align}
  \frac{1}{n}\E\left[\norm{\mTheta_T - \bar \theta_T\bfonet_n}^2\right]  
& \leq  e_{\rho,1}^{T}\left(1 + \frac{\gamma^2\ell^2}{(1-\rho^2)^4}\right)\frac{1}{n}\E\left[\norm{\mTheta_0 - \bar \theta_0\bfonet_n}^2\right] 
 + \frac{e_{\rho,1}^{T-1} \gamma^2}{(1-\rho^2)^2}  \frac{1}{n}\E\left[\norm{\mU_{1} - \bar u_{1}\bfonet_n}^2\right] \notag \\
 & + \frac{e_{\rho,1}^{T-1} \gamma^3\ell^4}{\mu(1-\rho^2)^3}\E\left[\norm{\bar \theta_{0} - \theta_*}^2\right] 
  + \frac{C_{x,2}\gamma^4}{n}\E\left[\norm{X - \bar{x}\bfonet_n}^2\right] + \frac{C_{\sigma,2}\gamma^2\sigma^2}{n};\notag 
\end{align}
and 
\begin{align*}
\frac{1}{n}\E\left[\norm{\mU_{T+1} - \bar u_{T+1}\bfonet_n}^2\right] 
  & \leq e_{\rho,2}^T \left(1 + \frac{\ell^2\gamma^2}{(1-\rho^2)^4}\right)\frac{1}{n}\E\left[\norm{\mU_{1} - \bar u_{1}\bfonet_n}^2\right] + \frac{\ell^2 e_{\rho,2}^{T-1}}{(1-\rho^2)^3}\frac{1}{n}\E\left[\norm{\mTheta_{0} - \bar \theta_{0}\bfonet_n}^2\right] \\   & +\frac{e_{\rho,2}^{T-1}\ell^4\gamma}{\mu(1-\rho^2)}\E\left[\norm{\bar \theta_{0} - \theta_*}^2\right]  
+  \frac{C_{x,3}\gamma^2}{n}\E\left[\norm{\mX - \bar{x}\bfonet_n}^2\right] +C_{\sigma, 3}\sigma^2. 
\end{align*}
\end{restatable}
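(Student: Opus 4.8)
The goal is to prove Lemma~\ref{lem: inner_error}, which unrolls the one-step recursions~\eqref{ineq: dsgt:a}, \eqref{ineq: dsgt:b}, \eqref{ineq: dsgt:c} from Lemma~\ref{lem: dsgt} over $T$ inner-loop iterations.

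\textbf{Overall strategy.} The plan is to treat the three quantities $a_t := \E\norm{\bar\theta_t - \theta_*}^2$, $p_t := \frac1n\E\norm{\mTheta_t - \bar\theta_t\bfonet_n}^2$, and $q_t := \frac1n\E\norm{\mU_{t+1} - \bar u_{t+1}\bfonet_n}^2$ as a coupled linear system and decouple it step by step. First I would substitute \eqref{ineq: dsgt:b} into \eqref{ineq: dsgt:c}: the consensus term $\E\norm{\mTheta_{t-1}-\bar\theta_{t-1}\bfonet_n}^2$ appearing in the $\mU$-recursion can be re-expressed, but more usefully one observes that under the stepsize restriction $\gamma \lesssim \min\{(1-\rho^2)/\ell, (1-\rho^2)\sqrt{\mu/\ell}/\ell, (1-\rho^2)^2/\ell\}$ all the $\gamma$-dependent perturbations of the contraction factors are absorbed, so that $e_{\rho,1} = \tfrac{1+\rho^2}{2}$ works for the $\mTheta$-recursion and $e_{\rho,2} = \tfrac{3+\rho^2}{4}$ (a slightly larger contraction factor, chosen so the extra $\frac{6\ell^2\gamma^2(1+\rho^2)}{1-\rho^2}$ term in \eqref{ineq: dsgt:c} is dominated) works for the $\mU$-recursion. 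Concretely: since $\gamma \le \frac{1-\rho^2}{\ell}\cdot c$ for a small absolute $c$, we have $\frac{6\ell^2\gamma^2(1+\rho^2)}{1-\rho^2} \le \frac{1-\rho^2}{4}$, hence $\frac{1+\rho^2}{2} + \frac{6\ell^2\gamma^2(1+\rho^2)}{1-\rho^2} \le \frac{3+\rho^2}{4} = e_{\rho,2}$.

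\textbf{Key steps in order.} (1) \emph{Close the $\mU$-recursion.} Plug the bound on $\E\norm{\mTheta_{t-1}-\bar\theta_{t-1}\bfonet_n}^2$ — obtained by unrolling \eqref{ineq: dsgt:b} in terms of $q$ and the initial $\mTheta_0$ term — back into \eqref{ineq: dsgt:c}. This is circular at the level of $q_t$ versus $q_{t-1}$ and $p_{t-1}$, so instead I would unroll the genuinely scalar recursion $q_t \le e_{\rho,2} q_{t-1} + (\text{lower-order in }q) + C p_{t-1} + D a_{t-1} + E\sigma^2$ using Lemma~\ref{lem: basic_ineq}\eqref{ineq: ak}, after first bounding $p_{t-1}$ by its own closed form. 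In practice it is cleanest to unroll $\mTheta$ first (Step 2), then $\mU$ (Step 3), then $\bar\theta$ (Step 4), feeding each closed form into the next. (2) \emph{Unroll \eqref{ineq: dsgt:b} for $p_T$.} Apply Lemma~\ref{lem: basic_ineq}\eqref{ineq: ak} with ratio $e_{\rho,1}$ and forcing term $\frac{(1+\rho^2)\gamma^2}{1-\rho^2} q_t$; since $q_t$ itself depends on the past, bound $q_t$ crudely by a geometric series dominated by its initial term plus the $\sigma^2$ and $\mX$-consensus forcing terms, then use Lemma~\ref{lem:double:sum} to collapse the double sum $\sum_{t}\sum_{\tau} e_{\rho,1}^{t-\tau}(\cdot)$ into a single $\frac{1}{1-e_{\rho,1}}$ factor, which is $\Theta(1/(1-\rho^2))$. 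The $\gamma^4\ell^6/(\mu^2(1-\rho^2)^4)$-type prefactors on the initial terms and the $C_{x,2}, C_{\sigma,2}$ constants come out of collecting these geometric sums. (3) \emph{Unroll \eqref{ineq: dsgt:c} for $q_{T+1}$} analogously with ratio $e_{\rho,2}$, using the $p$-bound from Step 2 and the $a$-bound from Step 4 (or a crude a priori bound on $a_{t-1}$, tightened afterwards) as forcing terms. (4) \emph{Unroll \eqref{ineq: dsgt:a} for $a_T$.} Here the forcing term is $\frac{2\gamma\ell^2}{\mu n}(\norm{\mX-\bar x\bfonet_n}^2 + \norm{\mTheta_t - \bar\theta_t\bfonet_n}^2) + \frac{\gamma^2\sigma^2}{n}$; apply Lemma~\ref{lem: basic_ineq}\eqref{ineq: ak} with ratio $e_\theta = 1-\gamma\mu$, substitute the $p_t$ bound from Step 2, and use $\sum_{t=0}^{T-1} e_\theta^{T-1-t} \le \min(T, 1/(\mu\gamma))$ — this is exactly where the $\min(T, 1/(\mu\gamma))$ factor in the statement originates, and where the $C_{x,1}, C_{\sigma,1}$ constants are assembled.

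\textbf{Main obstacle.} The genuine difficulty is the mutual coupling between $q_t$ (the gradient-tracking consensus error) and $p_t$ (the iterate consensus error): \eqref{ineq: dsgt:b} feeds $q$ into $p$, and \eqref{ineq: dsgt:c} feeds $p$ (at lag one) back into $q$. Naively substituting one into the other produces a second-order linear recursion whose characteristic roots must be shown to lie strictly inside the unit disk with the claimed gaps $e_{\rho,1}, e_{\rho,2}$ — doable but messy. The cleaner route, which I expect the authors take, is to decouple by \emph{choosing $e_{\rho,2}$ strictly larger than $e_{\rho,1}$} so that the lagged $p_{t-1}$ forcing term in the $q$-recursion, when unrolled, is dominated termwise by the $e_{\rho,2}^t$ geometric decay (using $\sum_t e_{\rho,2}^{T-t} e_{\rho,1}^t \le \frac{e_{\rho,2}^T}{1 - e_{\rho,1}/e_{\rho,2}} \lesssim \frac{e_{\rho,2}^T}{1-\rho^2}$), and symmetrically the $q$ forcing in the $p$-recursion is controlled. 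The bookkeeping of which powers of $(1-\rho^2)^{-1}$, $\gamma$, $\ell$, $\mu$ land on which term — so that the final constants match $C_{x,1},C_{x,2},C_{x,3},C_{\sigma,1},C_{\sigma,2},C_{\sigma,3}$ exactly — is the main source of tedium, but it is routine once the contraction structure above is fixed; each constant is just the sum of a finite geometric series with ratio bounded away from $1$ by $\Theta(1-\rho^2)$.
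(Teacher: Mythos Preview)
Your high-level reading of the structure is right, and your proposed rate-gap decoupling (choosing $e_{\rho,2}>e_{\rho,1}$ and bounding convolutions like $\sum_t e_{\rho,2}^{T-t}e_{\rho,1}^{t}$) is a workable strategy. However, it is \emph{not} the route the paper takes, and your guess that ``I expect the authors take'' the rate-gap route is incorrect.

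The paper instead packages the three errors into a vector
$\Omega_t=\bigl(\E\norm{\bar\theta_t-\theta_*}^2,\ \tfrac1n\E\norm{\mTheta_t-\bar\theta_t\bfonet_n}^2,\ \tfrac1n\E\norm{\mU_{t+1}-\bar u_{t+1}\bfonet_n}^2\bigr)^\top$,
reads off from Lemma~\ref{lem: dsgt} the one-step matrix inequality $\Omega_{t+1}\le M\,\Omega_t+\tilde C$ with an explicit $3\times 3$ matrix $M$, and then---for each target quantity separately---applies the weighted-sum inequality \eqref{ineq: aksum} of Lemma~\ref{lem: basic_ineq} to all three components with weights $M_{jj}^{-i}$ (where $j$ is chosen according to the target). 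This produces three linear relations among the partial sums $\sum_i a_i/M_{jj}^i$, $\sum_i b_i/M_{jj}^i$, $\sum_i c_i/M_{jj}^i$, which are then eliminated \emph{algebraically}: for instance, to bound $a_T$ one substitutes the $a$-sum relation into the $c$-sum relation, combines with the $b$-sum relation, and verifies that the net coefficient on the $b$-sum (the paper's $R_1$) is bounded below by $\Theta\bigl((1-\rho^2)^2/\gamma^2\bigr)$ under the stepsize hypotheses---this is precisely where the three conditions $\gamma\lesssim(1-\rho^2)/\ell$, $\gamma\lesssim(1-\rho^2)\sqrt{\mu/\ell}/\ell$, $\gamma\lesssim(1-\rho^2)^2/\ell$ are consumed. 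The resulting closed-form bound on the $b$-sum is then fed into the direct unrolling \eqref{ineq: ak} of $a_T$. The same elimination is repeated with pivots $M_{22}$ and $M_{33}$ for the $\mTheta$- and $\mU$-bounds, giving different auxiliary quantities $R_3,R_5,R_6$.

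Compared to your plan, the paper's approach handles the full three-way coupling (including the $a\to c$ channel via the $\E\norm{\bar\theta_{t-1}-\theta_*}^2$ term in \eqref{ineq: dsgt:c}, which your ``main obstacle'' paragraph does not explicitly address) in one algebraic pass rather than by iterated bootstrapping, and avoids having to argue separately about which geometric rate dominates in each cross-convolution. Your approach would likely produce the same order of bounds but with a messier accounting of the constants, and the circularity you flag between Steps~2 and~3 would need to be resolved by a genuine fixed-point or two-pass argument rather than the ``crude bound on $q_t$'' you sketch, since that crude bound cannot simply drop the $p_{t-1}$ and $a_{t-1}$ dependence.
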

\begin{proof}(of Lemma~\ref{lem: inner_error})
We define the vector function $\Omega_{t}$
\begin{align}
\Omega_{t}= \left(\E\left[\norm{\bar \theta_{t} - \theta_*}^2\right],  \frac{1}{n}\E\left[\norm{\mTheta_{t} - \bar \theta_{t}\bfonet_n}^2\right],  \frac{1}{n}\E\left[\norm{\mU_{t+1} - \bar u_{t+1}\bfonet_n}^2\right] \right)^\top
\end{align}
and an $3\times 3$ matrix $M$
\begin{equation}
M = 
\begin{pmatrix}
M_{11} & M_{12} & M_{13}\\
M_{21} & M_{22} & M_{23}\\
M_{31} & M_{32} & M_{33}\\
\end{pmatrix}
\end{equation}
where
\begin{align}
   &  M_{11} = 1- \gamma \mu; \quad M_{12} =\frac{\gamma\ell^2}{\mu}; \quad M_{13} = 0  \notag \\
    & M_{21} = 0;\quad M_{22} = \frac{1+\rho^2}{2}; \quad M_{23} = \frac{\gamma^2}{1-\rho^2} \notag \\
    & M_{31}= \frac{\ell^4\gamma^2}{(1-\rho^2)}; \quad M_{32} = \frac{\ell^2}{(1-\rho^2)}; \quad M_{33} = \frac{1+\rho^2}{2} + \frac{6\ell^2\gamma^2(1+\rho^2)}{1-\rho^2}.
\end{align}
By the results of Lemma \ref{lem: dsgt}, for any $t$, we have 
\begin{align}\label{ineq: Omega_recursion:1}
    \Omega_{t+1} \leq M \Omega_{t} + \tilde{C}
\end{align}
where 
\begin{align}
\tilde{C} = \left(\frac{\gamma\ell^2}{\mu n}\E\left[\norm{X - \bar{x}\bfonet_n}^2\right] + \frac{\gamma^2\sigma^2}{n}, 0, \frac{\ell^4\gamma^2}{(1-\rho^2)n}\E\left[\norm{\mX - \bar x\bfonet_n}^2\right] + \frac{\sigma^2}{(1-\rho^2)}\right)^{T}.
\end{align}
Note that we omit the constant factor to simplify the definitions of matrix $M$ and $\tilde{C}$. For sufficient small stepsize $\gamma \leq \mathcal{O}\left(\frac{1-\rho^2}{\ell} \right)$, we have $M_{33} \leq \frac{3+\rho^2}{4}$. For simplicity, we overload the notation and set $\Omega_t = (a_t, b_t, c_t)^\top$ and $\tilde{C} = (d_1, d_2, d_3)^\top$. 
Note that we have
    \begin{align*}
        a_{t+1}&\leq M_{11}a_t + M_{12}b_t + d_1 \\
        b_{t+1}&\leq M_{22}b_t + M_{23}c_t\\
        c_{t+1}&\leq M_{31}a_t + M_{32}b_t + M_{33}c_t + d_3
    \end{align*}
    and thus we apply Lemma \ref{lem: basic_ineq} (\eqref{ineq: ak} to $a_t$ and \eqref{ineq: aksum} to $a_t, b_t, c_t$) to get 
    \begin{align}
        a_{t+1} &\leq M_{11}^{t+1}a_0 + M_{12}M_{11}^t\sum_{i=0}^{t}\frac{b_i}{M_{11}^i} + M_{11}^t\sum_{i=0}^{t}\frac{d_1}{M_{11}^i} \label{inequ:a:final}\tag{a*}\\
        \sum_{i=0}^{t}\frac{a_i}{M_{11}^i}&\leq \frac{1}{1-M_{11}}\left(a_0 + M_{12}\left(\sum_{i=0}^{t}\frac{b_i}{M_{11}^i}\right) + \left(\sum_{i=0}^{t}\frac{d_1}{M_{11}^i}\right) \right) \label{inequ:a:sum}\tag{a}\\
        \sum_{i=0}^{t}\frac{b_i}{M_{11}^i}&\leq \frac{1}{1-M_{22}}\left(b_0 +  M_{23}\left(\sum_{i=0}^{t}\frac{c_i}{M_{11}^i}\right)\right) \label{inequ:b:sum}\tag{b}\\
        \sum_{i=0}^{t}\frac{c_i}{M_{11}^i}&\leq \frac{1}{1-M_{33}}\left(c_0 +  M_{31}\left(\sum_{i=0}^{t}\frac{a_i}{M_{11}^i}\right) + M_{32}\left(\sum_{i=0}^{t}\frac{b_i}{M_{11}^i}\right) + \left(\sum_{i=0}^{t}\frac{d_3}{M_{11}^i}\right) \right) \label{inequ:c:sum}\tag{c} \\
            \sum_{i=0}^{t}\frac{c_i}{M_{11}^i}&\leq \frac{c_0}{1-M_{33}} +  \frac{M_{31}}{1-M_{33}}\frac{a_0}{1-M_{11}} + \frac{M_{31}}{1-M_{33}}\frac{M_{12}}{1-M_{11}}\left(\sum_{i=0}^{t}\frac{b_i}{M_{11}^i}\right) \notag \\& + \frac{M_{32}}{1-M_{33}}\left(\sum_{i=0}^{t}\frac{b_i}{M_{11}^i}\right) 
             + \frac{M_{31}}{1-M_{33}}\frac{1}{1-M_{11}}\left(\sum_{i=0}^{t}\frac{d_1}{M_{11}^i}\right) + \frac{1}{1-M_{33}}\left(\sum_{i=0}^{t}\frac{d_3}{M_{11}^i}\right) \label{inequ:c:sum:2}\tag{$\tilde{c}$}    
    \end{align}
   Incorporating \eqref{inequ:a:sum} into \eqref{inequ:c:sum} gives \eqref{inequ:c:sum:2}, the coefficient of $\sum_{i=0}^{t}\frac{b_i}{M_{11}^i}$ in \eqref{inequ:c:sum:2} is denoted by $R_0$
    \begin{align}
R_0 = \frac{M_{31}}{1-M_{33}}\frac{M_{12}}{1-M_{11}} + \frac{M_{32}}{1-M_{33}} \sim \Theta \left(\frac{\ell^6 \gamma^2}{(1-\rho^2)^2\mu^2} + \frac{\gamma^2}{(1-\rho^2)^2} \right) \sim \Theta \left(\frac{\ell^6 \gamma^2}{(1-\rho^2)^2\mu^2}\right), 
\end{align}
   and then doing the operations on the two inequalities $\frac{(1-M_{22})}{M_{23}} \times \eqref{inequ:b:sum} + \eqref{inequ:c:sum:2}$ gives
    \begin{align}\label{inequ:b:avg:1}
    \left(\frac{1-M_{22}}{M_{23}} -  R_0 \right)\sum_{i=0}^{t}\frac{b_i}{M_{11}^i}  & \leq \frac{b_0}{M_{23}}   + \frac{c_0}{1-M_{33}} +  \frac{M_{31}}{1-M_{33}}\frac{a_0}{1-M_{11}} \notag \\   
&+   \frac{M_{31}}{1-M_{33}}\frac{1}{1-M_{11}}\left(\sum_{i=0}^{t}\frac{d_1}{M_{11}^i}\right) +  \frac{1}{1-M_{33}}\left(\sum_{i=0}^{t}\frac{d_3}{M_{11}^i}\right).
    \end{align}
    Let 
    \begin{align}
    R_1 = \frac{1-M_{22}}{M_{23}} -  R_0 = \Theta \left(\frac{(1-\rho^2)^2}{\gamma^2} -  \frac{\ell^6 \gamma^2}{(1-\rho^2)^2\mu^2}\right).
    \end{align}
    For sufficient small stepsize $\gamma \leq (1-\rho^2) \sqrt{\mu/\ell}/\ell$, 
    we have $R_1 \geq \frac{(1-\rho^2)^2}{2\gamma^2}$. Then 
    \begin{align}\label{inequ:b:avg:2}
    \sum_{i=0}^{t}\frac{b_i}{M_{11}^i} &\leq  \frac{1}{R_1}\left(\frac{b_0}{M_{23}} + \frac{c_0}{1-M_{33}}  +   \frac{M_{31}}{1-M_{33}}\frac{a_0}{1-M_{11}} \right) \notag \\   &+  \frac{1}{R_1}\left(\frac{M_{31}}{1-M_{33}}\frac{1}{1-M_{11}}\left(\sum_{i=0}^{t}\frac{d_1}{M_{11}^i}\right) +  \frac{1}{1-M_{33}}\left(\sum_{i=0}^{t}\frac{d_3}{M_{11}^i}\right)\right).
    \end{align}
    Then incorporating \eqref{inequ:b:avg:2} into \eqref{inequ:a:final}, then
    \begin{align}
    a_{t+1} \leq & M_{11}^{t+1}a_0 + M_{11}^t\frac{M_{12}}{R_1}\left(\frac{M_{31}}{1-M_{33}}\frac{1}{1-M_{11}}\left(\sum_{i=0}^{t}\frac{d_1}{M_{11}^i}\right) + \frac{1}{1-M_{33}}\left(\sum_{i=0}^{t}\frac{d_3}{M_{11}^i}\right)\right) \notag \\
    &  + M_{11}^t\frac{M_{12}}{R_1}\left(\frac{b_0}{M_{23}} + \frac{c_0}{1-M_{33}}   +  \frac{M_{31}}{1-M_{33}}\frac{a_0}{1-M_{11}} \right) + M_{11}^t\sum_{i=0}^{t}\frac{d_1}{M_{11}^i}.
\end{align}
Incorporating the definitions of $a_t$, $M$ and $d_1, d_3$, we have
\begin{align}
& \E\left[\norm{\bar \theta_{T} - \theta_*}^2\right] \notag \\
& \leq \left(1-\mu\gamma\right)^T \E\left[\norm{\bar \theta_0 - \theta_*}^2\right] + \min \left(T, \frac{1}{\mu\gamma} \right) \left(\frac{\gamma\ell^2}{\mu n}\E\left[\norm{X - \bar{x}\bfonet_n}^2\right] + 
\frac{\gamma^2\sigma^2}{n} \right) \notag \\
& + \min \left(T, \frac{1}{\mu\gamma} \right)\frac{\gamma^3 \ell^2}{\mu \left(1-\rho^2 \right)^3} \left( \frac{\ell^4\gamma^2}{(1-\rho^2)n}\E\left[\norm{\mX - \bar x\bfonet_n}^2\right] + \frac{\sigma^2}{(1-\rho^2)}\right) \notag \\
& + \left(1-\mu\gamma\right)^{T-1}\left(\frac{\gamma \ell^2}{\mu (1-\rho^2)n}\E\left[\norm{\mTheta_0 - \bar \theta_0\bfonet_n}^2\right] +  \frac{\gamma^3\ell^2}{\mu(1-\rho^2)^3n} \E\left[\norm{\mU_1 - \bar u_1\bfonet_n}^2\right] \right) \notag \\
 & +\left(1-\mu\gamma\right)^{T-1} \frac{\gamma^4\ell^6}{\mu^2(1-\rho^2)^4}\E\left[\norm{\bar \theta_0 - \theta_*}^2\right]. 
\end{align}

Following the same process for sequence $a_t$, we may achieve the estimation for $b_t$. We apply Lemma \ref{lem: basic_ineq} (\eqref{ineq: ak} to $b_t$ and \eqref{ineq: aksum} to $a_t, c_t$) to get 
 \begin{align}
           b_{t+1} &\leq M_{22}^{t+1}b_0 +  M_{23}M_{22}^t\sum_{i=0}^t \frac{c_i}{M_{22}^i} \label{inequ:b:final} \tag{b*}\\
       \sum_{i=0}^{t}\frac{b_i}{M_{22}^i}
        & \leq \frac{1}{1-M_{22}}\left(b_0 + M_{23}\sum_{i=0}^t \frac{c_i}{M_{22}^i}\right) \label{inequ:b:avg} \tag{b'}\\
       \sum_{i=0}^{t}\frac{a_i}{M_{22}^i}
       & \leq \frac{1}{1-M_{11}}\left(a_0  + M_{12} \sum_{i=0}^{t}\frac{b_i}{M_{22}^i} + \sum_{i=0}^{t}\frac{d_1}{M_{22}^i} \right) \label{inequ:a:b} \tag{a'}\\
        \sum_{i=0}^{t}\frac{c_i}{M_{22}^i}&\leq \frac{1}{1-M_{33}}\left(c_0 +  M_{31}\left(\sum_{i=0}^{t}\frac{a_i}{M_{22}^i}\right) + M_{32}\left(\sum_{i=0}^{t}\frac{b_i}{M_{22}^i}\right) + \left(\sum_{i=0}^{t}\frac{d_3}{M_{22}^i}\right) \right). \label{inequ:b:c} \tag{c'}
    \end{align}
 Firstly, we incorporate \eqref{inequ:a:b} into \eqref{inequ:b:c} and get that 
 \begin{align}
  \sum_{i=0}^{t}\frac{c_i}{M_{22}^i} &\leq  \left(\frac{M_{31}}{1-M_{33}}\frac{a_0}{1-M_{11}}  + \frac{M_{31}}{1-M_{33}}\frac{M_{12}}{1-M_{11}} \sum_{i=0}^{t}\frac{b_i}{M_{22}^i} + \frac{M_{31}}{1-M_{33}}\frac{1}{1-M_{11}}\sum_{i=0}^{t}\frac{d_1}{M_{22}^i} \right)  \notag \\ & +\frac{c_0}{1-M_{33}}  + \frac{M_{32}}{1-M_{33}}\left(\sum_{i=0}^{t}\frac{b_i}{M_{22}^i}\right) 
   + \frac{1}{1-M_{33}}\left(\sum_{i=0}^{t}\frac{d_3}{M_{22}^i}\right). \label{inequ:b:c:2}
 \end{align}
 Let 
 \begin{align}
 R_3 = \frac{M_{31}}{1-M_{33}}\frac{M_{12}}{1-M_{11}} + \frac{M_{32}}{1-M_{33}} \sim \Theta \left( \frac{\ell^6\gamma^2}{\mu^2(1-\rho^2)^2} + \frac{\ell^2}{(1-\rho^2)^2}\right) \sim \Theta \left( \frac{\ell^2}{(1-\rho^2)^2}\right) ,
 \end{align} then
 we do the operations $R_3 \times \eqref{inequ:b:avg} + \eqref{inequ:b:c:2} $, we have
    \begin{align}\label{ineq: c:b:avg}
\left( 1- R_3\frac{M_{23}}{1-M_{22}}\right)\sum_{i=0}^{t}\frac{c_i}{M_{22}^i} & \leq    \left(\frac{M_{31}}{1-M_{33}}\frac{a_0}{1-M_{11}}   + \frac{M_{31}}{1-M_{33}}\frac{1}{1-M_{11}}\sum_{i=0}^{t}\frac{d_1}{M_{22}^i} \right)  \notag \\ &  
   + \frac{R_3b_0}{1-M_{22}} + \frac{c_0}{1-M_{33}} +\frac{1}{1-M_{33}}\left(\sum_{i=0}^{t}\frac{d_3}{M_{22}^i}\right).
    \end{align}
We can select sufficient small step-size $\gamma \leq (1-\rho^2)^2/\ell$ such that  the coefficient $1- R_3\frac{M_{23}}{1-M_{22}}\geq 1/2$.
    Incorporating the above inequality to \eqref{inequ:b:final} get that
    \begin{align}
   b_{t+1} 
   & \leq   M_{22}^{t+1}b_0 + 2M_{22}^{t} M_{23} \left(\frac{R_3b_0}{1-M_{22}} + \frac{c_0}{1-M_{33}} +  \frac{M_{31}}{1-M_{33}}\frac{1}{1-M_{11}}\left(a_0 +\sum_{i=0}^{t}\frac{d_1}{M_{22}^i} \right)\right) \notag \\
    & + \frac{2M_{22}^{t} M_{23}}{1-M_{33}}\left(\sum_{i=0}^{t}\frac{d_3}{M_{22}^i}\right),
    \end{align}
then we thus substitute the definitions $b_t$, $M$, $d_1, d_2$ into the above inequality:
\begin{align}
& \frac{1}{n}\E\left[\norm{\mTheta_T - \bar \theta_T\bfonet_n}^2\right] \notag \\
 \leq &\left(\frac{\rho^2+1}{2}\right)^{T} \frac{1}{n}\E\left[\norm{\mTheta_0 - \bar \theta_0\bfonet_n}^2\right] \notag\\
&+ \min\left(T, \frac{2}{1-\rho^2} \right) \frac{\gamma^3\ell^4}{\mu(1-\rho^2)^3} \left(\frac{\gamma\ell^2}{\mu n}\E\left[\norm{\mX - \bar{x}\bfonet_n}^2\right] + \frac{\gamma^2\sigma^2}{n} \right) \notag \\
& + \min \left(T, \frac{4}{1-\rho^2}\right)\frac{\gamma^2}{(1-\rho^2)^2}\left(\frac{\ell^4\gamma^2}{(1-\rho^2)n}\E\left[\norm{\mX - \bar x\bfonet_n}^2\right] + \frac{\sigma^2}{(1-\rho^2)}\right) \notag \\
& + \left(\frac{\rho^2+1}{2}\right)^{T} \left(\frac{\gamma^2\ell^2}{(1-\rho^2)^4n}\E\left[\norm{\mTheta_{0} - \bar \theta_{0}\bfonet_n}^2\right] + \frac{\gamma^2}{(1-\rho^2)^2n}\E\left[\norm{\mU_{1} - \bar u_{1}\bfonet_n}^2\right]\right) \notag \\
& + \left(\frac{\rho^2+1}{2}\right)^{T}\frac{\gamma^3\ell^4}{\mu(1-\rho^2)^3}\E\left[\norm{\bar \theta_{0} - \theta_*}^2\right]. \notag 
\end{align}

Since for the sequence $c_t$, we have the recursive formulation $c_{t+1} \leq M_{33}c_t + M_{31}a_t + M_{32}b_t + d_3$. Applying Lemma~\ref{lem: basic_ineq} (\eqref{ineq: ak} to $c_t$ and \eqref{ineq: aksum} to $a_t, b_t, c_t$), we have
\begin{align}
\sum_{i=0}^t\frac{a_i}{M_{33}^i} & \leq  \frac{1}{1-M_{11}}\left(a_0 + M_{12} \sum_{i=0}^t \frac{b_i}{M_{33}^i} + \sum_{i=0}^t \frac{d_1}{M_{33}^i} \right) \tag{a''} \label{inequ:a:avg:3}\\
\sum_{i=0}^t\frac{b_i}{M_{33}^i} & \leq  \frac{1}{1-M_{22}}\left(b_0 + M_{23} \sum_{i=0}^t \frac{c_i}{M_{33}^i} \right) \tag{b''} \label{inequ:b:avg:3}\\
\sum_{i=0}^t\frac{c_i}{M_{33}^i} & \leq  \frac{1}{1-M_{33}}\left(c_0 + M_{31}\sum_{i=0}^t \frac{a_i}{M_{33}^i} + M_{32} \sum_{i=0}^t \frac{b_i}{M_{33}^i} + \sum_{i=0}^t \frac{d_3}{M_{33}^i} \right) \tag{c''} \label{inequ:c:avg:3}\\
c_{t+1} & \leq M_{33}^{t+1}c_0 + M_{31}M_{33}^{t}\sum_{i=0}^t \frac{a_i}{M_{33}^i} + M_{32}M_{33}^{t}\sum_{i=0}^t \frac{b_i}{M_{33}^i} + M_{33}^{t}\sum_{i=0}^t \frac{d_3}{M_{33}^i}.  \tag{$c*$} \label{inequ:c:final:3}
\end{align}
Incorporating \eqref{inequ:a:avg:3} into \eqref{inequ:c:final:3} gives
\begin{align}
 c_{t+1} 
& \leq M_{33}^{t+1}c_0 + \frac{M_{31}M_{33}^{t}}{1-M_{11}}\sum_{i=0}^t\left(a_0 + \sum_{i=0}^t \frac{d_1}{M_{33}^i} \right) + \left(M_{32} + \frac{M_{31}M_{12}}{1-M_{11}} \right)M_{33}^{t}\sum_{i=0}^t \frac{b_i}{M_{33}^i} \notag \\
& + M_{33}^{t}\sum_{i=0}^t \frac{d_3}{M_{33}^i}.  \label{inequ:c:final:4}
\end{align}
Then incorporating \eqref{inequ:a:avg:3} into \eqref{inequ:c:avg:3}, we have
\begin{align}
\sum_{i=0}^t\frac{c_i}{M_{33}^i} & \leq \frac{1}{1-M_{33}}\left(c_0 + \frac{M_{31}a_0}{1-M_{11}} +  \left(\frac{M_{31}M_{12}}{1-M_{11}} + M_{32} \right)\sum_{i=0}^t \frac{b_i}{M_{33}^i} +  \frac{M_{31}}{1-M_{11}}\sum_{i=0}^t \frac{d_1}{M_{33}^i}\right) \notag \\
& + \frac{1}{1-M_{33}}\sum_{i=0}^t \frac{d_3}{M_{33}^i}. \label{inequ:ctob}
\end{align}
Combining \eqref{inequ:ctob} and \eqref{inequ:b:avg:3} and doing the operations $\frac{1-M_{22}}{M_{23}}\times \eqref{inequ:b:avg:3} + \eqref{inequ:ctob}$ gives:
\begin{align}
& \left(\frac{1-M_{22}}{M_{23}} - \frac{1}{1-M_{33}}\left(\frac{M_{31}M_{12}}{1-M_{11}} + M_{32} \right)\right) \sum_{i=0}^t\frac{b_i}{M_{33}^i} \notag \\
 \leq  &\frac{b_0}{M_{23}} + \frac{1}{1-M_{33}}\left(c_0 + \frac{M_{31}a_0}{1-M_{11}} +  \frac{M_{31}}{1-M_{11}}\sum_{i=0}^t \frac{d_1}{M_{33}^i}   + \sum_{i=0}^t \frac{d_3}{M_{33}^i} \right). 
\end{align}
Define $R_5, R_6$ and for sufficient small stepsize $\gamma \leq \min \left( (1-\rho^2)/\ell, (1-\rho^2)(\mu/\ell)^{1/3}/\ell\right)$
\begin{align*}
R_5 & =\frac{M_{31}M_{12}}{1-M_{11}} + M_{32} \sim \Theta \left(\frac{\ell^2}{1-\rho^2} \right), \notag \\
R_6 & = \frac{1-M_{22}}{M_{23}} - \frac{1}{1-M_{33}}\left(\frac{M_{31}M_{12}}{1-M_{11}} + M_{32} \right) \sim \Theta \left( \frac{(1-\rho^2)^2}{\gamma^2}\right).
\end{align*}
Thus
\begin{align*}
\sum_{i=0}^t\frac{b_i}{M_{33}^i}  \leq \frac{1}{R_6}\frac{b_0}{M_{23}} + \frac{1}{R_6}\frac{1}{1-M_{33}}\left(c_0 + \frac{M_{31}a_0}{1-M_{11}} +  \frac{M_{31}}{1-M_{11}}\sum_{i=0}^t \frac{d_1}{M_{33}^i}   + \sum_{i=0}^t \frac{d_3}{M_{33}^i} \right).
\end{align*}
Applying the above inequality into \eqref{inequ:c:final:4} gives
\begin{align}
&c_{t+1}  \notag\\
\leq &  M_{33}^{t+1}c_0 + \frac{M_{31}M_{33}^{t}}{1-M_{11}}\sum_{i=0}^t\left(a_0 + \sum_{i=0}^t \frac{d_1}{M_{33}^i} \right) + M_{33}^{t}\sum_{i=0}^t \frac{d_3}{M_{33}^i} \notag \\
+ & \left(M_{32} + \frac{M_{31}M_{12}}{1-M_{11}} \right)\frac{M_{33}^{t}}{R_6}\left(\frac{b_0}{M_{23}} + \frac{1}{1-M_{33}}\left(c_0 + \frac{M_{31}a_0}{1-M_{11}} +  \frac{M_{31}}{1-M_{11}}\sum_{i=0}^t \frac{d_1}{M_{33}^i}   + \sum_{i=0}^t \frac{d_3}{M_{33}^i} \right) \right) \notag \\
 \leq & M_{33}^{t+1}c_0 + \frac{M_{31}M_{33}^{t} a_0}{1-M_{11}} + R_5\frac{M_{33}^{t}}{R_6}\left(\frac{b_0}{M_{23}} + \frac{1}{1-M_{33}}\left(c_0 + \frac{M_{31}a_0}{1-M_{11}}  \right)\right) \notag \\
+ & M_{33}^{t}\left(\frac{M_{31}}{1-M_{11}} + \frac{R_5}{R_6}\frac{M_{31}}{1-M_{11}}\right)\sum_{i=0}^t \frac{d_1}{M_{33}^i} + \left(1 + \frac{R_5}{R_6}\frac{1}{1-M_{33}}\right)M_{33}^{t}\sum_{i=0}^t \frac{d_3}{M_{33}^i}. 
\end{align}
We thus substitute the value of $c_t, M, d_1, d_2$ and get the simplified result
\begin{align}
& \frac{1}{n}\E\left[\norm{\mU_{T+1} - \bar u_{T+1}\bfonet_n}^2\right] \notag \\
  & \leq e_{\rho,2}^T \left(1 + \frac{\ell^2\gamma^2}{(1-\rho^2)^4}\right)\frac{1}{n}\E\left[\norm{\mU_{1} - \bar u_{1}\bfonet_n}^2\right] + \frac{\ell^2 e_{\rho,2}^{T-1}}{(1-\rho^2)^3}\frac{1}{n}\E\left[\norm{\mTheta_{0} - \bar \theta_{0}\bfonet_n}^2\right] \notag \\
& + \frac{e_{\rho,2}^{T-1}\ell^4\gamma}{\mu(1-\rho^2)}\E\left[\norm{\bar \theta_{0} - \theta_*}^2\right]  + \left(\frac{\ell^2}{\mu^2} + 1 \right) \frac{\gamma^2\ell^4}{(1-\rho^2)^2n} \E\left[\norm{\mX - \bar{x}\bfonet_n}^2\right]  + \left(\frac{\gamma^3\ell^4}{n \mu} +1\right)\frac{\sigma^2}{(1-\rho^2)^2}. \notag 
\end{align}
The proof is complete.
\end{proof}

To get the recursive result of the $T$-step inner-loop, we need to carefully estimate $ \E\left[\norm{\mU_1 - \bar u_1\bfonet_n}^2\right]$.

{\bf Remark.} Note that by \eqref{eq: U_decompose} we have
\begin{align*}
    \E\left[\norm{\mU_1 - \bar u_1\bfonet_n}^2\right]\leq \E\left[\norm{\mU_0 - \bar u_0\bfonet_n}^2\right] + \frac{1}{1-\rho^2}\E\left[\norm{\mH_1 - \mH_0}^2\right].
\end{align*}
Following \eqref{eq: H_diff} and \eqref{ineq: H_diff_exp}, for 
\[
    z_{s+1}^{(i)}, u_{s+1, z}^{(i)}, h_{s+1, z}^{(i)} =  \texttt{Inner Loop}(z_s^{(i)}, \eta_{z}, g_i(x_s^{(i)}, \cdot), u_{s, z}^{(i)}, h_{s, z}^{(i)}, T)
\]
we know that $\mH_0$ is initialized by $\mH_{T+1}$, the output in the previous $T$-steps inner-loop update (see Algorithm~\ref{alg:inner_loop}). Hence, we know
\begin{align*}
    \E\left[\norm{\mH_1 - \mH_0}^2\right]\leq &6n\sigma_z^2 + 3\sum_{i=1}^{n}\E\left[\norm{\nabla_yg_i(x_s^{(i)}, z_s^{(i)}) - \nabla_y g_i(x_{s-1}^{(i)}, z_s^{(i)})}^2\right] \\
    \leq &6n\sigma_z^2 + 9\ell_{g,1}^2\E\left[\norm{\mX_s - \bar x_s\bfonet_n}^2 + \norm{\mX_{s-1} - \bar x_{s-1}\bfonet_n}^2 + n\norm{\bar x_s - \bar x_{s-1}}^2\right].
\end{align*}
Combining the above conclusions we know for $s \geq 1$
\begin{align}
   \E\left[\norm{\mU_1 - \bar u_1\bfonet_n}^2\right] 
     &  \leq\E\left[\norm{\mU_0 - \bar u_0\bfonet_n}^2\right] + \frac{6n\sigma_z^2}{1-\rho^2} \notag \\
     & + \frac{9\ell_{g,1}^2}{1-\rho^2} \E\left[\norm{\mX_s - \bar x_s\bfonet_n}^2 + \norm{\mX_{s-1} - \bar x_{s-1}\bfonet_n}^2 + n\norm{\bar x_s - \bar x_{s-1}}^2\right]. \label{ineq: U1}
\end{align}
For $s=0$, we provide a more careful estimation.
Following \eqref{eq: H_diff} and \eqref{ineq: H_diff_exp}, for 
\[
    z_{1}^{(i)}, u_{1, z}^{(i)}, h_{1, z}^{(i)} =  \texttt{Inner Loop}(z_0^{(i)}, \eta_{z}, g_i(x_0^{(i)}, \cdot), u_{0, z}^{(i)}, h_{0, z}^{(i)}, T)
\]
We know $H_0=0$, thus
\begin{align}\label{inequ: U1:initial}
    \E\left[\norm{\mH_1 - \mH_0}^2\right] &= \E\left[\norm{\mH_1}^2\right] 
   = \sum_{i=1}^{n}\E\left[\norm{\nabla_y g_i(x_0^{(i)}, z_0^{(i)};\xi_j)}^2\right] \notag \\
&     \leq  2n\sigma_z^2 +  2\sum_{i=1}^{n}\E\left[\norm{\nabla_y g_i(x_0^{(i)}, z_0^{(i)})}^2\right] := 2n (\sigma_z^2 + \ell_{f,0}^2).
\end{align}

\section{Appendix / Consensus and convergence analysis for \texorpdfstring{$Y, Z, X$}{Lg}}\label{sec:consensus_app}

As a direct result of Lemma \ref{lem: inner_error}, we first get the estimations for the consensus of $y$ and $z$.
\begin{restatable}[]{lemma}{lemsinneryzconverge}
\label{lem: yz_convergence}
    Suppose Assumptions \ref{aspt: smoothness}, \ref{aspt: so}, \ref{aspt: sgap}, and \ref{aspt: bdd_grad} hold, we have the following estimations for the consensus of $y$ and $z$:
    \begin{align}
        \frac{1}{2n}\sum_{s=0}^{S}\E\left[\norm{\mZ_{s} - z_{*,s}\bfonet_n}^2\right] & \leq  C_{z_*,0}\Delta_{z_*,0}+  C_{Z,0}\Delta_{Z,0}  + C_{U_z,0}\Delta_{U_z,0}  + C_{z,v} \sum_{s=0}^{S}\E\left[\norm{\E\left[\bar v_{s+1}\middle|\cF_s\right]}^2\right] \notag \\ & + C_{z,vs} \sum_{s=0}^{S}\E\left[\norm{\bar v_{s+1}}^2\right] 
 + C_{z,x} \frac{1}{n}\sum_{s=0}^{S}\E\left[\norm{\mX_{s} - \bar x_{s}\bfonet_n}^2\right] + S \cdot C_{z,\sigma}  \sigma_z^2 \notag \\
     \frac{1}{2n}\sum_{s=0}^{S}\E\left[\norm{\mY_{s} - y_{*,s}^{\alpha}\bfonet_n}^2\right] &   \leq  C_{y_*,0}\Delta_{y_*,0} +  C_{Y,0}\Delta_{Y,0} + C_{U_y,0}\Delta_{U_y,0}  + C_{y,v}\sum_{s=0}^{S}\E\left[\norm{\E\left[\bar v_{s+1}\middle|\cF_s\right]}^2\right] \notag \\ & + C_{y,vs} \sum_{s=0}^{S}\E\left[\norm{\bar v_{s+1}}^2\right]  
  + C_{y,x} \sum_{s=0}^{S}\frac{1}{n}\E\left[\norm{\mX_s - \bar{x}_s\bfonet_n}^2\right] + S C_{y,\sigma} \sigma_y^2  \notag 
    \end{align}
    where the constants $C_{z_*,0}, C_{Z,0}, C_{U_z,0}, \Delta_{z_*,0}, \Delta_{Z,0}, \Delta_{U_z,0}, C_{z,v}, C_{z,vs},C_{z,x},C_{z,\sigma}$ are defined in \eqref{inequ:z:const} and $C_{y_*,0}, C_{Y,0}, C_{U_y,0}, \Delta_{y_*,0}, \Delta_{Y,0}, \Delta_{U_y,0}, C_{y,v}, C_{y,vs}, C_{y,x}, C_{y,\sigma}$ are defined in \eqref{inequ:y:const}.
\end{restatable}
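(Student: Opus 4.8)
The statement is essentially obtained by running Lemma~\ref{lem: inner_error} once per outer iteration $s$ and summing, so the first step is to instantiate that lemma correctly for each of the two inner subroutines of Algorithm~\ref{alg:stochastic_minmax:onestage}. For the $z$-update at outer step $s$ we take $\phi_i = g_i(x_s^{(i)},\cdot)$, so that $\mu=\mu_g$, $\ell=\ell_{g,1}$, $\gamma=\eta_z$, the target is $\theta_* = z_{*,s}$, the initial state is $(\bar\theta_0,\mTheta_0)=(\bar z_s,\mZ_s)$ and the output is $\mTheta_T=\mZ_{s+1}$; for the $y$-update we take $\phi_i = f_i(x_s^{(i)},\cdot)+\alpha g_i(x_s^{(i)},\cdot)$, so that $\mu=\mu_g\alpha/2$, $\ell\lesssim\alpha\ell_{g,1}$, $\gamma=\eta_y$, $\theta_*=y_{*,s}^{\alpha}$, and the ratio $\ell/\mu$ — hence every contraction factor in Lemma~\ref{lem: inner_error} — stays of order $\kappa$, independent of $\alpha$. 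The quantity ``$\mU_1$'' that appears on the right-hand sides of Lemma~\ref{lem: inner_error} is then eliminated using the Remark following that lemma: for $s\ge1$ estimate \eqref{ineq: U1} replaces $\frac1n\E[\|\mU_1-\bar u_1\bfonet_n\|^2]$ by $\frac1n\E[\|\mU_s-\bar u_s\bfonet_n\|^2]$ plus a variance term plus $\ell_{g,1}^2$ times $\frac1n(\E\|\mX_s-\bar x_s\bfonet_n\|^2+\E\|\mX_{s-1}-\bar x_{s-1}\bfonet_n\|^2+n\eta_x^2\E\|\bar v_s\|^2)$, where I use $\bar x_s-\bar x_{s-1}=-\eta_x\bar v_s$ from line~9 of Algorithm~\ref{alg:stochastic_minmax:onestage} together with Lemma~\ref{lem: gt}; for $s=0$ I instead use \eqref{inequ: U1:initial} since there $\mH_0=0$, which only modifies the initial-condition constants.

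The second ingredient is the drift of the targets between consecutive outer iterations, since the inner loop at step $s$ only contracts toward the target attached to $\bar x_s$, not $\bar x_{s+1}$. Applying Lemma~\ref{lem: cs} with a small parameter $c$ and the Lipschitz continuity of $z_*(\cdot)$ (resp. $y_*^{\alpha}(\cdot)$) from Lemma~\ref{lem:appendix:zy},
\begin{align*}
\E\left[\norm{\bar z_{s+1} - z_{*,s+1}}^2\right] \leq (1+c)\,\E\left[\norm{\bar z_{s+1} - z_{*,s}}^2\right] + (1+c^{-1})\,\kappa^2\eta_x^2\,\E\left[\norm{\bar v_{s+1}}^2\right],
\end{align*}
and the analogous inequality for $y$ with constant $9\kappa^2$. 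Choosing $c$ comparable to the contraction gap (so $c\asymp\mu_g\eta_z$ for $z$ and $c\asymp\mu_g\alpha\eta_y$ for $y$) keeps $(1+c)$ times the contraction factor strictly below $1$, while the $(1+c^{-1})$ inflation of the drift is absorbed into the coefficients $C_{z,vs},C_{z,v}$ (resp. $C_{y,vs},C_{y,v}$) after splitting $\bar v_{s+1}=\E[\bar v_{s+1}\mid\cF_s]+(\bar v_{s+1}-\E[\bar v_{s+1}\mid\cF_s])$ and bounding the conditional variance by $\cO(\alpha^2/n)$. Combining this with the three bounds of Lemma~\ref{lem: inner_error} and the $\mU_1$-elimination above gives, for each subroutine, a coupled three-dimensional linear recursion $\Omega_{s+1}\le \mathcal M\,\Omega_s+b_s$ for $\Omega_s=(\E\|\bar\theta_s-\theta_{*,s}\|^2,\ \frac1n\E\|\mTheta_s-\bar\theta_s\bfonet_n\|^2,\ \frac1n\E\|\mU_s-\bar u_s\bfonet_n\|^2)$, where $\mathcal M$ has sub-unit diagonal and off-diagonal entries of size $\cO(\gamma/(1-\rho^2))$ under the stated step-size restrictions, and $b_s$ collects only the exogenous quantities $\frac1n\E\|\mX_s-\bar x_s\bfonet_n\|^2$, $\frac1n\E\|\mX_{s-1}-\bar x_{s-1}\bfonet_n\|^2$, $\E\|\bar v_{s+1}\|^2$, $\E\|\E[\bar v_{s+1}\mid\cF_s]\|^2$, and $\sigma^2$. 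Summing over $s=0,\dots,S$, re-indexing the shifted terms ($\sum_s\|\mX_{s-1}-\bar x_{s-1}\bfonet_n\|^2\le\sum_s\|\mX_s-\bar x_s\bfonet_n\|^2$, similarly for $\bar v$ using $v_0=0$), and using Lemma~\ref{lem: basic_ineq}/Lemma~\ref{lem:double:sum} for the geometric sums generated by the initial conditions $\Delta_{z_*,0},\Delta_{Z,0},\Delta_{U_z,0}$, reduces the claim to solving the $3\times3$ linear system in $(\sum_s\Omega_s^{(1)},\sum_s\Omega_s^{(2)},\sum_s\Omega_s^{(3)})$ by Gaussian elimination exactly as in the proof of Lemma~\ref{lem: inner_error}; the left-hand side of the lemma is then read off as $\frac{1}{2n}\sum_s\E\|\mZ_s-z_{*,s}\bfonet_n\|^2=\frac12\sum_s(\Omega_s^{(1)}+\Omega_s^{(2)})$, and symmetrically for $Y$.

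The main obstacle is precisely this last reduction: one has to check that, under the prescribed $\eta_x,\eta_y,\eta_z$ (and their relative magnitudes, since $\eta_x^2/\eta_z$ controls the drift feedback and $\eta_x$ controls the $\mX$-feedback through \eqref{ineq: U1}), every pivot encountered in the elimination — the analogues of the quantities $R_1,R_3,R_6$ in the proof of Lemma~\ref{lem: inner_error} — stays strictly positive and $\Theta$-bounded, so that $(\mathcal I-\mathcal M)^{-1}$ is entrywise nonnegative and the constants $C_{z_*,0},C_{Z,0},C_{U_z,0},C_{z,v},C_{z,vs},C_{z,x},C_{z,\sigma}$ (and the $y$-analogues, which additionally carry the $\alpha^2$ from $\sigma_y^2$ and $\ell_{y,1}$) come out finite. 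The remaining parts — the $s=0$ edge case via \eqref{inequ: U1:initial} and the variance accounting for $\bar v_{s+1}$ — are routine; the bulk of the work is this bookkeeping, and the resulting constants are exactly those collected in \eqref{inequ:z:const} and \eqref{inequ:y:const}.
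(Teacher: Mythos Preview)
Your overall architecture matches the paper: instantiate Lemma~\ref{lem: inner_error} for each subroutine, eliminate $\mU_1$ via \eqref{ineq: U1}/\eqref{inequ: U1:initial}, set up a coupled $3\times 3$ recursion in $(\E\|\bar\theta_s-\theta_{*,s}\|^2,\,\tfrac1n\E\|\mTheta_s-\bar\theta_s\bfonet_n\|^2,\,\tfrac1n\E\|\mU_s-\bar u_s\bfonet_n\|^2)$, sum over $s$, and solve the resulting linear system exactly as in the proof of Lemma~\ref{lem: inner_error}. That part is fine.

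The gap is in your drift step. You bound the target shift by the crude Young inequality
\[
\E\bigl[\|\bar z_{s+1}-z_{*,s+1}\|^2\bigr]\le (1+c)\,\E\bigl[\|\bar z_{s+1}-z_{*,s}\|^2\bigr]+(1+c^{-1})\,\kappa^2\eta_x^2\,\E\bigl[\|\bar v_{s+1}\|^2\bigr],
\]
with $c\asymp\mu_g\eta_z$, so $(1+c^{-1})\asymp 1/\eta_z$. After the further $1/\eta_z$ from the geometric sum, the coefficient you attach to $\sum_s\E\|\bar v_{s+1}\|^2$ is of order $\eta_x^2/\eta_z^2$. But the constants the lemma claims (those in \eqref{inequ:z:const}) have $C_{z,vs}=\cO\!\bigl(\eta_x^2(1+1/a_2)/\eta_z\bigr)$, i.e.\ one power of $\eta_z$ better. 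This is not cosmetic: with the parameter choices of Theorem~\ref{thm: warm_start_rate} (so $\alpha^2\eta_x^2/\eta_z^2=\Theta(1)$), your extra $1/\eta_z$ turns the variance contribution $\alpha^2 C_{z,vs}\cdot\sigma_x^2/n$ into a term of order $S^{2/7}$ instead of $(nS)^{-2/7}$, and the main theorem would not close. Your proposed splitting $\bar v_{s+1}=\E[\bar v_{s+1}\mid\cF_s]+(\bar v_{s+1}-\E[\bar v_{s+1}\mid\cF_s])$ does not fix this, because it is precisely the variance part that carries the bad prefactor.

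The paper handles the drift of the \emph{first} coordinate differently (the crude bound \eqref{inequ:alert:z1} is used only for the two consensus coordinates). For $\E\|\bar z_s-z_{*,s+1}\|^2$ it expands the cross term $2\langle z_{*,s}-\bar z_s,\,z_{*,s+1}-z_{*,s}\rangle$, Taylor-expands $z_{*,s+1}-z_{*,s}$ via the \emph{smoothness} of $z_*(\cdot)$ from Lemma~\ref{lem_aspt: smoothness:yz} (not just its Lipschitz continuity), takes conditional expectation so that the first-order part involves $\|\E[\bar v_{s+1}\mid\cF_s]\|$, and bounds the second-order remainder $\ell_{\nabla z_*}\eta_x^2\|\bar v_{s+1}\|^2\|\bar z_s-z_{*,s}\|$ using Assumption~\ref{aspt: bdd_grad}. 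This is the step that produces the two free Young parameters $a_1,a_2$ appearing in \eqref{inequ:z:const}, and it is exactly why Assumption~\ref{aspt: bdd_grad} and the Hessian-Lipschitz condition on $f$ (needed for $\ell_{\nabla y_*}$) are among the hypotheses --- neither would be used by your approach. You need to replace your crude drift bound on the convergence coordinate by this refined analysis; the rest of your plan then goes through and yields the stated constants.
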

\begin{proof}(of Lemma~\ref{lem: yz_convergence})
We train the variables $y,z$ with  $T$-steps $b$-batch gradient descent for $b \geq 1$.
    Note that from Algorithm \ref{alg:stochastic_minmax:onestage} and by Lemma \ref{lem: gt} we know the updates of $\bar y_s, \bar z_s$ take the form
    \begin{align}
        &\bar y_{s+1} = \bar y_s - \eta_{y}\bar v_{s+1, y} = \bar y_s - \eta_{s,y}\bar \delta_{s+1, y}  \\
        &\bar z_{s+1} = \bar z_s - \eta_{z}\bar v_{s+1, z} = \bar z_s - \eta_{z}\bar \delta_{s+1, z}.
    \end{align}
The variable $z$ is to optimize the objective $g_i$ which is $\mu_g$-strongly convex and $\ell_{g,1}$-smooth. The stochastic gradient $h_{t+1,z}$ of updating $z$ is supposed to be variance-bounded by $\sigma_z^2 = \sigma_g^2$. By Lemma \ref{lem: inner_error} and Inequality \eqref{ineq: U1}, we know if the step-size $\eta_{z}$ satisfies that 
    \begin{align}
    \eta_{z} \leq \mathcal{O} \left(\min \left\lbrace {\frac{1-\rho^2}{\ell_{g,1}}}, \frac{(1-\rho^2)\sqrt{\mu_g}}{\ell_{g,1}\sqrt{\ell_{g,1}}}, \frac{(1-\rho^2)^2}{\ell_{g,1}}\right\rbrace \right)
    \end{align}
    then for $s\geq 1$
    \begin{align}
       &\E\left[\norm{\bar z_{s+1} - z_{*, s+1}}^2\right]  \notag \\
       & \leq  \left(1-\mu_g \eta_{z}\right)^T\left( 1+\frac{\eta_z^4\ell_{g,1}^6}{\mu_{g}^2(1-\rho^2)^4} \right) \E\left[\norm{\bar z_s - z_{*, s+1}}^2\right]   + \left(1-\mu_g \eta_{z}\right)^{T-1}\frac{\eta_z\ell_{g,1}^2}{\mu_g(1-\rho^2)}\frac{1}{n}\E\left[\norm{\mZ_{s} - \bar z_{s}\bfonet_n}^2\right] \notag \\  & +\min \left(T, \frac{1}{\mu_g\eta_{z}}\right)\left(\frac{C_{x,1}\eta_z}{n}\E\left[\norm{\mX_s - \bar x_s\bfonet_n}^2\right] + \frac{C_{\sigma,1}}{n}\eta_z^2\sigma_{z}^2  \right)   \notag \\
       & + \frac{\left(1-\mu_g \eta_{z}\right)^{T-1}\eta_z^3\ell_{g,1}^2}{\mu_g(1-\rho^2)^3n}\E\left[\norm{\mU_{s, z} - \bar u_{s,z}\bfonet_n}^2\right] +  \left(1-\mu_g \eta_{z}\right)^{T-1}\frac{\eta_z^3\ell_{g,1}^2}{\mu_g(1-\rho^2)^4}6\sigma_{z}^2\notag \\
        & + \frac{\left(1-\mu_g \eta_{z}\right)^{T-1}\eta_z^3\ell_{g,1}^2}{\mu_g(1-\rho^2)^4}\left( 9\ell_{g,1}^2\frac{1}{n}\E\left[\norm{\mX_s - \bar x_s\bfonet_n}^2 + \norm{\mX_{s-1} - \bar x_{s-1}\bfonet_n}^2 + n\norm{\bar x_s - \bar x_{s-1}}^2\right]\right) \notag \\
        &  \lesssim  e_z^T \E\left[\norm{\bar z_s - z_{*, s+1}}^2\right] + \frac{e_z^{T-1}\eta_z}{1-\rho^2} \frac{1}{n}\E\left[\norm{\mZ_{s} - \bar z_{s}\bfonet_n}^2\right]  +  \frac{e_z^{T-1}\eta_z^3}{(1-\rho^2)^3}\frac{1}{n}\E\left[\norm{\mU_{s, z} - \bar u_{s,z}\bfonet_n}^2\right] \notag \\
        & +  \min \left(T, \frac{1}{\mu_g\eta_{z}}\right)\left(\frac{C_{x,1}\eta_z}{n}\E\left[\norm{\mX_s - \bar x_s\bfonet_n}^2\right] + \frac{C_{\sigma,1}}{n}\eta_z^2\sigma_{z}^2  \right)  \notag \\
         & + \frac{e_z^{T-1}\eta_z^3}{(1-\rho^2)^4}\left(\frac{1}{n}\E\left[\norm{\mX_{s-1} - \bar x_{s-1}\bfonet_n}^2 + n\norm{\bar x_s - \bar x_{s-1}}^2\right]\right)
        \label{ineq: z_recur}
    \end{align}
    where  $e_z = 1-2\mu_g \eta_{z}/3$ and for simplicity we choose sufficient small $\eta_z$ and any $T \geq 1$ such that 
    \begin{align}
   \left(1-\mu_g \eta_{z}\right)^T\left( 1+\frac{\eta_z^4\ell_{g,1}^6}{\mu_{g}^2(1-\rho^2)^4} \right)  &\leq \left(1-\frac{2\mu_g\eta_z}{3} \right)^T \notag \\
  \left(1-\mu_g \eta_{z}\right)^{T-1} \frac{\eta_z^3\ell_{g,1}^4}{\mu_g(1-\rho^2)^4} & \leq  \frac{\eta_z^3\ell_{g,1}^4}{\mu_g(1-\rho^2)^4} \leq \min \left(T, \frac{1}{\mu_z\eta_z} \right)C_{x,1}\eta_z \notag \\
  \left(1-\mu_g \eta_{z}\right)^{T-1} \frac{\eta_z\ell_{g,1}^2  n}{\mu_g(1-\rho^2)^4} & \leq  \frac{\eta_z\ell_{g,1}^2  n}{\mu_g(1-\rho^2)^4} \leq C_{\sigma,1} 
    \end{align}
    with constant $C_{x, 1} = \mathcal{O}\left(1\right)$  and $C_{\sigma, 1} = \mathcal{O}\left(\eta_z n + 1\right)$ for variable $z$. 
    We also have
    \begin{align*}
        \norm{\bar z_s - z_{*, s+1}}^2
        = &\norm{\bar z_s - z_{*, s}}^2 + \norm{z_{*, s} - z_{*, s+1}}^2 + 2\<z_{*, s} - \bar z_s, z_{*, s+1} - z_{*, s}> \\
        \leq &\norm{\bar z_s - z_{*, s}}^2 + \eta_{x}^2\ell_{z_*}^2\norm{\bar v_{s+1}}^2 + 2\<z_{*, s} - \bar z_s, z_{*, s+1} - z_{*, s}>
    \end{align*}
    When we consider the convergence of the consensus convergence $\mZ$ and $\mU$, we only need the following inequality
    \begin{align}\label{inequ:alert:z1}
     \norm{\bar z_s - z_{*, s+1}}^2 \leq  2\norm{\bar z_s - z_{*, s}}^2 + 2\norm{z_{*, s} - z_{*, s+1}}^2 = 2\norm{\bar z_s - z_{*, s}}^2 + 2\eta_{x}^2\ell_{z_*}^2\norm{\bar v_{s+1}}^2.
    \end{align}
  To ensure convergence of variable $\bar z_s$, it is necessary to carefully estimate the cross-term. For any $a_1, a_2>0$, we have
    \begin{align*}
        &\E\left[2\<z_{*, s} - \bar z_s, z_{*, s+1} -  z_{*, s}>\middle|\cF_s\right] \notag \\
        & =  \E\left[2\<z_{*, s} - \bar z_s, \<\nabla z_*(\bar x_s), \bar x_{s+1} - \bar x_s>>\middle|\cF_s\right]  + \E\left[2\<z_{*, s} - \bar z_s, z_{*, s+1} -  z_{*, s} - \<\nabla z_*(\bar x_s), \bar x_{s+1} - \bar x_s>>\middle|\cF_s\right] \\
        \leq & 2\eta_{x}\ell_{z_*}\norm{\bar z_s - z_{*, s}} \norm{\E\left[\bar v_{s+1}\middle|\cF_s\right]}+ \E\left[2\norm{\bar z_s - z_{*, s}}\norm{z_{*, s+1} -  z_{*, s} - \<\nabla z_*(\bar x_s), \bar x_{s+1} - \bar x_s>}\middle|\cF_s\right]  \\
        \leq &  \eta_{x}\ell_{z_*}\left(a_1\norm{\bar z_s - z_{*, s}}^2 + \frac{1}{a_1}\norm{\E\left[\bar v_{s+1}\middle|\cF_s\right]}^2\right) + \ell_{\nabla z_*}\eta_{x}^2\E\left[\norm{\bar z_s - z_{*, s}}\norm{\bar v_{s+1}}^2\middle|\cF_s\right] \\ 
       \leq &  \eta_{x}\ell_{z_*}\left(a_1\norm{\bar z_s - z_{*, s}}^2 + \frac{1}{a_1}\norm{\E\left[\bar v_{s+1}\middle|\cF_s\right]}^2\right)   + \ell_{\nabla z_*}\eta_{x}^2\left(\frac{a_2}{2}\E\left[\norm{\bar z_s - z_{*, s}}^2\norm{\bar v_{s+1}}^2|\cF_s\right] + \frac{1}{2a_2}\E\left[\norm{\bar v_{s+1}}^2 \middle|\cF_s\right] \right) \notag \\
     \leq  & \eta_{x}\ell_{z_*}\left(a_1\norm{\bar z_s - z_{*, s}}^2 + \frac{1}{a_1}\norm{\E\left[\bar v_{s+1}\middle|\cF_s\right]}^2\right) + \ell_{\nabla z_*}\eta_{x}^2\left(\frac{a_2 c_{\delta} \alpha^2}{2}\norm{\bar z_s - z_{*, s}}^2  + \frac{1}{2a_2}\E\left[\norm{\bar v_{s+1}}^2 \middle|\cF_s\right] \right) 
    \end{align*}
    where the second inequality uses smoothness of $z^*(\cdot)$.
    Note that here we carefully analyze the cross term by using the method introduced in \cite{chen2021closing}. Note that this type of analysis utilizes Taylor expansion that leads to better error bound, and can be avoided by using the Moving-Average trick in \cite{chen2023optimal}. 
    Combining the above inequalities, we have
    \begin{align}
        \E\left[\norm{\bar z_s - z_{*, s+1}}^2\right]\leq &\left(1 + a_1\eta_{x}\ell_{z_*} + \frac{a_2\ell_{\nabla z_*}c_{\delta}\eta_{x}^2\alpha^2}{2}\right)\E\left[\norm{\bar z_s - z_{*,s}}^2\right] \notag\\
        &+ \frac{\eta_{x}\ell_{z^*}}{a_1}\E\left[\norm{\E\left[\bar v_{s+1}\middle|\cF_s\right]}^2\right]+ \eta_{x}^2 \left(\ell_{z_*}^2 + \frac{\ell_{\nabla z_*}}{2a_2} \right)\E\left[\norm{\bar v_{s+1}}^2\right] \label{ineq: alset_term}
    \end{align}
    Choosing $\eta_{x}, \eta_{z}$ and $T$ such that 
    \begin{align}\label{ineq: xzt}
        r_z^T = \left(1 + a_1\eta_{x}\ell_{z_*} + \frac{a_2\ell_{\nabla z_*}c_{\delta}\eta_{x}^2\alpha^2}{2}\right)\left(1-\frac{2\mu_g\eta_{z}}{3}\right)^T\leq \left(1 - \frac{\mu_g\eta_{z}}{3}\right)^T.
    \end{align}
    Combining \eqref{ineq: z_recur}, \eqref{ineq: alset_term} and \eqref{ineq: xzt} gives
    \begin{align}
& \E\left[\norm{\bar z_{s+1} - z_{*, s+1}}^2\right] \notag \\
\lesssim  & r_z^T \E\left[\norm{\bar z_s - z_{*, s}}^2\right]  +  \frac{e_z^{T-1}\eta_z}{1-\rho^2} \frac{1}{n}\E\left[\norm{\mZ_{s} - \bar z_{s}\bfonet_n}^2\right]  +  \frac{e_z^{T-1}\eta_z^3}{(1-\rho^2)^3}\frac{1}{n}\E\left[\norm{\mU_{s, z} - \bar u_{s,z}\bfonet_n}^2\right] \notag \\
+ &  e_z^T \frac{\eta_{x}\ell_{z^*}}{a_1}\E\left[\norm{\E\left[\bar v_{s+1}\middle|\cF_s\right]}^2\right] + e_z^T\eta_{x}^2 \left(\ell_{z_*}^2 + \frac{\ell_{\nabla z_*}}{2a_2} \right)\E\left[\norm{\bar v_{s+1}}^2\right]  + \frac{e_z^{T}\eta_z^3\eta_x^2}{(1-\rho^2)^4}\E\left[\norm{\bar v_s}^2\right]\notag 
\\
 + &  \min \left(T, \frac{1}{\mu_g\eta_{z}}\right)\left(\frac{C_{x,1}\eta_z}{n}\E\left[\norm{\mX_s - \bar x_s\bfonet_n}^2\right] + \frac{C_{\sigma,1}}{n}\eta_z^2\sigma_{z}^2  \right) 
        +    \frac{e_z^{T}\eta_z^3}{(1-\rho^2)^4}\frac{1}{n}\E\left[\norm{\mX_{s-1} - \bar x_{s-1}\bfonet_n}^2\right].\label{inequ:z:recur}
\end{align}
where $e_z = 1- 2\mu_g\eta_z/3$ and $r_z \leq 1-\mu_g\eta_z/3$. Recalling the result of Lemma~\ref{lem: inner_error} for the consensus of $z$ and incorporating Inequalities~\eqref{ineq: U1} and \eqref{inequ:alert:z1}, we have
\begin{align}
& \frac{1}{n}\E\left[\norm{\mZ_{s+1} - \bar z_{s+1}\bfonet_n}^2\right] \notag \\
\lesssim  &  \left(1 + \frac{\eta_z^2\ell_{g,1}^2}{(1-\rho^2)^4} \right)\frac{e_{\rho,1}^{T}}{n}\E\left[\norm{\mZ_{s} - \bar z_s\bfonet_n}^2\right] +  \frac{e_{\rho,1}^T\eta_z^3}{(1-\rho^2)^3}\E\left[\norm{\bar z_s - z_{*,s}}^2\right]   + \frac{e_{\rho,1}^T \eta_z^2}{(1-\rho^2)^2 n}\E\left[\norm{\mU_{s,z} - \bar u_{s,z}\bfonet_n}^2\right]   \notag \\
+ &  \frac{\eta_z^2}{(1-\rho^2)n}\left(C_{x,2}\eta_z^2+ \frac{e_{\rho,1}^T}{(1-\rho^2)^2} \right)\E\left[\norm{\mX_s - \bar{x}_s\bfonet_n}^2\right]  + \frac{e_{\rho,1}^T \eta_z^2}{(1-\rho^2)^3} \frac{1}{n}\E\left[\norm{\mX_{s-1} - \bar x_{s-1}\bfonet_n}^2\right] \notag \\
+ &    \eta_z^2\left(\frac{C_{\sigma,2}}{n}+\frac{e_{\rho,1}^T}{(1-\rho^2)^2} \right)\sigma_z^2 + \frac{e_{\rho,1}^T \eta_z^2}{(1-\rho^2)^3} \eta_x^2\E\left[\norm{\bar v_s}^2\right]  +   \frac{e_{\rho,1}^T\eta_z^3\eta_{x}^2 }{(1-\rho^2)^3}\E\left[\norm{\bar v_{s+1}}^2\right] \label{inequ: Z:recur}
\end{align}
for sufficient small $\eta_z \leq (1-\rho^2)^3/\ell_{g,1}$ such that $\left(1 + \frac{\eta_z^2\ell_{g,1}^2}{(1-\rho^2)^4} \right)e_{\rho,1} \leq \frac{3+\rho^2}{4}$, then $\left(1 + \frac{\eta_z^2\ell_{g,1}^2}{(1-\rho^2)^4} \right)e_{\rho,1}^T \leq e_{\rho,2}^T$ for any $T \geq 1$. Similarly, we recall the result of Lemma~\ref{lem: inner_error} for the consensus convergence $\mU$ of $z$  and incorporate Inequalities~\eqref{ineq: U1} and \eqref{inequ:alert:z1}
\begin{align}
& \frac{1}{n}\E\left[\norm{\mU_{s+1,z} - \bar u_{s+1,z}\bfonet_n}^2\right] \notag \\
\lesssim &   e_{\rho,2}^T \left(1 + \frac{\ell_{g,1}^2\eta_z^2}{(1-\rho^2)^4}\right)\frac{1}{n}\E\left[\norm{\mU_{s,z} - \bar u_{s,z}\bfonet_n}^2\right] + \frac{e_{\rho,2}^{T-1}\eta_z}{(1-\rho^2)}\E\left[\norm{\bar z_{s} - z_{*,s}}^2\right]  +  \frac{e_{\rho,2}^{T-1}}{(1-\rho^2)^3}\frac{1}{n}\E\left[\norm{\mZ_{s} - \bar z_{s}\bfonet_n}^2\right]  \notag \\ +  &   
 \left(C_{x,3}\eta_z^2  + \frac{e_{\rho,2}^T}{1-\rho^2}\right)\frac{1}{n}\E\left[\norm{\mX_s - \bar{x}_s\bfonet_n}^2\right] + \frac{e_{\rho,2}^T}{(1-\rho^2)n}\E\left[\norm{\mX_{s-1} - \bar x_{s-1}\bfonet_n}^2\right] \notag \\
 + &  \left(C_{\sigma, 3}  + \frac{e_{\rho,2}^T}{1-\rho^2}\right)\sigma_z^2 + \frac{e_{\rho,2}^T\eta_x^2}{1-\rho^2} \E\left[\norm{\bar v_s}^2\right] +\frac{e_{\rho,2}^{T-1}\eta_z\eta_{x}^2}{(1-\rho^2)}\E\left[\norm{\bar v_{s+1}}^2\right] \label{inequ: U_z:recur}
\end{align}
for sufficient small $\eta_z \leq (1-\rho^2)^3/\ell_{g,1}$ such that $r_{U,z} = e_{\rho,2} \left(1 + \frac{\ell_{g,1}^2\eta_z^2}{(1-\rho^2)^4}\right) \leq \frac{4+\rho^2}{5} < 1$ for any $T \geq 1$.
Here we use the same idea in Lemma~\ref{lem: inner_error} and define the vector function $\Omega_{Z,s}$:
\begin{align}
\Omega_{Z,s} = \left(\E\left[\norm{\bar z_s - z_{*, s}}^2\right],  \frac{1}{n}\E\left[\norm{\mZ_{s} - \bar z_s\bfonet_n}^2\right], \frac{1}{n}\E\left[\norm{\mU_{s,z} - \bar u_{s,z}\bfonet_n}^2\right]\right)
\end{align}
and an $3\times 3$ matrix $M_{Z}$
\begin{equation}
M_{Z} = 
\begin{pmatrix}
M_{11} & M_{12} & M_{13}\\
M_{21} & M_{22} & M_{23}\\
M_{31} & M_{32} & M_{33}\\
\end{pmatrix}
\end{equation}
where
\begin{align}
   &  M_{11} = r_z^T; \quad M_{12}=\frac{e_z^{T-1}\eta_z}{1-\rho^2}; \quad M_{13} = \frac{e_z^{T-1}\eta_z^3}{(1-\rho^2)^3} \notag \\
    & M_{21} = \frac{e_{\rho,1}^T\eta_z^3}{(1-\rho^2)^3};\quad M_{22} = e_{\rho,2}^T; \quad M_{23} = \frac{e_{\rho,1}^T \eta_z^2}{(1-\rho^2)^2} \notag \\
    & M_{31}= \frac{e_{\rho,2}^{T-1}\eta_z}{(1-\rho^2)}; \quad M_{32} = \frac{e_{\rho,2}^{T-1}}{(1-\rho^2)^3}; \quad M_{33} = r_{U,z}^T.
\end{align}
By the above inequalities \eqref{inequ:z:recur}, \eqref{inequ: Z:recur} and \eqref{inequ: U_z:recur}, we have
\begin{align}
\Omega_{Z,s+1} \leq M_{Z}\Omega_{Z,s} + \tilde{C}_{z,s}
\end{align}
where $\tilde{C}_{z,s} \in \R^3$ is defined by
\begin{align}
 \tilde{C}_{z,s}[1] 
& = \min \left(T, \frac{1}{\mu_g\eta_{z}}\right)\left(\frac{C_{x,1}\eta_z}{n}\E\left[\norm{\mX_s - \bar x_s\bfonet_n}^2\right] + \frac{C_{\sigma,1}}{n}\eta_z^2\sigma_{z}^2  \right) \notag \\
& + \frac{e_z^{T}\eta_z^3}{(1-\rho^2)^4}\frac{1}{n}\E\left[\norm{\mX_{s-1} - \bar x_{s-1}\bfonet_n}^2\right]   + e_z^T \frac{\eta_{x}\ell_{z^*}}{a_1}\E\left[\norm{\E\left[\bar v_{s+1}\middle|\cF_s\right]}^2\right] \notag \\
& + e_z^T\eta_{x}^2 \left(\ell_{z_*}^2 + \frac{\ell_{\nabla z_*}}{2a_2} \right)\E\left[\norm{\bar v_{s+1}}^2\right]  + \frac{e_z^{T}\eta_z^3\eta_x^2}{(1-\rho^2)^4}\E\left[\norm{\bar v_s}^2\right] \notag \\
\tilde{C}_{z,s}[2] 
& = \frac{\eta_z^2}{(1-\rho^2)n}\left(C_{x,2}\eta_z^2+ \frac{e_{\rho,1}^T}{(1-\rho^2)^2} \right)\E\left[\norm{\mX_s - \bar{x}_s\bfonet_n}^2\right]  \notag \\
& + \frac{e_{\rho,1}^T \eta_z^2}{(1-\rho^2)^3} \frac{1}{n}\E\left[\norm{\mX_{s-1} - \bar x_{s-1}\bfonet_n}^2\right] \notag \\
& + \eta_z^2\left(\frac{C_{\sigma,2}}{n}+\frac{e_{\rho,1}^T}{(1-\rho^2)^2} \right)\sigma_z^2 + \frac{e_{\rho,1}^T \eta_z^2}{(1-\rho^2)^3} \eta_x^2\E\left[\norm{\bar v_s}^2\right]  +   \frac{e_{\rho,1}^T\eta_z^3\eta_{x}^2 }{(1-\rho^2)^3}\E\left[\norm{\bar v_{s+1}}^2\right] \notag \\
 \tilde{C}_{z,s}[3] & = \left(C_{x,3}\eta_z^2  + \frac{e_{\rho,2}^T}{1-\rho^2}\right)\frac{1}{n}\E\left[\norm{\mX_s - \bar{x}_s\bfonet_n}^2\right] + \frac{e_{\rho,2}^T}{(1-\rho^2)n}\E\left[\norm{\mX_{s-1} - \bar x_{s-1}\bfonet_n}^2\right] \notag \\ &  + \left(C_{\sigma, 3}  + \frac{e_{\rho,2}^T}{1-\rho^2}\right)\sigma_z^2 
+ \frac{e_{\rho,2}^T\eta_x^2}{1-\rho^2} \E\left[\norm{\bar v_s}^2\right] +\frac{e_{\rho,2}^{T-1}\eta_z\eta_{x}^2}{(1-\rho^2)}\E\left[\norm{\bar v_{s+1}}^2\right]. \notag
\end{align}
For simplicity, we also overload the notation and set $\Omega_{Z,s} = (a_s, b_s, c_s)^\top$ and $\tilde{C}_{z,s} = (d_{1,s}, d_{2,s}, d_{3,s})^\top$.
Note that we have
\begin{align}
a_{s+1} & \leq M_{11}a_s + M_{12}b_s + M_{13}c_s + d_{1,s}  \notag \\
b_{s+1} & \leq M_{21}a_s + M_{22}b_s + M_{23}c_s + d_{2,s} \notag \\
c_{s+1} & \leq  M_{31}a_s + M_{32}b_s + M_{33}c_s + d_{3,2}, \notag 
\end{align}
thus we apply Lemma \ref{lem: basic_ineq} (\eqref{ineq: aksum} to $a_s, b_s, c_s$ and let $\tau_k = 1$)
\begin{align}
\sum_{i=0}^s a_i & \leq \frac{1}{1-M_{11}}\left( a_0 + M_{12}\sum_{i=0}^s b_i + M_{13}\sum_{i=0}^s c_i + \sum_{i=0}^s d_{1,i}\right) \tag{$z:a$} \label{inequ:z:a}\\
\sum_{i=0}^s b_i & \leq \frac{1}{1-M_{22}}\left( b_0 + M_{21}\sum_{i=0}^sa_i + M_{23}\sum_{i=0}^sc_i + \sum_{i=0}^sd_{2,i}\right) \tag{$z:b$} \label{inequ:z:b}\\
\sum_{i=0}^s c_i & \leq \frac{1}{1-M_{33}}\left( c_0 + M_{31}\sum_{i=0}^sa_i + M_{32}\sum_{i=0}^sb_i + \sum_{i=0}^sd_{3,i}\right).  \tag{$z:c$} \label{inequ:z:c}
\end{align}
Incorporating \eqref{inequ:z:c} into \eqref{inequ:z:a} and \eqref{inequ:z:b}, let $Q_1=\frac{M_{13}}{1-M_{11}}\frac{1}{1-M_{33}}$, we have
\begin{align}
 \left(1 - Q_1M_{31}\right)\sum_{i=0}^s a_i 
& \leq \frac{a_0}{1-M_{11}}  +  c_0 Q_1  + \left(Q_1M_{32} + \frac{M_{12}}{1-M_{11}}\right)\sum_{i=0}^sb_i \notag \\
& + Q_1\sum_{i=0}^sd_{3,i}  + \frac{1}{1-M_{11}}\sum_{i=0}^sd_{1,i}. \tag{$z:a'$} \label{inequ:z:a'}
\end{align}
Let $Q_2 = \frac{M_{23}}{1-M_{22}}\frac{1}{1-M_{33}}$, we have
\begin{align}
\left( 1- Q_2M_{32}\right) \sum_{i=0}^s b_i & \leq  \frac{b_0}{1-M_{22}} + Q_2c_0 + \left(Q_2M_{31} + \frac{M_{21}}{1-M_{22}}\right)\sum_{i=0}^sa_i  \notag \\
 & + Q_2\sum_{i=0}^sd_{3,i} + \frac{1}{1-M_{22}}\sum_{i=0}^sd_{2,i}. \tag{$z:b'$} \label{inequ:z:b'}
\end{align}
Then we make the operations on the sum of $a_s$ and $b_s$: that $\left(Q_2M_{31} + \frac{M_{21}}{1-M_{22}}\right) \times \eqref{inequ:z:a'} + \left(1 - Q_1M_{31}\right)\times \eqref{inequ:z:b'}$, then 
\begin{align}
& \left(\left(1 - Q_1M_{31}\right)\left( 1- Q_2M_{32}\right)-  \left(Q_2M_{31} + \frac{M_{21}}{1-M_{22}}\right)\left(Q_1M_{32} + \frac{M_{12}}{1-M_{11}}\right)\right) \sum_{i=0}^s b_i \notag \\
& \leq \left(Q_2M_{31} + \frac{M_{21}}{1-M_{22}}\right)\left(\frac{a_0}{1-M_{11}}  +  c_0 Q_1\right) + \left(1 - Q_1M_{31}\right)\left(\frac{b_0}{1-M_{22}} + Q_2c_0\right) \notag \\
& + \left(Q_1Q_2M_{31} + \frac{M_{21}Q_1}{1-M_{22}} +Q_2 \left(1 - Q_1M_{31}\right) \right)\sum_{i=0}^sd_{3,i} +  \frac{\left(1 - Q_1M_{31}\right)}{1-M_{22}} \sum_{i=0}^sd_{2,i} \notag \\
& + \frac{\left(Q_2M_{31} + \frac{M_{21}}{1-M_{22}}\right)}{1-M_{11}}\sum_{i=0}^sd_{1,i}.
\end{align}
Note that to simplify the calculations and also cover the two cases: $T=1$ and $T \gg 1 $, we set  
\begin{align*}
\frac{1}{1 - M_{11}} & = \frac{1}{1- r_z^T}  \sim \max\left(\frac{1}{\mu\gamma}, 2 \right); \notag \\
\frac{1}{1- M_{22}} & \sim \frac{1}{1-M_{33}} \sim \frac{1}{1 - \left(\frac{3 + \rho^2}{4}\right)^T} \sim \max \left(\frac{1}{1-\rho^2},2\right).
\end{align*}
Then 
\begin{align}
Q_1 & = \frac{M_{13}}{1-M_{11}}\frac{1}{1-M_{33}} \sim \Theta \left( \frac{e_{z}^T\eta_z^3}{(1-\rho^2)^3}\max\left( \frac{1}{\mu_g\eta_z(1-\rho^2)},4\right)\right), \notag \\
Q_2 & = \frac{M_{23}}{1-M_{22}}\frac{1}{1-M_{33}} \sim \Theta\left(\frac{e_{\rho,1}^T\eta_z^2}{(1-\rho^2)^2}\max\left( \frac{1}{(1-\rho^2)^2},4\right) \right). \notag 
\end{align}
For any $T\geq 1$ we choose sufficient small stepsize $\eta_z \leq \min\left((1-\rho^2)^{3.5}, (1-\rho^2)\mu_g\right)$ such that 
\begin{align}
 1 - Q_1M_{31} \sim  1- \Theta \left(\max\left(\frac{\eta_z^3}{\mu_g(1-\rho^2)^5}, \frac{\eta_z^4}{(1-\rho^2)^4}\right) \right)  & \geq \frac{2}{3},    \notag \\
 1- Q_2M_{23} \sim 1- \Theta\left( \max \left(\frac{e_{\rho,1}^{2T}\eta_z^2}{(1-\rho^2)^7}, \frac{e_{\rho,1}^{2T}\eta_z^2}{(1-\rho^2)^5}\right) \right) & \geq \frac{2}{3}, \notag \\
\left(\left(1 - Q_1M_{31}\right)\left( 1- Q_2M_{32}\right)-  \left(Q_2M_{31} + \frac{M_{21}}{1-M_{22}}\right)\left(Q_1M_{32} + \frac{M_{12}}{1-M_{11}}\right)\right)  & \geq \frac{1}{3}, \notag 
\end{align}
we have
\begin{align}
\sum_{i=0}^s b_i 
& \leq 3\left(Q_2M_{31} + \frac{M_{21}}{1-M_{22}}\right)\left(\frac{a_0}{1-M_{11}}  +  c_0 Q_1\right) + 3\left(1 - Q_1M_{31}\right)\left(\frac{b_0}{1-M_{22}} + Q_2c_0\right) \notag \\
& + 3\left(Q_1Q_2M_{31} + \frac{M_{21}Q_1}{1-M_{22}} +Q_2 \left(1 - Q_1M_{31}\right) \right)\sum_{i=0}^sd_{3,i} +  3\frac{1 - Q_1M_{31}}{1-M_{22}} \sum_{i=0}^sd_{2,i} \notag \\ 
& + 3\frac{\left(Q_2M_{31} + \frac{M_{21}}{1-M_{22}}\right)}{1-M_{11}}\sum_{i=0}^sd_{1,i} \notag \\
& \lesssim \frac{e_{\rho,1}^T\eta_z^2}{\mu_g(1-\rho^2)^4} a_0 + \max\left(\frac{1}{1-\rho^2},2 \right)b_0 + \frac{e_{\rho,1}^T\eta_z^2}{(1-\rho^2)^4} c_0 + \frac{e_{\rho,1}^T\eta_z^2}{(1-\rho^2)^4} \sum_{i=0}^sd_{3,i} \notag \\
& + \max\left(\frac{1}{1-\rho^2},2\right)\sum_{i=0}^sd_{2,i} + \left( \frac{1}{\mu_g} + \frac{e_{\rho,2}^T}{(1-\rho^2)}\right)\frac{e_{\rho,1}^T\eta_z^2}{(1-\rho^2)^4}\sum_{i=0}^sd_{1,i}.
\label{inequ:b:iter:sum:1}
\end{align}
Then we substitute the above result w.r.t. $b_i$ to Inequality~\eqref{inequ:z:a'}, then
\begin{align}
\sum_{i=0}^s a_i 
& \leq \frac{3}{2}\frac{a_0}{1-M_{11}}  +   \frac{3}{2}c_0 Q_1  +  \frac{3}{2}\left(Q_1M_{32} + \frac{M_{12}}{1-M_{11}}\right)\sum_{i=0}^sb_i \notag \\
& +  \frac{3}{2}Q_1\sum_{i=0}^sd_{3,i}  +  \frac{3}{2}\frac{1}{1-M_{11}}\sum_{i=0}^sd_{1,i} \notag \\
& \lesssim \frac{a_0}{1-M_{11}} +c_0 Q_1 + \left(Q_1M_{32} + \frac{M_{12}}{1-M_{11}}\right)\left(Q_2M_{31} + \frac{M_{21}}{1-M_{22}}\right)\left(\frac{a_0}{1-M_{11}}  +  c_0 Q_1\right) \notag \\
& + \left(Q_1M_{32} + \frac{M_{12}}{1-M_{11}}\right)\left(1 - Q_1M_{31}\right)\left(\frac{b_0}{1-M_{22}} + Q_2c_0\right) \notag \\
& + \left[\left(Q_1M_{32} + \frac{M_{12}}{1-M_{11}}\right)\left(Q_1Q_2M_{31} + \frac{M_{21}Q_1}{1-M_{22}} +Q_2 \left(1 - Q_1M_{31}\right) \right) + Q_1\right]\sum_{i=0}^sd_{3,i} \notag \\
& + \left(Q_1M_{32} + \frac{M_{12}}{1-M_{11}}\right)\frac{1 - Q_1M_{31}}{1-M_{22}} \sum_{i=0}^sd_{2,i} \notag \\
& + \left[\left(Q_1M_{32} + \frac{M_{12}}{1-M_{11}}\right)\frac{\left(Q_2M_{31} + \frac{M_{21}}{1-M_{22}}\right)}{1-M_{11}} + \frac{1}{1-M_{11}}\right]\sum_{i=0}^sd_{1,i} \notag \\
& \lesssim \max\left(\frac{1}{\mu_g\eta_z},2 \right)a_0 +  \frac{e_{z}^T}{\mu_g(1-\rho^2)}b_0 + \frac{e_z^T\eta_z^2}{\mu_g(1-\rho^2)^5} c_0 + \frac{e_{z}^T\eta_z^2}{\mu_g(1-\rho^2)^5}\sum_{i=0}^sd_{3,i} \notag \\
& + \frac{e_{z}^T}{\mu_g(1-\rho^2)^2} \sum_{i=0}^sd_{2,i} + \max\left(\frac{1}{\mu_g\eta_z}, 2 \right)\sum_{i=0}^sd_{1,i}.
\label{inequ:b:iter:final:1}
\end{align}
We then substitute the definitions of $a_t, b_t$ and the matrix $M_{Z}$ and $d_{1,s},d_{2,s},d_{3,s}$. Note that 
\begin{align}
\frac{1}{2n}\E\left[\norm{\mZ_{s} - z_{*,s}\bfonet_n}^2\right] \leq \E\left[\norm{\bar z_{s} - z_{*, s}}^2 + \frac{1}{n}\norm{\mZ_{s} - \bar z_{s}\bfonet_n}^2 \right]:= a_{s} + b_{s} 
\end{align}
Now we can give the estimation for the sum of $z$:
\begin{align*}
& \frac{1}{2n}\sum_{s=0}^{S}\E\left[\norm{\mZ_{s} - z_{*,s}\bfonet_n}^2\right] := \sum_{s=0}^{S} \left( a_{s} + b_{s} \right) \notag \\
 & \lesssim \frac{e_{\rho,1}^T\eta_z^2}{\mu_g(1-\rho^2)^4} a_0 + \max\left(\frac{1}{1-\rho^2},2 \right)b_0 + \frac{e_{\rho,1}^T\eta_z^2}{(1-\rho^2)^4} c_0 + \frac{e_{\rho,1}^T\eta_z^2}{(1-\rho^2)^4} \sum_{i=0}^sd_{3,i} \notag \\
& + \max\left(\frac{1}{1-\rho^2},2\right)\sum_{i=0}^sd_{2,i} + \left( \frac{1}{\mu_g} + \frac{e_{\rho,2}^T}{(1-\rho^2)}\right)\frac{e_{\rho,1}^T\eta_z^2}{(1-\rho^2)^4}\sum_{i=0}^sd_{1,i} \notag \\
& +\max\left(\frac{1}{\mu_g\eta_z},2 \right)a_0 + \frac{e_{z}^T}{\mu_g(1-\rho^2)}b_0 + \frac{e_z^T\eta_z^2}{\mu_g(1-\rho^2)^5} c_0 + \frac{e_{z}^T\eta_z^2}{\mu_g(1-\rho^2)^5}\sum_{i=0}^sd_{3,i} \notag \\
& + \frac{e_{z}^T}{\mu_g(1-\rho^2)^2} \sum_{i=0}^sd_{2,i} + \max\left(\frac{1}{\mu_g\eta_z}, 2 \right)\sum_{i=0}^sd_{1,i} \notag \\
& \lesssim \max \left(\frac{e_{\rho,1}^T\eta_z^2}{\mu_g(1-\rho^2)^4}, \frac{1}{\mu_g\eta_z}\right)a_0 + \max\left(\frac{1}{1-\rho^2},\frac{e_{z}^T}{\mu_g(1-\rho^2)} \right)b_0 \notag \\
& + \max \left(e_{\rho,1}^T,\frac{e_z^T}{\mu_g(1-\rho^2)} \right)\frac{\eta_z^2 c_0}{(1-\rho^2)^4}  + \max\left(e_{\rho,1}^T, \frac{e_z^T}{\mu_g(1-\rho^2)}\right)\frac{\eta_z^2}{(1-\rho^2)^4}\sum_{s=0}^Sd_{3,s} \notag \\
& + \max\left(1,\frac{e_{z}^T}{\mu_g} \right)\frac{\sum_{s=0}^Sd_{2,i}}{(1-\rho^2)^2} + \max\left( \frac{1}{\mu_g\eta_z}, 2\right)\sum_{s=0}^Sd_{1,s}. 
\end{align*}
We thus substitute the definition of $a_0, b_0, c_0, d_{1,s}, d_{2,s}, d_{3,s}$ then
\begin{align}
& \frac{1}{2n}\sum_{s=0}^{S}\E\left[\norm{\mZ_{s} - z_{*,s}\bfonet_n}^2\right] \notag \\
&  \lesssim \max \left(\frac{e_{\rho,1}^T\eta_z^2}{\mu_g(1-\rho^2)^4}, \frac{1}{\mu_g\eta_z}\right)\E\left[\norm{\bar z_0 - z_{*, 0}}^2\right] +  \max\left(\frac{1}{1-\rho^2},\frac{e_{z}^T}{\mu_g(1-\rho^2)} \right)\frac{1}{n}\E\left[\norm{\mZ_{0} - \bar z_0\bfonet_n}^2\right]   \notag \\
& + \max \left(e_{\rho,1}^T,\frac{e_z^T}{\mu_g(1-\rho^2)} \right)\frac{\eta_z^2}{(1-\rho^2)^4}\frac{1}{n}\E\left[\norm{\mU_{0,z} - \bar u_{0,z}\bfonet_n}^2\right] \notag \\
& + \max\left(e_{\rho,1}^T, \frac{e_z^T}{\mu_g(1-\rho^2)}\right)\frac{\eta_z^2}{(1-\rho^2)^4} \Biggl\{ \left(C_{x,3}\eta_z^2  + \frac{e_{\rho,2}^T}{1-\rho^2}\right)\sum_{s=0}^S\frac{1}{n}\E\left[\norm{\mX_s - \bar{x}_s\bfonet_n}^2\right] \notag \\
& + \frac{e_{\rho,2}^T}{(1-\rho^2)n}\sum_{s=0}^{S-1}\E\left[\norm{\mX_{s} - \bar x_{s}\bfonet_n}^2\right] + S\left(C_{\sigma, 3}  + \frac{e_{\rho,2}^T}{1-\rho^2}\right)\sigma_z^2 
 + \frac{e_{\rho,2}^T\eta_x^2}{1-\rho^2} \sum_{s=0}^{S-1}\E\left[\norm{\bar v_{s+1}}^2\right] \notag \\
 & +\frac{e_{\rho,2}^{T-1}\eta_z\eta_{x}^2}{(1-\rho^2)}\sum_{s=0}^S\E\left[\norm{\bar v_{s+1}}^2\right] \Biggl\} + \frac{e_{z}^T}{\mu_g(1-\rho^2)^2}\Biggl\{ \frac{\eta_z^2\left(C_{x,2}\eta_z^2+ \frac{e_{\rho,1}^T}{(1-\rho^2)^2} \right)}{(1-\rho^2)}\sum_{s=0}^{S}\frac{1}{n}\E\left[\norm{\mX_s - \bar{x}_s\bfonet_n}^2\right] \notag \\
 & + \frac{e_{\rho,1}^T \eta_z^2}{(1-\rho^2)^3} \frac{1}{n}\sum_{s=0}^{S-1}\E\left[\norm{\mX_{s} - \bar x_{s}\bfonet_n}^2\right]  + S\eta_z^2\left(\frac{C_{\sigma,2}}{n}+\frac{e_{\rho,1}^T}{(1-\rho^2)^2} \right)\sigma_z^2 \notag \\
&+ \frac{e_{\rho,1}^T \eta_z^2}{(1-\rho^2)^3} \eta_x^2\sum_{s=0}^{S-1}\E\left[\norm{\bar v_{s+1}}^2\right]  +   \frac{e_{\rho,1}^T\eta_z^3\eta_{x}^2 }{(1-\rho^2)^3}\sum_{s=0}^{S}\E\left[\norm{\bar v_{s+1}}^2\right] \Biggl\} \notag \\
& +\max\left( \frac{1}{\mu_g\eta_z}, 2\right) \Biggl\{ \min \left(T, \frac{1}{\mu_g\eta_{z}}\right)\left(\frac{C_{x,1}\eta_z}{n}\sum_{s=0}^{S}\E\left[\norm{\mX_s - \bar x_s\bfonet_n}^2\right] + S\frac{C_{\sigma,1}}{n}\eta_z^2\sigma_{z}^2  \right)  \notag \\
& + \frac{e_z^{T}\eta_z^3}{(1-\rho^2)^4}\sum_{s=0}^{S-1}\frac{1}{n}\E\left[\norm{\mX_{s} - \bar x_{s}\bfonet}^2\right] + e_z^T \frac{\eta_{x}\ell_{z^*}}{a_1}\sum_{s=0}^{S}\E\left[\norm{\E\left[\bar v_{s+1}\middle|\cF_s\right]}^2\right] \notag \\
& + e_z^T\eta_{x}^2 \left(\ell_{z_*}^2 + \frac{\ell_{\nabla z_*}}{2a_2} \right)\sum_{s=0}^{S}\E\left[\norm{\bar v_{s+1}}^2\right]  + \frac{e_z^{T}\eta_z^3\eta_x^2}{(1-\rho^2)^4}\sum_{s=0}^{S-1}\E\left[\norm{\bar v_{s+1}}^2\right] \Biggl\} \notag \\
& \lesssim C_{z_*,0}\Delta_{z_*,0}+  C_{Z,0}\Delta_{Z,0}  + C_{U_z,0}\Delta_{U_z,0}  + C_{z,v} \sum_{s=0}^{S}\E\left[\norm{\E\left[\bar v_{s+1}\middle|\cF_s\right]}^2\right]  \notag \\ &+ C_{z,vs} \sum_{s=0}^{S-1}\E\left[\norm{\bar v_{s+1}}^2\right]
 + C_{z,x} \frac{1}{n}\sum_{s=0}^{S-1}\E\left[\norm{\mX_{s} - \bar x_{s}\bfonet}^2\right] + S \cdot C_{z,\sigma}  \sigma_z^2 \label{ineq: z_recur_final}
\end{align}
where the constants are given by
\begin{align}
& e_z   = 1 - \frac{2\mu_g\eta_z}{3}, r_z \leq  1 - \frac{\mu_g\eta_z}{3}, e_{\rho,1} = \frac{\rho^2+1}{2}, a_1 >0, a_2 >0, \notag \\
& \Delta_{z_*,0}  = \E\left[\norm{\bar z_0 - z_{*, 0}}^2\right],  \Delta_{Z,0} = \frac{1}{n}\E\left[\norm{\mZ_{0} - \bar z_0\bfonet_n}^2\right], \notag \\
& \Delta_{U_z,0}   = \frac{1}{n}\E\left[\norm{\mU_{0,z} - \bar u_{0,z}\bfonet_n}^2\right] = \cO\left(1\right),\notag \\
 & C_{z_*,0} = \max \left(\frac{e_{\rho,1}^T\eta_z^2}{\mu_g(1-\rho^2)^4}, \frac{1}{\mu_g\eta_z}\right)  = \cO\left(\frac{1}{\eta_z}\right), C_{Z,0} = \max\left(\frac{1}{1-\rho^2},\frac{e_{z}^T}{\mu_g(1-\rho^2)} \right) = \cO\left(\frac{1}{1-\rho^2}\right),\notag \\
& C_{U_z,0}  = \max \left(e_{\rho,1}^T,\frac{e_z^T}{\mu_g(1-\rho^2)} \right)\frac{\eta_z^2}{(1-\rho^2)^4} = \cO\left(\frac{\eta_z^2}{(1-\rho^2)^{5}}\right), \notag \\
& C_{z,v}  = \max\left( \frac{1}{\mu_g\eta_z}, 2\right) e_z^T \frac{\eta_{x}\ell_{z^*}}{a_1 } =  \cO\left(\frac{e_z^T\eta_x}{a_1 \eta_z} \right),\notag \\
& C_{z,vs}   = \cO\left(\frac{e_z^T\eta_{x}^2}{\eta_z}\left(1 + \frac{1}{a_2} \right)  + \frac{e_z^{T}\eta_z^2\eta_x^2}{(1-\rho^2)^4} \right), \notag \\
& C_{z,x}   = \cO\left(\frac{e_{z}^T\eta_z^4}{(1-\rho^2)^7} + \frac{e_{\rho,2}^Te_{z}^T\eta_z^2}{(1-\rho^2)^5} + \min \left(T, \frac{1}{\eta_z} 
 \right)\right), \notag \\
& C_{z,\sigma} =   \cO\left(\left(\frac{\eta_z}{n} + \frac{\eta_z^2}{(1-\rho^2)^4} \right)\min\left(T, \frac{1}{\eta_z}\right) + \frac{e_{z}^T\eta_z^2}{(1-\rho^2)^2} \right). \label{inequ:z:const}
\end{align}

Following the same reasoning in \eqref{ineq: z_recur}, \eqref{ineq: alset_term}, \eqref{inequ: Z:recur}, \eqref{ineq: xzt}, \eqref{inequ: U_z:recur},  and \eqref{ineq: z_recur_final} we may obtain a similar conclusion for $y$. The variable $y$ is to optimize the objective, $f + \alpha g$ with respect to $y$ which is $\frac{\alpha\mu_g}{2}$-strongly convex and $\frac{3\alpha \ell_{g,1}}{2}$-smooth. The stochastic gradient $h_{t+1,y}$ of updating $y$ is variance-bounded by $\sigma_{y}^2 = \sigma_f^2 + \alpha^2 \sigma_g^2$. Let $\mu =\frac{\alpha\mu_g}{2}, \ell=\frac{3\alpha \ell_{g,1}}{2}$ in Lemma~\ref{lem: inner_error}. If the step-size $\eta_{y}$ satisfies that 
    \begin{align}
    \eta_{y} \leq \mathcal{O} \left(\frac{1}{\alpha}\min \left\lbrace {\frac{1-\rho^2}{\ell_{g,1}}}, \frac{(1-\rho^2)\sqrt{\mu_g}}{\ell_{g,1}\sqrt{\ell_{g,1}}}, \frac{(1-\rho^2)^2}{\ell_{g,1}}\right\rbrace \right),
    \end{align}
 we have
\begin{align}
& \E\left[\norm{\bar y_{s+1} - y_{*, s+1}^{\alpha}}^2 \right] \notag \\
 & \leq \left(1-\frac{\alpha\mu_g\eta_y}{2} \right)^T \left( 1+ \frac{\alpha^4\eta_y^4\ell_{g,1}^6}{\mu_g^2(1-\rho^2)^4} \right)\E\left[\norm{\bar y_s - y_{*,s+1}^{\alpha}}^2\right] \notag \\
 & + \frac{\left(1-\frac{\alpha\mu_g\eta_y}{2} \right)^T\alpha\eta_y\ell_{g,1}^2}{\mu_g(1-\rho^2)}\frac{1}{n}\E\left[\norm{\mY_s - \bar y_s\bfonet_n}^2\right]  \notag \\  & +  \frac{\left(1-\frac{\alpha\mu_g\eta_y}{2} \right)^T\alpha\eta_y^3\ell_{g,1}^2}{\mu_g(1-\rho^2)^3n}\E\left[\norm{\mU_{s,y} - \bar u_{s,y}\bfonet}^2\right]  +   \min \left(T, \frac{1}{\alpha\mu_g\eta_y} \right)  \frac{C_{x,1}\eta_y}{n}\E\left[\norm{\mX_s - \bar{x}_s\bfonet_n}^2\right]  \notag \\ 
& +  \min \left(T, \frac{1}{\alpha\mu_g\eta_y} \right)\frac{C_{\sigma,1}}{n} \eta_y^2 \sigma_y^2  + \frac{\left(1-\frac{\alpha\mu_g\eta_y}{2} \right)^T\alpha\eta_y^3\ell_{g,1}^2}{\mu_g(1-\rho^2)^3} \cdot 6\sigma_{y}^2 \notag \\
  & + \frac{\left(1-\frac{\alpha\mu_g\eta_y}{2} \right)^T\alpha\eta_y^3\ell_{g,1}^2}{\mu_g(1-\rho^2)^3} \left( \alpha^2\ell_{g,1}^2\frac{1}{n}\E\left[\norm{\mX_s - \bar x_s\bfonet}^2 + \norm{\mX_{s-1} - \bar x_{s-1}\bfonet}^2 + n\norm{\bar x_s - \bar x_{s-1}}^2\right]\right) \notag \\
  & \lesssim e_y^{T} \E\left[\norm{\bar y_s - y_{*,s+1}^{\alpha}}^2\right] + \frac{e_y^{T} \alpha \eta_y}{1-\rho^2}\frac{1}{n}\E\left[\norm{\mY_s - \bar y_s\bfonet_n}^2\right] + \frac{e_y^T\alpha\eta_y^3}{(1-\rho^2)^3}\frac{1}{n}\E\left[\norm{\mU_{s,y} - \bar u_{s,y}\bfonet}^2\right] \notag \\
  &  +  \min \left(T, \frac{1}{\alpha\mu_g\eta_y} \right) \left( \frac{C_{x,1}\eta_y}{n}\E\left[\norm{\mX - \bar{x}\bfonet_n}^2\right] + \frac{C_{\sigma,1}}{n} \eta_y^2 \sigma_y^2   \right)  \notag \\
 & + \frac{e_y^T\alpha^3\eta_y^3}{(1-\rho^2)^3}\left(\frac{1}{n}\E\left[\norm{\mX_{s-1} - \bar{x}_{s-1}\bfonet_n}^2\right] + \eta_x^2\E\left[\norm{\bar{v}_s}^2\right]  \right)   \end{align}
where $e_{y} = 1-\frac{\alpha\mu_g\eta_y}{3}$ and for simplicity we choose sufficient small $\eta_y \leq \cO\left( (1-\rho^2)\mu_g/(\alpha\ell_{g,1}^2)\right)$ and for any $T \geq 1$ such that 
\begin{align}
&\left(1-\frac{\alpha\mu_g\eta_y}{2} \right)^T \left( 1+ \frac{\alpha^4\eta_y^4\ell_{g,1}^6}{\mu_g^2(1-\rho^2)^4} \right)  \leq \left(1-\frac{\alpha\mu_g\eta_y}{3} \right)^T \notag \\
 & \frac{\left(1-\frac{\alpha\mu_g\eta_y}{2} \right)^T\alpha^3\eta_y^3\ell_{g,1}^4}{\mu_g(1-\rho^2)^3}  \leq \min\left(T, \frac{1}{\alpha\mu_g\eta_y} \right) C_{x,1}\eta_y,  \text{where}\,\, C_{x,1} \sim \cO\left(\frac{\alpha\ell_{g,1}^2}{\mu_g} + \frac{\alpha^5\eta_y^4\ell_{g,1}^6}{\mu_g(1-\rho^2)^4} \right) \notag \\
 & \frac{\left(1-\frac{\alpha\mu_g\eta_y}{2} \right)^T\alpha\eta_y\ell_{g,1}^2}{\mu_g(1-\rho^2)^3} \leq \frac{\alpha\eta_y\ell_{g,1}^2}{\mu_g(1-\rho^2)^3} \leq \frac{C_{\sigma,1}}{n}, \text{where}\,\, C_{\sigma,1}\sim \cO\left(\frac{\alpha\eta_y \ell_{g,1}^2n}{\mu_g(1-\rho^2)^4} +1\right).
\end{align}
 Recalling the inequality~\eqref{inequ:alert:z1} for $z$, we also have a similar result for $y$.  When we consider the convergence of the consensus convergence $\mY_s$ and $\mU_{s,y}$, we only need the following inequality
    \begin{align}\label{inequ:alert:y1}
     \norm{\bar y_s - y_{*, s+1}^{\alpha}}^2 \leq  2\norm{\bar y_s - y_{*, s}^{\alpha}}^2 + 2\norm{y_{*, s}^{\alpha} - y_{*, s+1}^{\alpha}}^2 = 2\norm{\bar y_s - y_{*, s}^{\alpha}}^2 + 2\eta_{x}^2\ell_{y_*}^2\norm{\bar v_{s+1}}^2.
    \end{align}
 For the convergence of variable $\bar y_s$, we need a careful estimate about $ \norm{\bar y_s - y_{*, s+1}^{\alpha}}^2$ just as $\bar{z}$,
    \begin{align}
        \E\left[\norm{\bar y_s - y_{*, s+1}^{\alpha}}^2\right]\leq &\left(1 + a_1\eta_{x}\ell_{y_*} + \frac{a_2\ell_{\nabla y_*}c_{\delta}\eta_{x}^2\alpha^2}{2}\right)\E\left[\norm{\bar y_s - y_{*,s}^{\alpha}}^2\right] \notag\\
        &+ \frac{\eta_{x}\ell_{y^*}}{a_1}\E\left[\norm{\E\left[\bar v_{s+1}\middle|\cF_s\right]}^2\right] + \eta_{x}^2 \left(\ell_{y_*}^2 + \frac{\ell_{\nabla y_*}}{2a_2} \right)\E\left[\norm{\bar v_{s+1}}^2\right] \label{ineq: alset_term:y}
    \end{align}
 and we properly choose 
    $\eta_{x}, \eta_{y}$ and $T$ such that 
    \begin{align}\label{ineq: yzt}
       r_y =  \left(1 + a_1\eta_{x}\ell_{y_*} + \frac{a_2\ell_{\nabla y_*}c_{\delta}\eta_{x}^2\alpha^2}{2}\right)\left(1-\frac{\mu_g\eta_{y}\alpha}{3}\right)^T\leq 1 - \frac{\mu_g\eta_{y}\alpha}{6}.
    \end{align}
Combining the above inequalities, we have
\begin{align}
& \E\left[\norm{\bar y_{s+1} - y_{*, s+1}^{\alpha}}^2 \right] \notag \\
\lesssim & r_y^{T} \E\left[\norm{\bar y_s - y_{*,s}^{\alpha}}^2\right] + \frac{e_y^{T} \alpha \eta_y}{1-\rho^2}\frac{1}{n}\E\left[\norm{\mY_s - \bar y_s\bfonet_n}^2\right] + \frac{e_y^T\alpha\eta_y^3}{(1-\rho^2)^3}\frac{1}{n}\E\left[\norm{\mU_{s,y} - \bar u_{s,y}\bfonet}^2\right] \notag \\
 + &    \min \left(T, \frac{1}{\alpha\mu_g\eta_y} \right) \left( \frac{C_{x,1}\eta_y}{n}\E\left[\norm{\mX - \bar{x}\bfonet_n}^2\right] + \frac{C_{\sigma,1}}{n} \eta_y^2 \sigma_y^2   \right)  +e_y^T\frac{\eta_{x}\ell_{y^*}}{a_1}\E\left[\norm{\E\left[\bar v_{s+1}\middle|\cF_s\right]}^2\right]   \notag \\
 + &   e_y^T\eta_{x}^2 \left(\ell_{y_*}^2 + \frac{\ell_{\nabla y_*}}{2a_2} \right)\E\left[\norm{\bar v_{s+1}}^2\right] + \frac{e_y^T\alpha^3\eta_y^3}{(1-\rho^2)^3}\left(\frac{1}{n}\E\left[\norm{\mX_{s-1} - \bar{x}_{s-1}\bfonet_n}^2\right] + \eta_x^2\E\left[\norm{\bar{v}_s}^2\right]  \right) \label{inequ:y:recur}
\end{align}
where $e_y = 1 - \frac{\alpha \mu_g\eta_y}{3}$ and $r_y \leq 1 - \frac{\alpha \mu_g\eta_y}{6}$. Recalling the result of Lemma~\ref{lem: inner_error} for the consensus of $y$ and incorporating Inequalities~\eqref{ineq: U1} and \eqref{inequ:alert:y1}, we have
\begin{align}
& \frac{1}{n}\E\left[\norm{\mY_{s+1} - \bar y_{s+1}\bfonet_n}^2\right] \notag \\
 &  \lesssim \left(1 + \frac{\alpha^2\eta_y^2\ell_{g,1}^2}{(1-\rho^2)^4} \right)\frac{e_{\rho,1}^{T}}{n}\E\left[\norm{\mY_{s} - \bar y_s\bfonet_n}^2\right] +  \frac{e_{\rho,1}^T\alpha^3\eta_y^3}{(1-\rho^2)^3}\E\left[\norm{\bar y_s - y_{*,s}^{\alpha}}^2\right]  +  \frac{e_{\rho,1}^T \eta_y^2}{(1-\rho^2)^2 n}\E\left[\norm{\mU_{s,y} - \bar u_{s,y}\bfonet_n}^2\right]   \notag \\
 &+  \frac{\eta_y^2}{(1-\rho^2)n}\left(C_{x,2}\eta_y^2+ \frac{e_{\rho,1}^T\alpha^2}{(1-\rho^2)^2} \right)\E\left[\norm{\mX_s - \bar{x}_s\bfonet_n}^2\right]  + \frac{e_{\rho,1}^T \alpha^2\eta_y^2}{(1-\rho^2)^3} \frac{1}{n}\E\left[\norm{\mX_{s-1} - \bar x_{s-1}\bfonet}^2\right] \notag \\
 &  +  \left(\frac{C_{\sigma,2}}{n}+\frac{e_{\rho,1}^T}{(1-\rho^2)^2} \right)\eta_y^2\sigma_y^2 + \frac{e_{\rho,1}^T \alpha^2\eta_y^2}{(1-\rho^2)^3} \eta_x^2\E\left[\norm{\bar v_s}^2\right]  +   \frac{e_{\rho,1}^T\alpha^3\eta_y^3\eta_{x}^2 }{(1-\rho^2)^3}\E\left[\norm{\bar v_{s+1}}^2\right]. \label{inequ: Y:recur}
\end{align}
for sufficient small $\eta_y$ such that $\left(1 + \frac{\alpha^2\eta_y^2\ell_{g,1}^2}{(1-\rho^2)^4} \right)e_{\rho,1} \leq e_{\rho,2} = \frac{3+\rho^2}{4} $, then $\left(1 + \frac{\alpha^2\eta_y^2\ell_{g,1}^2}{(1-\rho^2)^4} \right)e_{\rho,1}^T \leq e_{\rho,2}^T$ for any $T \geq 1$. Similarly, we apply the result of Lemma~\ref{lem: inner_error}  to the consensus $U_{s,y}$ for $y$ and incorporate Inequalities~\eqref{ineq: U1} and \eqref{inequ:alert:y1}
\begin{align}
& \frac{1}{n}\E\left[\norm{\mU_{s+1,y} - \bar u_{s+1,y}\bfonet_n}^2\right] \notag \\
 &  \lesssim e_{\rho,2}^T \left(1 + \frac{\alpha^2\ell_{g,1}^2\eta_y^2}{(1-\rho^2)^4}\right)\frac{1}{n}\E\left[\norm{\mU_{s,y} - \bar u_{s,y}\bfonet_n}^2\right] + + \frac{e_{\rho,2}^{T-1}\alpha^3\eta_y}{(1-\rho^2)}\E\left[\norm{\bar y_{s} - y_{*,s}^{\alpha}}^2\right]  \notag \\
& + \frac{\alpha^2 e_{\rho,2}^{T-1}}{(1-\rho^2)^3}\frac{1}{n}\E\left[\norm{\mY_{s} - \bar y_{s}\bfonet_n}^2\right]   +  \left(C_{x,3}\eta_y^2  + \frac{e_{\rho,2}^T\alpha^2}{1-\rho^2}\right)\frac{1}{n}\E\left[\norm{\mX_s - \bar{x}_s\bfonet_n}^2\right] \notag \\
 &+ \frac{e_{\rho,2}^T\alpha^2}{(1-\rho^2)n}\E\left[\norm{\mX_{s-1} - \bar x_{s-1}\bfonet}^2\right] + \left(C_{\sigma, 3}  + \frac{e_{\rho,2}^T}{1-\rho^2}\right)\sigma_y^2 \notag \\
 &+ \frac{e_{\rho,2}^T}{1-\rho^2} \eta_x^2\alpha^2 \E\left[\norm{\bar v_s}^2\right] +\frac{e_{\rho,2}^{T}}{(1-\rho^2)}\alpha^3\eta_y\eta_{x}^2\E\left[\norm{\bar v_{s+1}}^2\right]. \label{inequ: U_y:recur}
\end{align}
for sufficiently small $\eta_y \leq (1-\rho^2)^2/(\alpha \ell_{g,1})$, we have $ r_{U,y}:=\left(1 + \frac{\alpha^2\ell_{g,1}^2\eta_y^2}{(1-\rho^2)^4}\right)e_{\rho,2} :=\frac{4 + \rho^2}{5}$.

Combining the above results for $\E\left[\norm{\bar y_{s+1} - y_{*, s+1}^{\alpha}}^2 \right]$,$\frac{1}{n}\E\left[\norm{\mY_{s+1} - \bar y_{s+1}\bfonet_n}^2\right]$, and $\frac{1}{n}\E\left[\norm{\mU_{s+1,y} - \bar u_{s+1,y}\bfonet_n}^2\right]$, we follow the same procedure for variable $z$ and define the vector function $\Omega_{Y,s}$:
\begin{align}
\Omega_{Y,s} = \left(\E\left[\norm{\bar y_s - y_{*, s}^{\alpha}}^2\right],  \frac{1}{n}\E\left[\norm{\mY_{s} - \bar y_s\bfonet_n}^2\right], \frac{1}{n}\E\left[\norm{\mU_{s,y} - \bar u_{s,y}\bfonet_n}^2\right]\right)
\end{align}
and an $3\times 3$ matrix $M_{Y}$
\begin{equation}
M_{Y} = 
\begin{pmatrix}
M_{11} & M_{12} & M_{13}\\
M_{21} & M_{22} & M_{23}\\
M_{31} & M_{32} & M_{33}\\
\end{pmatrix}
\end{equation}
where
\begin{align}
   &  M_{11} = r_y^T; \quad M_{12}=\frac{e_y^{T}\alpha\eta_y}{1-\rho^2}; \quad M_{13} = \frac{e_y^{T}\alpha\eta_y^3}{(1-\rho^2)^3} \notag \\
    & M_{21} = \frac{e_{\rho,1}^T\alpha^3\eta_y^3}{(1-\rho^2)^3};\quad M_{22} = e_{\rho,2}^T; \quad M_{23} = \frac{e_{\rho,1}^T \eta_y^2}{(1-\rho^2)^2} \notag \\
    & M_{31}= \frac{e_{\rho,2}^{T}\alpha^3\eta_y}{(1-\rho^2)}; \quad M_{32} = \frac{\alpha^2e_{\rho,2}^T}{(1-\rho^2)^3}; \quad M_{33} = r_{U,y}^T.
\end{align}
By the inequalities~\eqref{inequ:y:recur}, \eqref{inequ: Y:recur} and \eqref{inequ: U_y:recur},  we have
\begin{align}
\Omega_{Y,s+1} \leq M_{Y}\Omega_{Y,s} + \tilde{C}_{y,s}
\end{align}
where $\tilde{C}_{y,s} \in \R^3$ are defined as below:
\begin{align}
\tilde{C}_{y,s}[1]  & =   \min \left(T, \frac{1}{\alpha\mu_g\eta_y} \right) \left( \frac{C_{x,1}\eta_y}{n}\E\left[\norm{\mX_s - \bar{x}\bfonet_n}^2\right] + \frac{C_{\sigma,1}}{n} \eta_y^2 \sigma_y^2   \right)   \notag \\
& + \frac{e_y^T\alpha^3\eta_y^3}{(1-\rho^2)^3}\frac{1}{n}\E\left[\norm{\mX_{s-1} - \bar{x}_{s-1}\bfonet_n}^2\right]  + e_y^T\frac{\eta_{x}\ell_{y^*}}{a_1}\E\left[\norm{\E\left[\bar v_{s+1}\middle|\cF_s\right]}^2\right] \notag \\
 & + e_y^T\eta_{x}^2 \left(\ell_{y_*}^2 + \frac{\ell_{\nabla y_*}}{2a_2} \right)\E\left[\norm{\bar v_{s+1}}^2\right] + \frac{e_y^T\alpha^3\eta_y^3}{(1-\rho^2)^3}\eta_x^2\E\left[\norm{\bar{v}_s}^2\right]  \notag \\
 \tilde{C}_{y,s}[2]  & = \frac{\eta_y^2}{(1-\rho^2)n}\left(C_{x,2}\eta_y^2+ \frac{e_{\rho,1}^T\alpha^2}{(1-\rho^2)^2} \right)\E\left[\norm{\mX_s - \bar{x}_s\bfonet_n}^2\right]  \notag \\
 & + \frac{e_{\rho,1}^T \alpha^2\eta_y^2}{(1-\rho^2)^3} \frac{1}{n}\E\left[\norm{\mX_{s-1} - \bar x_{s-1}\bfonet}^2\right] +   \left(\frac{C_{\sigma,2}}{n}+\frac{e_{\rho,1}^T}{(1-\rho^2)^2} \right)\eta_y^2\sigma_y^2 \notag \\
&  + \frac{e_{\rho,1}^T \alpha^2\eta_y^2}{(1-\rho^2)^3} \eta_x^2\E\left[\norm{\bar v_s}^2\right]  +   \frac{e_{\rho,1}^T\alpha^3\eta_y^3\eta_{x}^2 }{(1-\rho^2)^3}\E\left[\norm{\bar v_{s+1}}^2\right] \notag \\
\tilde{C}_{y,s}[3]  & = \left(C_{x,3}\eta_y^2  + \frac{e_{\rho,2}^T\alpha^2}{1-\rho^2}\right)\frac{1}{n}\E\left[\norm{\mX_s - \bar{x}_s\bfonet_n}^2\right] + \frac{e_{\rho,2}^T\alpha^2}{(1-\rho^2)n}\E\left[\norm{\mX_{s-1} - \bar x_{s-1}\bfonet}^2\right] \notag \\ & + \left(C_{\sigma, 3}  + \frac{e_{\rho,2}^T}{1-\rho^2}\right)\sigma_y^2 + \frac{e_{\rho,2}^T}{1-\rho^2} \eta_x^2\alpha^2 \E\left[\norm{\bar v_s}^2\right] +\frac{e_{\rho,2}^{T}}{(1-\rho^2)}\alpha^3\eta_y\eta_{x}^2\E\left[\norm{\bar v_{s+1}}^2\right]. 
\end{align}
For simplicity we overload the same notation and set $\Omega_{Y,s} = (a_s, b_s, c_s)^\top$ and $\tilde{C}_{y,s} = (d_{1,s}, d_{2,s}, d_{3,s})^\top$. We thus obtain a similar conclusion for $y$. 
\begin{align}
Q_1 & \sim \frac{e_y^T\alpha\eta_y^3}{(1-\rho^2)^4}\max\left(\frac{1}{\alpha \mu_g\eta_y},2 \right); \quad 
Q_2 \sim \frac{e_{\rho,1}^T\eta_y^2}{(1-\rho^2)^4}.
\end{align}
For sufficient small $\eta_y \leq (1-\rho^2)^{3.5}/(\alpha)$ such that 
\begin{align}
& 1 - Q_1M_{31}  \geq \frac{2}{3}, \quad \quad 1 - Q_2 M_{32} \geq \frac{2}{3}, \notag \\
& \left(\left(1 - Q_1M_{31}\right)\left( 1- Q_2M_{32}\right)-  \left(Q_2M_{31} + \frac{M_{21}}{1-M_{22}}\right)\left(Q_1M_{32} + \frac{M_{12}}{1-M_{11}}\right)\right)   \geq \frac{1}{3}.
\end{align}
Then
\begin{align}
\sum_{i=0}^sb_i  & \lesssim \frac{e_{\rho,1}^T\alpha^2\eta_y^2}{\mu_g(1-\rho^2)^5}a_0 + \max\left(\frac{1}{1-\rho^2},2 \right)b_0 + \frac{e_{\rho,1}^T\eta_y^2}{(1-\rho^2)^4}c_0 + \frac{e_{\rho,1}^T\eta_y^2}{(1-\rho^2)^4}\sum_{i=0}^s d_{3,i}  \notag \\
& + \max\left(\frac{1}{1-\rho^2},2 \right)\sum_{i=0}^s d_{2,i}  + \frac{e_{\rho,1}^T\alpha^2\eta_y^2}{(1-\rho^2)^5}\sum_{i=0}^s d_{1,i},
\end{align}
\begin{align}
\sum_{i=0}^s a_i & \lesssim \max\left(\frac{1}{\alpha\mu_g\eta_y} ,2\right)a_0 + \frac{e_{y}^T}{\mu_g(1-\rho^2)^2}b_0 + \frac{e_{y}^T\eta_y^2}{\mu_g(1-\rho^2)^4}c_0 + \frac{e_{y}^T\eta_y^2}{(1-\rho^2)^5}\sum_{i=0}^s d_{3,i} \notag \\ & + \frac{e_y^T}{\mu_g(1-\rho^2)^2}\sum_{i=0}^s d_{2,i} + \max\left( \frac{1}{\alpha\mu_g\eta_y}, 2\right)\sum_{i=0}^s d_{1,i}.
\end{align}
Combining the above inequalities we have
\begin{align}
&  \frac{1}{2n}\sum_{s=0}^{S}\E\left[\norm{\mY_{s} - y_{*,s}^{\alpha}\bfonet}^2\right] \leq \sum_{s=0}^{S} \E\left[\norm{\bar y_{s} - y_{*, s}^{\alpha}}^2 \right] + \sum_{s=0}^{S} \left[\frac{1}{n}\norm{\mY_{s} - \bar y_{s+1}\bfonet_n}^2 \right] := \sum_{s=0}^{S} \left( a_{s} + b_{s} \right) \notag \\
& \lesssim \frac{e_{\rho,1}^T\alpha^2\eta_y^2}{\mu_g(1-\rho^2)^5}a_0 + \max\left(\frac{1}{1-\rho^2},2 \right)b_0 + \frac{e_{\rho,1}^T\eta_y^2}{(1-\rho^2)^4}c_0 + \frac{e_{\rho,1}^T\eta_y^2}{(1-\rho^2)^4}\sum_{i=0}^s d_{3,i}  \notag \\
& + \max\left(\frac{1}{1-\rho^2},2 \right)\sum_{i=0}^s d_{2,i}  + \frac{e_{\rho,1}^T\alpha^2\eta_y^2}{(1-\rho^2)^5}\sum_{i=0}^s d_{1,i} \notag \\
& + \max\left(\frac{1}{\alpha\mu_g\eta_y} ,2\right)a_0 + \frac{e_{y}^T}{\mu_g(1-\rho^2)^2}b_0 + \frac{e_{y}^T\eta_y^2}{\mu_g(1-\rho^2)^4}c_0 + \frac{e_{y}^T\eta_y^2}{(1-\rho^2)^5}\sum_{i=0}^s d_{3,i} \notag \\ & + \frac{e_y^T}{\mu_g(1-\rho^2)^2}\sum_{i=0}^s d_{2,i} + \max\left( \frac{1}{\alpha\mu_g\eta_y}, 2\right)\sum_{i=0}^s d_{1,i} \notag \\
& \lesssim \max\left(\frac{1}{\alpha\mu_g\eta_y} ,2\right)a_0 + \max\left(1, \frac{e_{y}^T}{\mu_g(1-\rho^2)}\right)\frac{b_0}{1-\rho^2} + \max\left(e_{\rho,1}^T, \frac{e_{y}^T}{\mu_g} \right)\frac{\eta_y^2}{(1-\rho^2)^4}c_0 \notag \\
& + \max\left(e_{\rho,1}^T, \frac{e_{y}^T}{(1-\rho^2)} \right) \frac{\eta_y^2}{(1-\rho^2)^4}\sum_{i=0}^s d_{3,i} + \max\left(\frac{1}{1-\rho^2}, \frac{e_y^T}{\mu_g(1-\rho^2)^2}\right)\sum_{i=0}^s d_{2,i}  + \max\left( \frac{1}{\alpha\mu_g\eta_y}, 2\right)\sum_{i=0}^s d_{1,i} \notag \\
& \lesssim \max\left(\frac{1}{\alpha\mu_g\eta_y} ,2\right)\Delta_{y_*,0} + \max\left(1, \frac{e_{y}^T}{\mu_g(1-\rho^2)}\right)\frac{\Delta_{Y,0}}{1-\rho^2} + + \max\left(e_{\rho,1}^T, \frac{e_{y}^T}{\mu_g} \right)\frac{\eta_y^2}{(1-\rho^2)^4}\Delta_{U_y,0} \notag \\
& + \max\left(e_{\rho,1}^T, \frac{e_{y}^T}{(1-\rho^2)} \right) \frac{\eta_y^2}{(1-\rho^2)^4}\Bigg\{ \left(C_{x,3}\eta_y^2  + \frac{e_{\rho,2}^T\alpha^2}{1-\rho^2}\right)\sum_{s=0}^S\frac{1}{n}\E\left[\norm{\mX_s - \bar{x}_s\bfonet_n}^2\right]  \notag \\
& + \frac{e_{\rho,2}^T\alpha^2}{(1-\rho^2)}\sum_{s=0}^{S-1}\frac{1}{n}\E\left[\norm{\mX_{s-1} - \bar x_{s-1}\bfonet_n}^2\right] + S\left(C_{\sigma, 3}  + \frac{e_{\rho,2}^T}{1-\rho^2}\right)\sigma_y^2 \notag \\
&  + \frac{e_{\rho,2}^T}{1-\rho^2} \eta_x^2\alpha^2 \sum_{s=0}^{S-1}\E\left[\norm{\bar v_{s+1}}^2\right] +\frac{e_{\rho,2}^{T}}{(1-\rho^2)}\alpha^3\eta_y\eta_{x}^2\sum_{s=0}^S \E\left[\norm{\bar v_{s+1}}^2\right] \Bigg\}  \notag \\
&+ \max\left(\frac{1}{1-\rho^2}, \frac{e_y^T}{\mu_g(1-\rho^2)^2}\right) \Bigg\{ \frac{\eta_y^2}{(1-\rho^2)}\left(C_{x,2}\eta_y^2+ \frac{e_{\rho,1}^T\alpha^2}{(1-\rho^2)^2} \right)\sum_{s=0}^S\frac{1}{n}\E\left[\norm{\mX_s - \bar{x}_s\bfonet_n}^2\right] \notag \\
& + \frac{e_{\rho,1}^T \alpha^2\eta_y^2}{(1-\rho^2)^3} \sum_{s=0}^{S-1}\frac{1}{n}\E\left[\norm{\mX_{s} - \bar x_{s}\bfonet_n}^2\right]+   S \left(\frac{C_{\sigma,2}}{n}+\frac{e_{\rho,1}^T}{(1-\rho^2)^2} \right)\eta_y^2\sigma_y^2 \notag \\
& + \frac{e_{\rho,1}^T \alpha^2\eta_y^2}{(1-\rho^2)^3} \eta_x^2\sum_{s=0}^{S-1}\E\left[\norm{\bar v_{s+1}}^2\right]  +   \frac{e_{\rho,1}^T\alpha^3\eta_y^3\eta_{x}^2 }{(1-\rho^2)^3}\sum_{s=0}^S\E\left[\norm{\bar v_{s+1}}^2\right]  \Bigg\} \notag \\
& + \max\left( \frac{1}{\alpha\mu_g\eta_y}, 2\right)\Bigg\{ \min \left(T, \frac{1}{\alpha\mu_g\eta_y} \right) \left( \frac{C_{x,1}\eta_y}{n}\sum_{s=0}^S\E\left[\norm{\mX_s - \bar{x}\bfonet_n}^2\right] + S\frac{C_{\sigma,1}}{n} \eta_y^2 \sigma_y^2   \right)   \notag \\
& + \frac{e_y^T\alpha^3\eta_y^3}{(1-\rho^2)^3}\sum_{s=0}^{S-1}\frac{1}{n}\E\left[\norm{\mX_{s} - \bar{x}_{s}\bfonet_n}^2\right]  + e_y^T\frac{\eta_{x}\ell_{y^*}}{a_1}\sum_{s=0}^{S}\E\left[\norm{\E\left[\bar v_{s+1}\middle|\cF_s\right]}^2\right] \notag \\
& + e_y^T\eta_{x}^2 \left(\ell_{y_*}^2 + \frac{\ell_{\nabla y_*}}{2a_2} \right)\sum_{s=0}^{S}\E\left[\norm{\bar v_{s+1}}^2\right] + \frac{e_y^T\alpha^3\eta_y^3}{(1-\rho^2)^3}\eta_x^2\sum_{s=0}^{S-1}\E\left[\norm{\bar{v}_{s+1}}^2\right]  \Bigg\}
\end{align}
where we use these notations to simplify the inequality
\begin{align*}
\Delta_{y_*,0} & = \E\left[\norm{\bar y_0 - y_{*, 0}^{\alpha}}^2\right], \Delta_{Y,0} = \frac{1}{n}\E\left[\norm{\mY_{0} - \bar y_{0}\bfonet_n}^2 \right], \Delta_{U_y,0} = \frac{1}{n}\E\left[\norm{\mU_{0,y} - \bar u_{0,y}\bfonet_n}^2\right].
\end{align*}
We further re-arrange the above inequality and get that
\begin{align}
& \sum_{s=0}^{S} \frac{1}{2n}\E\left[\norm{\mY_{s} - y_{*,s}^{\alpha}\bfonet}^2\right] \notag \\
& \lesssim C_{y_*,0}\Delta_{y_*,0} +  C_{Y,0}\Delta_{Y,0} + C_{U_y,0}\Delta_{U_y,0}  + C_{y,v}\sum_{s=0}^{S}\E\left[\norm{\E\left[\bar v_{s+1}\middle|\cF_s\right]}^2\right] + C_{y,vs} \sum_{s=0}^{S}\E\left[\norm{\bar v_{s+1}}^2\right]  \notag \\
&  + C_{y,x} \sum_{s=0}^S\frac{1}{n}\E\left[\norm{\mX_s - \bar{x}_s\bfonet_n}^2\right] + S C_{y,\sigma} \sigma_y^2. 
\label{inequ: y_recur_final}
\end{align}
where the constants are given by 
\begin{align}
C_{y_*,0} & = \max \left(\frac{1}{\alpha\mu_g\eta_y} ,2\right), C_{Y,0}=\max\left(1, \frac{e_{y}^T}{\mu_g(1-\rho^2)}\right)\frac{1}{1-\rho^2}, \notag \\
C_{U_y,0} & =\max\left(e_{\rho,1}^T, \frac{e_{y}^T}{\mu_g} \right)\frac{\eta_y^2}{(1-\rho^2)^4} \notag \\
C_{y,v}  & =  \max\left( \frac{1}{\alpha\mu_g\eta_y}, 2\right) e_y^T\frac{\eta_{x}\ell_{y^*}}{a_1} \sim \cO\left( \frac{e_y^T\eta_x}{a_1\alpha \eta_y}\right)\notag \\
C_{y,vs}  & = \frac{e_y^T\eta_y^2}{(1-\rho^2)^5}\left(\frac{e_{\rho,2}^2\eta_x^2\alpha^2}{1-\rho^2} + \frac{e_{\rho,2}^2\eta_x^2\alpha^3\eta_y}{1-\rho^2} \right)  + \frac{e_{y}^T}{(1-\rho^2)^2} \left(\frac{e_{\rho,1}^T\alpha^2\eta_y^2\eta_x^2}{(1-\rho^2)^3} + \frac{e_{\rho,1}^T\alpha^3\eta_y^3\eta_x^2}{(1-\rho^2)^3}\right) \notag \\
& + \max\left( \frac{1}{\alpha\mu_g\eta_y}, 2\right)\left(e_y^T\eta_{x}^2 \left(\ell_{y_*}^2 + \frac{\ell_{\nabla y_*}}{2a_2} \right) + \frac{e_y^T\alpha^3\eta_y^3}{(1-\rho^2)^3}\eta_x^2\right)  \notag \\
& \sim \cO\left( \frac{e_y^Te_{\rho,2}^T\eta_y^2}{(1-\rho^2)^6}\left(\eta_x^2\alpha^2 + \eta_x^2\alpha^3\eta_y\right)  + \frac{e_y^T\eta_x^2}{\alpha\eta_y}\left( 1 + \frac{1}{a_2} + \frac{\alpha^3\eta_y^3}{(1-\rho^2)^3} \right) \right) \notag \\
C_{y,x}   & = \frac{e_y^T\eta_y^2}{(1-\rho^2)^5} \left(\left(C_{x,3}\eta_y^2  + \frac{e_{\rho,2}^T\alpha^2}{1-\rho^2}\right)  + \frac{e_{\rho,2}^T\alpha^2}{(1-\rho^2)}\right) \notag \\
& + \frac{e_{y}^T}{(1-\rho^2)^2} \left(\frac{\eta_y^2}{(1-\rho^2)}\left(C_{x,2}\eta_y^2+ \frac{e_{\rho,1}^T\alpha^2}{(1-\rho^2)^2} \right) + \frac{e_{\rho,1}^T \alpha^2\eta_y^2}{(1-\rho^2)^3}\right) +  \notag \\
& + \max\left( \frac{1}{\alpha\mu_g\eta_y}, 2\right) \left(\min \left(T, \frac{1}{\alpha\mu_g\eta_y} \right) C_{x,1}\eta_y +  \frac{e_y^T\alpha^3\eta_y^3}{(1-\rho^2)^3}\right)\notag \\
& \sim \cO\left(\min\left(T, \frac{1}{\alpha\eta_y}\right) + \frac{e_y^T\alpha^4\eta_y^4}{(1-\rho^2)^7} + \frac{e_{\rho,1}^T\alpha^2\eta_y^2}{(1-\rho^2)^6} + \min\left(T, \frac{1}{\alpha\eta_y}\right)\frac{\alpha^4\eta_y^4}{(1-\rho^2)^4}\right) \notag \\
C_{y,\sigma}  &  = \frac{e_y^T\eta_y^2}{(1-\rho^2)^5}\left(C_{\sigma, 3}  + \frac{e_{\rho,2}^T}{1-\rho^2}\right)  + \frac{e_{y}^T}{(1-\rho^2)^2} \left(\frac{C_{\sigma,2}}{n}+\frac{e_{\rho,1}^T}{(1-\rho^2)^2} \right)\eta_y^2 \notag \\
& + \frac{1}{\alpha\eta_y}  \min \left(T, \frac{1}{\alpha\eta_y} \right) \frac{C_{\sigma,1}}{n} \eta_y^2 \notag \\
& \sim \cO\left(\min\left(T, \frac{1}{\alpha\eta_y}\right)\frac{\eta_y}{\alpha} \left(\frac{\alpha\eta_y}{(1-\rho^2)^4} + \frac{1}{n} \right) + \frac{e_y^T\eta_y^2}{(1-\rho^2)^7}\right).
\label{inequ:y:const}
\end{align}
The proof is complete.  
\end{proof}

Next, we derive the consensus analysis for the upper-level variable $x$.
\begin{restatable}[]{lemma}{lemconsensusconverge} 
\label{lem: consensus}
    Suppose Assumptions \ref{aspt: smoothness}, \ref{aspt: so}, \ref{aspt: sgap}, and \ref{aspt: bdd_grad} hold, consider Algorithm \ref{alg:stochastic_minmax:onestage},
by properly choosing $\eta_x $ such that  $$\eta_x \leq \mathcal{O}\left(\min \left\lbrace \frac{(1-\rho^2)}{\alpha \ell_{f,1}}, \frac{(1-\rho^2)^2}{\alpha \ell_{g,1}}\right\rbrace\right), $$
we have
    \begin{align}
    & \frac{1}{2n}\sum_{s=0}^{S}\E\left[\norm{\mX_{s+1} - \bar x_{s+1}\bfonet_n}^2 \right]\notag \\ 
      & \lesssim\frac{\eta_{x}^2}{(1-\rho^2)^3}\left( C_{vz}C_{z,v} + C_{vy}C_{y,v} \right)\sum_{s=0}^{S}\E\left[\norm{\E\left[\bar v_{i+1}\middle|\cF_i\right]}^2\right] \notag\\
   & + \frac{\eta_{x}^2}{(1-\rho^2)^3}\left(4C_{vz}C_{z,vs} + 4C_{vy}C_{y,vs}  + \frac{C_{vv}}{n}\right)\sum_{s=0}^{S-1}\E\left[\norm{\bar v_{i+1}}^2\right] \notag \\
    & +\frac{\eta_{x}^2 C_{vz}}{(1-\rho^2)^3} \left(C_{z_*,0}\Delta_{z_*,0} +  C_{Z,0}\Delta_{Z,0}  + C_{U_z,0}\Delta_{U_z,0}  \right) \notag \\
   & + \frac{C_{vy}\eta_x^2}{(1-\rho^2)^3} \left( C_{y_*,0}\Delta_{y_*,0} +  C_{Y,0}\Delta_{Y,0} + C_{U_y,0}\Delta_{U_y,0} \right) \notag \\
    & + \frac{\eta_{x}^2S}{(1-\rho^2)^3}\left(C_{vz}C_{z,\sigma} \sigma_z^2 +  C_{vy}C_{y,\sigma} \sigma_y^2 \right)  + \frac{S\eta_{x}^2\sigma_{x}^2}{(1-\rho^2)^3} + \frac{\eta_{x}^2\ell_{f,0}^2(1+\rho^2)}{(1-\rho^2)^3} \notag
 \end{align}
where $\sigma_x^2 = \sigma_f^2 + 2\alpha^2\sigma_g^2$, $\sigma_y^2 = \sigma_f^2 + \alpha^2\sigma_g^2$,  $\sigma_z^2 = \sigma_g^2$,  and other constants are defined in~\eqref{inequ:z:const} and \eqref{inequ:y:const} of Lemma~\ref{lem: yz_convergence}.
\end{restatable}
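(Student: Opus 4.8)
The plan is to track the two expected consensus errors $a_s^X := \E\left[\norm{\mX_s - \bar x_s\bfonet_n}^2\right]$ and $a_s^V := \E\left[\norm{\mV_s - \bar v_s\bfonet_n}^2\right]$ jointly, reusing the contraction bookkeeping from the proof of Lemma~\ref{lem: dsgt}. Writing the outer updates in matrix form, $\mX_{s+1} = \mX_s\mW - \eta_x\mV_{s+1}$ and $\mV_{s+1} = \mV_s\mW + \mDelta_{s+1} - \mDelta_s$, and subtracting row averages gives
\begin{align*}
\mX_{s+1} - \bar x_{s+1}\bfonet_n &= (\mX_s - \bar x_s\bfonet_n)\Bigl(\mW - \tfrac{\bfone_n\bfonet_n}{n}\Bigr) - \eta_x(\mV_{s+1} - \bar v_{s+1}\bfonet_n),\\
\mV_{s+1} - \bar v_{s+1}\bfonet_n &= (\mV_s - \bar v_s\bfonet_n)\Bigl(\mW - \tfrac{\bfone_n\bfonet_n}{n}\Bigr) + (\mDelta_{s+1} - \mDelta_s)\Bigl(\mI_n - \tfrac{\bfone_n\bfonet_n}{n}\Bigr).
\end{align*}
Applying Lemmas~\ref{lem: cs} and~\ref{lem: W_m} with $c = \tfrac{1-\rho^2}{2\rho^2}$ exactly as in~\eqref{ineq: dsgt:b} yields $a_{s+1}^X \le \tfrac{1+\rho^2}{2}a_s^X + \tfrac{(1+\rho^2)\eta_x^2}{1-\rho^2}a_{s+1}^V$ and $a_{s+1}^V \le \tfrac{1+\rho^2}{2}a_s^V + \tfrac{1+\rho^2}{1-\rho^2}\E\left[\norm{\mDelta_{s+1} - \mDelta_s}^2\right]$, with $a_0^X = a_0^V = 0$ thanks to the initialization $x_0^{(i)} = x_0$, $v_0^{(i)} = 0$; this is why no initial consensus term for $X$ appears in the bound.

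The substantive step is estimating $\E\left[\norm{\mDelta_{s+1} - \mDelta_s}^2\right]$. Splitting $\mDelta_{s+1} - \mDelta_s$ into the martingale increments $\mDelta_{s+1} - \E[\mDelta_{s+1}\mid\cF_s]$, $-(\mDelta_s - \E[\mDelta_s\mid\cF_{s-1}])$, and the drift $\E[\mDelta_{s+1}\mid\cF_s] - \E[\mDelta_s\mid\cF_{s-1}]$ (as in~\eqref{eq: H_diff}--\eqref{ineq: H_diff_exp}), the first two contribute $\cO(n\sigma_x^2)$ with $\sigma_x^2 = \sigma_f^2 + 2\alpha^2\sigma_g^2$, while the drift is controlled by gradient-Lipschitzness: since $\E[\delta_{s+1}^{(i)}\mid\cF_s] = \nabla_x f_i(x_s^{(i)}, y_s^{(i)}) + \alpha(\nabla_x g_i(x_s^{(i)}, y_s^{(i)}) - \nabla_x g_i(x_s^{(i)}, z_s^{(i)}))$ is $\cO(\alpha\ell_{g,1})$-Lipschitz in $(x,y,z)$,
\[
\E\left[\norm{\mDelta_{s+1} - \mDelta_s}^2\right] \lesssim n\sigma_x^2 + \alpha^2\ell_{g,1}^2\,\E\Bigl[\norm{\mX_s - \mX_{s-1}}^2 + \norm{\mY_s - \mY_{s-1}}^2 + \norm{\mZ_s - \mZ_{s-1}}^2\Bigr].
\]
The $s=0$ case is handled separately using $\mDelta_0 = 0$ and the warm-started initialization, producing the $\tfrac{\eta_x^2\ell_{f,0}^2(1+\rho^2)}{(1-\rho^2)^3}$ term. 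Each displacement is then re-expressed through already-controlled quantities: from $\mX_s - \mX_{s-1} = (\mX_{s-1} - \bar x_{s-1}\bfonet_n)(\mW - \mI) - \eta_x\mV_s$ we get $\norm{\mX_s - \mX_{s-1}}^2 \lesssim a_{s-1}^X + \eta_x^2 a_{s-1}^V + n\eta_x^2\norm{\bar v_s}^2$, and using $\bar x_s - \bar x_{s-1} = -\eta_x\bar v_s$ (Lemma~\ref{lem: gt}) together with the Lipschitz continuity of $y_*^{\alpha}$ and $z_*$ (Lemma~\ref{lem:appendix:zy}),
\[
\norm{\mY_s - \mY_{s-1}}^2 \lesssim \norm{\mY_s - y_{*,s}^{\alpha}\bfonet_n}^2 + \norm{\mY_{s-1} - y_{*,s-1}^{\alpha}\bfonet_n}^2 + n\eta_x^2\ell_{y_*,0}^2\norm{\bar v_s}^2,
\]
and analogously for $Z$ with $\kappa$ in place of $\ell_{y_*,0}$. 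This $y_*$-based detour keeps the step valid for general inner-loop length $T$.

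Substituting these into the two scalar recursions and summing over $s$ with Lemma~\ref{lem: basic_ineq}/Lemma~\ref{lem:double:sum} --- each summation of a contracting recursion costing a factor $\tfrac{1}{1-\rho^2}$ --- produces a bound of the claimed form, with $C_{vz}, C_{vy}, C_{vv}$ naming the constants that multiply, respectively, the $Z$-bundle $\tfrac{1}{2n}\sum_s\norm{\mZ_s - z_{*,s}\bfonet_n}^2$, the $Y$-bundle $\tfrac{1}{2n}\sum_s\norm{\mY_s - y_{*,s}^{\alpha}\bfonet_n}^2$, and $\sum_s\norm{\bar v_{s+1}}^2$. The final and most delicate step is to plug in Lemma~\ref{lem: yz_convergence} for the $Y$- and $Z$-bundles: those bounds themselves contain $\tfrac1n\sum_s a_s^X$ (with coefficients $C_{y,x}, C_{z,x}$), so $\sum_s a_s^X$ reappears on the right-hand side multiplied by a factor of order $\tfrac{\eta_x^2}{(1-\rho^2)^3}(C_{vz}C_{z,x} + C_{vy}C_{y,x})$. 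The main obstacle is verifying that the two-timescale stepsize conditions --- $\eta_x \le \cO\bigl(\min\{(1-\rho^2)/(\alpha\ell_{f,1}),\,(1-\rho^2)^2/(\alpha\ell_{g,1})\}\bigr)$ in the hypothesis, together with the smallness of $\eta_y,\eta_z$ relative to $\alpha$ and $1-\rho^2$ already assumed in Lemma~\ref{lem: yz_convergence} --- force this self-coupling coefficient below $\tfrac12$; once checked, one absorbs $\sum_s a_s^X$ into the left-hand side, doubles, and renames constants to reach the stated inequality. One must likewise carry through the $\norm{\E[\bar v_{s+1}\mid\cF_s]}^2$ and the extra $\norm{\bar v_{s+1}}^2$ contributions that flow back from Lemma~\ref{lem: yz_convergence} and merge them with the $\tfrac{C_{vv}}{n}\sum_s\norm{\bar v_{s+1}}^2$ term generated directly here.
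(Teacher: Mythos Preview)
Your proposal is correct and follows essentially the same approach as the paper's proof: the same matrix-form recursions for $\mX_s - \bar x_s\bfonet_n$ and $\mV_s - \bar v_s\bfonet_n$, the same martingale/drift decomposition of $\mDelta_{s+1} - \mDelta_s$, the same re-expression of $\norm{\mY_s - \mY_{s-1}}^2$ and $\norm{\mZ_s - \mZ_{s-1}}^2$ via $y_*^{\alpha}$ and $z_*$, the separate treatment of $s=0$ to produce the $\ell_{f,0}^2$ term, and finally the substitution of Lemma~\ref{lem: yz_convergence} followed by absorbing the self-coupled $\sum_s a_s^X$ term using the stepsize condition on $\eta_x$. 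The only minor point worth noting is that the paper obtains the clean $\ell_{f,0}^2$ bound at $s=0$ by additionally taking $y_0^{(i)} = z_0^{(i)}$ (so the two $\alpha\nabla_x g_i$ terms in $\delta_1^{(i)}$ cancel), which is slightly more specific than ``warm-started initialization.''
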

\begin{proof}(of Lemma~\ref{lem: consensus})
    Note that in Algorithm \ref{alg:stochastic_minmax:onestage} we have
    \begin{align*}
        \mX_{s+1} = \mX_s \mW - \eta_{x}\mV_{s+1},\ \mV_{s+1} = \mV_s\mW + \Delta_{s+1} - \Delta_s,\ 
        \bar x_{s+1} = \bar x_s - \eta_{x}\bar v_{s+1},\ \bar v_s = \bar \delta_s.
    \end{align*}
    Thus we know by Lemmas \ref{lem: cs} (with $c = \frac{1-\rho^2}{2\rho^2}$) and \ref{lem: W_m},
    \begin{align*}
        \norm{\mX_{s+1} - \bar x_{s+1}\bfonet_n}^2\leq \frac{1+\rho^2}{2}\norm{\mX_s - \bar x_s\bfonet_n}^2 + \frac{(1+\rho^2)\eta_{x}^2}{1-\rho^2}\norm{\mV_{s+1} - \bar v_{s+1}\bfonet_n}^2.
    \end{align*}
    We also have
    \begin{align*}
        \mV_{s+1} - \bar v_{s+1}\bfonet_n 
        &= \mV_s\mW + \Delta_{s+1} - \Delta_s - (\bar v_s + \bar \delta_{s+1} - \bar \delta_s)\bfonet_n \\
        &= \left(\mV_s - \bar v_s\bfonet_n\right)\left(\mW - \frac{\bfone_n\bfonet_n}{n}\right) + \left(\Delta_{s+1} - \Delta_s\right)\left(\mI_n - \frac{\bfone_n\bfonet_n}{n}\right)
    \end{align*}
which, together with Lemmas \ref{lem: cs}, \ref{lem: F_norm_ineq} and \ref{lem: W_m}, and $\norm{\mI_n - \frac{\bfone_n\bfonet_n}{n}}_2\leq 1$, implies
\begin{align}
    \norm{\mV_{s+1} - \bar v_{s+1}\bfonet_n}^2\leq \frac{1+\rho^2}{2}\norm{\mV_s - \bar v_s\bfonet_n}^2 + \frac{1+\rho^2}{1-\rho^2}\norm{\Delta_{s+1} - \Delta_s}^2\label{ineq: v_recursion}
\end{align}
To bound $\norm{\Delta_{s+1} - \Delta_s}$, we have
\begin{align*}
    \Delta_{s+1} - \Delta_s = \Delta_{s+1} - \E\left[\Delta_{s+1}\mid \cF_s\right] - (\Delta_s - \E\left[\Delta_s\mid \cF_{s-1}\right])+\E\left[\Delta_{s+1}\mid \cF_s\right]- \E\left[\Delta_s\mid \cF_{s-1}\right],
\end{align*}
and thus
\begin{align}
    &\E\left[\norm{\Delta_{s+1} - \Delta_s}^2\right]\notag\\
    \leq &3\E\left[\norm{\Delta_{s+1} - \E\left[\Delta_{s+1}\mid \cF_s\right]}^2 + \norm{\Delta_s - \E\left[\Delta_s\mid \cF_{s-1}\right]}^2 + \norm{\E\left[\Delta_{s+1}\mid \cF_s\right]- \E\left[\Delta_s\mid \cF_{s-1}\right]}^2\right] \notag\\
    \leq & 6n\sigma_x^2 + 3\E\left[\norm{\E\left[\Delta_{s+1}\mid \cF_s\right]- \E\left[\Delta_s\mid \cF_{s-1}\right]}^2\right] \label{ineq: Delta_diff},
\end{align}
where the stochastic gradient $\Delta_{s+1}$ of updating variable $x$ is variance-bounded by $\sigma_x^2 = \sigma_f^2 + 2\alpha^2\sigma_g^2$.
We then bound $\norm{\E\left[\Delta_{s+1}\mid \cF_s\right]- \E\left[\Delta_s\mid \cF_{s-1}\right]}$ via the following inequalities:
\begin{align}
    &\norm{\E\left[\Delta_{s+1}\mid \cF_s\right]- \E\left[\Delta_s\mid \cF_{s-1}\right]}^2 \notag\\
    \leq &\sum_{i=1}^{n}3\norm{\nabla_{x} f_i(x_{s}^{(i)}, y_{s}^{(i)}) - \nabla_{x} f_i(x_{s-1}^{(i)}, y_{s-1}^{(i)})}^2 + \sum_{i=1}^{n}3\alpha^2\norm{\nabla_{x} g_i(x_{s}^{(i)}, y_{s}^{(i)})  -  \nabla_{x} g_i(x_{s-1}^{(i)}, y_{s-1}^{(i)})}^2 \notag\\
    &+ \sum_{i=1}^{n}3\alpha^2\norm{\nabla_{x} g_i(x_{s}^{(i)}, z_{s}^{(i)}) - \nabla_{x} g_i(x_{s-1}^{(i)}, z_{s-1}^{(i)})}^2 \notag\\
    \leq &(3 + 6\alpha^2)\ell_{f,1}^2\norm{\mX_s - \mX_{s-1}}^2 + (3 + 3\alpha^2)\ell_{g,1}^2\norm{\mY_{s} - \mY_{s-1}}^2 + 3\alpha^2\ell_{g,1}^2\norm{\mZ_{s} - \mZ_{s-1}}^2. \label{ineq: exp_Delta_diff}
\end{align}
Note that we also have for $\norm{\mX_s - \mX_{s-1}}$,
\begin{align}
   \norm{\mX_{s+1} - \mX_s}^2
    = &\norm{\left(\mX_s - \bar x_s\bfonet_n\right)\left(\mW - \mI\right) - \eta_{x}\mV_{s+1}}^2 \notag\\
    \leq &2\norm{\left(\mX_s - \bar x_s\bfonet_n\right)\left(\mW - \mI\right)}^2 + 2\eta_{x}^2\norm{\mV_{s+1}}^2 \notag\\
    \leq &8\norm{\mX_s - \bar x_s\bfonet}^2 + 2\eta_{x}^2\norm{\mV_{s+1} - \bar v_{s+1}\bfonet_n}^2 + 2n\eta_{x}^2\norm{\bar v_{s+1}}^2 \label{ineq: x_diff}
\end{align}
for $\norm{\mY_s - \mY_{s-1}}$,
\begin{align}
   \norm{\mY_s - \mY_{s-1}}^2 
    = &\norm{\mY_s - y_{\ast}^{\alpha}(\bar x_s)\bfonet_n - \mY_{s-1} + y_{\ast}^{\alpha}(\bar x_{s-1})\bfonet_n + y_{\ast}^{\alpha}(\bar x_s)\bfonet_n - y_{\ast}^{\alpha}(\bar x_{s-1})\bfonet_n}^2 \notag\\
    \leq &3\norm{\mY_s - y_{\ast}^{\alpha}(\bar x_s)\bfonet_n}^2 + 3\norm{\mY_{s-1} - y_{\ast}^{\alpha}(\bar x_{s-1})\bfonet_n}^2 + 3n\ell_{y_{\ast}}^2\eta_{x}^2\norm{\bar v_s}^2 \label{ineq: Y_diff}
\end{align}
and for $\norm{\mZ_s - \mZ_{s-1}}$, 
\begin{align}
    \norm{\mZ_s - \mZ_{s-1}}^2 
    = &\norm{\mZ_s - z_{\ast}(\bar x_s)\bfonet_n - \mZ_{s-1} + z_{\ast}(\bar x_{s-1})\bfonet_n + z_{\ast}(\bar x_s)\bfonet_n - z_{\ast}(\bar x_{s-1})\bfonet_n}^2 \notag\\
    \leq &3\norm{\mZ_s - z_{\ast}(\bar x_s)\bfonet_n}^2 + 3\norm{\mZ_{s-1} - z_{\ast}(\bar x_{s-1})\bfonet_n}^2 + 3n\ell_{z_{\ast}}^2\eta_{x}^2\norm{\bar v_s}^2.\label{ineq: Z_diff}
\end{align}
Combining all the inequalities above and setting $\alpha\geq 1$, we obtain
\begin{align*}
    &\E\left[\norm{\E\left[\Delta_{s+1}\mid \cF_s\right]- \E\left[\Delta_s\mid \cF_{s-1}\right]}^2\right]\\
    \leq &9\alpha^2\ell_{f,1}^2\left(8\norm{\mX_{s-1} - \bar x_{s-1}\bfonet}^2 + 2\eta_{x}^2\norm{\mV_s - \bar v_s\bfonet_n}^2 + 2n\eta_{x}^2\norm{\bar v_{s}}^2\right) \\
    +& 6\alpha^2\ell_{g,1}^2\left(3\norm{\mY_s - y_{\ast}^{\alpha}(\bar x_s)\bfonet_n}^2 + 3\norm{\mY_{s-1} - y_{\ast}^{\alpha}(\bar x_{s-1})\bfonet_n}^2 + 3n\ell_{y_{\ast}}^2\eta_{x}^2\norm{\bar v_s}^2\right) \\
   + & 3\alpha^2\ell_{g,1}^2\left(3\norm{\mZ_s - z_{\ast}(\bar x_s)\bfonet_n}^2 + 3\norm{\mZ_{s-1} - z_{\ast}(\bar x_{s-1})\bfonet_n}^2 + 3n\ell_{z_{\ast}}^2\eta_{x}^2\norm{\bar v_s}^2\right)
\end{align*}
and thus for $s \geq 1$, we have
\begin{align}
    &\E\left[\norm{\mV_{s+1} - \bar v_{s+1}\bfonet_n}^2\right] \notag\\
     & \leq\frac{1+\rho^2}{2}\E\left[\norm{\mV_s - \bar v_s\bfonet_n}^2\right] + \frac{1+\rho^2}{1-\rho^2}\E\left[\norm{\Delta_{s+1} - \Delta_s}^2\right] \notag\\
     & \leq\frac{1+\rho^2}{2}\E\left[\norm{\mV_s - \bar v_s\bfonet_n}^2\right] + \frac{6n(1+\rho^2)\sigma_x^2}{1-\rho^2} \notag\\
   & + \frac{3(1+\rho^2)}{1-\rho^2}\bigg\{3\alpha^2\ell_{f,1}^2\left(8\norm{\mX_{s-1} - \bar x_{s-1}\bfonet}^2 + 2\eta_{x}^2\norm{\mV_s - \bar v_s\bfonet_n}^2 + 2n\eta_{x}^2\norm{\bar v_{s}}^2\right)
    \notag\\
    & +6\alpha^2\ell_{g,1}^2\left(\norm{\mY_s - y_{\ast}^{\alpha}(\bar x_s)\bfonet_n}^2 + \norm{\mY_{s-1} - y_{\ast}^{\alpha}(\bar x_{s-1})\bfonet_n}^2 + n\ell_{y_{\ast}}^2\eta_{x}^2\norm{\bar v_s}^2\right) 
    \notag\\
    & +3\alpha^2\ell_{g,1}^2\left(\norm{\mZ_s - z_{\ast}(\bar x_s)\bfonet_n}^2 + \norm{\mZ_{s-1} - z_{\ast}(\bar x_{s-1})\bfonet_n}^2 + n\ell_{z_{\ast}}^2\eta_{x}^2\norm{\bar v_s}^2\right)\bigg\} \notag\\
    & = C_{v}\E\left[\norm{\mV_s - \bar v_s\bfonet_n}^2\right] + C_{vx}\E\left[\norm{\mX_{s-1} - \bar x_{s-1}\bfonet}^2\right]  + C_{vy}\E\left[\norm{\mY_s - y_{\ast}^{\alpha}(\bar x_s)\bfonet_n}^2 + \norm{\mY_{s-1} - y_{\ast}^{\alpha}(\bar x_{s-1})\bfonet_n}^2\right] \notag\\
    &+ C_{vz}\E\left[\norm{\mZ_s - z_{\ast}(\bar x_s)\bfonet_n}^2 + \norm{\mZ_{s-1} - z_{\ast}(\bar x_{s-1})\bfonet_n}^2\right] + C_{vv}\E\left[\norm{\bar v_s}^2\right]+\frac{6n(1+\rho^2)\sigma_x^2}{1-\rho^2}. \label{inequ:v:recursive}
\end{align}
where the constants are given by
\begin{align*}
    &C_{v} = \left(\frac{1+\rho^2}{2} + \frac{18\alpha^2\ell_{f,1}^2\eta_{x}^2(1+\rho^2)}{1-\rho^2}\right) ,\\
    &C_{vx} = \frac{72\alpha^2\ell_{f,1}^2(1+\rho^2)}{1-\rho^2} = \cO\left(\frac{\alpha^2}{1-\rho^2}\right),\ C_{vy} = \frac{18\alpha^2\ell_{g,1}^2(1+\rho^2)}{1-\rho^2} = \cO\left(\frac{\alpha^2}{1-\rho^2}\right),\\
    &C_{vz} = \frac{9\alpha^2\ell_{g,1}^2(1+\rho^2)}{1-\rho^2} = \cO\left(\frac{\alpha^2}{1-\rho^2}\right),\\
    &C_{vv} = \frac{3n\alpha^2\eta_{x}^2(1+\rho^2)(6\ell_{f,1}^2 + 6\ell_{g,1}^2\ell_{y_*}^2 + 3\ell_{g,1}^2\ell_{z_*}^2)}{1-\rho^2} = \cO\left(\frac{n\alpha^2\eta_x^2}{1-\rho^2}\right).
\end{align*}
Especially, for $s=0$, by \eqref{ineq: v_recursion} and $\Delta_0=0$ and $V_0=0$, if we initialize $y_0^{(i)} = z_0^{(i)}$ at each agent, we have
\begin{align}
\E\left[\norm{\mV_{1} - \bar v_{1}\bfonet_n}^2\right] \leq & \frac{1+\rho^2}{2}\norm{\mV_0 - \bar v_0\bfonet_n}^2 + \frac{1+\rho^2}{1-\rho^2}\norm{\Delta_{1} - \Delta_0}^2 \notag \\
\leq &  \frac{1+\rho^2}{2}\norm{\mV_0 - \bar v_0\bfonet_n}^2 + \frac{1+\rho^2}{1-\rho^2}\sum_{i=1}^n \left(\delta_1^{(i)} \right)^2 \notag \\
\leq  &  \frac{1+\rho^2}{2}\norm{\mV_0 - \bar v_0\bfonet_n}^2 + \frac{n \ell_{f,0}^2(1+\rho^2)}{1-\rho^2} \leq  \frac{n \ell_{f,0}^2(1+\rho^2)}{1-\rho^2}. 
\end{align}
Combining the result at the initial iteration $s=0$ and telescoping the inequality~\eqref{inequ:v:recursive} for $s=1,2,\cdots, S$, we have
\begin{align}
& \sum_{i=0}^{S}\E\left[\norm{\mV_{s+1} - \bar v_{s+1}\bfonet_n}^2\right] \notag \\
 \leq & C_{v}\sum_{i=0}^{S}\E\left[\norm{\mV_s - \bar v_s\bfonet_n}^2\right] + C_{vx}\sum_{i=0}^{S-1}\E\left[\norm{\mX_{s} - \bar x_{s}\bfonet}^2\right] + C_{vy}\sum_{i=0}^{S}\E\left[\norm{\mY_s - y_{\ast}^{\alpha}(\bar x_s)\bfonet_n}^2 \right] \notag \\
 + &  C_{vy}\sum_{i=0}^{S-1}\E\left[\norm{\mY_{s} - y_{\ast}^{\alpha}(\bar x_{s})\bfonet_n}^2\right] + \frac{n \ell_{f,0}^2(1+\rho^2)}{1-\rho^2} +\frac{6nS(1+\rho^2)\sigma_{x}^2}{1-\rho^2}\notag \\
   + & C_{vz}\sum_{i=0}^{S}\E\left[\norm{\mZ_s - z_{\ast}(\bar x_s)\bfonet_n}^2 \right] + C_{vz}\sum_{i=0}^{S-1}\E\left[\norm{\mZ_{s} - z_{\ast}(\bar x_{s})\bfonet_n}^2\right] + C_{vv}\sum_{i=0}^{S-1}\E\left[\norm{\bar v_{s+1}}^2\right] \notag \\ 
  \leq & C_{v}\sum_{i=0}^{S}\E\left[\norm{\mV_s - \bar v_s\bfonet_n}^2\right] + C_{vx}\sum_{i=0}^{S-1}\E\left[\norm{\mX_{s} - \bar x_{s}\bfonet}^2\right] + 2C_{vy}\sum_{i=0}^{S}\E\left[\norm{\mY_s - y_{\ast}^{\alpha}(\bar x_s)\bfonet_n}^2 \right] \notag \\
  + & 2C_{vz}\sum_{i=0}^{S}\E\left[\norm{\mZ_s - z_{\ast}(\bar x_s)\bfonet_n}^2 \right] + C_{vv}\sum_{i=0}^{S-1}\E\left[\norm{\bar v_{s+1}}^2\right] + \frac{n \ell_{f,0}^2(1+\rho^2)}{1-\rho^2} +\frac{6nS(1+\rho^2)\sigma_{x}^2}{1-\rho^2}. \label{inequ:v:sum}
\end{align}
Applying the consensus convergence estimations of $y, z$ in Lemma~\ref{lem: yz_convergence} (see~\eqref{ineq: z_recur_final} and \eqref{inequ: y_recur_final}) into \eqref{inequ:v:sum}, we have
\begin{align}\label{inequ:main:V}
& \sum_{i=0}^{S}\frac{1}{n}\E\left[\norm{\mV_{s+1} - \bar v_{s+1}\bfonet_n}^2\right] \notag \\
\lesssim & C_{v}\sum_{i=0}^{S}\frac{1}{n}\E\left[\norm{\mV_s - \bar v_s\bfonet_n}^2\right]  + C_{vx}\sum_{i=0}^{S-1}\frac{1}{n}\E\left[\norm{\mX_{s} - \bar x_{s}\bfonet_n}^2\right]  + C_{vv}\sum_{i=0}^{S-1}\E\left[\norm{\bar v_{s+1}}^2\right]   \notag \\
+ &  4C_{vz} \bigg\{ C_{z_*,0}\Delta_{z_*,0}+  C_{Z,0}\Delta_{Z,0}  + C_{U_z,0}\Delta_{U_z,0}  + C_{z,v} \sum_{s=0}^{S}\E\left[\norm{\E\left[\bar v_{s+1}\middle|\cF_s\right]}^2\right] \notag \\
& + C_{z,vs} \sum_{s=0}^{S-1}\E\left[\norm{\bar v_{s+1}}^2\right] + C_{z,x} \frac{1}{n}\sum_{s=0}^{S-1}\E\left[\norm{\mX_{s} - \bar x_{s}\bfonet_n}^2\right] + S \cdot C_{z,\sigma}  \sigma_z^2 \bigg\} +\frac{6S(1+\rho^2)\sigma_{x}^2}{1-\rho^2} \notag \\
+ & 4C_{vy} \bigg\{ C_{y_*,0}\Delta_{y_*,0} +  C_{Y,0}\Delta_{Y,0} + C_{U_y,0}\Delta_{U_y,0}  + C_{y,vs} \sum_{s=0}^{S}\E\left[\norm{\bar v_{s+1}}^2\right]  \notag \\
&  + C_{y,v}\sum_{s=0}^{S}\E\left[\norm{\E\left[\bar v_{s+1}\middle|\cF_s\right]}^2\right]+ C_{y,x} \sum_{s=0}^S\frac{1}{n}\E\left[\norm{\mX_s - \bar{x}_s\bfonet_n}^2\right] + S C_{y,\sigma} \sigma_y^2 \bigg\}  +  \frac{ \ell_{f,0}^2(1+\rho^2)}{1-\rho^2}\notag \\
\lesssim & C_{v}\sum_{i=0}^{S}\frac{1}{n}\E\left[\norm{\mV_s - \bar v_s\bfonet_n}^2\right] + \left(4C_{vz}C_{z,vs} + 4C_{vy}C_{y,vs}  + \frac{C_{vv}}{n}\right)\sum_{s=0}^{S}\E\left[\norm{\bar v_{s+1}}^2\right] \notag \\
+ &  \left( 2C_{vz}C_{z,v} + 2C_{vy}C_{y,v} \right)\sum_{s=0}^{S}\E\left[\norm{\E\left[\bar v_{s+1}\middle|\cF_s\right]}^2\right] 
+   \left( C_{vx} + 4C_{vz}C_{z,x} + 4C_{vy}C_{y,x} \right)\sum_{i=0}^{S}\frac{1}{n}\E\left[\norm{\mX_{s} - \bar x_{s}\bfonet_n}^2\right]
\notag \\
+ &  4C_{vz} \left(C_{z_*,0}\Delta_{z_*,0} +  C_{Z,0}\Delta_{Z,0}  + C_{U_z,0}\Delta_{U_z,0}  \right) + 4C_{vy} \left( C_{y_*,0}\Delta_{y_*,0} +  C_{Y,0}\Delta_{Y,0} + C_{U_y,0}\Delta_{U_y,0} \right) \notag \\
+ &  4S\left(C_{vz}C_{z,\sigma} \sigma_z^2 +  C_{vy}C_{y,\sigma} \sigma_y^2 \right) + \frac{ \ell_{f,0}^2(1+\rho^2)}{1-\rho^2} +\frac{6S(1+\rho^2)\sigma_{x}^2}{1-\rho^2} 
\end{align}
For sufficient small $\eta_x$  such that
\begin{align}\label{inequ:etax:1}
C_{v}  =\left(\frac{1+\rho^2}{2} + \frac{18\alpha^2\ell_{f,1}^2\eta_{x}^2(1+\rho^2)}{1-\rho^2}\right) & \leq \frac{3+\rho^2}{4},
\end{align}
since $v_0^{(i)}=0$ for each agent $i$,  dividing the both side of \eqref{inequ:V:mid} by $1-C_{v}$ then
\begin{align}\label{inequ:V:mid}
& \frac{1}{n}\sum_{s=0}^{S} \E\left[\norm{\mV_{s+1} - \bar v_{s+1}\bfonet_n}^2\right] \notag \\
&  \lesssim \frac{4}{1-\rho^2}\left(4C_{vz}C_{z,vs} + 4C_{vy}C_{y,vs}  + \frac{C_{vv}}{n}\right)\sum_{s=0}^{S}\E\left[\norm{\bar v_{s+1}}^2\right] \notag \\
& +  \frac{4}{1-\rho^2}\left( 4C_{vz}C_{z,v} + 4C_{vy}C_{y,v} \right)\sum_{s=0}^{S}\E\left[\norm{\E\left[\bar v_{s+1}\middle|\cF_s\right]}^2\right] \notag \\
& +  \frac{16}{1-\rho^2} \left( C_{vx} + 4C_{vz}C_{z,x} + 4C_{vy}C_{y,x} \right)\sum_{i=0}^{S}\frac{1}{n}\E\left[\norm{\mX_{s} - \bar x_{s}\bfonet_n}^2\right]
  \notag \\ 
& + \frac{4C_{vz}}{1-\rho^2} \left(C_{z_*,0}\Delta_{z_*,0} +  C_{Z,0}\Delta_{Z,0}  + C_{U_z,0}\Delta_{U_z,0}  \right) \notag \\
  & + \frac{16C_{vy}}{1-\rho^2} \left( C_{y_*,0}\Delta_{y_*,0} +  C_{Y,0}\Delta_{Y,0} + C_{U_y,0}\Delta_{U_y,0} \right) + \frac{16S}{1-\rho^2}\left(C_{vz}C_{z,\sigma} \sigma_z^2 +  C_{vy}C_{y,\sigma} \sigma_y^2 \right) \notag \\
  & + \frac{ \ell_{f,0}^2(1+\rho^2)}{(1-\rho^2)^2} +\frac{S\sigma_{x}^2}{(1-\rho^2)^2} . 
\end{align}
Recalling that 
    \begin{align}\label{inequ:X:V}
        \norm{\mX_{s+1} - \bar x_{s+1}\bfonet_n}^2\leq \frac{1+\rho^2}{2}\norm{\mX_s - \bar x_s\bfonet_n}^2 + \frac{(1+\rho^2)\eta_{x}^2}{1-\rho^2}\norm{\mV_{s+1} - \bar v_{s+1}\bfonet_n}^2,
    \end{align}
    we have
     \begin{align}
   \frac{1}{n}\sum_{s=0}^{S}\E\left[\norm{\mX_{s+1} - \bar x_{s+1}\bfonet_n}^2 \right] \leq &  \frac{1+\rho^2}{2}\sum_{s=0}^{S}\frac{1}{n}\E\left[\norm{\mX_s - \bar x_s\bfonet_n}^2  \right]+ \frac{(1+\rho^2)\eta_{x}^2}{1-\rho^2}\sum_{s=0}^{S} \frac{1}{n}\E\left[\norm{\mV_{s+1} - \bar v_{s+1}\bfonet_n}^2\right] \notag \\
   \lesssim &  \left(\frac{1+\rho^2}{2} +  \frac{16\eta_x^2}{(1-\rho^2)^2} \left( C_{vx} + 4C_{vz}C_{z,x} + 4C_{vy}C_{y,x} \right)\right)\sum_{s=0}^{S}\frac{1}{n}\E\left[\norm{\mX_s - \bar x_s\bfonet_n}^2 \right]\notag \\
   + &\frac{(1+\rho^2)\eta_{x}^2}{(1-\rho^2)^2}\left( C_{vz}C_{z,v} + C_{vy}C_{y,v} \right)\sum_{s=0}^{S}\E\left[\norm{\E\left[\bar v_{i+1}\middle|\cF_i\right]}^2\right] \notag\\
   + & \frac{\eta_{x}^2}{(1-\rho^2)^2}\left(4C_{vz}C_{z,vs} + 4C_{vy}C_{y,vs}  + \frac{C_{vv}}{n}\right)\sum_{s=0}^{S-1}\E\left[\norm{\bar v_{i+1}}^2\right] \notag \\
   + & \frac{\eta_{x}^2 C_{vz}}{(1-\rho^2)^2} \left(C_{z_*,0}\Delta_{z_*,0} +  C_{Z,0}\Delta_{Z,0}  + C_{U_z,0}\Delta_{U_z,0}  \right) \notag \\
   + &  \frac{C_{vy}\eta_x^2}{(1-\rho^2)^2} \left( C_{y_*,0}\Delta_{y_*,0} +  C_{Y,0}\Delta_{Y,0} + C_{U_y,0}\Delta_{U_y,0} \right) \notag \\
   + & \frac{\eta_{x}^2S}{1-\rho^2}\left(C_{vz}C_{z,\sigma} \sigma_z^2 +  C_{vy}C_{y,\sigma} \sigma_y^2 \right) + \frac{ \eta_{x}^2\ell_{f,0}^2(1+\rho^2)}{(1-\rho^2)^2} + \frac{S\eta_{x}^2\sigma_{x}^2}{(1-\rho^2)^2}.
     \end{align}
Properly choosing $\eta_{x} \leq \mathcal{O}\left((1-\rho^2)^2/(\alpha\ell_{g,1}) \right)$ such that
\begin{align} 
\frac{1+\rho^2}{2} +  \frac{16\eta_x^2}{(1-\rho^2)^2} \left( C_{vx} + 4C_{vz}C_{z,x} + 4C_{vy}C_{y,x} \right) & \leq  \frac{3+\rho^2}{4}
\end{align}
and due to that $x_0^{(i)}=x_0$ for each agent, 
we have
     \begin{align}
   \frac{1}{n}\sum_{s=0}^{S}\E\left[\norm{\mX_{s+1} - \bar x_{s+1}\bfonet_n}^2 \right]
 \lesssim   &   \frac{\eta_{x}^2}{(1-\rho^2)^3}\left( C_{vz}C_{z,v} + C_{vy}C_{y,v} \right)\sum_{s=0}^{S}\E\left[\norm{\E\left[\bar v_{i+1}\middle|\cF_i\right]}^2\right] \notag\\
   + & \frac{\eta_{x}^2}{(1-\rho^2)^3}\left(4C_{vz}C_{z,vs} + 4C_{vy}C_{y,vs}  + \frac{C_{vv}}{n}\right)\sum_{s=0}^{S-1}\E\left[\norm{\bar v_{i+1}}^2\right] \notag \\
   + & \frac{\eta_{x}^2 C_{vz}}{(1-\rho^2)^3} \left(C_{z_*,0}\Delta_{z_*,0} +  C_{Z,0}\Delta_{Z,0}  + C_{U_z,0}\Delta_{U_z,0}  \right) \notag \\
   + & \frac{C_{vy}\eta_x^2}{(1-\rho^2)^3} \left( C_{y_*,0}\Delta_{y_*,0} +  C_{Y,0}\Delta_{Y,0} + C_{U_y,0}\Delta_{U_y,0} \right) \notag \\
   + & \frac{\eta_{x}^2S}{(1-\rho^2)^3}\left(C_{vz}C_{z,\sigma} \sigma_z^2 +  C_{vy}C_{y,\sigma} \sigma_y^2 \right)  + \frac{S\eta_{x}^2\sigma_{x}^2}{(1-\rho^2)^3} + \frac{\eta_{x}^2\ell_{f,0}^2(1+\rho^2)}{(1-\rho^2)^3}.
 \label{inequ:x:consensus:sum}
 \end{align}
where $\sigma_x^2 = \sigma_f^2 + 2\alpha^2\sigma_g^2$, $\sigma_y^2 = \sigma_f^2 + \alpha^2\sigma_g^2 $,  $\sigma_z^2 = \sigma_g^2$. Note that the symbol $\lesssim$ indicates that there exists an absolute constant $C$ such that LHS $\leq$ $C$ RHS. Now we have completed the proof.
\end{proof}

\section{Appendix / Convergence complexity in Theorem~\ref{thm: warm_start_rate}}\label{sec: complexity_app}

To derive the convergence rate in Theorem~\ref{thm: warm_start_rate}, we first have the following sufficient decrease lemma.
\begin{restatable}[]{lemma}{lemsuffcientdecrease} 
\label{lem: suffcient:decrease}
    Suppose Assumptions \ref{aspt: smoothness} and \ref{aspt: so} hold. The stepsize $\eta_{x}$ satisfies $\eta_{x}\leq 1/ \ell_{\Gamma}$ for all $s$. We have
    \begin{align}
        &\frac{\eta_{x}}{2}\norm{\nabla \Gamma^{\alpha}(\bar x_s)}^2 + \left(\frac{\eta_{x}}{2} - \frac{\eta_{x}^2\ell_{\Gamma}}{2}\right)\norm{\E\left[\bar v_{s+1}\middle|\cF_s\right]}^2 \notag\\
        \leq & \Gamma^{\alpha}(\bar x_s) - \E\left[\Gamma^{\alpha}(\bar x_{s+1})\middle|\cF_s\right] + \frac{\eta_{x}}{2}\norm{\E\left[\bar v_{s+1}\middle|\cF_s\right] - \nabla \Gamma^{\alpha}(\bar x_s)}^2  + \frac{\ell_{\Gamma}\eta_{x}^2\sigma^2}{2n} \notag.
    \end{align}
\end{restatable}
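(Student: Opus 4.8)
The plan is to treat this as a one-step descent estimate for stochastic gradient descent on the smooth surrogate $\Gamma^{\alpha}$, keeping careful track of the fact that the direction driving the averaged iterate is the tracked stochastic gradient $\bar v_{s+1}$, not $\nabla\Gamma^{\alpha}(\bar x_s)$. First I would record the averaged dynamics: averaging the $x$-update in Algorithm~\ref{alg:stochastic_minmax:onestage} and using that $\mW$ is doubly stochastic gives $\bar x_{s+1} = \bar x_s - \eta_{x}\bar v_{s+1}$, and by Lemma~\ref{lem: gt} we also have $\bar v_{s+1} = \bar\delta_{s+1}$, the per-agent average of the single-sample surrogate gradients $\delta_{s+1}^{(i)}$. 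Since $\Gamma^{\alpha}$ is $\ell_{\Gamma}$-smooth (with $\ell_{\Gamma} = \cO(\kappa^3)$ independent of $\alpha$), the quadratic upper bound gives
\[
\Gamma^{\alpha}(\bar x_{s+1}) \le \Gamma^{\alpha}(\bar x_s) - \eta_{x}\<\nabla\Gamma^{\alpha}(\bar x_s),\bar v_{s+1}> + \frac{\ell_{\Gamma}\eta_{x}^2}{2}\norm{\bar v_{s+1}}^2 .
\]

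Next I would take $\E[\,\cdot\mid\cF_s]$. Because $\nabla\Gamma^{\alpha}(\bar x_s)$ is $\cF_s$-measurable, the linear term becomes $-\eta_{x}\<\nabla\Gamma^{\alpha}(\bar x_s),\E[\bar v_{s+1}\mid\cF_s]>$, which I would split with the polarization identity $-\<a,b> = \tfrac12\norm{a-b}^2 - \tfrac12\norm{a}^2 - \tfrac12\norm{b}^2$ applied to $a=\nabla\Gamma^{\alpha}(\bar x_s)$, $b=\E[\bar v_{s+1}\mid\cF_s]$. For the quadratic term I would use the bias--variance decomposition $\E[\norm{\bar v_{s+1}}^2\mid\cF_s] = \norm{\E[\bar v_{s+1}\mid\cF_s]}^2 + \E[\norm{\bar v_{s+1} - \E[\bar v_{s+1}\mid\cF_s]}^2\mid\cF_s]$, and bound the conditional variance of $\bar\delta_{s+1}$: since the per-agent oracles are unbiased, conditionally independent across $i$ given $\cF_s$, and each has variance at most $\sigma^2 := \sigma_f^2 + 2\alpha^2\sigma_g^2 = \cO(\alpha^2)$ (Assumption~\ref{aspt: so}), the variance of the $n$-agent average is at most $\sigma^2/n$. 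This is the step that merits the most care, since it is precisely the cross-agent conditional independence that produces the $1/n$ (linear-speedup) factor here.

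Substituting these two expansions into the conditional descent inequality, the $-\tfrac{\eta_x}{2}\norm{\E[\bar v_{s+1}\mid\cF_s]}^2$ from polarization combines with the $+\tfrac{\ell_\Gamma\eta_x^2}{2}\norm{\E[\bar v_{s+1}\mid\cF_s]}^2$ from the quadratic term into $-\bigl(\tfrac{\eta_x}{2}-\tfrac{\eta_x^2\ell_\Gamma}{2}\bigr)\norm{\E[\bar v_{s+1}\mid\cF_s]}^2$; the term $-\tfrac{\eta_x}{2}\norm{\nabla\Gamma^{\alpha}(\bar x_s)}^2$ appears, the difference term $\tfrac{\eta_x}{2}\norm{\E[\bar v_{s+1}\mid\cF_s]-\nabla\Gamma^{\alpha}(\bar x_s)}^2$ is carried over intact, and the variance contributes $\tfrac{\ell_\Gamma\eta_x^2\sigma^2}{2n}$. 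Moving $\Gamma^{\alpha}(\bar x_s)-\E[\Gamma^{\alpha}(\bar x_{s+1})\mid\cF_s]$ to the right and the two negative quadratic terms to the left reproduces the claimed inequality exactly; the hypothesis $\eta_x\le 1/\ell_\Gamma$ is then invoked only to guarantee $\tfrac{\eta_x}{2}-\tfrac{\eta_x^2\ell_\Gamma}{2}\ge 0$ so the estimate is useful. Beyond this bookkeeping there is no real obstacle: the point to emphasize is that consensus and inner-loop errors do not appear in this lemma at all — they enter only later when one bounds $\norm{\E[\bar v_{s+1}\mid\cF_s]-\nabla\Gamma^{\alpha}(\bar x_s)}$.
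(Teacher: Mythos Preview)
Your proposal is correct and follows essentially the same approach as the paper's proof: apply $\ell_\Gamma$-smoothness to $\bar x_{s+1}=\bar x_s-\eta_x\bar v_{s+1}$, take conditional expectation, rewrite the inner product via the polarization identity, split $\E[\norm{\bar v_{s+1}}^2\mid\cF_s]$ by bias--variance with the variance bounded by $\sigma^2/n$, and rearrange. If anything, your write-up is more explicit than the paper's (which jumps directly from the conditional descent inequality to the rearranged form), and your identification $\sigma^2=\sigma_f^2+2\alpha^2\sigma_g^2$ matches the paper's $\sigma_x^2$ used elsewhere.
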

\begin{proof}(of Lemma~\ref{lem: suffcient:decrease})
    Note that in Algorithm \ref{alg:stochastic_minmax:onestage} we have 
    \begin{align}\label{eq: x_update}
        \bar x_{s+1} = \bar x_s - \eta_{x}\bar v_{s+1}.
    \end{align}
    By smoothness of $\Gamma^{\alpha}(x)$, we have
    \begin{align}\label{ineq: cL_decrease}
        \Gamma^{\alpha}(\bar x_{s+1}) - \Gamma^{\alpha}(\bar x_s) - \<\nabla \Gamma^{\alpha}(\bar x_s), \bar x_{s+1} - \bar x_s> \leq \frac{\ell_{\Gamma}}{2}\norm{\bar x_{s+1} - \bar x_s}^2
    \end{align}
    and thus
    \begin{align}\label{ineq: cL_decrease_exp}
        \E\left[\Gamma^{\alpha}(\bar x_{s+1})\middle|\cF_s\right] - \Gamma^{\alpha}(\bar x_s) + \eta_{x}\<\nabla \Gamma^{\alpha}(\bar x_s), \E\left[\bar v_{s+1}\middle|\cF_s\right]> \leq \frac{\ell_{\Gamma}\eta_{x}^2}{2}\E\left[\norm{\bar v_{s+1}}^2\middle|\cF_s\right]
    \end{align}
    which implies
    \begin{align}
        &\frac{\eta_{x}}{2}\left(\norm{\nabla \Gamma^{\alpha}(\bar x_s)}^2 + \norm{\E\left[\bar v_{s+1}\middle|\cF_s\right]}^2 - \norm{\E\left[\bar v_{s+1}\middle|\cF_s\right] - \nabla \Gamma^{\alpha}(\bar x_s)}^2\right)\notag\\
        \leq &  \Gamma^{\alpha}(\bar x_s) - \E\left[\Gamma^{\alpha}(\bar x_{s+1})\middle|\cF_s\right] + \frac{\ell_{\Gamma}\eta_{x}^2\sigma^2}{2n} + \frac{\ell_{\Gamma}\eta_{x}^2}{2}\norm{\E\left[\bar v_{s+1}\middle|\cF_s\right]}^2.\label{ineq: noncvx_basic}
    \end{align}
    Rearranging terms on both sides completes the proof. 
\end{proof}
Note that following the analysis in \cite{chen2021closing}, $\norm{\E\left[\bar v_{s+1}\middle|\cF_s\right]}^2$ should not be thrown away since it will be used later in the analysis. Another commonly used approach is to incorporate the moving-average updates in the algorithm, which has been used in distributed optimization \citep{xiao2023one, huang2023stochastic}.

Now we analyze $\norm{\E\left[\bar v_{s+1}\middle|\cF_s\right] - \nabla \Gamma^{\alpha}(\bar x_s)}$.
\begin{restatable}[]{lemma}{lemhypergraderror} 
\label{lem: hypergrad_error}
    Suppose Assumptions \ref{aspt: smoothness}, \ref{aspt: so}, and \ref{aspt: sgap} hold. We have
    \begin{align*}
       \norm{\E\left[\bar v_{s+1}\middle|\cF_s\right] - \nabla \Gamma^{\alpha}(\bar x_s)}^2 
       & \leq \frac{3\ell_{x,1}^2}{n}\norm{\mX_s - \bar x_s\bfonet_n}^2 + \frac{3\ell_{y,1}^2}{n}\norm{\mY_s - y_{\ast}^{\alpha}(\bar x_s)\bfonet_n}^2  +  \frac{3\alpha^2\ell_{z,1}^2}{n}\norm{\mZ_s - z_{\ast}(\bar x_s)\bfonet_n}^2
    \end{align*}
    where $\ell_{x,1}^2 = \ell_{f,1}^2+2\alpha^2\ell_{g,1}^2$, $\ell_{y,1}^2=\ell_{f,1}^2+\alpha^2\ell_{g,1}^2$, and $\ell_{z,1}^2 = \ell_{g,1}^2$.
\end{restatable}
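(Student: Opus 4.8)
The plan is to bound the deterministic bias $\norm{\E\left[\bar v_{s+1}\middle|\cF_s\right] - \nabla \Gamma^{\alpha}(\bar x_s)}$ by first identifying what $\E[\bar v_{s+1}\mid \cF_s]$ actually is. By Lemma~\ref{lem: gt} we have $\bar v_{s+1} = \bar\delta_{s+1}$, and from the definition of $\delta_{s+1}^{(i)}$ in Algorithm~\ref{alg:stochastic_minmax:onestage} together with unbiasedness of the stochastic oracles (Assumption~\ref{aspt: so}),
\[
\E\left[\bar v_{s+1}\middle|\cF_s\right] = \avein \left[\nabla_x f_i(x_s^{(i)}, y_s^{(i)}) + \alpha\left(\nabla_x g_i(x_s^{(i)}, y_s^{(i)}) - \nabla_x g_i(x_s^{(i)}, z_s^{(i)})\right)\right].
\]
On the other hand, by Danskin-type reasoning (these identities are recorded in Appendix~\ref{sec:appendix:functions}; cf.\ the hypergradient formula used in \citep{kwon2023fully, chen2023near}), the true gradient satisfies $\nabla\Gamma^{\alpha}(\bar x_s) = \nabla_x \cL^{\alpha}(\bar x_s, y_*^{\alpha}(\bar x_s), z_*(\bar x_s)) = \avein\left[\nabla_x f_i(\bar x_s, y_{*,s}^{\alpha}) + \alpha(\nabla_x g_i(\bar x_s, y_{*,s}^{\alpha}) - \nabla_x g_i(\bar x_s, z_{*,s}))\right]$, where I use the shorthand $y_{*,s}^{\alpha} = y_*^{\alpha}(\bar x_s)$, $z_{*,s} = z_*(\bar x_s)$.

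Next I would subtract these two expressions and split the difference into three groups according to which gradient term they touch: the $\nabla_x f_i$ terms, the $\alpha\nabla_x g_i(\cdot, y)$ terms, and the $\alpha\nabla_x g_i(\cdot, z)$ terms. Applying the elementary inequality $\norm{a+b+c}^2 \le 3(\norm{a}^2+\norm{b}^2+\norm{c}^2)$ and then Jensen's inequality (convexity of $\norm{\cdot}^2$) to pull the average inside each squared norm, I get three sums of the form $\frac{3}{n}\sum_i \norm{\nabla_x f_i(x_s^{(i)}, y_s^{(i)}) - \nabla_x f_i(\bar x_s, y_{*,s}^{\alpha})}^2$ and similarly for $g$. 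Each summand is then controlled by the gradient-Lipschitz property from Assumption~\ref{aspt: smoothness}: $\norm{\nabla_x f_i(x_s^{(i)}, y_s^{(i)}) - \nabla_x f_i(\bar x_s, y_{*,s}^{\alpha})}^2 \le \ell_{f,1}^2(\norm{x_s^{(i)} - \bar x_s}^2 + \norm{y_s^{(i)} - y_{*,s}^{\alpha}}^2)$, and analogously for the two $g$ terms (which contribute $\alpha^2\ell_{g,1}^2$ factors). Collecting the $x$-discrepancy contributions gives a coefficient $\ell_{f,1}^2 + 2\alpha^2\ell_{g,1}^2 =: \ell_{x,1}^2$ multiplying $\frac{1}{n}\norm{\mX_s - \bar x_s\bfonet_n}^2$; the $y$-discrepancy contributions (from the $f$ term and one $g$ term) give $\ell_{f,1}^2 + \alpha^2\ell_{g,1}^2 =: \ell_{y,1}^2$ times $\frac{1}{n}\norm{\mY_s - y_{*,s}^{\alpha}\bfonet_n}^2$; and the $z$-discrepancy (from the single remaining $g$ term) gives $\alpha^2\ell_{g,1}^2 = \alpha^2\ell_{z,1}^2$ times $\frac{1}{n}\norm{\mZ_s - z_{*,s}\bfonet_n}^2$. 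This matches the claimed bound exactly.

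I expect this proof to be essentially routine — the only point requiring a little care is justifying the formula $\nabla\Gamma^{\alpha}(\bar x_s) = \nabla_x\cL^{\alpha}(\bar x_s, y_{*,s}^{\alpha}, z_{*,s})$, i.e.\ that the partial derivatives with respect to the inner variables vanish at the saddle. This follows because $y_{*,s}^{\alpha}$ is the unconstrained minimizer of $\Omega^{\alpha}(\bar x_s, \cdot)$ (so $\nabla_y\cL^{\alpha} = 0$ there, using also that $z_{*,s}$ is the corresponding inner maximizer), and $z_{*,s}$ maximizes $\cL^{\alpha}(\bar x_s, \cdot, \cdot)$ over $z$ — both stationarity conditions hold with equality since the domains are all of $\R^q$ and $\cL^{\alpha}$ is strongly convex in $y$ and strongly concave in $z$ by Lemma~\ref{lem: smoothness_minmax_prob}. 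With that identity in hand, the rest is the triangle-inequality-plus-Lipschitz computation sketched above, and no step poses a genuine obstacle.
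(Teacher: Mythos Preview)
Your proposal is correct and follows essentially the same route as the paper's proof: reduce $\bar v_{s+1}$ to $\bar\delta_{s+1}$ via Lemma~\ref{lem: gt}, take the conditional expectation using unbiasedness, identify $\nabla\Gamma^{\alpha}(\bar x_s)$ with $\nabla_x\cL^{\alpha}(\bar x_s, y_{*,s}^{\alpha}, z_{*,s})$, split the difference into three groups, and apply Jensen plus Lipschitz. If anything, you are slightly more explicit than the paper in justifying the Danskin-type identity $\nabla\Gamma^{\alpha} = \nabla_x\cL^{\alpha}$ at the saddle, which the paper invokes without comment.
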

\begin{proof}(of Lemma~\ref{lem: hypergrad_error})
    By Lemma \ref{lem: gt} we know it suffices to analyze $\norm{\E\left[\bar \delta_{s+1}\middle|\cF_s\right] - \nabla \Gamma^{\alpha}(\bar x_s)}$. 
\begin{align*}
  &\norm{\E\left[\bar \delta_{s+1}\middle|\cF_s\right] - \nabla_{x} \cL^{\alpha}(\bar x_s, {y_{\ast}^{\alpha}}(\bar x_s), z_{\ast}(\bar x_s))}^2 \notag\\
  & = \norm{\frac{1}{n} \sum_{i=1}^n\E\left[\delta_{s+1}^{(i)}\middle|\cF_s\right] - \nabla_{x} \cL^{\alpha}(\bar x_s, {y_{\ast}^{\alpha}}(\bar x_s), z_{\ast}(\bar x_s))}^2 \notag \\
  & = \frac{3}{n^2} \norm{\sum_{i=1}^n \nabla_{x}f_i(x_{s}^{(i)}, y_s^{(i)}) - \nabla_{x} f_i(\bar x_s,{y_{\ast}^{\alpha}}(\bar x_s))}^2 + \frac{3\alpha^2}{n^2} \norm{\sum_{i=1}^n \nabla_{x} g_i(x_{s}^{(i)}, y_s^{(i)}) - \nabla_x g_i(\bar x_s, {y_{\ast}^{\alpha}}(\bar x_s))}^2 \notag \\
  & \quad + \frac{3\alpha^2}{n^2} \norm{\sum_{i=1}^n \nabla_{x} g_i(x_{s}^{(i)}, z_s^{(i)}) - \nabla_x g_i(\bar x_s, z_{\ast}(\bar x_s))}^2 \notag \\
  &\leq \frac{3\ell_{f,1}^2}{n}\sum_{i=1}^n\left(\norm{x_{s}^{(i)} - \bar x_s}^2 + \norm{y_s^{(i)} - {y_{\ast}^{\alpha}}(\bar x_s)}^2 \right) + \frac{3\alpha^2\ell_{g,1}^2}{n}\sum_{i=1}^n\left(\norm{x_{s}^{(i)} - \bar x_s}^2 + \norm{y_s^{(i)} - {y_{\ast}^{\alpha}}(\bar x_s)}^2 \right) \notag \\
  &\quad + \frac{3\alpha^2\ell_{g,1}^2}{n}\sum_{i=1}^n\left(\norm{x_{s}^{(i)} - \bar x_s}^2 + \norm{z_s^{(i)} - z_{\ast}(\bar x_s)}^2 \right) \notag\\
  & = \frac{3\left(\ell_{f,1}^2+2\alpha^2\ell_{g,1}^2 \right)}{n}\norm{\mX_s - \bar x_s\bfonet_n}^2 + \frac{3\left(\ell_{f,1}^2+\alpha^2\ell_{g,1}^2 \right)}{n}\norm{\mY_s - {y_{\ast}^{\alpha}}(\bar x_s)\bfonet_n}^2  +  \frac{3\alpha^2\ell_{g,1}^2}{n}\norm{\mZ_s - z_{\ast}(\bar x_s)\bfonet_n}^2. 
\end{align*}
\end{proof}

\thmconverge* 
\begin{proof}(of Theorem~\ref{thm: warm_start_rate})
Incorporating the result of  Lemma~\ref{lem: hypergrad_error} into the sufficient decrease condition in Lemma~\ref{lem: suffcient:decrease} and telescoping the inequality from $s=0$ to $S-1$, we have
\begin{align}
& \sum_{s=0}^{S-1}\frac{\eta_{x}}{2}\norm{\nabla \Gamma^{\alpha}(\bar x_s)}^2 + \left(\frac{\eta_{x}}{2} - \frac{\eta_{x}^2\ell_{\Gamma}}{2}\right)\sum_{s=0}^{S-1}\norm{\E\left[\bar v_{s+1}\middle|\cF_s\right]}^2 \notag \\
        \leq & \sum_{s=0}^{S-1}\Gamma^{\alpha}(\bar x_s) - \E\left[\Gamma^{\alpha}(\bar x_{s+1})\middle|\cF_s\right] + \frac{\eta_{x}}{2} \sum_{s=0}^{S-1}\norm{\E\left[\bar v_{s+1}\middle|\cF_s\right] - \nabla \Gamma^{\alpha}(\bar x_s)}^2  + \sum_{s=0}^{S-1}\frac{\ell_{\Gamma}\eta_{x}^2\sigma_x^2}{2n} \notag\\
      \leq  & \Gamma^{\alpha}(\bar x_0) - \E\left[\Gamma^{\alpha}(\bar x_{S})\middle|\cF_S\right] + \frac{\eta_{x}}{2} \sum_{s=0}^{S-1}\norm{\E\left[\bar v_{s+1}\middle|\cF_s\right] - \nabla \Gamma^{\alpha}(\bar x_s)}^2  + S\frac{\ell_{\Gamma}\eta_{x}^2\sigma_x^2}{2n} \notag \\
      \leq & \Delta_{x,0} 
      +  \frac{\eta_x}{2}\sum_{s=0}^{S-1}\left(\frac{3\ell_{x,1}^2}{n}\norm{\mX_s - \bar x_s\bfonet_n}^2 + \frac{3\ell_{y,1}^2}{n}\norm{\mY_s - {y_{\ast}^{\alpha}}(\bar x_s)\bfonet_n}^2 +  \frac{3\alpha^2\ell_{z,1}^2}{n}\norm{\mZ_s - z_{\ast}(\bar x_s)\bfonet_n}^2 \right)
      \notag \\
      &  + S\frac{\ell_{\Gamma}\eta_{x}^2\sigma_x^2}{2n} \label{inequ:sufficient:decrease:1}
    \end{align}
    where $\Delta_{x,0}= \Gamma^{\alpha}(\bar x_0) - \Gamma^{\alpha}(x^{\ast})$. Incorporating the consensus results of  $Y$ and $Z$ in Lemma~\ref{lem: yz_convergence}, we have
    \begin{align}
  & \sum_{s=0}^{S-1}\frac{\eta_{x}}{2}\norm{\nabla \Gamma^{\alpha}(\bar x_s)}^2 + \left(\frac{\eta_{x}}{2} - \frac{\eta_{x}^2\ell_{\Gamma}}{2}\right)\sum_{s=0}^{S-1}\norm{\E\left[\bar v_{s+1}\middle|\cF_s\right]}^2 \notag \\  
  \lesssim & \Delta_{x,0} + S\frac{\ell_{\Gamma}\eta_{x}^2\sigma_x^2}{2n} + \frac{3\eta_x\ell_{y,1}^2}{2}\bigg\{
  C_{y_*,0}\Delta_{y_*,0} +  C_{Y,0}\Delta_{Y,0} + C_{U_y,0}\Delta_{U_y,0}  + C_{y,vs} \sum_{s=0}^{S-1}\E\left[\norm{\bar v_{s+1}}^2\right]\notag \\
 &   + C_{y,v}\sum_{s=0}^{S-1}\E\left[\norm{\E\left[\bar v_{s+1}\middle|\cF_s\right]}^2\right]  + C_{y,x} \sum_{s=0}^{S-1}\frac{1}{n}\E\left[\norm{\mX_s - \bar{x}_s\bfonet_n}^2\right] + S C_{y,\sigma} \sigma_y^2 \bigg\} \notag \\
 + & \frac{3\eta_x\alpha^2\ell_{z,1}^2}{2}\bigg\{C_{z_*,0}\Delta_{z_*,0}+  C_{Z,0}\Delta_{Z,0}  + C_{U_z,0}\Delta_{U_z,0}  + C_{z,v} \sum_{s=0}^{S-1}\E\left[\norm{\E\left[\bar v_{s+1}\middle|\cF_s\right]}^2\right] \notag \\
 &+ C_{z,vs} \sum_{s=0}^{S-1}\E\left[\norm{\bar v_{s+1}}^2\right]  + C_{z,x} \frac{1}{n}\sum_{s=0}^{S-1}\E\left[\norm{\mX_{s} - \bar x_{s}\bfonet}^2\right] + S \cdot C_{z,\sigma}  \sigma_z^2 \bigg\} \notag \\
 & + \frac{3\eta_x\ell_{x,1}^2}{2}\sum_{s=0}^{S-1}\frac{1}{n}\E\left[\norm{\mX_s - \bar x_s\bfonet_n}^2 \right] \notag \\
\lesssim &  \Delta_{x,0} + \frac{3\eta_x\alpha^2}{2} \left( C_{y_*,0}\Delta_{y_*,0} +  C_{Y,0}\Delta_{Y,0} + C_{U_y,0}\Delta_{U_y,0} + C_{z_*,0}\Delta_{z_*,0}+  C_{Z,0}\Delta_{Z,0}  + C_{U_z,0}\Delta_{U_z,0}\right) \notag \\
+ & \frac{3\eta_x\alpha^2}{2}\left(C_{y,vs} + C_{z,vs}\right)\sum_{s=0}^{S-1}\E\left[\norm{\bar v_{s+1}}^2\right]  + \frac{3\eta_x\alpha^2}{2}\left(C_{y,v} + C_{z,v}\right)\sum_{s=0}^{S-1}\E\left[\norm{\E\left[\bar v_{s+1}\middle|\cF_s\right]}^2\right] \notag \\
+ &   \frac{3\eta_x\alpha^2}{2}\left(C_{y,x} + C_{z,x} + 1\right)\sum_{s=0}^{S-1}\frac{1}{n}\E\left[\norm{\mX_s - \bar{x}_s\bfonet_n}^2\right] + \frac{S\eta_{x}^2\sigma_x^2}{2n} +\frac{S\eta_x\alpha^2}{2}\left(C_{y,\sigma}\sigma_y^2 +  C_{z,\sigma}\sigma_z^2\right) 
    \end{align}
where $\ell_{x,1}^2= \mathcal{O}(\alpha^2), \ell_{y,1}^2= \mathcal{O}(\alpha^2)$ and $\ell_{z,1}^2= \mathcal{O}(1)$.  For simplicity, we let $C_X =  C_{y,x} + C_{z,x} + 1$. Then incorporating the sum w.r.t $\mX_s$  in Lemma~\ref{lem: consensus} and dividing $\eta_x S$ on both side,  we achieve that
\begin{align}
 & \frac{1}{2S}\sum_{s=0}^{S-1}\norm{\nabla \Gamma^{\alpha}(\bar x_s)}^2 + \left(\frac{1}{2S} - \frac{\eta_{x}\ell_{\Gamma}}{2S}\right)\sum_{s=0}^{S-1}\norm{\E\left[\bar v_{s+1}\middle|\cF_s\right]}^2 \notag \\ 
\lesssim & \frac{\Delta_{x,0}}{\eta_x S} + \frac{\alpha^2}{S} \left( C_{y_*,0}\Delta_{y_*,0} +  C_{Y,0}\Delta_{Y,0} + C_{U_y,0}\Delta_{U_y,0} + C_{z_*,0}\Delta_{z_*,0}+  C_{Z,0}\Delta_{Z,0}  + C_{U_z,0}\Delta_{U_z,0}\right) \notag \\
+ & \frac{\alpha^2}{S}\left(C_{y,vs} + C_{z,vs}\right)\sum_{s=0}^{S-1}\E\left[\norm{\bar v_{s+1}}^2\right]  + \frac{\alpha^2}{S}\left(C_{y,v} + C_{z,v}\right)\sum_{s=0}^{S-1}\E\left[\norm{\E\left[\bar v_{s+1}\middle|\cF_s\right]}^2\right] \notag \\
+ &  \frac{\eta_{x}\sigma_x^2}{2n} +\frac{\alpha^2}{2}\left(C_{y,\sigma}\sigma_y^2 +  C_{z,\sigma}\sigma_z^2\right)  + \frac{\alpha^2C_X }{S} \Bigg\{\frac{\eta_{x}^2 \left( C_{vz}C_{z,v} + C_{vy}C_{y,v} \right)}{(1-\rho^2)^3}\sum_{s=0}^{S-1}\E\left[\norm{\E\left[\bar v_{i+1}\middle|\cF_i\right]}^2\right] \notag\\
    & + \frac{\eta_{x}^2}{(1-\rho^2)^3}\left(4C_{vz}C_{z,vs} + 4C_{vy}C_{y,vs}  + \frac{C_{vv}}{n}\right)\sum_{s=0}^{S-1}\E\left[\norm{\bar v_{i+1}}^2\right] \notag \\
   & + \frac{\eta_{x}^2 C_{vz}}{(1-\rho^2)^3} \left(C_{z_*,0}\Delta_{z_*,0} +  C_{Z,0}\Delta_{Z,0}  + C_{U_z,0}\Delta_{U_z,0}  \right) \notag \\
   & + \frac{C_{vy}\eta_x^2}{(1-\rho^2)^3} \left( C_{y_*,0}\Delta_{y_*,0} +  C_{Y,0}\Delta_{Y,0} + C_{U_y,0}\Delta_{U_y,0} \right) \notag \\
    & + \frac{\eta_{x}^2S}{(1-\rho^2)^3}\left(C_{vz}C_{z,\sigma} \sigma_z^2 +  C_{vy}C_{y,\sigma} \sigma_y^2 \right)  + \frac{S\eta_{x}^2\sigma_{x}^2}{(1-\rho^2)^3} + \frac{ \eta_{x}^2\ell_{f,0}^2(1+\rho^2)}{(1-\rho^2)^3} \Bigg\} \notag \\
    \lesssim & \frac{\Delta_{x,0}}{\eta_x S} + \frac{\alpha^2}{S}\left(\left(C_{y,v} + C_{z,v}\right)  + C_X \frac{\eta_{x}^2}{(1-\rho^2)^3}\left( C_{vz}C_{z,v} + C_{vy}C_{y,v} \right)\right)\sum_{s=0}^{S-1}\E\left[\norm{\E\left[\bar v_{i+1}\middle|\cF_i\right]}^2\right] \notag \\
    + &   \frac{\alpha^2}{S}\left( \left(C_{y,vs} + C_{z,vs}\right) + C_X \frac{\eta_{x}^2}{(1-\rho^2)^3}\left(4C_{vz}C_{z,vs} + 4C_{vy}C_{y,vs}  + \frac{C_{vv}}{n}\right)\right)\sum_{s=0}^{S-1}\E\left[\norm{\bar v_{i+1}}^2\right]  \notag \\ 
    +&  \frac{\eta_{x}\sigma_x^2}{2n} +\frac{\alpha^2}{2}\left(C_{y,\sigma}\sigma_y^2 +  C_{z,\sigma}\sigma_z^2\right)   + \alpha^2 C_X \left(\frac{\eta_{x}^2}{(1-\rho^2)^3}\left(C_{vz}C_{z,\sigma} \sigma_z^2 +  C_{vy}C_{y,\sigma}\sigma_y^2 \right) + \frac{\eta_{x}^2\sigma_x^2}{(1-\rho^2)^3}\right)  \notag \\
  +  &  \frac{\alpha^2}{S}C_X \frac{ \eta_{x}^2\ell_{f,0}^2(1+\rho^2)}{(1-\rho^2)^3} + 
    \frac{\alpha^2}{S}\left(1 +  C_X \frac{C_{vy}\eta_x^3}{(1-\rho^2)^3}\right) \left( C_{y_*,0}\Delta_{y_*,0} +  C_{Y,0}\Delta_{Y,0} + C_{U_y,0}\Delta_{U_y,0} \right) \notag \\
   +  & \frac{\alpha^2}{S}\left(1 + C_X\frac{\eta_{x}^2 C_{vz}}{(1-\rho^2)^3}\right)\left(C_{z_*,0}\Delta_{z_*,0}+  C_{Z,0}\Delta_{Z,0}  + C_{U_z,0}\Delta_{U_z,0}\right).
\end{align}
Incorporating the inequality that 
\begin{equation}\label{ineq: bdd_var}
    \E\left[\norm{\bar v_{i}}^2\right] \leq \E\left[\norm{\E\left[\bar v_{i}\middle|\cF_i\right]}^2\right] + \mathcal{O}\left(\frac{\sigma_x^2}{n}\right),
\end{equation}
we have
\begin{align}
 & \frac{1}{2S}\sum_{s=0}^{S-1}\norm{\nabla \Gamma^{\alpha}(\bar x_s)}^2 + \left(\frac{1}{2S} - \frac{\eta_{x}\ell_{\Gamma}}{2S}\right)\sum_{s=0}^{S-1}\norm{\E\left[\bar v_{s+1}\middle|\cF_s\right]}^2 \notag \\ 
 \lesssim & \frac{\Delta_{x,0}}{\eta_x S} + \frac{\alpha^2}{S}\left(\left(C_{y,v} + C_{z,v}\right)  + C_X \frac{\eta_{x}^2}{(1-\rho^2)^3}\left( C_{vz}C_{z,v} + C_{vy}C_{y,v} \right)\right)\sum_{s=0}^{S-1}\E\left[\norm{\E\left[\bar v_{i+1}\middle|\cF_i\right]}^2\right] \notag \\
    + &   \frac{\alpha^2}{S}\left( \left(C_{y,vs} + C_{z,vs}\right) +  \frac{C_X\eta_{x}^2}{(1-\rho^2)^3}\left(4C_{vz}C_{z,vs} + 4C_{vy}C_{y,vs}  + \frac{C_{vv}}{n}\right)\right)\sum_{s=0}^{S-1}\E\left[\norm{\E\left[\bar v_{i+1}\middle|\cF_i\right]}^2\right]  \notag \\ 
   + &  \frac{\alpha^2}{S}\left( \left(C_{y,vs} + C_{z,vs}\right) + C_X \frac{\eta_{x}^2}{(1-\rho^2)^3}\left(4C_{vz}C_{z,vs} + 4C_{vy}C_{y,vs}  + \frac{C_{vv}}{n}\right)\right)\frac{S\sigma_x^2}{n}\notag \\
    + & \frac{\eta_{x}\sigma_x^2}{2n} +\frac{\alpha^2}{2}\left(C_{y,\sigma}\sigma_y^2 +  C_{z,\sigma}\sigma_z^2\right)  + \frac{\alpha^2 C_X\eta_{x}^2}{(1-\rho^2)^3} \left(C_{vz}C_{z,\sigma} \sigma_z^2 +  C_{vy}C_{y,\sigma}\sigma_y^2 + \sigma_x^2\right)  \notag \\
   + &  \frac{\alpha^2}{S}C_X \frac{ \eta_{x}^2\ell_{f,0}^2(1+\rho^2)}{(1-\rho^2)^3} + 
    \frac{\alpha^2}{S}\left(1 +  C_X \frac{C_{vy}\eta_x^3}{(1-\rho^2)^3}\right) \left( C_{y_*,0}\Delta_{y_*,0} +  C_{Y,0}\Delta_{Y,0} + C_{U_y,0}\Delta_{U_y,0} \right) \notag \\
   + &  \frac{\alpha^2}{S}\left(1 + C_X\frac{\eta_{x}^2 C_{vz}}{(1-\rho^2)^3}\right)\left(C_{z_*,0}\Delta_{z_*,0}+  C_{Z,0}\Delta_{Z,0}  + C_{U_z,0}\Delta_{U_z,0}\right). \label{ineq: original_long}
\end{align}
We consider Algorithm~\ref{alg:stochastic_minmax:onestage} with $T=1$ and variance $\sigma_x^2 \sim \Theta\left(\alpha^2\sigma^2 \right), \sigma_y^2 \sim \Theta\left(\alpha^2\sigma^2 \right), \sigma_z^2= \Theta\left(\sigma^2\right)$ (defined in Lemma \ref{lem: consensus}), to ensure that 
     \begin{align}\label{ineq: eta_xzt}
       r_z = \left(1 + a_1\eta_{x}\ell_{z_*} + \frac{a_2\ell_{\nabla z_*}c_{\delta}\eta_{x}^2\alpha^2}{2}\right)\left(1-\frac{2\mu_g\eta_{z}}{3}\right)^T\leq 1 - \frac{\mu_g\eta_{z}}{3},
    \end{align}
    we set
    \begin{align}
    a_1 \lesssim \frac{\eta_z}{\eta_x}; \quad a_2 \lesssim \frac{\eta_z}{\eta_x^2\alpha^2}.
    \end{align}
    Similarly, to guarantee that 
      \begin{align}\label{ineq: eta_xyt}
        r_y = \left(1 + a_1\eta_{x}\ell_{y_*} + \frac{a_2\ell_{\nabla y_*}c_{\delta}\eta_{x}^2\alpha^2}{2}\right)\left(1-\frac{\mu_g\eta_{y}\alpha}{3}\right)^T\leq 1 - \frac{\mu_g\eta_{y}\alpha}{6}.
    \end{align}
    we might choose
    \begin{align}
    a_1 & \lesssim \frac{\alpha\eta_y}{\eta_x};\quad a_2 \lesssim \frac{\alpha\eta_y}{\eta_x^2\alpha^2}. 
    \end{align} 
    Thus to make sure the two conditions~\eqref{ineq: eta_xzt} and \eqref{ineq: eta_xyt} both hold,  we choose $\alpha\eta_y$ and $\eta_z$ are in the same scale, then $a_1$ and $a_2$ are well-defined.
    Setting  
    \begin{align}\label{ineq: condition1}
        \alpha\eta_y & \sim \eta_z \gg \frac{1}{S}, 
       \end{align}
       and $C_X$ can be simplified as
       \begin{align}
       C_{X} & = C_{y,x} + C_{z,x} + 1 \notag \\
       & \sim \cO\left(\min\left(T, \frac{1}{\alpha\eta_y}\right) + \frac{e_y^T\alpha^4\eta_y^4}{(1-\rho^2)^7} + \frac{e_{\rho,1}^T\alpha^2\eta_y^2}{(1-\rho^2)^6} + \min\left(T, \frac{1}{\alpha\eta_y}\right)\frac{\alpha^4\eta_y^4}{(1-\rho^2)^4}\right) \notag \\
       & + \cO\left(\frac{e_{z}^T\eta_z^4}{(1-\rho^2)^7} + \frac{e_{\rho,2}^Te_{z}^T\eta_z^2}{(1-\rho^2)^5} + \min \left(T, \frac{1}{\eta_z} 
 \right)\right)  + 1\notag \\
       & \sim \mathcal{O}(1).
       \end{align}
    Combining this and replacing $e_y^T, e_{\rho,1}^T, e_{\rho,2}^T$ and $e_z^T$ with 1 and let $a_1 \sim \frac{\alpha\eta_y}{\eta_x} \sim \frac{\eta_z}{\eta_x}$ and $a_2 \sim \frac{\eta_z}{\eta_x^2\alpha^2} \sim \frac{\alpha\eta_y}{\eta_x^2\alpha^2}$, the inequality \eqref{ineq: original_long} can be simplified
\begin{align}\label{inequ:t1:}
 & \frac{1}{2S}\sum_{s=0}^{S-1}\norm{\nabla \Gamma^{\alpha}(\bar x_s)}^2 + \left(\frac{1}{2S} - \frac{\eta_{x}\ell_{\Gamma}}{2S}\right)\sum_{s=0}^{S-1}\norm{\E\left[\bar v_{s+1}\middle|\cF_s\right]}^2 \notag \\ 
 \lesssim & {\frac{\Delta_{x,0}}{\eta_x S}} + \frac{\alpha^2}{S}\left(\left({\frac{\eta_x^2}{\alpha^2\eta_y^2}} + \frac{\eta_x^2}{\eta_z^2}\right)  + \frac{\eta_{x}^2}{(1-\rho^2)^4}\left(\frac{\alpha^2 \eta_x^2}{\eta_z^2} + \frac{\eta_x^2}{\eta_y^2} \right)\right)\sum_{s=0}^{S-1}\E\left[\norm{\E\left[\bar v_{i+1}\middle|\cF_i\right]}^2\right] \notag \\
    + &   \frac{\alpha^2}{S} \left(\frac{\eta_x^2\alpha^2\eta_y^2}{(1-\rho^2)^6}  + \frac{\eta_x^2}{\alpha \eta_y} + \frac{\eta_x^4}{\eta_y^2}+ \frac{\alpha^2\eta_x^4}{\eta_z^2} \right) \sum_{s=0}^{S-1}\E\left[\norm{\E\left[\bar v_{i+1}\middle|\cF_i\right]}^2\right] \notag \\
    + &  \frac{\alpha^2}{S} \frac{\eta_{x}^4\alpha^2}{(1-\rho^2)^3}\left(\alpha^2  + \frac{1}{\alpha \eta_y} + \frac{\eta_x^2}{\eta_y^2}+ \frac{\alpha^2\eta_x^2}{\eta_z^2} \right)  \sum_{s=0}^{S-1}\E\left[\norm{\E\left[\bar v_{i+1}\middle|\cF_i\right]}^2\right]  \notag \\ 
   + &   \left( \left(\frac{\eta_x^2\alpha^2\eta_y^2}{(1-\rho^2)^6}  + {\frac{\eta_x^2}{\alpha \eta_y}} + \frac{\eta_x^4}{\eta_y^2}+ \frac{\alpha^2\eta_x^4}{\eta_z^2} \right) + \frac{\eta_{x}^4\alpha^2}{(1-\rho^2)^3}\left(\alpha^2  + \frac{1}{\alpha \eta_y} + \frac{\eta_x^2}{\eta_y^2}+ \frac{\alpha^2\eta_x^2}{\eta_z^2} \right)  \right)\frac{\alpha^4\sigma^2}{n}\notag \\
    + &  \frac{\eta_{x}\alpha^2\sigma^2}{n} +\frac{\alpha^2\sigma^2}{2}\left(\frac{\alpha\eta_y}{n} +  \frac{\eta_z}{n}\right)  + \alpha^2 \left(\frac{\eta_{x}^2\alpha^2\sigma^2 }{(1-\rho^2)^4}\left(\frac{\alpha\eta_y}{n} +  \frac{\eta_z}{n}\right) + \frac{\eta_{x}^2\alpha^2\sigma^2}{(1-\rho^2)^3}\right)  \notag \\
   + &  \frac{\alpha^2}{S}\frac{\eta_{x}^2}{(1-\rho^2)^3} + 
        \frac{\alpha^2}{S}\left(1 +  \frac{\alpha^2\eta_x^3}{(1-\rho^2)^4}\right) \left(\frac{1}{\alpha\eta_y}\Delta_{y_*,0} +  \frac{\Delta_{Y,0}}{(1-\rho^2)^2} +\frac{\eta_y^2\Delta_{U_y,0}}{(1-\rho^2)^4} \right) \notag \\
   + &  \frac{\alpha^2}{S}\left(1 + \frac{\eta_{x}^2\alpha^2}{(1-\rho^2)^3}\right)\left(\frac{1}{\eta_z}\Delta_{z_*,0}+  \frac{\Delta_{Z,0}}{1-\rho^2}  + \frac{\eta_z^2\Delta_{U_z,0}}{(1-\rho^2)^5}\right) 
\end{align}
where
\begin{align*}
C_{y,vs} & \sim \frac{\eta_x^2\alpha^2\eta_y^2}{(1-\rho^2)^6}  + \frac{\eta_x^2}{\alpha \eta_y} + \frac{\eta_x^4}{\eta_y^2}; \quad   
C_{z,vs} \sim \frac{\eta_x^2}{\eta_z} + \frac{\alpha^2\eta_x^4}{\eta_z^2}  \notag \\
C_{vz} & \sim \frac{\alpha^2}{1-\rho^2}; \quad C_{vy} \sim \frac{\alpha^2}{1-\rho^2};\quad 
C_{y,\sigma} \sim \frac{\eta_y}{n \alpha} + \eta_y^2; \quad 
C_{z,\sigma}  \sim \frac{\eta_z}{n} + \eta_z^2
\end{align*}
Now we choose the parameters $\alpha$ and stepsizes as follows.
\begin{align}\label{inequ:t1:para}
\alpha = \Theta\left((1-\rho^2)^{1/2}(nS)^{1/7}\right), \eta_x = \eta_y = \Theta \left( \frac{(1-\rho^2)n^{2/7}}{S^{5/7}}\right), \alpha \eta_y = \eta_z = \Theta\left( \frac{(1-\rho^2)^{3/2}n^{3/7}}{S^{4/7}} \right), 
\end{align}
and set  a warm-start for $y, z$ such that 
\begin{align}\label{inequ:t1:para:2}
\Delta_{y_*,0}, \Delta_{z_*, 0} \leq \mathcal{O}\left(1/\alpha\right).
\end{align}
By properly choosing the parameters as in \eqref{inequ:t1:para}, \eqref{inequ:t1:para:2} and $S \geq n^{4/3}$,  the coefficient of $\frac{1}{S}\sum_{s=0}^{S-1}\E\left[\norm{\E\left[\bar v_{s+1}\middle|\cF_s\right]}^2\right]$ of RHS in \eqref{inequ:t1:} is smaller than that of LHS. We thus conclude
    \begin{align}
  &\sum_{s=0}^{S-1}\frac{1}{2S}\E\left[\norm{\nabla \Gamma^{\alpha}(\bar x_s)}^2\right] + \frac{1}{2S}\sum_{s=0}^{S-1}\norm{\E\left[\bar v_{s+1}\middle|\cF_s\right]}^2 \notag\\
\lesssim  \, & \frac{1}{(1-\rho^2)}\frac{\Delta_{x,0}}{(nS)^{2/7}} + \frac{ 1}{n^{9/7} S^{6/7}} + \frac{\sigma^2}{(nS)^{2/7}} + \frac{1}{(1-\rho^2)(nS)^{2/7}} = \cO\left(\frac{1}{(1-\rho^2)(nS)^{2/7}} \right). \label{ineq: final_ineq}
    \end{align}
Note that the difference $\Delta_{x,0}=\Gamma^{\alpha}(x_0)- \Gamma^{\alpha}(x^{\ast})$ can be controlled by a constant which is independent with $\alpha$:
\begin{align}
\Delta_{x,0} =  & \Gamma^{\alpha}(x_0) - \Gamma^{\alpha}(x^{\ast}) = \cL^{\alpha}(x_0, y_*^{\alpha}(x_0), z_*(x_0)) - \cL^{\alpha}(x^{\ast}, y_*^{\alpha}(x^{\ast}), z_*(x^{\ast})) \notag \\
= &  f(x_0, y_*^{\alpha}(x_0)) - f(x^{\ast}, y_*^{\alpha}(x^{\ast})) + \alpha \left(g(x_0, y_*^{\alpha}(x_0)) - g(x_0, z_*(x_0)) \right) \notag \\
& + \alpha \left(g(x^{\ast}, y_*^{\alpha}(x^{\ast})) - g(x^{\ast}, z_*(x^{\ast})) \right) \notag \\
= &   f(x_0, y_*(x_0)) - f(x^{\ast}, y_*(x^{\ast})) + f(x_0, y_*^{\alpha}(x_0)) - f(x_0, y_*(x_0)) \notag \\ & + f(x^{\ast}, y_*(x^{\ast})) - f(x^{\ast}, y_*^{\alpha}(x^{\ast})) + \alpha \left(g(x_0, y_*^{\alpha}(x_0)) - g(x_0, z_*(x_0)) \right) \notag \\
& + \alpha \left(g(x^{\ast}, y_*^{\alpha}(x^{\ast})) - g(x^{\ast}, z_*(x^{\ast})) \right) \notag \\
\leq & \Phi(x_0) - \Phi(x^{\ast}) + \ell_{f,0}\left\| y_*^{\alpha}(x_0) - y_*(x_0)\right\| + \ell_{f,0}\left\| y_*^{\alpha}(x^{\ast}) - y_*(x^{\ast})\right\| \notag \\
& + \alpha \frac{\ell_{g,1}}{2}\left\|y_*^{\alpha}(x_0)-z_*(x_0)\right\|^2 +  \alpha \frac{\ell_{g,1}}{2}\left\|y_*^{\alpha}(x^{\ast})-z_*(x^{\ast})\right\|^2 \notag \\
\leq & \Phi(x_0) - \Phi(x^{\ast}) + \frac{2\ell_{f,0}C_0}{\alpha}  + 2\alpha \frac{\ell_{g,1}}{2} \frac{C_0^2}{\alpha^2} \notag \\
\leq &  \Phi(x_0) - \Phi(x^{\ast}) + \frac{2\ell_{f,0}C_0 \mu_g}{\ell_{f,1}}  + \frac{\ell_{g,1} C_0^2\mu_g}{\ell_{f,1}} = \Phi(x_0) - \Phi(x^{\ast}) + \cO\left( \kappa^2 \ell_{g,1}\right),
\end{align}
where the first inequality follows from the gradient-Lipschitz of $g$ and the Lipschitz continuity of $f$ in $y$, and the second inequality uses Lemma~\ref{lem:y:ast}.

We then recall the relationship of $\norm{\nabla \Phi(\bar x_s)}^2$ and $\norm{\nabla \Gamma^{\alpha}(\bar x_s)}^2$ in Lemma~\ref{lem: gap_grad_bil_minmax}, we have
\begin{align*}
\sum_{s=0}^{S-1}\frac{1}{2S}\E\left[\norm{\nabla \Phi(\bar x_s)}^2\right] \leq 2\sum_{s=0}^{S-1}\frac{1}{2S}\E\left[\norm{\nabla \Gamma^{\alpha}(\bar x_s)}^2\right] + \frac{2}{\alpha^2} \leq \cO\left(\frac{1}{(1-\rho^2)(nS)^{2/7}} \right).
\end{align*}
We then notice that 
\[
    \left(\frac{1}{S}\sum_{s=0}^{S-1}\E\left[\norm{\nabla \Phi(\bar x_s)}\right]\right)^2  \leq \frac{1}{S}\sum_{s=0}^{S-1}\E\left[\norm{\nabla \Phi(\bar x_s)}\right]^2\leq \frac{1}{S}\sum_{s=0}^{S-1}\E\left[\norm{\nabla \Phi(\bar x_s)}^2\right]
\]
where the first inequality uses Cauchy-Schwarz inequality and the second one uses Jensen's inequality. Hence we know
\[
    \frac{1}{S}\sum_{s=0}^{S-1}\E\left[\norm{\nabla \Phi(\bar x_s)}\right]\leq \cO\left(\frac{1}{\sqrt{1-\rho^2}(nS)^{1/7}} \right).
\]
Moreover, we notice that from Lemma \ref{lem: consensus}, \eqref{inequ:t1:para}, and \eqref{inequ:t1:para:2} we know
\begin{align*}
    & \frac{1}{n}\sum_{s=0}^{S}\E\left[\norm{\mX_{s+1} - \bar x_{s+1}\bfonet_n}^2 \right] \\ 
      & \lesssim\frac{\eta_{x}^2}{(1-\rho^2)^3}\left( C_{vz}C_{z,v} + C_{vy}C_{y,v} \right)\sum_{i=0}^{S}\E\left[\norm{\E\left[\bar v_{i+1}\middle|\cF_i\right]}^2\right] \\
   & + \frac{\eta_{x}^2}{(1-\rho^2)^3}\left(4C_{vz}C_{z,vs} + 4C_{vy}C_{y,vs}  + \frac{C_{vv}}{n}\right)\sum_{i=0}^{S-1}\E\left[\norm{\bar v_{i+1}}^2\right]  \\
    & +\frac{\eta_{x}^2 C_{vz}}{(1-\rho^2)^3} \left(C_{z_*,0}\Delta_{z_*,0} +  C_{Z,0}\Delta_{Z,0}  + C_{U_z,0}\Delta_{U_z,0}  \right)  \\
   & + \frac{C_{vy}\eta_x^2}{(1-\rho^2)^3} \left( C_{y_*,0}\Delta_{y_*,0} +  C_{Y,0}\Delta_{Y,0} + C_{U_y,0}\Delta_{U_y,0} \right)  \\
    & + \frac{\eta_{x}^2S}{(1-\rho^2)^3}\left(C_{vz}C_{z,\sigma} \sigma_z^2 +  C_{vy}C_{y,\sigma} \sigma_y^2 \right)  + \frac{S\eta_{x}^2\sigma_{x}^2}{(1-\rho^2)^3} + \frac{\eta_{x}^2\ell_{f,0}^2(1+\rho^2)}{(1-\rho^2)^3} \\
    &\lesssim \frac{\eta_{x}^2}{(1-\rho^2)^4}\left(\frac{\alpha^2 \eta_x^2}{\eta_z^2} + \frac{\eta_x^2}{\eta_y^2} \right)\sum_{i=0}^{S}\E\left[\norm{\E\left[\bar v_{i+1}\middle|\cF_i\right]}^2\right] \\
    & + \frac{\eta_{x}^4\alpha^2}{(1-\rho^2)^4}\left(\frac{\alpha^2\eta_y^2}{(1-\rho^2)^6}  + \frac{1}{\alpha \eta_y} + \frac{\eta_x^2}{\eta_y^2}+ \frac{1}{\eta_z} +  \frac{\alpha^2\eta_x^2}{\eta_z^2} + 1\right)\sum_{s=0}^{S-1}\E\left[\norm{\bar v_{i+1}}^2\right] \\
    & + \frac{\alpha^2\eta_x^3}{(1-\rho^2)^4}\left(\frac{1}{\alpha\eta_y}\Delta_{y_*,0} +  \frac{\Delta_{Y,0}}{(1-\rho^2)^2} +\frac{\eta_y^2\Delta_{U_y,0}}{(1-\rho^2)^4} \right) \\
    & + \frac{\eta_{x}^2\alpha^2}{(1-\rho^2)^3}\left(\frac{1}{\eta_z}\Delta_{z_*,0}+  \frac{\Delta_{Z,0}}{1-\rho^2}  + \frac{\eta_z^2\Delta_{U_z,0}}{(1-\rho^2)^5}\right) \\
    & + \frac{\alpha^2\eta_{x}^2S}{(1-\rho^2)^4}\left(\left(\frac{\eta_z}{n} + \eta_z^2\right) + \left(\frac{\eta_y}{n \alpha} + \eta_y^2\right)\alpha^2\right)\sigma^2  + \frac{S\alpha^2\eta_{x}^2\sigma^2}{(1-\rho^2)^3} + \frac{\eta_{x}^2\ell_{f,0}^2(1+\rho^2)}{(1-\rho^2)^3} \\
    &\lesssim \frac{\eta_x^2}{(1-\rho^2)^4}\sum_{i=0}^{S}\E\left[\norm{\E\left[\bar v_{i+1}\middle|\cF_i\right]}^2\right] + \frac{\eta_{x}^4\alpha^2}{(1-\rho^2)^4}\left(\frac{\alpha^2\eta_y^2}{(1-\rho^2)^6} + \frac{1}{\alpha\eta_x}\right)\sum_{s=0}^{S-1}\E\left[\norm{\bar v_{i+1}}^2\right] \\
    & + \frac{\eta_x^2}{(1-\rho^2)^4} + \frac{\eta_{x}}{(1-\rho^2)^3} + \frac{\alpha^3\eta_{x}^3S\sigma^2}{n(1-\rho^2)^4} + \frac{S\alpha^2\eta_{x}^2\sigma^2}{(1-\rho^2)^3} + \frac{\eta_{x}^2\ell_{f,0}^2(1+\rho^2)}{(1-\rho^2)^3}.
 \end{align*}
 Using \eqref{ineq: bdd_var} and \eqref{ineq: final_ineq} in the above inequality, we know
 \begin{align*}
    & \frac{1}{nS}\sum_{s=0}^{S}\E\left[\norm{\mX_{s+1} - \bar x_{s+1}\bfonet_n}^2 \right] \\ 
    = &\cO\left(\frac{n^{2/7}}{(1-\rho^2)^3S^{12/7}} \right) + \cO\left(\frac{n^{8/7}}{S^{20/7}}\left(\frac{n^{6/7}}{(1-\rho^2)^3S^{8/7}} + \frac{S^{4/7}}{(1-\rho^2)^{3/2}n^{3/7}}\right)\right) + \cO\left(\frac{n^{2/7}}{(1-\rho^2)^2S^{12/7}} + \frac{n^{6/7}}{S^{8/7}} + \frac{n^{4/7}}{(1-\rho^2)S^{17/7}}\right) \notag \\
    &=\cO\left(\frac{n^{6/7}}{S^{8/7}} + \frac{n^{2/7}}{(1-\rho^2)^3S^{12/7}} \right).
 \end{align*}
This indicates that
\begin{align*}
    &\frac{1}{n}\left(\min_{0 \leq s \leq S-1}\E\left[\norm{X_s - \bar{x}_s \bfone_n}\right]\right)^2\leq \frac{1}{n}\left(\frac{1}{S}\sum_{s=0}^{S}\E\left[\norm{\mX_{s+1} - \bar x_{s+1}\bfonet_n}\right]\right)^2\\
    \leq &\frac{1}{nS}\sum_{s=0}^{S}\E\left[\norm{\mX_{s+1} - \bar x_{s+1}\bfonet_n}\right]^2\leq\frac{1}{nS}\sum_{s=0}^{S}\E\left[\norm{\mX_{s+1} - \bar x_{s+1}\bfonet_n}^2 \right] = \cO\left(\frac{n^{6/7}}{S^{8/7}} + \frac{n^{2/7}}{(1-\rho^2)^3S^{12/7}}\right)
\end{align*}
which gives
\begin{align*}
    \min_{0 \leq s \leq S-1}\frac{1}{n}\E\left[\norm{X_s - \bar{x}_s \bfone_n}\right] = \cO\left(\frac{1}{n^{1/14}S^{4/7}} + \frac{n^{1/7}}{(1-\rho^2)^{3/2}S^{6/7}}\right).
\end{align*}

\end{proof}

\section{Details of the Experiments}\label{numerical:appendix}
In this part, we provide the details of the experiments. For hyperparameters in the algorithms, we followed the settings in the corresponding papers. For example, we chose stepsizes to be $a/\sqrt{K}$ for MA-DSBO as suggested in their paper \cite{chen2022decentralizeddsbo}, while for our experiments we picked stepsizes and multiplier $\alpha$ according to Theorem \ref{thm: warm_start_rate}. Iterates are randomly initialized from normal distributions. The network topologies are chosen as ring networks for all settings. In synthetic data experiments, we choose outer loop stepsizes: 0.03, 
inner loop stepsizes: 0.03, and $\alpha$: 1.0. In particular, for our algorithm DSGDA-GT, in the MNIST experiments: outer loop stepsize is 0.3; inner loop stepsize is  0.3; and $\alpha$ is 1.0. The number of samples in each agent:
training set: 7500; validation set: 7500; test set: 1000.

\end{document}